\newtheorem{thm}{Theorem}[section]
\newtheorem{lem}[thm]{Lemma}
\newtheorem{cor}[thm]{Corollary}
\newtheorem{prop}[thm]{Proposition}
\theoremstyle{definition}
\newtheorem{example}[thm]{Example}
\newtheorem{note}[thm]{Note}
\newcommand{\R}{\mathbf{R}}
\newcommand{\ol}{\overline}
\newcommand{\C}{\mathcal{C}}
\newcommand{\p}{\partial}
\newcommand{\f}{f|_{\p M}}
\renewcommand{\P}{\mathcal{P}}
\renewcommand{\S}{\mathbf{S}}
\renewcommand{\l}{\langle}
\renewcommand{\r}{\rangle}
\renewcommand{\(}{\left(}
\renewcommand{\)}{\right)}
\renewcommand{\tilde}{\widetilde}
\DeclareMathOperator{\inte}{int}
\DeclareMathOperator{\conv}{conv}
\DeclareMathOperator{\cl}{cl}
\DeclareMathOperator{\distort}{distort}
\DeclareMathOperator{\length}{length}
\DeclareMathOperator{\rank}{rank}
\title[Boundary Torsion and Convex Caps]{Boundary Torsion and Convex Caps\\ of Locally Convex Surfaces}
\author{Mohammad Ghomi}
\address{School of Mathematics, Georgia Institute of Technology,
Atlanta, GA 30332}
\email{ghomi@math.gatech.edu}
\urladdr{www.math.gatech.edu/$\sim$ghomi}
\date{\today \,(Last Typeset)}
\subjclass[2000]{Primary: 53A04,  53A07, Secondary: 53C23, 53C45; }
\keywords{Torsion, vertex, Bose formula, convex cap,  osculating plane or circle, isometric extension, distortion, tangent cone, Hausdorff convergence, self-linking number.}
\thanks{Research of the  author was supported in part by NSF Grant DMS--1308777, and Simons Collaboration Grant 279374.}
\begin{document}

\vspace*{-0.5in}

\begin{abstract} 
We prove that the torsion of  any closed space curve which bounds a simply connected locally convex surface  vanishes at least  $4$ times. 
This answers a question of Rosenberg related to a problem of Yau on characterizing the boundary of positively curved disks in Euclidean space. Furthermore, our result generalizes the  $4$ vertex theorem of Sedykh for convex space curves, and thus constitutes  a far reaching extension of the classical $4$ vertex theorem. The proof involves  studying  the arrangement of convex caps in a locally convex surface, and yields a Bose type formula for these objects.
\end{abstract}

\maketitle

\vspace{-0.3in}

\tableofcontents

\section{Introduction}

When does a closed curve $\Gamma$ in Euclidean space $\R^3$ bound a disk of positive curvature? This question, which appears in Yau's list of open problems \cite[\#26]{yau:problems2},  has been investigated by a number of authors \cite{ghomi:stconvex, rosenberg:constant, gluck&pan}. In particular, in 1993  Rosenberg \cite[Sec. 1.3]{rosenberg:constant} asked whether a necessary condition is that $\Gamma$ have (at least) $4$ \emph{vertices}, i.e., points where the torsion vanishes (or the first three derivatives of $\Gamma$ are linearly dependent). Here we show that the answer is yes. To state our main result,
let $M$ denote a topological disk, with   boundary $\p M$. A \emph{locally convex immersion}  $f\colon M\to\R^3$ is a continuous locally one-to-one map which sends a neighborhood $U$ of each point $p$ of $M$ into the boundary of a convex body $K$ in $\R^3$. We say that $f$ is \emph{locally nonflat} along $\p M$ provided that for no point $p\in\p M$, $f(U)$  lies in a plane. 
Further, $\f$ is of regularity class $\C^{k\geq 1}$ if  $f\colon \partial M\to\R^3$ is $k$-times continuously differentiable, and its differential is nonvanishing.
 The torsion $\tau$ of $\f$ \emph{changes sign} (at least) $n$ times, provided that there are $n$ points cyclically arranged in $\p M$ where the sign of $\tau$ alternates.

\begin{thm}[Main Theorem, First Version]\label{thm:main}
Let $f\colon M\to\R^3$ be a locally convex immersion.
Suppose that $\f$ is $\C^3$, has no inflections, and $f$ is locally nonflat along $\p M$. Then 
  either $\tau$ vanishes identically, or  else it changes sign $4$ times.
\end{thm}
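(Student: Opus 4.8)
The plan is to recast the Main Theorem as a combinatorial statement about the arrangement of \emph{convex caps} of $(M,f)$ supported along $\p M$, and then to prove a Bose type formula for that arrangement; the sign changes of $\tau$ will correspond precisely to the ``critical'' caps that the formula counts. As a first reduction, if $\tau$ vanishes identically we are done, so assume $\tau\not\equiv 0$. The three hypotheses on $\f$ — admitting a local convex extension along $\p M$, having nonvanishing curvature, and being locally nonflat — all persist under $\C^3$ perturbations of $\f$ that are sufficiently small and respect the local convex structure near $\p M$; using such a perturbation we may assume all zeros of $\tau$ are simple, so that the number of sign changes of $\tau$ equals its number of zeros, an even integer, and it suffices to show this number is at least $4$. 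The relevant local fact is that, after the perturbation, a zero of $\tau$ at $p\in\p M$ occurs exactly where the osculating plane $\Pi_p$ of $\f$ \emph{locally supports} $\f$ (the curve touches $\Pi_p$ from one side to fourth order rather than crossing it transversally), and the side of support switches across such a point; thus it is enough to produce four points of $\p M$ at which the osculating plane locally supports $\f$ from alternating sides.

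Given a plane $H$, a closed halfspace $H^+$ bounded by it, and a compact region $C\subseteq M$, call $C$ a \emph{convex cap over $H$} if $f(C)\subseteq H^+$, $f(C\cap\inte M)$ meets $H$ only along $\p C$, and $f|_C$ is an embedding whose image is a convex graph over the planar convex region of $H$ enclosed by $f(\p C)$; write $\apex(C)$ for the point of $C$ where $f$ is farthest from $H$, and call $H$ a supporting plane of $C$. Local convexity guarantees that every point of $M$ lies in some, possibly degenerate, convex cap, and since $M$ is a topological disk these caps can be organized globally: completing each local convex body to its tangent cone and taking Hausdorff limits of caps yields honest \emph{maximal} caps. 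The key local statement is that for a unit vector $u\in\S^2$ near the binormal of $\f$ at a non-vertex point there is a unique maximal convex cap whose rim $f(\p C)$ runs along $f(\p M)$ and whose supporting plane is of the form $H_{u,t}$, and that as $u$ tends to the binormal at a point $p$ where $\Pi_p$ locally supports $\f$, this supporting plane tends to $\Pi_p$. Call a maximal cap \emph{critical} when its supporting plane osculates $\f$; by the previous paragraph distinct critical caps give distinct points at which $\tau$ changes sign.

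Maximal convex caps supported along $\p M$ split into two types — \emph{positive} and \emph{negative} — according to which side of the osculating plane they open toward at their extreme (critical) configuration. The crux is to show that, drawn inside the disk $M$, the nesting and incidence pattern of these caps is finite and combinatorially rigid, and that an Euler characteristic count against $\chi(M)=1$ forces
\[
\big(\#\text{ positive critical caps}\big)\;-\;\big(\#\text{ negative critical caps}\big)\;=\;2,
\]
hence at least four critical caps altogether. Establishing this Bose type formula — in particular ruling out pathological or infinite arrangements and verifying that the degenerations are as tame as claimed — is the principal obstacle. It rests on the limiting arguments with tangent cones and Hausdorff convergence mentioned above, on an isometric extension (with controlled distortion) that promotes partial caps to genuine convex surfaces so that global convexity results of van Heijenoort and Sacksteder type become available, and on a linking computation showing that each critical cap is linked with $\f$ in a prescribed way, so that the signed count of critical caps is pinned down by the self-linking number of $\f$.

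Finally, since there are at least four critical caps and the supporting osculating plane of each locally supports $\f$ at a distinct point where $\tau$ changes sign, $\tau$ changes sign at least four times, which together with the reduction above proves the theorem. In the limiting case where all the caps collapse — their supporting planes merging into a single plane containing $f(\p M)$ — the construction degenerates to the classical Bose picture for the planar convex curve $\f$, which is exactly the excluded alternative $\tau\equiv 0$.
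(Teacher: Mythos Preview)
Your outline has the right skeleton --- convex caps, a Bose-type count, and the identification of cap planes with osculating planes --- but several load-bearing steps are either unjustified or misconceived. The perturbation move is the first real problem: you cannot freely perturb $\f$ in $\C^3$ and assume the new curve still bounds a locally convex disk, since local convexity is a constraint on all of $f$, not just on $\f$, and no mechanism is offered for extending the perturbation inward. The paper never perturbs; it counts sign changes of $\tau$ directly. More seriously, your dichotomy of maximal caps into ``positive'' and ``negative'' types and the signed formula $(\#\text{positive})-(\#\text{negative})=2$ do not reflect what actually happens. After a single choice of orientation of $\p M$ (the paper's Proposition~\ref{prop:orientation}), \emph{every} singular maximal cap lies above its osculating plane; there is no negative type, and the self-linking number plays no role whatsoever in the argument (it appears only in an example showing independence of the two obstructions).

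The mechanism the paper uses is different and more elementary. The maximal caps $C_p$ induce a partition $[p]:=C_p\cap\p M$ of $\p M\simeq\S^1$ which is \emph{nested} (distinct maximal caps cannot interlace along $\p M$ because $M$ is simply connected); a purely combinatorial lemma about nested partitions (Lemma~\ref{lem:partition}) then gives $\mathcal{S}\ge\mathcal{T}+2\ge 2$, where $\mathcal{S}$ counts singular maximal caps. For each such cap $C$, the extension property forces $\f$ to lie strictly \emph{below} the plane of $C$ on an arc $I\supset C\cap\p M$, and since that plane is the osculating plane at some $p\in C\cap\p M$ with $C$ above it, a Taylor-type maximum principle (Lemmas~\ref{lem:signchange}--\ref{lem:signchange2}) forces $\tau<0$ at the initial point of $I$ and $\tau>0$ at its final point. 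Thus each singular cap contributes a $(-,+)$ transition in a fixed orientation, and $\mathcal{S}\ge 2$ of these interlace to give $\mathcal{V}\ge 2\mathcal{S}\ge 4$. Your Euler-characteristic and linking-number machinery is neither needed nor, as stated, correct.
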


Note that if $f$ is locally flat at a point $p\in\p M$, then $\tau$ vanishes throughout a neighborhood of $p$, which yields infinitely many vertices. Thus the above theorem ensures the existence of $4$ boundary vertices for all locally convex immersions $f\colon M\to\R^3$ for which $\tau$ is well defined ($f$ is not required to have any degree of regularity in the interior of $M$). In particular, 
Theorem \ref{thm:main} generalizes a well-known result of Sedykh \cite{sedykh:vertex} who had established the existence of $4$ vertices in the case where $f$ embeds $M$ into the boundary of a single convex body. In this sense, Theorem \ref{thm:main} is a substantial extension of  the classical four vertex theorem for planar curves. Indeed  points of vanishing torsion of a space curve are natural generalizations of critical points of curvature of a planar curve, e.g., see \cite[Note 1.5]{ghomi:verticesC}. We should also note that  the local nonflatness assumption along $\p M$ here  is necessary to ensure that the sign of $\tau$ behaves in the claimed manner, see Example \ref{ex:dumbbell}.

\begin{cor}\label{cor:main}
Let $f\colon M\to\R^3$ be a $\C^3$ immersion with nonnegative curvature. Suppose that $f$ has positive curvature on $\p M$.  Then either $\tau$ vanishes identically, or  else it changes sign $4$ times.
\end{cor}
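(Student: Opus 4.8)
The plan is to deduce the corollary from Theorem~\ref{thm:main} by checking that $f$ satisfies all of its hypotheses. Three of these follow almost immediately from the positivity of the Gauss curvature $K$ of $f$ along $\p M$. Since $f$ is a $\C^3$ immersion, its restriction $\f\colon\p M\to\R^3$ is $\C^3$ with nowhere vanishing derivative, so $\f$ has regularity class $\C^3$. If $f(U)$ were contained in a plane for some neighborhood $U$ in $M$ of a point $p\in\p M$, then $f|_U$ would be a planar immersion, so $K$ would vanish on $U$, contradicting $K(p)>0$; hence $f$ is locally nonflat along $\p M$. Lastly, at each point of $\p M$ the second fundamental form $\mathrm{II}$ of $f$ is definite, since its determinant relative to the first fundamental form equals $K>0$; therefore the normal curvature of $\f$ in the direction of its velocity is nonzero, and as the curvature of a curve on a surface is at least the absolute value of its normal curvature, $\f$ has nowhere vanishing curvature and hence no inflections (so $\tau$ is defined throughout $\p M$).

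The substantive point is that $f$ is a \emph{locally convex} immersion. Being an immersion, $f$ is automatically locally one-to-one, so it remains to show that every $p\in M$ has a neighborhood $U$ whose image $f(U)$ lies in the boundary of a convex body. I would first show that, after fixing an orientation, $\mathrm{II}$ is positive semidefinite on all of $M$. On the open subset of $M$ where $K>0$ the form $\mathrm{II}$ is definite and, by continuity of its eigenvalues, of locally constant sign; since $\p M$ is connected and $K>0$ along it, one may orient $M$ so that $\mathrm{II}$ is positive definite on $\p M$. The claim is then that $\mathrm{II}$ acquires no negative eigenvalue anywhere on $M$: along any path from $\p M$ to a hypothetical point at which the least eigenvalue $\lambda_{\min}$ of $\mathrm{II}$ is negative, continuity of the eigenvalues together with the identity $K=\lambda_{\min}\lambda_{\max}\ge 0$ forces both eigenvalues to vanish at some interior point of the path, and a finer local analysis there---using that $M$ is a disk---excludes the sign reversal. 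Granting that $\mathrm{II}\ge 0$ throughout $M$, each sufficiently small neighborhood $U$ of a point of $M$ is the graph over its tangent plane of a function whose Hessian, being a positive multiple of $\mathrm{II}$, is positive semidefinite; such a function is convex, so $f(U)$ is a piece of the boundary of a convex body, and $f$ is locally convex. Once this is known, Theorem~\ref{thm:main} applies and yields the asserted conclusion. Alternatively, the local convexity of $f$ may be quoted from the literature on surfaces of nonnegative curvature, for which the hypothesis $K|_{\p M}>0$ is exactly what rules out the degenerate behavior displayed by the graph of $z=x^{3}$.

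The main obstacle is this last sign-propagation step. The set $\{K=0\}$ can be intricate and may in principle disconnect $M$, so one has to exclude an interior region on which $\mathrm{II}$ is negative semidefinite; the delicate case is the one in which such a region is separated from $\p M$ by a set on which $\mathrm{II}$ vanishes identically, and resolving it requires genuinely global information---the topology of the disk $M$ together with the condition $K\ge 0$---rather than purely local reasoning, in order to carry the positivity of $\mathrm{II}$ from the boundary into the interior.
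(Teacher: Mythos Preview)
Your strategy matches the paper's exactly: verify the hypotheses of Theorem~\ref{thm:main}. The three easy checks ($\C^3$ regularity of $\f$, local nonflatness along $\p M$, and absence of inflections) are handled correctly and essentially as the paper does, though more explicitly.

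For local convexity, the paper does not attempt a direct argument but simply cites \cite[Prop.~3.3]{alexander&ghomi:chp}, noting only that the $\C^3$ hypothesis is precisely what is needed to invoke Sard's theorem in the two-dimensional case. Your own sketch is suggestive---the observation that the constraint $K\ge 0$ forces both principal curvatures to vanish simultaneously along any path from $\p M$ to a point where $\lambda_{\min}<0$ is correct, since the allowed region $\{\lambda_{\min}\lambda_{\max}\ge 0,\ \lambda_{\min}\le\lambda_{\max}\}$ is the union of two closed sets meeting only at the origin---but, as you yourself acknowledge, the ``finer local analysis'' that would then exclude the sign reversal is missing, and it is genuinely nontrivial: a planar point (where $\mathrm{II}=0$) imposes no immediate local obstruction to $\mathrm{II}$ changing sign nearby. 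The standard proofs (as in the cited reference) proceed via height functions and Sard's theorem rather than eigenvalue tracking along paths. Your suggested alternative of quoting the result from the literature is exactly what the paper does and is the right way to close the gap.
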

\begin{proof}
Since $f$  has positive curvature on $\p M$,  $\f$ is automatically devoid of inflections, and thus $\tau$ is well-defined. Further $f$ is locally nonflat along $\p M$. So, to apply Theorem \ref{thm:main}, we just need to check that $f$ is locally convex. This has been shown in \cite[Prop. 3.3]{alexander&ghomi:chp}. Indeed, even though \cite[Prop. 3.3]{alexander&ghomi:chp}, which holds in $\R^n$, has been stated subject to $f$ being $\C^\infty$, the only regularity requirement used there is that $f$ be $\C^{m+1}$, where $m=\dim(M)$, in order to invoke Sard's theorem \cite[Sec. 3.4.3]{federer:book} in \cite[Lem. 3.1]{alexander&ghomi:chp}. Thus, since $m=2$ in the present case, we only need $f$ to be $\C^3$ in order to ensure local convexity.
\end{proof}

Thus we obtain a new necessary condition for the existence of a positively curved disk spanning a given curve. Only one other nontrivial obstruction, involving the self-linking number of the curve, has been known up to now \cite{rosenberg:constant}, see Example \ref{ex:negative}. The above result is of interest also for studying the Plateau problem for surfaces of constant curvature \cite{smith:lecturenotes}, since it has been shown in recent years
by Guan and Spruck \cite{guan&spruck:chp} and Trudinger and Wang \cite{trudinger&wang}
that if a curve bounds a surface of positive curvature, then it bounds a surface of constant positive curvature, see also Smith \cite{smith2012}.  Further we should point out that there exists a disk of everywhere zero curvature smoothly embedded in $\R^3$ whose torsion changes sign only twice, and  has only two vertices, see Example \ref{ex:dumbbell2} and the paper of R{\o}gen \cite{rogen:flat}. Thus, in Corollary \ref{cor:main}, the assumption that the curvature be positive along $\p M$ is essential. The requirement that the curvature be nonnegative in the interior of $M$ is necessary as well, see Example \ref{ex:negative}. Finally note that the $\C^3$ assumption on $f$   was used in the above proof only to ensure local convexity; therefore, if $f$ has everywhere positive curvature,  then $\C^2$ regularity in the interior of $M$ is sufficient.

The rest of this paper is devoted to the proof of Theorem \ref{thm:main}, although our methods yield a substantial refinement of it (Theorem \ref{thm:main2}). The main inspiration here is a proof of the classical four vertex theorem for planar curves which dates back to H. Kneser \cite{hkneser} in 1922, and Bose \cite{bose} in 1932. As we will describe in Section \ref{subsec:nested}, this proof involves studying circles of largest radius which may be inscribed at each point of a simple closed planar curve. A key idea here, as has been pointed out by Umehara \cite{umehara2}, is to focus on the purely topological nature of the partition which these inscribed circles induce on the curve.
As we will outline in Section \ref{subsec:outline}, this argument  may be adapted to the setting of Theorem \ref{thm:main}, where the role of  inscribed circles will be played by  (convex) caps in our locally convex surface. Accordingly, we will embark on an extensive study of these caps and develop a number of their fundamental properties  needed in this work, which may also be of independent interest.

Convex caps play a major role in the seminal works of Alexandrov  \cite{alexandrov:intrinsic}    and Pogorelov \cite{pogorelov:book} on the isometric embeddings of convex surfaces, as well as in other fundamental results in this area such as the works by van Heijenoort \cite{vH:convex}, Sacksteder \cite{sacksteder:convex}, and Volkov \cite[Sec. 12.1]{alexandrov:polyhedra}. In these studies, however, the underlying Riemannian manifolds are assumed to be complete, or nearly complete, as in the works of Greene and Wu \cite{greene&wu:rigidity,greene&wu:rigidityII}. In the present work, on the other hand, we study caps of manifolds with boundary, as in the author's previous work with Alexander \cite{alexander&ghomi:chp} and Alexander and Wong \cite{agw}. Thus we will be prompted to refine a number of old results in this area, and establish some new ones, such as Theorem \ref{thm:singularcap} and the corresponding Bose formula, which is a culmination of other results on uniqueness, extension, distortion, and convergence of caps developed below.

The study of special points of curvature and torsion of closed curves has generated a vast and multifaceted literature since the
works  of  Mukhopadhyaya \cite{mukhopadhyaya} and A. Kneser \cite{kneser} on vertices of planar curves were published in 1910--1912, although aspects of these investigations  may be traced even further back to the study of inflections by  M\"{o}bius \cite{mobius} and Klein \cite{klein}, see \cite{ghomi:verticesB, ghomi:verticesC}. The first version of the four vertex theorem for space curves, which was concerned with curves lying on smooth strictly convex surfaces, was stated by Mohrmann \cite{mohrmann} in 1917, and proved by Barner and Flohr \cite{barner&flohr} in 1958. This result was  finally extended to curves lying on the boundary of any convex body by Sedykh \cite{sedykh:vertex} in 1994, after partial results by other authors \cite{ballesteros&fuster, bisztriczky}, see also Romero-Fuster and Sedykh \cite{fuster&sedykh} for further refinements. 

Among various applications of four vertex theorems, we mention a paper of Berger and Calabi et al.\@ \cite{rmbc} on physics of floating bodies, and recent work of Bray and Jauregui \cite{bray&jauregui} in general relativity. See also the works of Arnold \cite{arnold:plane, arnold:sphere} for relations with contact geometry, the book of Ovsienko and Tabachnikov \cite{ovsienko&tabachnikov} for  projective geometric aspects,
Angenent \cite{angenent:inflection} for connections with mean curvature flow, which are also discussed in \cite{ghomi:verticesC}, and Ivanisvili et al.\@ \cite{ivanisvili1, ivanisvili2} for applications  to the study of Bellman functions.
 Other references and more background on four vertex theorems may be found in \cite{gluck:notices, thorbergsson&umehara, ghomi:verticesA, pak:book, ghys&tabachnikov}.

\section{Preliminaries}\label{sec:prelim}

As has been pointed out by Umehara \cite{umehara2}, and further studied by Umehara and Thorbergsson \cite{thorbergsson&umehara}, there is a purely  topological partition of the circle $\S^1$, called an \emph{intrinsic circle system}, which plays a fundamental role in a variety of four vertex theorems in geometry. Here we review how a weaker notion, which we call a \emph{nested partition}, quickly  leads to various generalizations of the classical four vertex theorem for planar curves, and then outline how this argument may be adapted to proving theorem \ref{thm:main}, which  will then motivate the rest of this work. 

\subsection{Bose's formula for nested partitions of $\S^1$}\label{subsec:nested}

A \emph{partition} $\P$  of $\S^1$ is a collection of its nonempty subsets, or \emph{parts} $P$,  which cover it and are pairwise disjoint. We say that $\P$ is \emph{nontrivial} if it contains more than one part. For each  $p\in \S^1$, let $[p]$ denote the  part of  $\P$ which contains $p$. Further, for each part $P\in\P$, let $P':=\S^1-P$ denote its complement.
We say that  $\P$  is \emph{nested} provided that for every $P\in \P$ and $p\in P'$, $[p]$ lies   in a (connected) component of $P'$.  This definition is motivated by the following example: 

\begin{example}\label{ex:partition}
Let $M$ be a topological disk, $f\colon M\to\S^2$ be a locally one-to-one continuous map into the sphere, and suppose that $\f$ is  $\C^2$. Let us say that $C\subset M$ is a \emph{circle} provided that $f(C)$ is a circle, and $f$ is injective on $C$. Since $\f$ is $\C^2$, there exists a circle passing through each point $p\in \p M$. Further, one of these circles, which we denote by $C_p$, is \emph{maximal}, i.e., it is not contained inside any other circle passing through $p$. Existence of $C_p$ follows quickly from Blaschke's selection principle \cite[Thm. 1.8.6]{schneider:book} when $f$ is one-to-one, or $f(\p M)$ is a simple curve; the general case, which is more subtle, follows for instance from Proposition \ref{prop:converge} below.
Now set 
$$
[p]:=C_p\cap \p M.
$$ 
Then $\P:=\{[p]\}_{p\in\p M}$ is a  partition of $\p M\simeq\S^1$, since maximal circles are unique. We claim that $\P$ is nested. 
 \begin{figure}[h]
\begin{overpic}[height=1.0in]{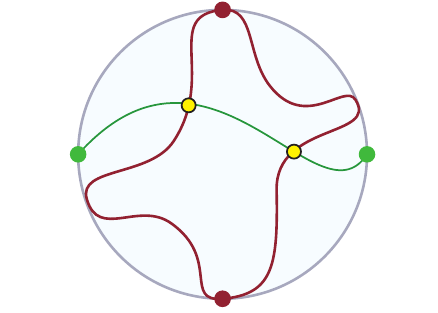}
\put(78,10){\Small $M$}
\put(42,19){\Small $C$}
\put(8,35){\Small $p'$}
\put(88,35){\Small $q'$}
\put(48,-4){\Small $q$}
\put(48,74){\Small $p$}
\end{overpic}
\caption{}
\label{fig:circles}
\end{figure}
Suppose, towards a contradiction, that it is not. Then there is a pair $C$, $C'$ of distinct maximal circles in $M$  such that $C'$ has points $p'$, $q'$ in different components of $\p M-C$, see Figure \ref{fig:circles}. 
Further there are points $p$, $q\in C\cap\p M$ which separate $p'$ and $q'$ in $\p M$. Consequently, each component of $C'-\{p',q'\}$ must intersects both components of $C-\{p,q\}$, since $M$ is simply connected. Thus $C\cap C'$ and consequently $f(C)\cap f(C')$ contain at least $4$ points. Since $f(C)$ and $f(C')$ are circles, it follows then that $f(C)=f(C')$. Thus $C=C'$ which is the desired contradiction.
\end{example}

A part $P\in\mathcal{P}$ has \emph{rank} $n$ provided   that it has a total of $n$ components. If $\rank(P)=1$, then we say that $P$ is \emph{singular}, and if $\rank(P)\geq 3$, then  $P$ is called \emph{triangular}. Our first lemma below gives a lower bound for the number of singular parts of $\P$ in terms of the number of triangular ones. To describe this estimate, let 
$$
\mathrm{Tri}(\P):=\{P\in\P\mid \rank(P)\geq 3\},
$$
denote the set of triangular parts of $\P$. Obviously, one way to measure the size of $\mathrm{Tri}(\P)$ would be by its cardinality, $\#\mathrm{Tri}(\P)$; however, following  Umehara \cite{umehara2}, see also Romero-Fuster and Sedykh \cite{fuster&sedykh}, we employ a more refined measure:
\begin{equation}\label{eq:T}
\mathcal{T}:=\sum_{P\in\mathrm{Tri}(\P)}\big(\rank(P)-2\big)\;\geq\; \#\mathrm{Tri}(\P).
\end{equation}
This notion, which we call the \emph{triangularity} of $\P$, goes back  to the remarkable work of Bose \cite{bose}.
 Next let $\mathcal{S}$ denote the cardinality of the set of singular parts of $\P$:
\begin{equation}\label{eq:S}
\mathcal{S}:=\# \{P\in\P\mid \rank(P)=1\}.
\end{equation}
The first statement in the following lemma, and its proof, is a close variation on \cite[Lem.\@ 1.1]{umehara2}, which according to Umehara \cite{umehara2} dates back to H. Kneser's work \cite{hkneser}. Further the inequality \eqref{eq:bose} below is a ``Bose type formula", which dates back to  \cite{bose}, and is similar to a formula proved in
\cite[Thm.\@ 2.7]{umehara2}, see Note \ref{note:umehara} below.

\begin{lem}\label{lem:partition}
Let $\P$ be a 
 nested nontrivial partition of $\S^1$ and $P\in \P$. Then  each component of $P'$ contains a singular part of $\P$. In particular,
\begin{equation}\label{eq:bose}
 \mathcal{S}\geq\mathcal{T}+2.
 \end{equation}
 \end{lem}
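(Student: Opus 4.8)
The plan is to prove the two assertions in sequence, the second being an immediate combinatorial consequence of the first together with a counting argument. So the heart of the matter is to show that if $\P$ is a nested nontrivial partition of $\S^1$ and $P\in\P$, then every component of $P'$ contains a singular part. Fix such a $P$ and a component $I$ of $P'$; note $I$ is an open arc of $\S^1$. I would argue by contradiction, assuming that no part contained in $I$ is singular, i.e. every part $[q]$ with $q\in I$ has rank at least $2$. The strategy is then to extract a part of $\P$ inside $I$ that is, in a suitable sense, ``innermost'', and derive a contradiction from the nestedness hypothesis applied to that part.

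To make ``innermost'' precise I would use a nesting order on arcs. For a point $q\in I$ with $[q]$ non-singular, $[q]$ has at least two components, so $I\setminus[q]$ has a component $J_q$ that is a proper subarc of $I$ whose closure meets $[q]$ on both sides; choose $J_q$ to be, say, the ``shortest'' such gap, or more robustly, choose among all $q\in I$ and all such gap-arcs a nested-minimal one. Concretely: the collection of all open subarcs of $I$ of the form (a component of $[q]'$) $\cap\, I$, as $q$ ranges over $I$, is partially ordered by inclusion; I would apply a Zorn/compactness argument — or better, directly invoke that a nested partition has no infinite strictly descending chain of such arcs because distinct parts are disjoint and each contributes an endpoint — to obtain a minimal arc $J$, the gap of some part $Q=[q_0]\subset I$. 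Minimality of $J$ forces every part meeting $J$ to lie entirely inside $J$: if $[r]\cap J\ne\emptyset$ with $r\in J$ but $[r]\not\subset J$, then since $\P$ is nested $Q$ lies in a single component of $[r]'$, so $J$ (which abuts $Q$) sits inside that component, and intersecting with $I$ would produce a strictly smaller gap-arc than $J$, contradicting minimality — here I would need to chase the endpoints carefully, which I expect to be the main technical obstacle: controlling exactly how the boundary points of $J$, of $Q$, and of $[r]$ interleave on $\S^1$ and making sure ``strictly smaller'' is genuine.

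Once $J$ has this property, pick any $r\in J$. Then $[r]\subset J$. Since we assumed $[r]$ is non-singular, $[r]$ has a gap inside $J$, i.e. a component of $[r]'$ that is a proper subarc of $J$ — but that arc is again of the form (component of $[r]'$)$\cap\, I$ (because $J\subset I$), and it is strictly contained in $J$, contradicting the minimality of $J$. This contradiction shows some part inside $I$ must be singular, proving the first assertion.

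For the inequality \eqref{eq:bose}, I would run the standard Kneser/Bose bookkeeping. Since $\P$ is nontrivial, pick any part $P$ with $\rank(P)=n\ge 1$; then $P'$ has exactly $n$ components (if $n\ge 1$) — when $P$ is singular, $P'$ is a single arc — and by the first part each of these $n$ components contains a singular part. The point is to set up a global count: assign to each singular part the parts that ``enclose'' it, or dually, induct on triangularity. A clean route is induction on $\mathcal{T}$: if $\mathcal{T}=0$ there are no triangular parts, and choosing any singular part $P_0$ (one exists: take any non-singular $P$, which has rank $\ge 2$, hence $\ge 3$ would make it triangular, so $\mathrm{rank}=2$; wait — better: if all parts were non-singular with rank exactly $2$ one still produces singular parts by the first assertion applied to any $P$), its complement is connected... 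I would instead argue: apply the first assertion to a cleverly chosen $P$ to get at least two singular parts in the two ``ends'', giving $\mathcal{S}\ge 2=\mathcal{T}+2$ in the base case; for the inductive step, if there is a triangular part $P$ with $\rank(P)=k\ge 3$, its complement has $k\ge 3$ components each carrying a singular part, and one splices/coarsens the partition across $P$ to reduce $\mathcal{T}$ by $1$ while losing at most one singular part, closing the induction. I expect the endpoint-interleaving in the minimality argument of the first part to be where the real care is needed; the Bose count is then routine but must be set up so the bookkeeping of which singular parts are ``used up'' is consistent.
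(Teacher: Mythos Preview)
Your argument for the first assertion has a genuine gap: the minimal gap arc $J$ need not exist. Zorn's lemma does not apply here, because a descending chain of open arcs can shrink to a single point, which is not itself a gap arc in your collection. Your alternative claim---that there is no infinite strictly descending chain ``because distinct parts are disjoint and each contributes an endpoint''---is simply false: nothing prevents infinitely many distinct parts from contributing endpoints that accumulate. In fact the paper's proof \emph{constructs} exactly such an infinite descending chain. The paper proceeds by bisection: take the midpoint $m_0$ of $I$; if $[m_0]$ is singular, stop; otherwise $I\setminus[m_0]$ has a component $I_1$ of at most half the length, with $\cl(I_1)\subset I$. Iterate. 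Either the process terminates with a singular part, or one obtains nested arcs $I_k$ of length $\leq \ell_0/2^k$ shrinking to a point $m_\infty$. The nested property then forces $[m_\infty]\subset I_k$ for every $k$, so $[m_\infty]=\{m_\infty\}$ is singular. The missing idea in your approach is precisely this passage to the limit; once you see it, the minimality framework becomes unnecessary.

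For the Bose inequality, your inductive sketch is too vague to be a proof. You never define the ``splicing/coarsening'' of $\P$ across a triangular part $P$, and it is unclear what partition results or why it remains nested; moreover if $\mathcal{T}=\infty$ the induction does not get started. The paper avoids this by working not with $\P$ itself but with an arbitrary \emph{finite} set $A\subset\mathrm{Tri}(\P)$: one defines the ``prime'' components of $\S^1\setminus\bigcup A$ (those that coincide with a component of some single $a'$), shows each contains a singular part by the first assertion, and then proves the exact identity $\mathcal{S}_A=\mathcal{T}_A+2$ by induction on $|A|$ using a ``prime element'' of $A$ (one all but one of whose complementary components are prime). This reduces the problem to finite combinatorics where the induction is clean; taking the supremum over $A$ gives $\mathcal{S}\geq\mathcal{T}+2$.
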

\begin{proof}
Since $\P$ is nontrivial, $P'\neq\emptyset$. Let
 $I_0$ be a component of $P'$, of  length $\ell_0$. Let $m_0$ be the midpoint of $I_0$. If $[m_0]$ is connected we are done, since $[m_0]\subset I_0$ by the nested property of $\P$. Otherwise, $I_0-[m_0]$ has a component $I_1$ of length $\ell_1\leq \ell_0/2$ such that $\cl (I_1)\subset I_0$, where $\cl (I_1)$ denotes the closure of $I_1$ in $\S^1$. Let $m_1$ be the midpoint of $I_1$. Again if $[m_1]$ is connected, we are done, since $[m_1]\subset I_1\subset I_0$. Otherwise  $I_1-[m_1]$ contains a component $I_2$ of length $\ell_2\leq \ell_1/2\leq\ell_0/4$ such that $\cl (I_2)\subset I_1$. Continuing this procure inductively, we either reach a stage where $[m_k]$ is connected for some $k$, in which case we are done; or else we produce an infinite sequence of closed nested intervals $\cl (I_1)\supset \cl (I_2)\supset\dots$ such that $I_k$ has length $\ell_k\leq \ell_0/(2^k)$. Then $I_k$ converge to a point $m_\infty\in\cl (I_1)\subset I_0$. 
 By the nested property, $[m_\infty]\subset I_k$ for all $k$. Consequently $[m_\infty]=\{m_\infty\}$. In particular $[m_\infty]$ is connected, which completes the proof of the first statement of the lemma.

Next, to establish \eqref{eq:bose}, first suppose that $\mathcal{T}=0$, or $\mathrm{Tri}(\P)=\emptyset$. Then we have to check that 
$\mathcal{S}\geq 2$. This follows immediately from the previous paragraph if $\P$ has a part with rank $2$.
Otherwise every part of $\P$ is singular. Thus again  $\mathcal{S}\geq 2$, since $\P$ is nontrivial by  assumption. 
So we may suppose that $\mathrm{Tri}(\P)\neq\emptyset$. Now
let  $A\subset\mathrm{Tri}(\P)$ be an arbitrary subset of finite cardinality $|A|$, and set 
$$
\mathcal{T}_A:=\sum_{a\in A}\big(\rank(a)-2\big).
$$
 It suffices to show  that 
 $$
 \mathcal{S}\geq \mathcal{T}_A+2.
 $$
  Let us say that a component of $A':=\S^1-A$ is \emph{prime}, provided that it coincides with a component of 
 $a':=\S^1-\{a\}$ for some   $a\in A$. Let $\mathcal{S}_A$ denote the number of prime components of $A'$. Then
 $$
 \mathcal{S}\geq \mathcal{S}_A,
 $$
 because each prime component of $A'$ contains a singular element of $\P$ by the previous paragraph. Thus it suffices to show that
 \begin{equation}\label{eq:partition}
 \mathcal{S}_A= \mathcal{T}_A+2, 
\end{equation}
 which will be done by induction on $|A|$.
  First note that when $|A|=1$, or $A=\{a\}$,  every component of $A'$ is a component of $a'$ and thus is prime. So $\mathcal{S}_A=\rank(a)=\mathcal{T}_A+2$, as desired.

Next, to perform the inductive step on \eqref{eq:partition}, we need the following observation.
Let us say that an element $a\in A$ is \emph{prime} provided that with at most one exception each component of $a'$ is a component of $A'$. 
We claim that $A$ always contains at least one prime element. To see this let $a_1$ be any element of $A$. If $a_1$ is prime, we are done. Otherwise let $a_2\in A$ be an element which lies in a component of $a_1'$.  Continuing this process inductively, if $a_{i-1}$ is  not prime, we let $a_{i}\in A$ be an element in a component of $a_{i-1}'$ which does not contain $a_{i-2}$. Since $\P$ is nested, $a_1,\dots, a_{i}$ are all distinct. Thus, since $|A|$ is finite,  this chain terminates after $k\leq |A|$ steps. Then $a_k$ is the desired prime element.

  Now assume that \eqref{eq:partition} holds whenever $|A|=n$, and suppose that we have a set $A$ with $|A|=n+1$. 
  Let $a_1$ be a prime element of $A$. Since now $A$ has more than one element, $a_1'$ has $\rank(a_1)-1$ components which are components of $A'$. Thus
  $$
  \mathcal{S}_{A-\{a_1\}}=\mathcal{S}_A-\(\rank(a_1)-1\)+1=\mathcal{S}_A-\rank(a_1)+2.
  $$
  On the other hand, by the inductive hypothesis,
  $$
  \mathcal{S}_{A-\{a_1\}}=\mathcal{T}_{A-\{a_1\}}+2=\sum_{i=2}^n\(\rank(a_i)-2\)+2.
  $$
Setting the right hand sides of the last two displayed expressions equal to each other, we obtain
  $$
  \mathcal{S}_A=\sum_{i=2}^n(\rank(a_i)-2)+\rank(a_1)=\sum_{i=1}^n(\rank(a_i)-2)+2=\mathcal{T}_A+2,
  $$
 as desired.
\end{proof}

\begin{note}\label{note:tree}
Another way to prove  \eqref{eq:partition}, which reveals the deeper combinatorial reason behind the Bose formula \eqref{eq:bose}, is by observing that any finite set $A\subset \mathrm{Tri}(\P)$ gives rise to a tree $T$ as follows. 
Let the vertices of $T$ be the elements of $A$ and prime components of $A'$. We declare each prime component of $A'$ to be \emph{adjacent} to the corresponding element of $A$. Further, we stipulate that 
$a_i\in A$, $i=1$, $2$ are  \emph{adjacent} provided that
there are disjoint connected sets $I_i\subset \S^1$ such that $a_i\subset I_i$, $\p I_i\subset a_i$, and there is no element of $A$ which separates $I_i$, i.e., has points in both components of $\S^1-\{I_1\cup I_2\}$.

Since $A$ is nested, it follows that $T$ is a \emph{tree}, i.e., $T$ is connected, and contains no closed paths. Prime components of $A'$ form the \emph{leaves} of $T$, or the vertices of degree $1$, while  elements  of $A$ are vertices of $T$ with degree  $\geq 3$. Thus \eqref{eq:partition} corresponds to
\begin{equation}\label{eq:bosetree}
\#\mathrm{Leaves}(T)=\sum_{\mathrm{deg}(v)\geq 3}(\mathrm{deg}(v)-2)+2,
\end{equation}
which is a general property of all trees.
To establish this equality let $V$, $|V|$, $|V_i|$, and $|E|$ denote, respectively, the set of vertices, the number of vertices, the number of vertices of degree $i$, and the number of edges of $T$. Then we have
$$
|E|=|V|-1=\sum_{i}|V_i|-1,\quad \text{and} \quad |E|=\frac{1}{2}\sum_{v\in V} \mathrm{deg}(v)=\frac{1}{2}\sum_{i} i|V_i|.
$$
The first equality is a basic characteristic property of trees, and the second one is the ``hand shaking lemma", which holds for all graphs. Setting the right hand sides of these expressions equal to each other yields 
$$
|V_1|=\sum_{i\geq 3}(i-2)|V_i|+2,
$$
which is equivalent to \eqref{eq:bosetree}.
\end{note}

\subsection{Proof of the classical $4$ vertex theorems}
Let $f\colon M\to\S^2$ be a topological immersion of a disk into the sphere, and $\P$ be the partition induced on $\p M$ by the maximal circles of $M$, as described in Example \ref{ex:partition}. We say that a maximal circle $C$ is \emph{singular} if $C\cap\p M$ is connected. Let $\mathcal{S}$ denote the number of singular maximal circles of $M$, and $\mathcal{T}$ be the number of maximal circles  $C$ such that $C\cap\p M$ has at least $3$ components.
As we discussed in Example \ref{ex:partition}, $\P$ is nested. Thus Lemma \ref{lem:partition} immediately yields the following observation, which is a special case of our Theorem \ref{thm:singularcap}.

\begin{cor}\label{cor:classic}
 Suppose that $\p M$ is not a circle, and let $C\subset M$ be a maximal circle.
Then each component of $\p M-C$ contains a point  where the corresponding maximal circle  is singular. In particular  
\begin{equation}\label{eq:boseclassic}
 \mathcal{S}\geq \mathcal{T}+2.
\end{equation}
 \end{cor}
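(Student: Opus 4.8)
The plan is to derive Corollary~\ref{cor:classic} directly from Lemma~\ref{lem:partition} by verifying that the partition $\P$ of $\p M\simeq\S^1$ induced by the maximal circles is nontrivial and nested, and then translating the abstract conclusions of the lemma back into statements about maximal circles. Nestedness of $\P$ was already established in Example~\ref{ex:partition}, so the only new point is that $\P$ is nontrivial precisely when $\p M$ is not a circle. First I would argue this: if $\p M$ is not a circle, then no single maximal circle $C$ can satisfy $C\cap\p M=\p M$ (otherwise $f$ restricted to $\p M$ would parametrize a circle, contradicting the hypothesis via the injectivity of $f$ on $C$), hence there exist two distinct parts $[p]\neq[q]$, so $\P$ is nontrivial.

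Next I would set up the dictionary between the combinatorial data of $\P$ and the geometric data of maximal circles. For $p\in\p M$, the part $[p]=C_p\cap\p M$ has rank equal to the number of components of $C_p\cap\p M$; thus $[p]$ is singular in the sense of Lemma~\ref{lem:partition} exactly when $C_p$ is a singular maximal circle, and $[p]$ is triangular exactly when $C_p\cap\p M$ has at least $3$ components. Since maximal circles are unique, distinct parts correspond to distinct maximal circles, so the cardinality $\mathcal{S}$ of the set of singular parts coincides with the number of singular maximal circles, and likewise $\mathcal{T}=\sum_{P\in\mathrm{Tri}(\P)}(\rank(P)-2)$ matches the quantity defined in the corollary (in the Bose-weighted sense). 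With this identification in place, the first assertion of Lemma~\ref{lem:partition}---that every component of $P'=\S^1-P$ contains a singular part---says exactly that every component of $\p M-C$ contains a point whose maximal circle is singular, and the inequality \eqref{eq:bose} becomes \eqref{eq:boseclassic}.

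The one subtlety I anticipate is a matching of conventions: Lemma~\ref{lem:partition} as stated gives $\mathcal{T}$ as the Bose-weighted sum $\sum(\rank(P)-2)$, whereas the corollary's preamble describes $\mathcal{T}$ merely as ``the number of maximal circles $C$ such that $C\cap\p M$ has at least $3$ components,'' i.e.\ a cardinality. Since $\sum_{P\in\mathrm{Tri}(\P)}(\rank(P)-2)\geq\#\mathrm{Tri}(\P)$ by \eqref{eq:T}, the cardinality version follows a fortiori from the weighted version, so the corollary is correct under either reading; I would simply note that \eqref{eq:boseclassic} holds with the stronger weighted $\mathcal{T}$ and hence with the weaker one. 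Beyond this bookkeeping, there is no real obstacle here: the entire content---nestedness and the combinatorial Bose estimate---has been front-loaded into Example~\ref{ex:partition} and Lemma~\ref{lem:partition}, so the proof of Corollary~\ref{cor:classic} is a short application of those results once the nontriviality of $\P$ and the rank dictionary are spelled out.
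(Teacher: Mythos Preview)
Your proposal is correct and matches the paper's approach exactly: the paper states the corollary as an immediate consequence of Lemma~\ref{lem:partition} once Example~\ref{ex:partition} has established nestedness, and you simply spell out the nontriviality step and the dictionary between parts and maximal circles in more detail than the paper does. Your remark about the two readings of $\mathcal{T}$ is also apt and handled correctly.
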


Since the stereographic projection preserves circles, the above observation has a direct analogue for planar curves. An example of this phenomenon is illustrated in Figure \ref{fig:ellipse} which depicts an ellipse with a pair of singular maximal circles on either side of a maximal circle of rank $2$.

\begin{figure}[h]
\begin{overpic}[height=0.9in]{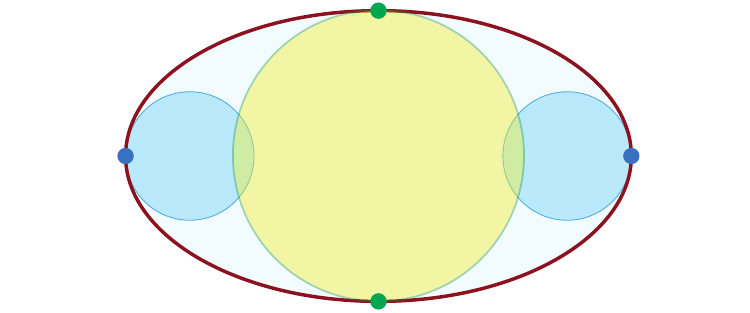}
\end{overpic}
\caption{}
\label{fig:ellipse}
\end{figure}

If $C_p$ is a singular maximal circle of $M$ at $p$,  then it is well-known that $f(C_p)$ is the osculating circle of $\f$ at $p$; this follows for instance by applying Taylor's theorem to a stereographic projection of $\f$ into $\R^2$. Let $\kappa$ denote the geodesic curvature of $\f$ with respect to the normal vector field which points locally into $f(M)$. Then, since $f(C_p)$ lies on one side of $\f$ near $p$, it follows that  $\kappa$  has a local minimum at $p$.
So, since $\kappa$ must have a local maximum between every pair of local minima, we have
\begin{equation}\label{eq:V2S}
\mathcal{V}\geq 2\mathcal{S}\geq 2(\mathcal{T}+2),
\end{equation}
where $\mathcal{V}$ is the number of local extrema of $\kappa$, or vertices of $\f$.  
In particular  $\mathcal{V}\geq 4$, which  yields the 1912 theorem of  A. Kneser \cite{kneser} for simple closed planar curves. Earlier,  in 1909, Mukhopadhyaya \cite{mukhopadhyaya} had established this inequality for convex planar curves. The improved inequality \eqref{eq:V2S} was first established by Bose \cite{bose} for convex  planar curves in 1932, and was extended  to all simple closed planar curves by Haupt \cite{haupt:bose} in 1969. Prior to Bose, H. Kneser \cite{hkneser}, who was A. Kneser's son, had shown that $\mathcal{S}\geq 2$ in 1922. The  above corollary also improves the theorem of Osserman \cite{osserman:vertex} and dual results of Jackson \cite{jackson:bulletin} on the relation between the vertices and the inscribed/circumscribed circles of planar curves. Further, \eqref{eq:V2S} yields the simply connected case of a theorem of Pinkall \cite{pinkall} who in 1987 had obtained the first version of the $4$ vertex theorem for nonsimple curves which bound immersed surfaces in $\S^2$ (or $\R^2$), see also \cite{cairns2,umehara}.

\begin{note}\label{note:umehara}
As we mentioned above, 
in \cite{umehara2} Umehara introduces a partition of $\S^1$, called an \emph{intrinsic circle system}, which is more restrictive than the nested partitions discussed above. An intrinsic circle system is a nested partition where each part is closed, and the parts satisfy a  lower semicontinuity property: if $p_i\in \S^1$  converge to $p$, and $q_i\in[p_i]$ converge to $q$, then $q\in [p]$. Subject to these additional restraints, Umehara shows that equality holds in \eqref{eq:bose}, when $\mathcal{S}<\infty$. See also Note \ref{note:umehara2} below.
\end{note}

\begin{note}
Another way to gain insight into the Bose formula  in Corollary \ref{cor:classic}, is by considering the locus $T$ of the centers of  maximal circles in $M$. It is well-known \cite{mather} that when $\p M$, or more precisely $f\colon \p M\to\S^2$, is generic, $T$ is a graph. 
The leaves of $T$ correspond to centers of singular circles, and, more generally, its vertices of degree $n$ correspond to circles of rank $n$. Further, since $M$ is simply connected,  $T$ is a tree. Thus  the Bose formula \eqref{eq:boseclassic} follows from the basic combinatorial property of trees described by \eqref{eq:bosetree}. Also note that $T$ is the set of singularities of the distance function from $\p M$, which is well defined in any dimension. These sets have been studied by Mather \cite{mather} who showed that generically they have a stratified structure in the sense of Whitney. Further they have been called ``the medial axis", ``Maxwell graph", or ``the central set", in various parts of the literature, and may even arise as the cut-locus of Riemannian manifolds without conjugate points. See the papers of Damon \cite{damon}, Houston and van Manen \cite{houston&vanmanen}, and references therein. 
\end{note}

\subsection{Outline of the proof of the main theorem}\label{subsec:outline}
The proof of Theorem \ref{thm:main}, or more precisely that of its refinement, Theorem \ref{thm:main2} below,  is modeled on the proof of Corollary \ref{cor:classic} and the subsequent inequality \eqref{eq:V2S} described above. Indeed Theorem \ref{thm:main2} generalizes  \eqref{eq:V2S} to all  locally convex  immersions $f\colon M\to\R^3$ satisfying the hypothesis of Theorem \ref{thm:main}. The key idea here is that the role of maximal circles  in the above argument may  be played by \emph{maximal caps} in our  locally convex surface:
$$
\text{circles\; $\longrightarrow$\; caps}.
$$
 Roughly speaking, caps are convex pieces of a locally convex surface which are cut off by a plane, or lie on a plane. For instance every circle in $\S^2$ bounds a cap, but we will also regard single points of $\S^2$ as (degenerate) caps. In general, degenerate caps may  take the form of a line segment or a flat disk as well. The  precise definition of  caps will be presented in Section \ref{subsec:capdef}. 

 \begin{figure}[h]
\begin{overpic}[height=1.1in]{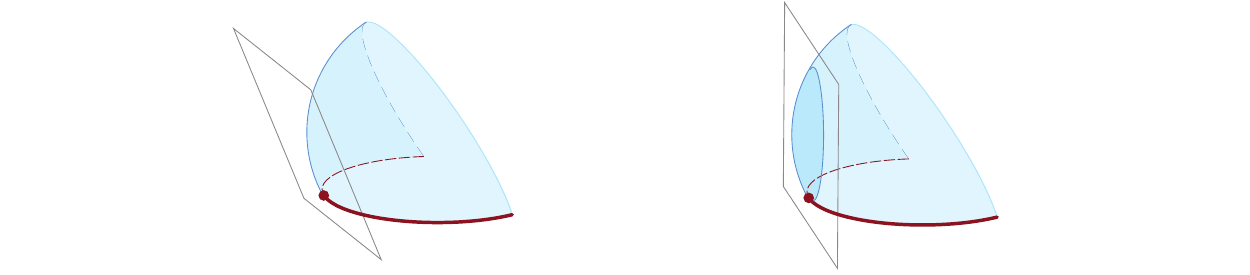}
\end{overpic}
\caption{}
\label{fig:caps}
\end{figure}

Sections \ref{sec:degenerate}, \ref{sec:basic}, and \ref{sec:maxcap} are devoted to proving the existence of a maximal cap at each point $p\in \p M$.
This will be done via a constructive procedure which  is easy to describe  when $f$ is smooth, and has positive curvature near $\p M$:  start with a plane $H$ which is tangent to $f$ at $p$, see Figure \ref{fig:caps}, and rotate it about the tangent line passing through $p$ so as to cut off a small cap from the surface;  continue to rotate $H$ in the same direction as far as possible while the interior of the cut off region remains disjoint from $\p M$. The final cut off region will  be the desired cap.

To carry out the above construction in the general  case we need to 
address a number of technical issues. The first major complication is that  maximal caps may be degenerate. So first we characterize these in Section \ref{sec:degenerate}. Subsequently some basic properties of general caps, including their convergence, may be further developed in Section \ref{sec:basic}, which culminates in a comprehensive structure theorem in Section \ref{sec:slicing}. Next we need to describe what we mean by a ``tangent plane" when $f$ is not regular in the interior of $M$. This will require studying  the tangent cones of $f$ along $\p M$ in Section \ref{sec:tancone}. 
Using this tool, we then show, in Section \ref{sec:boundarycaps}, that there exists a cap at each boundary point of $M$, and subsequently prove the existence of a maximal cap $C_p$ for each point $p\in\p M$ in Section \ref{subsec:maxcap}. 

Once the existence of maximal caps has been established, we let $[p]:=C_p\cap\p M$ be the partition that  these caps induce on
$\p M\simeq\S^1$, just as in Example \ref{ex:partition}. Next we will show in Section \ref{sec:singularcap} that this partition has the nested property,  which  yields the estimate $\mathcal{S}\geq\mathcal{T}+2$ via Lemma \ref{lem:partition}.
It remains then to establish the estimate $\mathcal{V}\geq2\mathcal{S}$, where $\mathcal{V}$ denotes the number of times the torsion $\tau$ of $\f$ changes sign. To this end we will show, in Section \ref{subsec:osculateplane}, that  the plane $H$ of a maximal cap $C_p$ is the osculating plane of $\f$ at $p$. Consequently, since $\f$ lies locally on one side of $H$, it follows that $\tau$  vanishes at $p$. Moreover it will be shown in Section \ref{subsec:orientexpress} that  $\p M$ may be oriented so that $\f$ lies locally \emph{below} $H$, i.e., in the half-space opposite to where the binormal vector $B(p)$ points. This will ensure, as we will show in Section \ref{subsec:torsion}, that $\tau$ changes sign from negative to positive as $\f$ crosses $p$. Thus we obtain the desired inequality $\mathcal{V}\geq2\mathcal{S}$,  which completes the proof.

\subsection{Notation and terminology} Here we gather the frequently used terms in this paper for easy reference. Most importantly, we will define  \emph{caps}, which are the central objects of study in this work. 

\subsubsection{General terms}
$\R^n$ stands for the $n$-dimensional Euclidean space with standard inner product $\l\cdot,\cdot\r$ and corresponding norm $\|\cdot\|$. Further $\S^{n-1}$ denotes the unit sphere centered at the origin of $\R^n$. The abbreviations  \emph{int} and \emph{cl} will stand respectively for the interior and closure, and $\p$ will denote the boundary of a manifold or a subspace. A \emph{segment} is a topological space which is homeomorphic to the interval $[0,1]\subset\R$. Finally, for any set $X\subset\R^n$, $\conv(X)$ will denote the \emph{convex hull} of $X$, i.e., the intersection of all closed half-spaces which contain $X$.

\subsubsection{Support planes and vectors}
Hyperplanes of $\R^n$ will be denoted by $H$, and by a \emph{side} of $H$ we shall mean one of the closed half-spaces $H^-$ or $H^+$, which are determined by $H$. When $H$ is oriented, i.e., it comes with a preferred choice of a unit normal $n$, we say that $n$ is the \emph{outward normal} of $H$, and assume $H^+$ is the side of $H$ into which $n$ points. We say that $H$ \emph{supports} a set $A\subset\R^n$, if $H$ passes through a point $p$ of $A$, and $A$ lies on one side of $H$, which unless specified otherwise, we assume to be $H^-$. In this case, $n$ will be called a \emph{suppor vector} of $A$ at $p$. If, furthermore, $H$ intersects $A$ only at $p$ we say that $H$ is a \emph{strictly supporting} plane of $A$ at $p$;  accordingly, $n$ will  be called a \emph{strict support vector} of $A$ at $p$. 

\subsubsection{The local nonflatness assumption}
This refers simply to the assumption in Theorem \ref{thm:main}, and throughout the paper, that no point $p\in\partial M$ has a neighborhood $U$ such that $f(U)$ lies in a plane.

\subsubsection{$f$ and $M$}
Unless noted otherwise, we shall assume that $f$ and $M$ are as in Theorem \ref{thm:main}, i.e., $M$ is a topological disk, $f\colon M\to\R^3$ is a locally convex immersion which satisfies the local nonflatness assumption along $\p M$,
and  $\f$ is $\C^3$ and has no \emph{inflections}, i.e., points where the  curvature vanishes. Most of the  statements which are established below, however,  often hold  in wider contexts. For instance, the $\C^3$ regularity of $\f$ is not needed before Section \ref{subsec:orientexpress}; until that point $\f$ may be $\C^2$, and  often even $\C^1$. Further, the simply connected assumption on $M$ is not required until Section \ref{sec:singularcap}, i.e., $M$ may be any compact connected 2-manifold with boundary up to that point. Finally, the statement and proofs of many of the following results hold nearly word by word in $\R^n$,  but for simplicity we will not distinguish these from other results.

\subsubsection{Convex neighborhoods}\label{subsubsec:convngbhd}
A convex body $K\subset\R^n$ is a compact convex set with interior points in $\R^n$. Recall that  $f\colon M\to\R^3$ is a \emph{locally convex immersion} provided that $M$ is a $2$-dimensional topological manifold, and $f$ is a locally one-to-one continuous map which sends a neighborhood $U$ of each point $p\in M$ into the boundary of a convex body $K\subset\R^3$. Since $f$ is locally-one-to-one, we may assume that $U$ is so small that $f$ is one-to-one on $\cl(U)$ and consequently $f\colon\cl (U)\to f(\cl(U))\subset\partial K$ is a homeomorphism (any one-to-one map from a compact space into a Hausdorff space is a homeomorphism onto its image).  Further we may assume that $\cl(U)$ is a topological disk and $\partial U$ is a simple closed curve. When $p\in\partial M$, we will also assume that $\partial U\cap\partial M$ is connected, and $K=\conv(f(U))$ by the local nonflatness assumption. When all these conditions hold, we say that $U$ is a \emph{convex neighborhood} of $p$ with \emph{associated body} $K$.

\subsubsection{Caps}\label{subsec:capdef}
We use the term \emph{cap}, which is short for \emph{convex cap}, to refer  both to certain subsets of $\R^3$ as well as corresponding subsets of $M$ which are mapped homeomorphically onto them. Further, in each category we distinguish two varieties of caps: nondegenerate and degenerate, as described below.

A \emph{nondegenerate} cap $C$ in $\R^3$ is a topological disk which lies embedded on the boundary of a convex body $K$, and meets a plane $H$ precisely along its boundary $\partial C$. Then $H$ which will be called the \emph{plane} of $C$. By the Jordan curve theorem, $C$ lies on one side of $H$, which we  designate by $H^+$, and refer to as the \emph{half-space} of $C$. In particular, $C=\cl(\partial K\cap\inte H^+)$. Conversely, if $K\subset\R^3$ is a convex body, $H$ is a plane such that $H\cap K$ has interior points in $H$, and $H^+$ is a side of $H$ which contains an interior point of $K$, then $\cl(\partial K\cap\inte H^+)$ is a nondegenerate cap.

A \emph{degenerate} cap in $\R^3$, on the other hand, is a compact convex subset $C$ of a plane $H$, i.e.,  $C$ is either a point, a line segment, or a convex  disk in $H$. In this case we again say that $H$ is a \emph{plane} of $C$; however, note that this plane  is not unique when $C$ is not a disk.

 A  closed connected set $C\subset M$ will be called a \emph{cap} provided that (i) $f$ maps $C$ injectively onto a cap $f(C)$ in $\R^3$, and (ii) if $f(C)$ is degenerate, then there exists a neighborhood $U$ of $C$ in $M$ such $f(U-C)$ is disjoint from a half-space $H^+$ of $f(C)$. Note that since $M$ is compact, $C$ is compact, and thus $f\colon C\to f(C)$ is a homeomorphism.  So a cap in $M$ is either homeomorphic to a disk, a line segment, or a point. 
 
 We say that a cap $C$ in $M$ is \emph{degenerate} (resp. nondegenerate) when $f(C)$ is degenerate (resp. nondegenerate). When $C$ is nondegenerate, then $H$, $H^+$ will be called respectively the \emph{plane} and  \emph{half-space} of $C$, if they are the plane and the half-space of $f(C)$ respectively. When $C$ is degenerate,  we say that $H$ is a plane of $C$ provided that $H$ is a plane of $f(C)$, and furthermore a half-space $H^+$ of $H$ satisfies condition (ii) in the last paragraph. 
 
 A cap $C\subset M$ is called \emph{maximal} if it is contained in no other cap of $M$, and is \emph{singular} if $C\cap\p M$ is connected. The number of components of $C\cap \p M$ is the \emph{rank} of $C$. The partition which maximal caps induce on $\p M$ will be denoted by $\mathcal{P}$, and the corresponding quantities $\mathcal{S}$, $\mathcal{T}$ will be defined as in \eqref{eq:S} and \eqref{eq:T} respectively.
 
 \subsubsection{The uniqueness and extension properties}
 When $C$ is a nondegenerate cap in $M$,  condition (ii) above is met automatically (Proposition \ref{prop:capextend}), which will be referred to as the \emph{extension property}. Another basic, but important, fact is that intersecting caps in $M$ which have a common plane coincide (Proposition \ref{prop:capunique}), which will be called the \emph{uniqueness property}. Since these properties are invoked often, we will not always cite the respective propositions.

\subsubsection{Other terms} The components of the Frenet-Serret frame of a curve in $\R^3$ will be denoted by $T$, $N$, $B$ which stand for the \emph{tangent}, \emph{principal normal}, and \emph{binormal} vectors respectively. Further $\nu$ will denote the \emph{conormal} vector along $f|_{\partial M}$ as defined in Section \ref{sec:tancone}. The \emph{curvature} and \emph{torsion} of a curve will be denoted by $\kappa$ and $\tau$ respectively, and $\mathcal{V}$ will be \emph{the number of sign changes} of $\tau$. For any set $X\subset\R^n$ and point $p\in X$, $T_p X$ will denote the \emph{tangent cone} of $X$ at $p$. Finally the \emph{tangent plane} of $f$ at $p$ (Section \ref{sec:tancone}) will be denoted by $H_p(0)$, and its outward normal will be denoted by $n$ or $n_p(0)$.

\section{Structure of Degenerate Caps}\label{sec:degenerate}
The main result of this section is Theorem \ref{thm:flatcap} below which gives a  characterization for degenerate caps,  and  will be eventually subsumed under Theorem \ref{thm:caps}. To obtain this result, first we need to record a local characterization for degenerate caps, which follows from a classical theorem of Tietze-Nakajima. 

\subsection{Local characterization}
Let $A\subset M$.
We say that \emph{$f$ is locally convex on $A$ relative to a plane $H$} provided that for every point $p$ of $A$ there exists a convex neighborhood $U$ of $p$ in $M$ such that $f(U)$ is a convex subset of $H$. The next lemma is a version of the Tietze-Nakajima characterization for convex sets \cite{valentine:book}, which has been extended in several directions \cite{SackstederValentine, bjorndahl&karshon,BirteaOrtegaRatiu}. In particular, this lemma follows immediately from \cite[Thm. 15]{bjorndahl&karshon}. On the other hand, as has been pointed out in the proofs of \cite[Lem. 3.4]{alexander&ghomi:chp} or \cite[Lem. 1]{sacksteder:convex}, the proof of the original Tietze-Nakajima theorem, as presented for instance in \cite[Thm 4.4 ]{valentine:book}, may be quickly adapted to the present setting.

\begin{lem}[Tietze-Nakajima \cite{bjorndahl&karshon}]\label{lem:TN}
Let $A\subset M$ be a closed connected set. Suppose that $f$ is locally convex on $A$ relative to a plane $H$. Then $f$ is one-to-one on $A$, and $f(A)$ is a convex subset of $H$.
\end{lem}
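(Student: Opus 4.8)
The plan is to adapt the classical proof of the Tietze--Nakajima theorem, as presented for instance in \cite[Thm.\ 4.4]{valentine:book}, to the immersed setting, with the locally one-to-one map $f$ playing the role that the inclusion of a locally convex subset of $\R^n$ plays in the classical statement; as indicated in the proofs of \cite[Lem.\ 3.4]{alexander&ghomi:chp} and \cite[Lem.\ 1]{sacksteder:convex}, the classical argument transfers with only minor changes, and alternatively one may simply invoke \cite[Thm.\ 15]{bjorndahl&karshon}. The crucial structural point is that convexity has to be verified downstairs, in the target plane $H$, whereas the connectedness arguments must be run upstairs, in the surface $M$.

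I would first record a preliminary reduction. For each $p\in A$ the convex neighborhood $U$ supplied by the hypothesis is a topological disk on which $f$ is one-to-one, and $f(U)$ is a convex subset of $H$; since a continuous injection of a $2$-disk cannot have its image contained in a line, invariance of domain forces $f(U)$ to have nonempty interior in $H$ and $f|_U$ to be a homeomorphism onto it. Hence $f$ restricted to a suitable open neighborhood $\Omega$ of $A$ in $M$ is a local homeomorphism into $H$, so along $A$ we are genuinely in the situation of a developing map into the plane, each local image being a $2$-dimensional convex set. In particular $f(A)\subset H$, and $f(A)$ is compact and connected.

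Next I would carry out the two classical steps. \emph{Polygonal connectedness:} fixing $p_0\in A$, the set of points that can be joined to $p_0$ by an arc $\gamma$ in $A$ whose image $f\circ\gamma$ is an embedded polygonal arc contained in $f(A)$ is nonempty, and one checks that it is both open and closed in $A$ --- openness and closedness coming from lifting short straight segments of the target through the local homeomorphisms $f|_{U_p}$ and concatenating --- so by connectedness of $A$ every point of $A$ is so reachable. \emph{Straightening:} following Valentine, one shows by induction on the number of edges that such a polygonal arc in $f(A)$ can be replaced by a single straight segment still contained in $f(A)$, the inductive step using local convexity at an interior vertex together with a connectedness argument inside the triangle spanned by three consecutive vertices. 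This yields $[x,y]\subset f(A)$ for all $x,y\in f(A)$, so $f(A)$ is convex. Injectivity of $f|_A$ then drops out: if $f(p)=f(q)$ with $p,q\in A$, applying the two steps to an arc in $A$ joining $p$ to $q$ produces an arc in $A$ on which $f$ is constant, which, since $f$ is locally one-to-one, must be a single point, so $p=q$.

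The step I expect to be the main obstacle is the polygonal connectedness together with the straightening, precisely because $f$ is only locally one-to-one: one must ensure that the straight segments obtained in $H$ really are images of arcs lying in $A$, not merely in the ambient neighborhoods $U_p$, and that the straightening procedure never leaves $f(A)$. Managing this interplay between the closed set $A$, its neighborhoods $U_p$, and the non-injectivity of $f$ is exactly the delicate point addressed in the adaptations of \cite{alexander&ghomi:chp} and \cite{sacksteder:convex}; once it is handled, the remainder of the argument is the standard Tietze--Nakajima machinery.
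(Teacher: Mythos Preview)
Your proposal is essentially identical to the paper's approach, which in fact gives no proof at all: the paper simply states that the lemma follows immediately from \cite[Thm.~15]{bjorndahl&karshon}, and remarks that alternatively the classical Tietze--Nakajima argument of \cite[Thm.~4.4]{valentine:book} adapts to this setting, citing \cite[Lem.~3.4]{alexander&ghomi:chp} and \cite[Lem.~1]{sacksteder:convex} --- exactly the references you invoke. Your sketch of the adaptation (local homeomorphism into $H$, polygonal connectedness, straightening, injectivity) is the standard one and is correct in outline; the delicate point you flag, namely keeping the straightened segments inside $f(A)$ rather than merely inside the ambient neighborhoods, is precisely the issue handled in the cited adaptations, and the paper does not spell it out either.
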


Next we adapt the above lemma to the precise form  we need here: 

\begin{prop}\label{lem:3conditions}
Let $A\subset M$ be a closed connected set, and suppose that $f(A)$ lies in a plane $H$. Further suppose that each point of $A$ has a convex neighborhood $U$  in $M$ such that 
\begin{enumerate}
\item[(i)]{$f(U\cap A)$ is convex,}
\item[(ii)]{$f(U-A)$ is disjoint from $H$,}
\item[(iii)]{$f(U)$ lies on one side of $H$.}
\end{enumerate}
 Then $A$ is a cap with plane $H$.
\end{prop}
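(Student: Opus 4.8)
The plan is to verify the two defining properties of ``$A$ is a cap with plane $H$'' (Section~\ref{subsec:capdef}): first, that $f$ maps $A$ one‑to‑one onto a cap $f(A)\subset\R^3$, and second --- since $f(A)$ will lie in $H$ and hence be a \emph{degenerate} cap --- that some neighborhood $U$ of $A$ in $M$ has $f(U-A)$ disjoint from one of the two closed sides of $H$.

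For the first property I would run the Tietze--Nakajima argument that underlies Lemma~\ref{lem:TN}, adapted to the present hypotheses. One cannot quote Lemma~\ref{lem:TN} verbatim, since its hypothesis asks for a two‑dimensional neighborhood of each point of $A$ mapped \emph{into} $H$, whereas (i)--(iii) only place $f(U\cap A)$ in $H$ while pushing $f(U-A)$ off $H$ to one side; but the proof of Tietze--Nakajima (\cite[Thm.~4.4]{valentine:book}, or \cite[Thm.~15]{bjorndahl&karshon}) still applies. Given $p,q\in A$, I would lift the Euclidean segment from $f(p)$ to $f(q)$, which lies in $H$, into $A$ starting at $p$: such a lift exists and is unique because $f$ is one‑to‑one on each convex neighborhood and the segment is covered by the convex pieces $f(U\cap A)$, with (i) used to extend the lift across each point and a compactness argument closing it up at the far endpoint (the limit of the lifted arc at a supremum parameter is a connected subset of $A$ carried by $f$ onto a single point, hence a single point). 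The role of (ii) is to trap the lift inside $A$: the segment stays in $H$, while $f$ carries $M-A$ off $H$. Running this for all pairs $p,q$ shows that $f|_A$ is one‑to‑one and that $f(A)$ contains the segment between any two of its points, i.e.\ $f(A)$ is convex; since $M$ is compact and $A$ closed, $f(A)$ is a compact convex subset of $H$ --- by definition a degenerate cap in $\R^3$ --- and $f\colon A\to f(A)$ is a homeomorphism.

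It remains to produce the half‑space. If $f(A)$ is two‑dimensional, then $A$ is a closed $2$‑disk embedded in $M$, so it has a neighborhood $U$ with $U-A$ connected; then $f(U-A)$ is connected and, by (ii), disjoint from $H$, hence contained in one open side of $H$ and disjoint from the opposite closed side. If instead $f(A)$ is a point or a segment, so that $A$ has empty interior in $M$ (and $H$ is no longer unique, though we keep it fixed), then every convex neighborhood $U_p$ meets $M-A$ and, by (ii)--(iii), $f(U_p-A)$ lies in one fixed open side of $H$; the label of this side is locally constant in $p$, since two overlapping convex neighborhoods share a point off $A$ whose $f$‑image lies strictly on one side of $H$, hence constant because $A$ is connected, and I would take $U=\bigcup_p U_p$. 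Either way, $A$ together with $H$ and this side satisfies condition (ii) of the cap definition, so $A$ is a cap with plane $H$.

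The hard part will be the first property: because Lemma~\ref{lem:TN} does not literally apply, one must re‑examine the Tietze--Nakajima proof and check that the weaker, one‑sided hypotheses (i)--(iii) still confine the lifted segment to $A$ and still allow the lift to be extended across every point and closed at the endpoint. Once $f|_A$ is known to be injective with $f(A)$ convex, the half‑space condition is a short connectedness argument.
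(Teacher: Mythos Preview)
Your treatment of the first property is more cautious than necessary but not wrong: the Tietze--Nakajima result cited for Lemma~\ref{lem:TN} (e.g.\ \cite[Thm.~15]{bjorndahl&karshon}) already covers the situation where only $f(U\cap A)$, rather than $f(U)$, is assumed convex in $H$; the paper simply invokes that lemma directly. Redoing the lifting argument would work too, so there is no error here, only extra labor.

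The genuine gap is in your half-space argument when $f(A)$ is two-dimensional. Your claim that a closed $2$-disk $A$ embedded in $M$ admits a neighborhood $U$ with $U-A$ connected is false in general: if $A$ meets $\partial M$ so that $\partial A\cap\inte(M)$ has several components (think of a strip crossing $M$ from one boundary arc to another), then any neighborhood of $A$ has $U-A$ disconnected, and the different components could a priori map to opposite sides of $H$. More tellingly, your entire half-space argument never invokes the standing local nonflatness hypothesis along $\partial M$, yet without that hypothesis the proposition is false: the strip example just described, with $f$ bending one side of $M-A$ into $\inte(H^+)$ and the other into $\inte(H^-)$, satisfies (i)--(iii) but is not a cap with plane $H$; what rules it out is precisely that an interior boundary point of such a strip on $\partial M$ would have a flat neighborhood.

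The paper closes this gap by arguing along $\partial A$, which \emph{is} connected (a circle, a segment, or a point), rather than along $U-A$. One lets $\partial A^{\pm}$ be the points of $\partial A$ admitting a convex neighborhood with image in $H^{\pm}$; these are open and cover $\partial A$ by (iii). If some $q$ lay in both, then intersecting the two neighborhoods gives an open $U$ with $f(U)\subset H$, forcing $U\subset A$ by (ii); since $q$ is on the boundary circle of the disk $A$ this can happen only when $q\in\partial M$, and then $f$ is flat near $q$, contradicting local nonflatness. Hence $\partial A^+\cap\partial A^-=\emptyset$, the two sets partition the connected $\partial A$, and one of them is all of $\partial A$. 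Your Case~2 reasoning is essentially this same connectedness argument restricted to the easy situation where $A$ has empty interior (so every convex neighborhood automatically meets $M-A$); what you are missing is the mechanism---local nonflatness---that makes the same conclusion go through when $A$ has interior.
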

\begin{proof} 
By condition (i), $f$ is locally convex on $A$ relative to $H$. Thus, by Lemma \ref{lem:TN}, $f$ embeds $A$ into a convex set in $H$. It remains to show then that there is a neighborhood $V$ of $A$ in $M$ such that $f(V-A)$ is disjoint from $H$,  and lies on one side of it. Let $V$ be the union of all neighborhoods $U$  in the statement of the proposition. We claim that $V$ is the desired neighborhood.

Condition (ii) quickly yields that $f(V-A)\cap H=\emptyset$, as desired. To show that $f(V-A)$ lies on one side of $H$, it suffices to check that the neighborhoods $U$  all lie  either in $H^+$ or in $H^-$.
Further, we only need  to check this for those  $U$ which intersect $\p A$, for if $U\subset A$, then $f(U)\subset H\subset H^\pm$. So suppose  that there is a point $q\in\p A$ such that $f(U)\subset H^-$ for some  $U$ containing $q$. We claim then that,  for every $U$ intersecting $\p A$, 
\begin{equation}\label{eq:3conditions1}
f(U)\subset H^-.
\end{equation}
To establish this claim, which would finish the proof, first note that by $\p A$ here we mean the subset of $A$ which corresponds to the topological boundary of the convex set $f(A)\subset H$, under the homeomorphism $f\colon A\to f(A)$. Thus $\p A$ is either homeomorphic to a point, a line segment, or a circle. 
In particular, $\partial A$ is connected. 

 Let $\p A^\pm$ be the set of points $q\in \p A$ such that for some $U$ containing $q$, $f(U)\subset H^\pm$ respectively. Then $\p A^\pm$ cover $\p A$ by condition (iii). Further it is clear that $\p A^\pm$ are  open in $\p A$. Next we claim that 
\begin{equation}\label{eq:pApm}
 \p A^+\cap\p A^-=\emptyset.
\end{equation}
  This would show that $\p A^\pm$ are closed. Since by assumption $\p A^-\neq\emptyset$, and $\p A$ is connected, it would follow then  that $\p A=\p A^-$, which would in turn yield  \eqref{eq:3conditions1}, as desired.
To establish \eqref{eq:pApm},  suppose towards a contradiction that there is a point $q\in \p A^+\cap \p A^-$. Then  there are neighborhoods $U^\pm$ of $q$ in $M$ such that $f(U^\pm)\subset H^\pm$ respectively. 
Set $U:=U^+\cap U^-$. 
 Then $f(U)\subset H^+\cap H^-=H$. Consequently  $U\subset A$, since $U\subset V$ and $f(V-A)\cap H=\emptyset$. 
 It follows then that $q\in\partial M$, because $q\in\p A$, and $U$ is a neighborhood of $q$ in $M$. 
 This violates the local nonflatness assumption, and yields the desired contradiction.
 \end{proof}

\subsection{Global characterization}
Here we use the local characterization (Proposition \ref{lem:3conditions}) developed in the last section to obtain the following global result on the structure of degenerate caps.

\begin{thm}\label{thm:flatcap}
Let $p\in\inte(M)$,   $H$ be a local support plane of $f$ at $p$, and $A$ be the component of $f^{-1}(H)$ which contains $p$. Then $A$ is a cap with plane $H$.
\end{thm}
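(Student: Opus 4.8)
The goal is to show that $A$, the connected component of $f^{-1}(H)$ containing $p$, satisfies the hypotheses of Proposition \ref{lem:3conditions}, so that it is a cap with plane $H$. Since $H$ is a \emph{local} support plane of $f$ at $p$, there is a convex neighborhood $U_0$ of $p$ with $f(U_0)$ lying on one side of $H$; by the local nonflatness assumption, $f(U_0)$ does not lie in $H$, so $f(U_0\cap H)=f(U_0)\cap H$ is a lower-dimensional slice, in fact a convex set (it is the intersection of the convex surface piece with the plane). The real content is to propagate this behavior from $p$ to every point of $A$.

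The plan is as follows. First I would verify that $A$ is closed (automatic, as a component of the closed set $f^{-1}(H)$ in the compact $M$) and that $f(A)\subset H$ (by definition of $A$). Next, the crucial step: I claim that every point $q\in A$ has a local support plane equal to $H$ — i.e. a convex neighborhood $U$ of $q$ with $f(U)$ on one side of $H$. This is where I expect the main obstacle to lie. The set of $q\in A$ for which $H$ is a local support plane is clearly nonempty (contains $p$) and, I would argue, open in $A$: if $f(U_q)\subset H^-$ for a convex neighborhood $U_q$ of $q$, then for any nearby $q'\in A$ a small enough convex neighborhood sits inside $U_q$ hence also in $H^-$. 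For closedness in $A$, suppose $q_i\to q$ with $q_i\in A$ and $H$ a local support plane at each $q_i$; using a fixed convex neighborhood $U$ of $q$ with associated body $K=\conv(f(U))$, the portion $f(U)$ lies in $\partial K$, and the local support condition at points $q_i$ accumulating at $q$, combined with the fact that $f(q)\in H$ and $f(U)\subset\partial K$ is locally graphical over its tangent cone, forces $f(U)$ to lie on one side of $H$ as well — here one uses that a convex surface cannot cross a plane through one of its points arbitrarily close to points where it stays on one side, since that would contradict convexity of $K$ (the plane would have to cut $\inte K$ near $q$, yet the surface stays in $H^-$ near the $q_i$). Hence the set is clopen and nonempty in the connected set $A$, so it is all of $A$.

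Once this is established, conditions (i)–(iii) of Proposition \ref{lem:3conditions} follow readily. For (iii): each $q\in A$ has a convex neighborhood $U$ with $f(U)$ on one side of $H$. For (i): shrinking $U$ if necessary, $f(U\cap H)=f(U)\cap H$, and since $f(U)$ lies in $\partial K$ on one side of $H$ with $K=\conv(f(U))$, the intersection $f(U)\cap H$ is the intersection of a supporting-or-secant plane with $\partial K$, hence a convex set; and $U\cap A$ is exactly $f^{-1}(f(U)\cap H)\cap U$ locally (any point of $U$ mapping into $H$ is connected to $q$ within $U\cap f^{-1}(H)$, hence lies in $A$), so $f(U\cap A)$ is convex. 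For (ii): if some $q'\in U-A$ had $f(q')\in H$, then $q'\in f^{-1}(H)$ and $q'$ would lie in a component of $f^{-1}(H)$ other than $A$; but $U$ is connected, and after shrinking, $U\cap f^{-1}(H)$ is connected (it is $f^{-1}$ of the convex set $f(U)\cap H$ under the homeomorphism $f|_U$), forcing $q'\in A$, a contradiction — so after this shrinking $f(U-A)\cap H=\emptyset$. Collecting these, Proposition \ref{lem:3conditions} applies and yields that $A$ is a cap with plane $H$.

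The main obstacle, as indicated, is the closedness step: showing that the property ``$H$ is a local support plane of $f$'' is preserved under limits within $A$. The subtlety is that a priori $f$ could be very irregular in $\inte(M)$, so one cannot appeal to continuity of a normal field; instead the argument must be purely convex-geometric, using that each small piece $f(U)$ lies on $\partial\conv(f(U))$ and that such a piece meeting $H$ but not contained in it must locally either stay on one side of $H$ or genuinely cross it, with the latter ruled out by the accumulation of ``one-sided'' points $q_i$. One should phrase this via the associated convex body: $H$ fails to support $f(U)$ only if $\inte K\cap H^+\ne\emptyset$ and $\inte K\cap H^-\ne\emptyset$, i.e. $H$ cuts through $\inte K$; but then points of $\partial K$ near $q$ occur on both sides of $H$, incompatible with $f(U_{q_i})\subset H^-$ for $q_i\to q$ once $U_{q_i}\subset U$. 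Making this last incompatibility precise — that the image points $f(q_i)\in\partial K\cap H$ together with one-sided neighborhoods force $H$ to support $K$ — is the heart of the proof.
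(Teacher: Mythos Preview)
Your strategy—verify Proposition~\ref{lem:3conditions} via a clopen argument for the local-support property—matches the paper's, but there is a genuine gap at points of $A\cap\partial M$. In the closedness step you argue that $f(U_{q_i})\subset H^-$ for $q_i\to q$ forces $H$ to support $K=\conv(f(U))$. This works when $q_i\in\inte(M)$, because then $f(U_{q_i})$ is \emph{open in $\partial K$} by invariance of domain, so $H$ locally and hence globally supports $K$. But when $q_i\in\partial M$, $f(U_{q_i})$ is only a half-neighborhood of $f(q_i)$ in $\partial K$; the complementary portion of $\partial K$ near $f(q_i)$ is not in the image of $f$ and may perfectly well lie in $\inte(H^+)$, so there is no contradiction. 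You never explain how to locate an interior point of $M$ in your set $X$ near such a $q$. The paper sidesteps this by working not with $A$ but with the auxiliary set $A_0$, the component of $\cl(A\cap\inte(M))$ containing $p$: every point of $A_0$ is, by construction, a limit of points of $A\cap\inte(M)$, so the invariance-of-domain argument is always available (this is Lemma~\ref{lem:HA}). One then shows $A_0$ is a cap and concludes $A_0=A$ via the separate Lemma~\ref{lem:subset}.

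There is a parallel gap in your check of condition~(i). You claim $f(U)\cap H$ is convex because it is the intersection of $\partial K$ with a support plane. But at $q\in\partial M$, $f(U)$ is a \emph{proper} piece of $\partial K$ near $f(q)$, so $f(U)\cap H$ is only part of the convex set $\partial K\cap H$, and its convexity is not automatic. Establishing it is precisely the content of the paper's Lemma~\ref{lem:U-A}, whose boundary case is delicate and uses both the no-inflection hypothesis on $f|_{\partial M}$ and the local-nonflatness assumption. Your argument for~(ii), which rests on connectedness of $f(U)\cap H$, inherits the same issue.
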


This observation generalizes \cite[Lem. 3.4]{alexander&ghomi:chp}, where it had been assumed that  $A\cap\partial M=\emptyset$, and thus was relatively simple to treat. The general case, however,  is considerably more subtle, as we will explore below.
It is also worth noting that the assumption here that $p\in\inte(M)$ is necessary. For instance let $M$ be the upper hemisphere of $\S^2$, $f$ be the inclusion map into $\R^3$, $H$ be the $xy$-plane, and $p:=(1,0,0)$. Then $f^{-1}(H)=M\cap H$ is a circle,  not a cap. 
The basic strategy for proving the above theorem is to show that $A$ satisfies the local conditions in Proposition \ref{lem:3conditions}; however, this is not directly feasible for points of $A\cap\p M$. Thus we take another route via the following simple observation:

\begin{lem}\label{lem:subset}
Suppose that $A_0\subset A$ is a cap with plane $H$. Then $A_0=A$. 
\end{lem}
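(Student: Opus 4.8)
The plan is to show that $A_0 = A$ by arguing that $A_0$ is both open and closed in $A$, and then invoking connectedness of $A$ (which holds by definition, since $A$ is defined as the component of $f^{-1}(H)$ containing $p$). Since $A_0$ is a cap, it is compact, hence closed in $A$; so the entire burden is to show that $A_0$ is open in $A$. Because $A_0 \subset A \subset f^{-1}(H)$ and $f(A_0)$ is a convex subset of $H$ with nonempty relative interior only in the nondegenerate-within-$H$ cases, the key point is to show that no point of $A_0$ is a limit of points of $A \setminus A_0$.

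First I would take an arbitrary point $q \in A_0$ and a convex neighborhood $U$ of $q$ in $M$ with associated body $K = \conv(f(U))$. Since $A_0$ is a cap with plane $H$, condition (ii) in the definition of a cap gives a neighborhood $W$ of $A_0$ in $M$ such that $f(W - A_0)$ is disjoint from a half-space $H^+$ of $f(A_0)$; shrinking $U$ so that $U \subset W$, I get that $f(U - A_0)$ lies in the open complement of $H^+$, i.e.\ strictly on one side of $H$ away from $A_0$ — in particular $f(U - A_0) \cap H = \emptyset$. But any point of $A \cap U$ maps into $H$ by definition of $A$, so $A \cap U \subset A_0$. This shows $A_0$ is open in $A$, and the proof is then complete by connectedness of $A$.

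The one subtlety I anticipate is the case where $f(A_0)$ is a point or a line segment (the genuinely degenerate cases), where condition (ii) in the cap definition is exactly the piece of structure that rescues the argument — without it, nearby sheets of $f(M)$ could touch $H$ again near $A_0$ and one could not conclude $A \cap U \subset A_0$. When $f(A_0)$ is a two-dimensional convex disk, condition (ii) is automatic (it is the extension property, Proposition~\ref{prop:capextend}), so there the argument is even more direct. In all cases the essential mechanism is: a cap comes equipped with a neighborhood on which the surface pulls away from the plane $H$ on one side, and that neighborhood cannot contain any other point of $f^{-1}(H)$; hence any point of $A$ close enough to $A_0$ already lies in $A_0$.

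I expect the main obstacle to be purely bookkeeping — making sure the neighborhood $U$ is chosen inside both the convex-neighborhood regime of Section~\ref{subsubsec:convngbhd} and inside the extension neighborhood $W$ supplied by $A_0$ being a cap — rather than anything conceptually deep; the heart of the matter is the single observation that the extension property of $A_0$ forbids $f^{-1}(H)$ from accumulating on $A_0$ outside $A_0$.
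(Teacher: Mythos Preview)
Your proposal is correct and follows essentially the same open--closed--connected argument as the paper. The paper's version is slightly more streamlined: it first observes that since $f(A_0)\subset f(A)\subset H$, the cap $A_0$ is necessarily \emph{degenerate}, so condition~(ii) in the cap definition directly furnishes a single neighborhood $U$ of all of $A_0$ with $f(U-A_0)\cap H=\emptyset$; from this $(U-A_0)\cap A=\emptyset$, hence $A_0=A\cap U$ is open in $A$. Your pointwise use of convex neighborhoods and your case discussion (point, segment, disk) are unnecessary here, and your remark that Proposition~\ref{prop:capextend} is needed for the disk case is a minor misattribution---that proposition is for nondegenerate caps, whereas $A_0$ is always degenerate in this lemma, so the extension neighborhood comes straight from the definition.
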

\begin{proof}
Since $f(A_0)\subset f(A)\subset H$, $A_0$ is degenerate. So there exists a neighborhood $U$ of $A_0$ in $M$ such that $f(U-A_0)\cap H=\emptyset$, by the definition of degenerate caps. Consequently $(U-A_0)\cap A=\emptyset$. Thus $A\cap A_0=A\cap U$ is open in $A$. But $A\cap A_0$ is also closed in $A$, since $A$ and $A_0$ are both closed. Thus $A\cap A_0=A$, since $A\cap A_0\neq\emptyset$ and $A$ is connected. So $A\subset A_0$, which completes the proof.
\end{proof}

So, to prove Theorem \ref{thm:flatcap}, it suffices to show that there is some subset of $A$ which is a cap with plane $H$. We choose this set to be
$$
A_0:= \text{the component of }\cl(A\cap\inte(M))\text{ which contains } p.
$$
Note that $A_0\neq\emptyset$ since $p\in A_0$. Further, a particularly useful feature of this set is that it satisfies condition (iii) of Proposition \ref{lem:3conditions} fairly quickly, via the theorem on the invariance of domain:

\begin{lem}\label{lem:HA}
$H$ is a local support plane of $f$ at all points of $A_0$.
\end{lem}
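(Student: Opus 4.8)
\textbf{Proof proposal for Lemma \ref{lem:HA}.}

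The plan is to show that $H$ supports $f$ locally at each point $q \in A_0$ by reducing to the hypothesis that $H$ supports $f$ at $p$, using that $A_0$ is a \emph{connected} set on which we can propagate the support property. First I would observe the basic dichotomy: at any point $q$ where $f(U) \subset H$ for a small neighborhood $U$, the plane $H$ trivially supports $f$ locally (with either choice of side); at any point $q$ where $H$ is a local support plane, $f(U)$ lies in one of the two closed half-spaces $H^-$ or $H^+$. So it suffices to prove that the subset $A_0^- \subset A_0$ of points having a convex neighborhood $U$ with $f(U) \subset H^-$, and the analogous set $A_0^+$, together cover $A_0$, are open, and — crucially — are disjoint; then connectedness of $A_0$ finishes the argument, since $p$ belongs to one of them (say $A_0^-$) by hypothesis, forcing $A_0 = A_0^-$.

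Openness of $A_0^\pm$ is immediate. Covering, i.e.\ that every $q \in A_0$ lies in $A_0^- \cup A_0^+$, is where the invariance of domain enters, as the parenthetical remark in the statement hints: for $q \in A_0 \subset \cl(A \cap \inte(M))$, I would take a convex neighborhood $U$ of $q$ with associated body $K$, so $f$ embeds $\cl(U)$ into $\partial K$. If $f(U) \not\subset H$, pick a point $q'$ in $A \cap \inte(M)$ close to $q$ with $q' \in U$, so $f(q') \in H$; since $f(U)$ is a relatively open subset of $\partial K$ by invariance of domain and $f(q')$ is a point of $\partial K$ lying on $H$, the standard support-plane structure of convex bodies forces $f(U)$ to lie on one side of $H$ (otherwise $H \cap \partial K$ near $f(q')$ would have interior points in both half-spaces, contradicting that $\partial K \cap H$ is at most a face of $K$ — more precisely, a plane through a boundary point of a convex body that is crossed by the boundary cannot be a local support plane, and here we can choose $H$ itself to be supporting by shrinking $U$, or argue that $f(U)$ open in $\partial K$ meeting $H$ only if $H$ cuts $K$, handled case by case). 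The cleanest route: either $f(U) \subset H$ (then $q \in A_0^\pm$ both), or $H$ meets the open set $f(U) \subset \partial K$ and I invoke that the component of $f^{-1}(H)$ story together with Theorem \ref{thm:flatcap}'s local input — but since we are \emph{proving} a lemma \emph{towards} Theorem \ref{thm:flatcap}, I must instead argue directly from convexity of $K$: a plane through an interior-relative boundary point either supports $K$ or crosses $\inte K$, and in the latter case $f(U) \cap H$ is a hypersurface in $\partial K$ of codimension one, so near $q$ one still needs $f(U)$ on one side — this fails in general, which signals that I should choose $H$ more carefully or localize. The honest fix is: since $p \in A_0$ and $H$ is by hypothesis a \emph{support} plane at $p$, and $A_0$ is connected, I only need the \emph{disjointness} \eqref{eq:pApm}-style argument, and the covering claim should be restated as: every $q \in A_0$ either has $f(U) \subset H$ or has $H$ as a local support plane — and this I get because if $H$ crossed $\inte K$ at $f(q)$, then points of $f(U)$ on the far side would not be in $\cl(A \cap \inte M)$ consistently, contradicting $q \in A_0$ via an argument tracking which side the nearby interior points of $A$ force.

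The disjointness $A_0^+ \cap A_0^- = \emptyset$ is the same maneuver as in the proof of Proposition \ref{lem:3conditions}: if $q \in A_0^+ \cap A_0^-$ with neighborhoods $U^\pm$, then on $U := U^+ \cap U^-$ we get $f(U) \subset H^+ \cap H^- = H$, so $U \subset A$; then $q$ would be an interior point of $M$ (since $q \in A_0 \subset \cl(\inte M)$ and we can even arrange $q$ to have a full neighborhood $U$ with $f(U) \subset H$, i.e.\ $f$ locally flat at $q$), and if $q \in \inte(M)$ this still violates local nonflatness only when $q \in \partial M$ — so here I instead note that $f(U) \subset H$ with $U \ni q$ open in $M$ means $q$ is a point where $f$ is locally flat; but $q$ need not be a boundary point. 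Therefore disjointness must come not from nonflatness but from the observation that $A_0^+ \cap A_0^-$ consists exactly of locally-flat points, which form a set that is simultaneously open (clear) and closed in $A_0$, and is a \emph{proper} subset (it misses $p$, where $H$ is genuinely supporting but $f$ is not locally flat — wait, $H$ supporting at $p$ does not preclude flatness). \textbf{Main obstacle.} Pinning down exactly why $p \notin A_0^+ \cap A_0^-$, i.e.\ why the hypothesis "$H$ is a local support plane of $f$ at $p$" combined with $p \in \inte(M)$ prevents the degenerate overlap from spreading — this is the crux, and I expect the resolution is that $A_0^+$ and $A_0^-$ are each \emph{closed} in $A_0$ (via invariance of domain: the side on which an open piece of $\partial K$ sits relative to $H$ varies lower-semicontinuously, and for a convex body it is actually locally constant away from flat pieces), their union is all of $A_0$, and on the flat locus they agree harmlessly; connectedness of $A_0$ then propagates the side from $p$ outward, giving "$H$ is a local support plane at every point of $A_0$" as a single coherent statement. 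I would write the final argument as: let $A_0^-$ be the set of $q \in A_0$ with a convex neighborhood $U$, $f(U) \subset H^-$; show $A_0^-$ is open and closed in $A_0$ and nonempty (contains $p$); conclude $A_0^- = A_0$.
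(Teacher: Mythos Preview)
Your decomposition into $A_0^+$ and $A_0^-$ creates two problems you never resolve. First, the ``covering'' claim --- that every $q\in A_0$ lies in $A_0^+\cup A_0^-$ --- is literally the statement of the lemma (that $H$ locally supports $f$ at every point of $A_0$), so attempting to prove it directly is circular; you recognize this partway through and the argument dissolves into case analysis that never closes. Second, the disjointness argument borrowed from Proposition~\ref{lem:3conditions} genuinely fails here, as you observe: an interior point where $f$ is locally flat lies in both $A_0^+$ and $A_0^-$, and the local-nonflatness hypothesis only constrains boundary points. Your final pivot to ``show $A_0^-$ is open and closed'' is the right shape, but you never prove closedness.

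The paper sidesteps both issues. It works with the single set $X\subset A$ of points where $H$ locally supports $f$ (on \emph{either} side --- the lemma does not specify which), and shows $X\cap A_0$ is open and closed in $A_0$. Openness is immediate. The closedness argument is the real content, and it contains the idea you are missing: given $q_i\in X\cap A_0$ converging to $q\in A_0$, take a convex neighborhood $U$ of $q$ with associated body $K$. Since $q_i\in A_0\subset\cl(A\cap\inte(M))$ and $X$ is open in $A$, there is a point $q'\in\inte(M)\cap X\cap U$ near $q_i$. Because $q'\in X$, there is a neighborhood $V\subset\inte(M)\cap U$ of $q'$ with $f(V)$ on one side of $H$; because $V\subset\inte(M)$, invariance of domain makes $f(V)$ open in $\partial K$; hence $H$ locally supports $K$ at $f(q')$, and by convexity $H$ supports $K$ globally. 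Thus $f(U)$ lies on one side of $H$, i.e.\ $q\in X$. The crucial step you never isolated is this propagation: support at \emph{one} interior point $q'$ of $U$, together with invariance of domain and convexity of $K$, forces support on all of $U$ --- and the closedness setup is exactly what hands you such a $q'$ with the support property already known.
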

\begin{proof}
Let $X\subset A$ consist of all points $q\in A$ such that $H$ locally supports $f$ at $q$. Then $X$ is open in $A$. Indeed, if $q\in X$, then there is a neighborhood $U$ of $q$ in $M$ such that $f(U)$ lies on one side of $H$. So $U\cap A\subset X$.

 We have to show that $A_0\subset X$. First note that  $X\cap A_0\neq\emptyset$, since $p\in X\cap A_0$. It is also immediate that $X\cap A_0$ is open in $A_0$, since $X$ is open in $A$, and $A_0\subset A$. It remains then to show that $X\cap A_0$ is closed in $A_0$, which will finish the proof, since $A_0$ is connected. 
To  this end, let $q_i\in X\cap A_0$ be a sequence   which converges to a point $q\in A_0$:
$$
X\cap A_0\ni \;q_i\longrightarrow q\;\in A_0.
$$
 We have to show that $q\in X$. Let $U$ be a convex neighborhood of $q$ with associated body $K$. Then $q_i\in U$, for a sufficiently large index $i$ which we now fix. 
Since $q_i\in A_0$, there are points of $\inte(M)\cap A$ which converge to $q_i$ and so eventually lie in $U$ as well. Thus there is a point $q'\in \inte(M)\cap A\cap U$. If $q'$ is sufficiently close to $q_i$, then $q'\in X$, since $q_i\in X$, and $X$ is open in $A$. So
$$
q'\in\inte(M)\cap X\cap U.
$$

Since $q'\in X$, there is a neighborhood $V$ of $q'$ in $M$ such that $f(V)$  lies on one side of $H$, and since $q'\in \inte(M)\cap U$, we may assume that $V\subset \inte(M)\cap U$.
 Since $V\subset\inte(M)$,  $f(V)$ is open in $\partial K$, by the invariance of domain. So $H$ is a local support plane of $K$ at $f(q')$. This in turn yields that $H$ is a global support plane of $K$, since $K$ is convex. So $f(U)$ lies on one side of $H$, and thus $q\in X$ as desired.
\end{proof}

 It remains then to show that $A_0$ satisfies conditions (i) and (ii) of Proposition \ref{lem:3conditions} as well. To this end we need the following three lemmas.

\begin{lem}\label{lem:localdisk}
Let $K\subset\R^3$ be a convex body, $p\in\p K$,  $B$ be a  (closed) ball of radius $r$ centered at $p$, and  $D:=B\cap\p K$. Then there exists an $\epsilon>0$ such that for all $0<r\leq\epsilon$, $D$ is a topological disk, and $\p B\cap D=\p D$.
\end{lem}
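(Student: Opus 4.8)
Let $K\subset\R^3$ be a convex body, $p\in\p K$, $B$ a closed ball of radius $r$ centered at $p$, and $D:=B\cap\p K$. Then there exists $\epsilon>0$ such that for all $0<r\leq\epsilon$, $D$ is a topological disk and $\p B\cap D=\p D$.

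The plan is to exploit the standard fact that $\p K$ is, near any of its points, a graph over a support plane. Concretely, let $H$ be a support plane of $K$ at $p$ with inward unit normal $n$, and let $\pi\colon\R^3\to H$ be orthogonal projection. Since $K$ has interior points, after translating we may assume $p$ is the origin and write points of $\R^3$ as $(x,t)$ with $x\in H$, $t\in\R$, $t$ the coordinate along $n$. Then there is a radius $\rho>0$ and a convex function $\phi\colon H\cap B_\rho^H(0)\to[0,\infty)$ with $\phi(0)=0$ and $\phi$ Lipschitz (indeed locally Lipschitz, with constant $L$ say, on $B_{\rho/2}^H(0)$, by convexity), such that $\p K$ coincides near $p$ with the graph $\{(x,\phi(x))\}$; here $B_\rho^H$ denotes the ball in $H$. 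I would cite or quickly establish this from convexity (it is the classical local-graph description of $\p K$); the Lipschitz bound is what makes the ball $B$ cut the graph cleanly.

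Now I would take $\epsilon:=\rho/(3(1+L))$, say, and fix $0<r\le\epsilon$. The key geometric claim is: for $(x,\phi(x))$ on the graph, $\|(x,\phi(x))\|\le r$ iff $x$ lies in a certain convex region $\Omega_r\subset H$ that is a topological disk, and the boundary sphere $\p B$ meets $D$ exactly over $\p\Omega_r$. To see the disk property, note $g(x):=\|x\|^2+\phi(x)^2$ is a continuous function on $B_\rho^H(0)$; its sublevel set $\Omega_r=\{g\le r^2\}$ contains $0$, and because $\phi\le L\|x\|$ on this scale we have $g(x)\le (1+L^2)\|x\|^2$, so $\Omega_r\supset B^H_{r/\sqrt{1+L^2}}(0)$ — in particular $\Omega_r$ is a neighborhood of $0$ and stays well inside the domain of the graph description since $r\le\epsilon$. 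Conversely $g(x)\ge\|x\|^2$ forces $\Omega_r\subset B^H_r(0)$. I would then argue $\Omega_r$ is a topological disk: the cleanest route is that $g$ restricted to each ray from $0$ is strictly increasing (since $\|x\|$ strictly increases and $\phi\ge 0$ with, after possibly shrinking so $\phi$ is also monotone along rays — this needs $\phi$ convex with minimum at $0$, which we arranged — $\phi(tx)$ nondecreasing in $t$), so $\Omega_r$ is star-shaped with respect to $0$ and each ray hits $\p\Omega_r=\{g=r^2\}$ exactly once; hence $\Omega_r$ is homeomorphic to a closed disk with boundary circle $\{g=r^2\}$. Since $D$ is the graph over $\Omega_r$ and the graph map $x\mapsto(x,\phi(x))$ is a homeomorphism onto its image, $D$ is a topological disk. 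Finally $\p B\cap D$ consists of graph points with $\|(x,\phi(x))\|=r$, i.e.\ $g(x)=r^2$, i.e.\ $x\in\p\Omega_r$, and these map exactly to $\p D$; so $\p B\cap D=\p D$, completing the proof.

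The main obstacle is making the "each ray from $0$ meets $\{g=r^2\}$ exactly once" argument airtight without accidentally assuming $K$ is smooth or strictly convex: $\phi$ is merely convex and $\|\p K\|$ can contain flat pieces through $p$, on which $\phi\equiv 0$, so $g(x)=\|x\|^2$ there and monotonicity along rays still holds — the point is that $\phi(tx)$ is convex and nonnegative in $t$ with $\phi(0)=0$, hence nondecreasing for $t\ge 0$, so $g(tx)$ is strictly increasing regardless. One must also be careful that $\Omega_r$ does not run into the edge of the region where $\p K$ is a graph; this is exactly why $\epsilon$ is taken a fixed fraction of $\rho$ and why the inclusion $\Omega_r\subset B^H_r(0)\subset B^H_\epsilon(0)$ is recorded. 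A secondary nuisance is verifying the local-graph description of $\p K$ itself if one does not wish to cite it; but this is standard convex geometry and I would simply invoke it.
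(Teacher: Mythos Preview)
Your proof is correct and follows essentially the same route as the paper: both represent $\p K$ near $p$ as the graph of a nonnegative convex function $\phi$ over a support plane, identify $D$ with the graph over the region $\Omega_r=\{x:\|x\|^2+\phi(x)^2\le r^2\}$, and read off $\p D=\p B\cap D$ from the equation $\|x\|^2+\phi(x)^2=r^2$. The only difference is in verifying that $\Omega_r$ is a disk: you argue it is star-shaped about $0$ via strict monotonicity of $t\mapsto t^2\|x\|^2+\phi(tx)^2$ along rays, whereas the paper notes more directly that $\Omega_r$ is \emph{convex}, since it is the superlevel set $\{g-\phi\ge 0\}$ of the concave function $g-\phi$ with $g(x)=\sqrt{r^2-\|x\|^2}$; the paper's observation is slightly slicker and yields a stronger conclusion, but your argument is perfectly adequate for the lemma.
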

\begin{proof}
We may assume that $p=o$, the origin of $\R^3$,  there exists a  disk $\Omega$ of radius $\epsilon$ centered at $o$ in $\R^2$, and a nonnegative convex function $f\colon\Omega\to\R$, with $f(o)=0$, such that $f(\Omega)\subset\p K$. 
Then the upper hemisphere  of $\p B$ is the graph of a concave function $g\colon\Omega\to\R$. Let  $\Omega'\subset\Omega$ be the set  where $h:=g-f\geq 0$. Then $D$ is a graph over $\Omega'$ and so is homeomorphic to it.  It suffices then to show  that $\Omega'$ is convex, since $h(o)=1-0>0$, which means that $\Omega'$ has interior points in the $xy$-plane, and thus is a disk.  Let $x$, $y\in \Omega'$ and set $z:=\lambda x+(1-\lambda)y$, for $\lambda\in[0,1]$. Then
$$
f(z)\leq \lambda f(x)+(1-\lambda) f(y),\quad\quad\text{and}\quad\quad g(z)\geq \lambda g(x)+(1-\lambda)g(y),
$$
since $f$ is convex $g$ is concave. Consequently, 
since $h(x)$, $h(y)\geq 0$, 
$$
h(z)=g(z)-f(z)\geq\lambda h(x)+(1-\lambda)h(y)\geq 0.
$$
Hence $z\in \Omega'$, which means that $\Omega'$ is convex as desired. To show that $\p D=\p B\cap D$, note that $h\equiv 0$ on $\p \Omega'$. So $\p D\subset\p B$. It remains to check that $\inte(D)\cap\p B=\emptyset$, or that $h\neq 0$  on $\inte(\Omega')$. To see this let $z\in\inte(\Omega')$. Then $z=\lambda o+(1-\lambda) y$, for some $y\in\partial \Omega'$, and $0<\lambda<1$. Since $h(o)=1$ and $h(y)=0$, it follows that  $h(z)\geq \lambda>0$ which completes the proof.
\end{proof}

Another basic fact which we need to record is the following elementary observation. Again we include the proof for completeness.

\begin{lem}\label{lem:Gamma-L}
Let $\Gamma\subset\R^3$ be a $\C^2$ embedded curve segment, $p\in\inte(\Gamma)$, and $L$ be the tangent line of $\Gamma$ at $p$. Suppose that the curvature of $\Gamma$ at $p$ does not vanish. Then there exists a neighborhood $U$ of $p$ in $\Gamma$ which intersects $L$ only at $p$.
\end{lem}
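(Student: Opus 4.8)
The plan is to reduce the statement to an elementary calculus fact about the local behavior of a curve near a point of nonvanishing curvature. Without loss of generality I would place $p$ at the origin, so that the tangent line $L$ coincides with the $x$-axis. Since $\Gamma$ is $\C^2$ and embedded with nonvanishing differential, I can parametrize a neighborhood of $p$ in $\Gamma$ as a graph over $L$: that is, after choosing coordinates $(x,y,z)$ with $L$ the $x$-axis, there is $\epsilon>0$ and $\C^2$ functions $\phi,\psi\colon(-\epsilon,\epsilon)\to\R$ with $\phi(0)=\psi(0)=0$ and $\phi'(0)=\psi'(0)=0$ (the latter because $L$ is the tangent line) such that $U:=\{(x,\phi(x),\psi(x))\mid |x|<\epsilon\}$ is a neighborhood of $p$ in $\Gamma$.

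The key step is then to observe that $U\cap L$ consists precisely of those $x$ for which $\phi(x)=\psi(x)=0$. The curvature of $\Gamma$ at $p$ is, in these coordinates, $\kappa(0)=\sqrt{\phi''(0)^2+\psi''(0)^2}$, since $\phi'(0)=\psi'(0)=0$. The hypothesis $\kappa(0)\neq 0$ therefore guarantees that at least one of $\phi''(0)$, $\psi''(0)$ is nonzero; say $\phi''(0)\neq 0$. By Taylor's theorem, $\phi(x)=\tfrac12\phi''(0)x^2+o(x^2)$ as $x\to 0$, so there is $\delta\in(0,\epsilon]$ with $\phi(x)\neq 0$ for $0<|x|<\delta$. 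Hence on the neighborhood $U':=\{(x,\phi(x),\psi(x))\mid |x|<\delta\}$ of $p$, the only intersection with $L$ is the point $x=0$, i.e. $p$ itself. This $U'$ is the desired neighborhood.

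I do not anticipate a genuine obstacle here; the only mild care needed is the justification that a $\C^2$ embedded curve with nonvanishing speed can be written locally as a graph over its tangent line (an application of the implicit function theorem, or simply reparametrizing by the first coordinate after a rotation bringing the velocity vector to point along the $x$-axis), and the bookkeeping that $L$ being the tangent line forces the first-order terms to vanish so that the second-order term, controlled by the curvature, dominates. Everything else is Taylor expansion. One could alternatively argue slightly more invariantly: write $\Gamma(s)=p+sT(p)+\tfrac12 s^2\kappa(p)N(p)+o(s^2)$ in arclength, note that the component of $\Gamma(s)-p$ orthogonal to $T(p)$ equals $\tfrac12 s^2\kappa(p)N(p)+o(s^2)$, which is nonzero for small $s\neq 0$ because $\kappa(p)\neq 0$, and conclude that $\Gamma(s)\notin L$ for such $s$.
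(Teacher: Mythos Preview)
Your proposal is correct and follows essentially the same approach as the paper. The paper parametrizes by arclength and studies $h(t):=\langle\gamma(t)-\gamma(0),N(0)\rangle$, using Taylor's theorem with remainder to get $h(t)=\tfrac12 s^2\kappa(s)$ for some $s\in(0,t)$; this is exactly your ``alternative, more invariant'' argument at the end, and your main graph-over-$L$ version is an equivalent coordinate realization of the same second-order computation.
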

\begin{proof}
Let $\gamma\colon(-\delta,\delta)\to\Gamma$ be a local unit speed parametrization with $\gamma(0)=p$, and set $h(t):=\l\gamma(t)-\gamma(0),N(0)\r$, where $N:=\gamma''/\|\gamma''\|$ is the principal normal of $\gamma$. Then $\gamma(t)\not\in L$  if $h(t)\neq 0$. By 
the Frenet-Serret formulas, and Taylor's theorem, $h(t)=s^2\kappa(s)/2$ for some $s=s(t)\in (0,t)$, where  $\kappa:=\|\gamma''\|$ is the curvature of $\gamma$ (e.g. see \cite[p. 31]{spivak:v2}). Thus, since $\kappa(0)\neq 0$, $h>0$ on $(-\epsilon,\epsilon)-\{0\}$ for some $0<\epsilon<\delta$. Consequently  $U:=\gamma (-\epsilon, \epsilon)$ is the desired neighborhood.
\end{proof}

Using the last two lemmas, we  establish next the main observation we need in order to prove Theorem \ref{thm:flatcap}. Again the interior points will be relatively easy to treat here, due to the invariance of domain, but boundary points will require more work.

\begin{lem}\label{lem:U-A}
Every point  of $A_0$ has a convex neighborhood $U\subset M$ such that $f(U)\cap H$ is convex.
 \end{lem}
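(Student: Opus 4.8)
The plan is to distinguish two cases according to whether the point $q\in A_0$ lies in $\inte(M)$ or in $\partial M$, and in each case to produce a convex neighborhood $U$ on which $f(U)\cap H$ is convex. For interior points this is straightforward: by Lemma \ref{lem:HA}, $H$ is a local support plane of $f$ at $q$, so picking a convex neighborhood $U$ of $q$ with associated body $K$ and using the invariance of domain (as in the proof of Lemma \ref{lem:HA}), $f(U)$ is open in $\partial K$ and lies on one side of $H$; hence $f(U)\cap H = f(U)\cap H^-\cap H^+$ is a relatively open subset of the planar convex set $K\cap H$, and shrinking $U$ we may take it to be a disk, hence convex. So the real content is the case $q\in A_0\cap\partial M$.

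For a boundary point $q$, I would again take a convex neighborhood $U$ of $q$ with associated body $K = \conv(f(U))$, and analyze $f(U)\cap H$ inside $\partial K$. The key geometric input is Lemma \ref{lem:localdisk}: after intersecting with a small ball $B$ centered at $f(q)$, the set $D := B\cap\partial K$ is a topological disk meeting $\partial B$ exactly along $\partial D$. Writing $D$ as a graph of a convex function $\phi$ over a planar domain $\Omega'$ (as in the proof of Lemma \ref{lem:localdisk}), the slice $D\cap H$ corresponds to a sublevel/level set of an affine function composed with this graph, which is a convex subset of $\Omega'$, hence convex; so $f(U)\cap H \cap B$ is convex provided $U$ is chosen small enough that $f(U)\subset D$. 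The point that needs care is that $f(U)\cap H$ should be connected and should not ``wrap around'' — i.e., that the part of $\partial K$ lying in $H$ near $f(q)$ is exactly the part hit by $f(U)$, not some extra sheet. This is handled by first shrinking $U$ so that $f(U)$ is contained in the small disk $D$ from Lemma \ref{lem:localdisk}, and then using that $K=\conv(f(U))$ so that $\partial K$ near $f(q)$ is governed by $f(U)$ alone.

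The main obstacle I anticipate is precisely this local-versus-global issue at boundary points: a priori $f^{-1}(H)$ near $q$ could fail to map onto a convex planar slice because $f(U)$ might meet $H$ along a disconnected set, or because the convex body $K$ could have other faces meeting $H$ near $f(q)$. Overcoming this requires combining three ingredients already in hand: (1) Lemma \ref{lem:HA}, so that $H$ genuinely supports $f$ throughout $A_0$ and the slice is an exposed face rather than an interior section; (2) Lemma \ref{lem:localdisk}, to realize a neighborhood of $f(q)$ in $\partial K$ as a convex graph so that slicing by $H$ preserves convexity; and (3) the local nonflatness hypothesis together with $K=\conv(f(U))$, which prevents $f(U)\cap H$ from having interior in $H$ (that would make $q$ flat) and pins down $\partial K$ near $f(q)$ in terms of $f(U)$. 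Lemma \ref{lem:Gamma-L} enters as a technical tool to control how the boundary curve $f|_{\partial M}$ meets $H$: since $\f$ has no inflections, near $q$ the curve $f(\partial M)$ touches the tangent line $L\subset H$ only at $f(q)$, which ensures the planar convex set $f(U)\cap H$ meets the image of $\partial M$ in at most the single point $f(q)$, so that $f(U)\cap H$ genuinely sits as a convex neighborhood-slice rather than being cut off along $\partial M$. Assembling these, one obtains the desired convex neighborhood $U$, completing the proof.
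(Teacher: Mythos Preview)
Your overall two-case structure matches the paper, and your instinct to use Lemma~\ref{lem:localdisk} together with the ball $B$ is correct. However, there is a genuine error in your boundary-point argument, and a related omission.

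\textbf{The error.} You write that local nonflatness ``prevents $f(U)\cap H$ from having interior in $H$ (that would make $q$ flat).'' This is false. Local nonflatness only says that no full neighborhood of $q$ in $M$ maps into a plane; it does not forbid $f(U)\cap H$ from being a $2$-dimensional convex region. (Think of a boundary point sitting on the edge of a flat face of a convex body.) The paper treats this as a separate sub-case: when $H\cap D$ has interior in $H$, one must show $H\cap D'=H\cap D$, where $D' = D\cap f(V)$ is the half of the disk $D$ cut off by $\Gamma := f(\partial V)\cap D$. Local nonflatness enters \emph{here}, but differently than you suggest: it forces the interior of $H\cap D$ to be disjoint from $\Gamma$ (otherwise a neighborhood of a boundary point would map into $H$), so the connected set $H\cap D$ lies entirely on one side of $\Gamma$ in $D$.

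\textbf{The omission.} To decide that this side is $D'$, one needs a point of $H\cap\inte(D')$, and this is precisely where the definition of $A_0$ as a component of $\cl(A\cap\inte(M))$ is used: since $q\in A_0$, there are points of $A\cap\inte(M)$ arbitrarily close to $q$, and their images lie in $\inte(D')\cap H$. Your sketch never invokes this property of $A_0$, so you have no mechanism to rule out $H\cap D$ lying on the wrong side of $\Gamma$.

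Your treatment of the segment sub-case (via Lemma~\ref{lem:Gamma-L}) is closer to the paper, though note that the line $L=H\cap D$ need not be the tangent line of $\f$ at $q$; Lemma~\ref{lem:Gamma-L} is applied both when $L$ is tangent and when it is transversal to $\Gamma$ at $f(q)$, to conclude $L$ meets $\Gamma$ only at $f(q)$, whence $L\cap D'$ is connected. Finally, in the interior case your phrase ``shrinking $U$ we may take it to be a disk, hence convex'' is too quick (an open subset of a convex set need not be convex); the paper's precise mechanism is to set $K':=B\cap K$ and observe $H\cap D = H\cap K'$, which is convex.
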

\begin{proof}
Let $q\in A_0$. There are two cases to consider:  $q\in\inte(M)$, and $q\in\p M$. 

\emph{(Case 1)}
First suppose that $q\in\inte(M)$. Then there exists a convex neighborhood $V$ of $q$ with associated body $K$, such that $V\subset\inte(M)$. So $f(V)$ is open in $\partial K$, by the invariance of domain. Further, by Lemma \ref{lem:HA}, $f(V)$ lies on one side of $H$. Thus $H$ is a local support plane of $K$ at $f(q)$, which in turn yields that $H$ is a (global) support plane of $K$, since $K$ is convex. 
Let $B$ be a ball centered at $f(q)$ and set  $D:=\p K\cap B$. Choosing  $B$ sufficiently small, we may
assume that $D$ is a (topological) disk which meets $\p B$ only along its own boundary $\p D$, by Lemma \ref{lem:localdisk}. 
Further, we may assume that $f(\partial V)$ lies outside $B$. Then we claim that $U:=f^{-1}(\inte(B))\cap V$ is the desired neighborhood, that is $f(U)\cap H=\inte(D)\cap H$ is convex. To this end it suffices to check that $H\cap D$ is convex, since $H\cap\inte(D)=H\cap D\cap\inte(B)$, and if any set $X\subset B$ is convex, then so is $X\cap\inte(B)$. 

To show that $H\cap D$ is convex, 
 let $K':=B\cap K$. Note that $D\subset\p K'$, and $\p K'-D$ is an open subset of $\p B$; see Figure \ref{fig:BK}.
 \begin{figure}[h]
\begin{overpic}[height=1.1in]{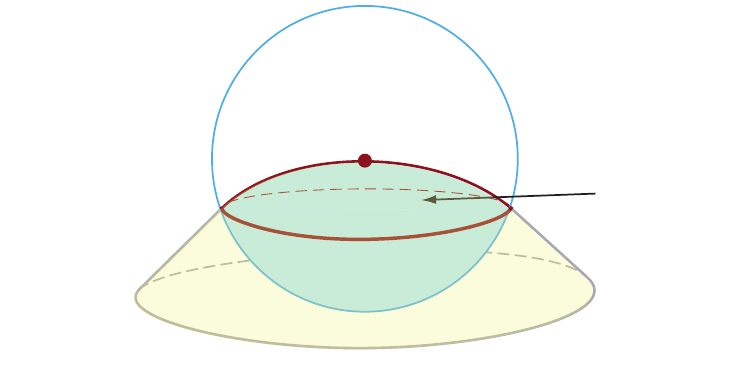}
\put(45,32){\Small$f(q)$}
\put(80,23){\Small$D$}
\put(47,13){\Small$K'$}
\put(24,9){\Small$K$}
\put(27,40){\Small$B$}
\end{overpic}
\caption{}
\label{fig:BK}
\end{figure}
We claim that $H\cap D=H\cap K'$, which is all we need, since $K'$, and therefore $H\cap K'$ is convex. To establish this claim recall that  $H$ is a support plane of $K$ and therefore of $K'$, since $K'\subset K$. So 
$$
H\cap K'=H\cap \p K'=(H\cap D)\cup (H\cap(\p K'-D)).
$$
Note that  if $H$ intersects $\p K' -D$, then $H$ will be a support plane of $\p K'-D$, which is  open in $\p B$. Thus $H$ will be a support plane of $B$. But this is not possible because $f(q)$, which by assumption lies on $H$, is in the interior of $B$. So 
$H\cap(\p K'-D)=\emptyset$. Consequently $H\cap K'=H\cap D$ as claimed. 

\emph{(Case 2)}
It remains to  consider the case where $q\in\p M$.  Again let $V$ be a convex neighborhood of $q$ with associated body $K$. Recall that, since $q\in \p M$, $K=\conv(f(V))$, by definition. Thus, since $f(V)$ lies on one side of $H$,  $K$ lies on one side of $H$ as well.   Let $B$ be a small ball centered at $f(q)$,  and set $D:=B\cap\p K$ as in the previous case. 
Since $\p V$ coincides with a segment of $\p M$ near $q$, and $\f$ is $\C^1$, we may assume that $\p B$ and consequently $\p D$ intersect  $f(\p V)$ at only two points. So $\Gamma:=f(\p V)\cap D$ determines a pair of regions in $D$, one of which is $D':=D\cap f(V)$, see Figure \ref{fig:DandD}. Once again we claim that $U:=f^{-1}(\inte (B))\cap V$ is the desired neighborhood, that is $H\cap f(U)=H\cap D'\cap\inte(B)$ is  convex. To this end,   as we discussed above, it suffices to check that  $H\cap D'$ is convex, which we claim is the case.

\begin{figure}[h]
\begin{overpic}[height=0.9in]{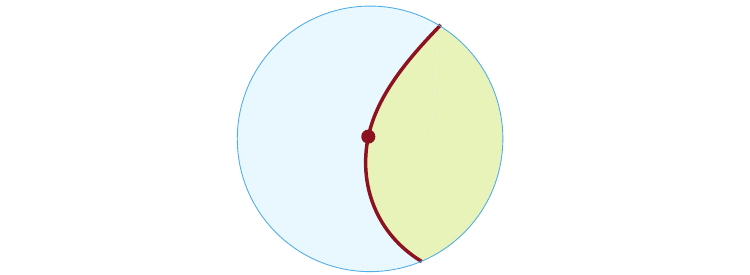}
\put(38,18){\Small$f(q)$}
\put(57,16){\Small$D'$}
\put(26.5,11){\Small$D$}
\put(46.5,4){\Small$\Gamma$}
\end{overpic}
\caption{}
\label{fig:DandD}
\end{figure}

To establish the last claim, first suppose that $H\cap D$ has interior points in $H$. Then we show that $H\cap D'=H\cap D$, which is all we need, since $H\cap D$ is convex as we showed in the previous case. Note that
$H\cap D'\subset H\cap D$, since $D'\subset D$. So it remains to check that $H\cap D\subset H\cap D'$, or more simply that $H\cap D\subset D'$. To this end first note that
if $H\cap D$ has interior points in $H$, 
then it is homeomorphic
  to a disk. Consequently, if $H\cap D$ were to have points in the interiors of both sides of $\Gamma$ in $D$, then the interior of $H\cap D$ would have to intersect $\Gamma$. This would in turn imply that there exists a neighborhood of a point of $\p M$ which is mapped by $f$ into a plane, which would violate the local nonflatness assumption. So $H\cap D$ has to lie on one side of $\Gamma$. To show that this side is $D'$, we just need to check that 
  $H\cap D$ contains an interior point of $D'$, or more simply that $H\cap\inte(D')\neq\emptyset$. This is indeed the case since $q\in U\cap A_0\subset\cl(A\cap\inte(M))$. Thus, since $U$ is open, $U\cap A\cap\inte(M)\neq\emptyset$. Let $q'\in U\cap A\cap\inte(M)=(U-\p M)\cap A$.
  Then $f(q')\in f(U-\partial M)\cap H\subset\inte(D')\cap H$, as desired. 
  
Finally we consider the case where $H\cap D$ has no interior points in $H$. Then $H\cap D$ is either a point or a line segment, since  it is convex. If $H\cap D$ is a point, then so is $H\cap D'$, since $D'\subset D$. In particular $H\cap D'$ is convex  as claimed.
Suppose then that $H\cap D$ is a line segment, say $L$. Then $H\cap D'=H\cap D\cap D'=L\cap D'$. Thus we just need to check that $L\cap D'$ is connected, since a connected subset of a line is convex. 
To see this first note that if $L$ is transversal to $\Gamma$ at $f(q)$, there is a neighborhood of $f(q)$ in $\Gamma$ which intersects $L$ only at $f(q)$. By Lemma \ref{lem:Gamma-L}, the same holds as well when $L$ is tangent to $\Gamma$ at $f(q)$. 
 Thus by choosing $B$ small enough, we can make sure that $L$  meets $\Gamma$ only at $f(q)$. It follows then that the portion of $L$ lying in $D'$ consists either of all of $L$, or one of the two subsegments of it determined by $f(q)$, or just $f(q)$ itself. In all these cases, $L\cap D'$ will be connected as claimed.
\end{proof}

Now we can prove the main result of this section:
 
\begin{proof}[Proof of Theorem \ref{thm:flatcap}]
By Lemma \ref{lem:subset} it suffices to show that $A_0$ is a cap. To this end, we will check that $A_0$ satisfies the three conditions of Proposition \ref{lem:3conditions}. By Lemma \ref{lem:HA}, $A_0$ already satisfies condition (iii). To  check the other conditions, recall that, by Lemma \ref{lem:U-A}, every point $q\in A_0$ has a convex neighborhood $U$ such that $f(U)\cap H$ is convex. We claim that if $U$ is a such a neighborhood, then
\begin{equation}\label{eq:flatcap1}
f(U)\cap H=f(U\cap A_0).
\end{equation}
If this equality holds, then $f(U\cap A_0)$ is convex, which means that condition (i) is met. Furthermore, $f(U-A_0)=f(U-U\cap A_0)=f(U)-f(U\cap A_0)=f(U)-f(U)\cap H=f(U)-H$. So $f(U-A_0)$ is disjoint from $H$, which means that condition (ii) holds as well. Thus it suffices to establish \eqref{eq:flatcap1}. 

To show that \eqref{eq:flatcap1} holds,
let $y\in f(U)\cap H$. Then there is a point $x\in U\cap f^{-1}(H)$ such that $f(x)=y$. Since $f(U)\cap H$ is connected, then so is $U\cap f^{-1}(H)$. It follows then that $U\cap f^{-1}(H)=U\cap A$, because by definition $A$ is a component of $f^{-1}(H)$; and, furthermore, $U\cap A\neq\emptyset$, since $q\in U\cap A_0\subset U\cap A$.
Thus $x\in U\cap A$, and therefore $y=f(x)\in f(U\cap A)$. So  $f(U)\cap H\subset f(U\cap A)$. As the reverse inclusion is trivial, we conclude that 
\begin{equation}\label{eq:flatcap2}
 f(U)\cap H= f(U\cap A).
 \end{equation}
 In particular, $f(U\cap A)$ is convex, since $f(U)\cap H$ is convex.
 It remains to show that $U\cap A=U\cap A_0$. This equality, together with \eqref{eq:flatcap2} would establish 
 \eqref{eq:flatcap1}, which would in turn complete the proof. So our last claim will be
\begin{equation}\label{eq:flatcap3}
 U\cap A=U\cap A_0.
 \end{equation}
 To see this 
 note  that $U\cap A$ is homeomorphic to either a point,  a disk, or a line segment, since $f(U\cap A)$ is convex. 
  If $U\cap A$ is a point, then so is $A$, which yields  that $A=\{p\}=A_0$ and we are done. So we may assume that $U\cap A$ is either a line segment or  a disk.
 
 If $U\cap A$ is a disk, then each point of it is a limit of points of $A$ in the interior of $M$. 
 The same also holds when $U\cap A$ is a segment, say $L$, because $L\cap \partial M$ cannot have any interior points in $\partial M$, since $f|_{\partial M}$ has no inflections by assumption.
 Thus 
 $$
 U\cap A = U\cap \cl(A\cap\inte(M)).
 $$
In particular  $U\cap \cl(A\cap\inte(M))$ is connected, since $U\cap A$ is connected.  This in turn yields that 
 $$
 U\cap \cl(A\cap\inte(M))=U\cap A_0,
 $$
  since $A_0$ has a point $q$ in $U$, and $A_0$ is a component of $\cl(A\cap\inte(M))$ by definition.
 The last two displayed expressions yield \eqref{eq:flatcap3}, which completes the proof.
 \end{proof}

\section{Structure of General Caps}\label{sec:basic}
Here we develop a number of fundamental properties of caps which are required in this work. These observations culminate in a comprehensive characterization, Theorem \ref{thm:caps}, which subsumes Theorem \ref{thm:flatcap}   proved in the last section. 

\subsection{Extension and uniqueness} First we establish a pair of basic  properties of caps which had been mentioned in Section \ref{subsec:capdef}:

 \begin{prop}[The extension property]\label{prop:capextend}
Let $C\subset M$ be a cap, and $H^+$ be a half-space of $C$. Then there exists a neighborhood $U$ of $C$ in $M$ such that 
$$
f(U-C)\subset \inte(H^-).
$$
 In particular, $C$ is a component of $f^{-1}(H^+)$.
\end{prop}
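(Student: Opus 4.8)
The plan is to treat the nondegenerate and degenerate cases separately, since in the nondegenerate case the statement really is the content of the extension property (condition (ii) is automatic), whereas in the degenerate case it is essentially built into the definition and only needs to be upgraded to a one-sided statement.

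\medskip
\textbf{Nondegenerate case.} Suppose $C$ is nondegenerate with plane $H$ and half-space $H^+$, so that $C=\cl(\p K\cap\inte H^+)$ for some convex body $K$, and $f$ maps $C$ homeomorphically onto it. First I would cover $C$ by convex neighborhoods $U_q$, $q\in C$, and show that for each $q$ the set $f(U_q)$ lies on one side of $H$: for interior $q$ this follows from the invariance of domain (as in Lemma~\ref{lem:HA}, since $f(U_q)$ is then open in $\p K$ and $H$ supports $\p K$ locally hence globally), and for boundary $q$ it follows since $K_q=\conv(f(U_q))$ lies on one side of $H$ by the local nonflatness assumption together with the fact that $f(\p C)\subset H$. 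Next I would argue, exactly as in the proof of Proposition~\ref{lem:3conditions} via a connectedness argument along $\p C$, that all the relevant $U_q$ lie on the \emph{same} side of $H$, and that this side is $H^-$; the point is that $f(\inte C)\subset\inte H^+$ pins down which side the neighborhoods of boundary points of $C$ must fall on. Then I would take $U:=\bigcup_q U_q$ shrunk slightly so that $f(U)$ stays in a neighborhood of $f(C)$ where $\p K$ is a graph over $H$, and conclude $f(U-C)\subset\inte(H^-)$: a point of $U-C$ maps either into $\inte H^-$ or onto $H$, but the latter would force that point into $C=\cl(\p K\cap\inte H^+)$ (one-sidedness of $\p K$ near $f(C)$) or violate local nonflatness at $\p M$, a contradiction. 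Finally, $f(U-C)\subset\inte H^-$ and $C\subset\cl H^+$ immediately give that $C$ is open \emph{and} closed in $f^{-1}(H^+)\cap U$, and since $C$ is connected it is a whole component of $f^{-1}(H^+)$.

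\medskip
\textbf{Degenerate case.} If $C$ is degenerate, then by the definition of a cap (condition (ii) in Section~\ref{subsec:capdef}) there already is a neighborhood $U_0$ of $C$ with $f(U_0-C)$ disjoint from a half-space $H^+$ of $C$; I would simply observe that ``disjoint from $H^+$'' means ``contained in $\inte H^-$'', so $U:=U_0$ works, and the componentwise conclusion follows as before since $f(U-C)\cap H^+=\emptyset$ makes $C$ relatively clopen in $U\cap f^{-1}(H^+)$, hence a component of $f^{-1}(H^+)$.

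\medskip
\textbf{Main obstacle.} The delicate point is the nondegenerate boundary case: controlling the behavior of $f(U_q)$ near a point $q\in C\cap\p M$, where $f$ need not be regular in $\inte M$ and where $\p C$ meets $\p M$. Here I expect to lean on the standing conventions for convex neighborhoods at boundary points (so that $K_q=\conv(f(U_q))$) and on the local nonflatness assumption, which is exactly what prevents $f(U_q)$ from straddling $H$; the connectedness/clopen argument along $\p C$ is then the mechanism that propagates one-sidedness from a single good point to all of $C$, mirroring the proof of Proposition~\ref{lem:3conditions}.
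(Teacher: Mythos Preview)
Your degenerate case is fine. The nondegenerate case, however, rests on a claim that is false: for $q\in\partial C\cap\inte(M)$, the image $f(U_q)$ does \emph{not} lie on one side of $H$. Indeed, any such $U_q$ contains points of $\inte(C)$, which map into $\inte(H^+)$, and (as we are trying to prove) points just outside $C$, which should map into $\inte(H^-)$. So $H$ is not a local support plane of $f$ at $q$; the surface genuinely crosses $H$ along $\partial C$. Your appeal to the reasoning of Lemma~\ref{lem:HA} does not apply: there the plane $H$ was already known to support $f$ locally at some point and the argument propagated that support along a set mapped \emph{into} $H$. Here $\partial C$ maps into $H$, but on either side of $\partial C$ the image lies on opposite sides of $H$, so there is nothing to propagate. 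The connectedness argument from Proposition~\ref{lem:3conditions} is likewise inapplicable, since condition~(iii) there is precisely the one-sidedness you are assuming rather than proving.

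The paper's argument goes in the opposite direction. It first establishes (Lemma~\ref{lem:capextend}, using Theorem~\ref{thm:flatcap}) that for $q\in\partial C$ with $V\cap\partial C\cap\inte(M)\neq\emptyset$, the associated body $K$ must intersect the interiors of \emph{both} sides of $H$: if not, $H$ would support $f$ locally at a point of $\inte(M)$, the component of $f^{-1}(H)$ through that point would be a degenerate cap containing $\partial C$, and gluing it to $C$ would produce a sphere in $M$. Once $K$ meets both sides of $H$, the curve $\partial K\cap H$ separates $\partial K$ into two caps, one in each half-space; a small ball argument then identifies the $H^+$ piece near $f(q)$ with $V\cap C$, forcing $f(V-C)\subset\inte(H^-)$. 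The mechanism is thus transversality of $\partial K$ to $H$, not support, and this is the missing idea in your outline.
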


To establish this property we need the following observation:

\begin{lem}\label{lem:capextend}
Suppose that $C$ is nondegenerate. Let $p\in\p C$, $V$ be a convex neighborhood of $p$ with associated body $K$, and suppose that $V\cap\p C\cap\inte(M)\neq\emptyset$.
Then $K$ intersects the interiors of both sides of $H$.
\end{lem}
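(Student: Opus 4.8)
The statement is local at a boundary point $p$ of the nondegenerate cap $C$, and the goal is to show the associated convex body $K=\conv(f(V))$ meets the interiors of both half-spaces $H^\pm$. Since $C$ is nondegenerate, its plane $H$ cuts $f(C)$ precisely along $\partial f(C)$, and $f(C)=\cl(\partial K_0\cap\inte H^+)$ for the convex body $K_0$ carrying $C$; so the side $H^+$ clearly contains interior points of $f(C)$, hence of $K$ (shrinking $V$ if necessary so that $f(V)$ stays near $f(C)$). The real content is the other side: I must produce a point of $f(V)$, or at least of $K=\conv(f(V))$, lying strictly in $\inte(H^-)$.

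\textbf{Key steps.} First I would use the hypothesis $V\cap\partial C\cap\inte(M)\neq\emptyset$ to pick a point $q\in\partial C$ with $q\in\inte(M)$ and $q\in V$. Because $q$ is an interior point of $M$, it has a convex neighborhood $W\subset V\cap\inte(M)$ with associated body $K_q$, and $f(W)$ is \emph{open} in $\partial K_q$ by invariance of domain. Now $f(q)\in\partial f(C)\subset H$, and $f(C)$ lies in $H^+$; the second step is to observe that $H$ cannot be a support plane of $K_q$ at $f(q)$. Indeed, if it were, then $H$ would be a global support plane of the convex body $K_q$, so all of $f(W)$ would lie in $H^+$; but then near $q$ the surface stays on the $H^+$ side of $H$ and touches $H$ along $f(W)\cap H$, forcing $f^{-1}(H)$ near $q$ to be the component $\partial f(C)$ passes through — this is fine so far, but the point is that $f(q)\in\inte\big(f(C)\big)$? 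No — $f(q)\in\partial f(C)$, and $\partial f(C)$ is a simple closed curve (a circle) bounding the disk $f(C)$ inside $H$; since $f(W)$ is a neighborhood of $f(q)$ in $\partial K_q$ and $f(C)$ only occupies one side of that circle, $f(W)$ must contain points of $\partial K_q$ on \emph{both} sides of $\partial f(C)$ within $H$, and the points on the far side cannot lie in $f(C)$ (which is all of $K_q\cap H$ near $f(q)$), contradicting that $f(W)\cap H$ is locally the full disk. Thus $H$ is not a support plane of $K_q$, so $K_q$ — and hence $K\supset K_q$? (not quite: $K_q$ need not be contained in $K$, but $f(W)\subset f(V)\subset K$, so $K\supset\conv(f(W))$, and $\conv(f(W))$ is not contained in $\partial K_q$) — meets the interiors of both sides of $H$, and therefore so does $K$.

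\textbf{Main obstacle.} The delicate point is the step showing $H$ fails to support $K_q$ at $f(q)$: one must carefully use that $f(q)$ is a \emph{boundary} point of the disk $f(C)$ inside the plane $H$, together with the fact that $f(W)\cap H$ is an open neighborhood of $f(q)$ in $H$ (from Theorem~\ref{thm:flatcap} applied at $q$, giving that $f^{-1}(H)$ near $q$ is a degenerate cap — a convex planar set — which agrees with $f(C)$ locally by the uniqueness property, since both are caps in $M$ with plane $H$ through $q$). If $H$ supported $K_q$, the surface near $q$ would lie on one side of $H$, so $f^{-1}(H)$ near $q$ would be exactly this convex planar patch, which has $f(q)$ on its relative boundary — but a convex set with a point on its relative boundary is not a neighborhood of that point, contradicting that $f(C)$ (being a disk) \emph{does} contain a neighborhood of each of its interior points while $f(q)$ lies on $\partial f(C)$; more precisely, $f(W)\cap H$ would be both a neighborhood of $f(q)$ in $H$ and contained in the convex disk $f(C)$, which is impossible since $f(q)\in\partial f(C)$. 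Once this contradiction is in hand, convexity of $K_q$ immediately gives interior points of $K_q$ strictly on the $H^-$ side, and these lie in $K$, completing the proof.
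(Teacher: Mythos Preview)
Your plan has the right opening move---pick $q\in V\cap\partial C\cap\inte(M)$, assume $H$ locally supports $f$ at $q$, and invoke Theorem~\ref{thm:flatcap} to produce a degenerate cap $D$ through $q$ with plane $H$---but the way you close the argument has two genuine gaps.

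First, you invoke the uniqueness property (Proposition~\ref{prop:capunique}) to compare $D$ with $C$. In the paper's logical order, the uniqueness property is proved \emph{after} the extension property (Proposition~\ref{prop:capextend}), and the extension property in turn relies on the present lemma. So calling on uniqueness here is circular.

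Second, your alternative local contradiction---that ``$f(W)\cap H$ would be a neighborhood of $f(q)$ in $H$''---is not justified. If $H$ supported $K_q$, the face $H\cap K_q$ could well be a point or a line segment; nothing you have said forces it to be two-dimensional. You also seem to want that points of $W\setminus C$ near $q$ map into $\inte(H^-)$, but that is precisely the content of the extension property you are building toward, so you cannot assume it.

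The paper closes the argument without uniqueness by a short global topological step. Having obtained the degenerate cap $D\ni q$ from Theorem~\ref{thm:flatcap}, note that $\partial C\subset f^{-1}(H)$ is connected and contains $q$, so $\partial C\subset D$. Since $\partial C\simeq\S^1$, the cap $D$ can be neither a point nor a segment; it must be a disk. Moreover $f(\inte(C))\cap H=\emptyset$ forces $\inte(C)\cap D=\emptyset$, so $\partial C\subset\partial D$, hence $\partial C=\partial D$ (both are circles). Then $C\cup D$ is a topological sphere in $M$, contradicting that $M$ is connected with $\partial M\neq\emptyset$. This replaces your local/uniqueness step with an argument that uses only Theorem~\ref{thm:flatcap} and elementary topology, avoiding the circularity.
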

\begin{proof}
Let $p\in V\cap\p C\cap\inte(M)$. Suppose, towards a contradiction, that  $K$ lies on one side of $H$. Then $H$ is a local support plane of $f$ at $p$. Let $D$ be the component of $f^{-1}(H)$ which contains $p$. Then $D$ is a cap by Theorem \ref{thm:flatcap}, since $p\in\inte(M)$. Further note that $\p C\subset D$, since $p\in\p C$, $\p C$ is connected, and $\p C\subset f^{-1}(H)$. So, since $\p C$ is homeomorphic to $\S^1$, $D$ cannot be a point or a segment and must therefore be a topological disk.
Further recall that $f(\inte(C))\cap H=\emptyset$ by the definition of nondegenerate caps. So $\inte(C)\cap D=\emptyset$.
This in turn yields that $\partial C\subset\partial D$.  Thus $\partial C=\partial D$ since both of these objects are homeomorphic to $\S^1$.   Consequently $C\cup D$ is a  topological sphere. But this is not possible since $M$ is  connected and $\p M\neq\emptyset$. 
\end{proof}

We can now establish the extension property:

\begin{proof}[Proof of Proposition \ref{prop:capextend}]
We may assume that $C$ is nondegenerate, since otherwise  $U$ exists by definition. It suffices to show that every  $p\in\p C$ has a neighborhood $V$ in $M$ such that $f(V-C)$ is disjoint from  $H^+$, because the union of all these neighborhoods, together with $C$, then yield the desired neighborhood $U$. 

 \begin{figure}[h]
\begin{overpic}[height=0.7in]{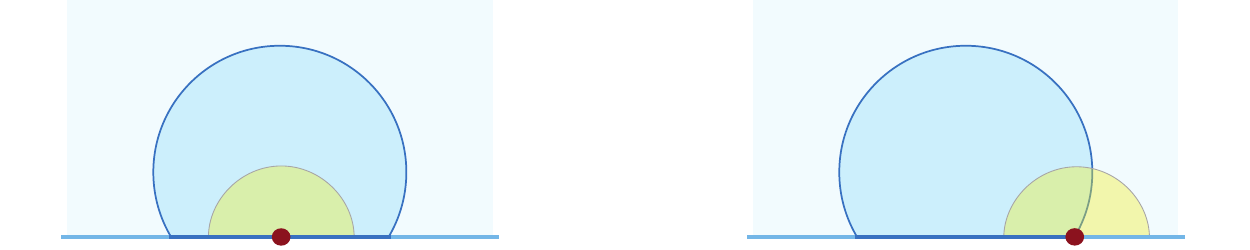}
\put(7,16){\Small$M$}
\put(21.5,-1.5){\Small$p$}
\put(21.5,3){\Small$V$}
\put(21.5,11){\Small$C$}
\put(-2,0){\Small$\p M$}
\put(63,16){\Small$M$}
\put(85,-1.5){\Small$p$}
\put(84.5,3){\Small$V$}
\put(76.5,11){\Small$C$}
\put(53,0){\Small$\p M$}
\end{overpic}
\caption{}
\label{fig:halfdisks}
\end{figure}

Let $V$ be a convex neighborhood of $p$ in $M$ with associated body $K$. If a 
 neighborhood of $p$ in $\p C$ lies in $\p M$, then we may choose $V$ so small that $V\subset C$, see the left diagram in Figure \ref{fig:halfdisks}. 
 Then $f(V-C)\cap H^+=\emptyset\cap H^+=\emptyset$, as desired. So we may assume  that $V\cap\p C\cap\inte(M)\neq\emptyset$, as shown in the right diagram in Figure \ref{fig:halfdisks}. Then $K$ has points in the interiors of both sides  of $H$ by Lemma \ref{lem:capextend}.

Since, by definition, $f$ is one-to-one on $C$,  which is compact, there exists a neighborhood $A$ of $C$ in $M$ such that $f$ is one-to-one on $\cl(A)$. Thus $f\colon A\to f(A)\subset\p K$ is a homeomorphism. For notational convenience  we will identify $A$ with $f(A)$, $C$ with $f(C)$, and suppress $f$ altogether henceforth. Further we may assume that $V\subset A$. What we need to show then is that  
\begin{equation}\label{eq:V-C}
(V-C)\cap H^+=\emptyset.
\end{equation}

Since $K$ intersects the interiors of both sides of $H$, $\partial K\cap H$ is a closed convex planar curve which separates $\partial K$ into a pair of regions including $(\partial K)^+:=\partial K\cap H^+$, see Figure \ref{fig:line}. Let $B$ be a ball centered at $p$. Set  $W:=\inte(B)\cap\partial K$, and $W^+:=W\cap (\partial K)^+$. Then
 \begin{equation}\label{eq:W-W+}
 W-W^+\subset\partial K-(\partial K)^+\subset\inte (H^-).
 \end{equation}
 
 \begin{figure}[h]
\begin{overpic}[height=1.2in]{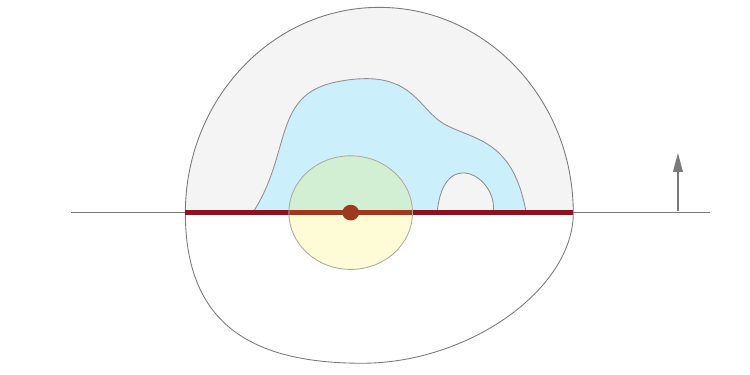}
\put(45,17.5){\Small$p$}
\put(41,32){\Small$V\cap C$}
\put(55,38.5){\Small$\p K^+$}
\put(5,19){\Small$H$}
\put(44,23.5){\SMALL$W^+$}
\put(35,12){\Small$W$}
\put(66,3){\Small$\partial K$}
\put(81,24){\Small$H^+$}
\end{overpic}
\caption{}
\label{fig:line}
\end{figure}

 Note that $V\cap C$ is a neighborhood of $p$ in $(\partial K)^+$; because $V\cap C$ is a neighborhood of $p$ in $C$, $V\cap C\subset\partial K\cap H^+=(\partial K)^+$, and $V\cap\p C\subset\partial K\cap H$. Thus choosing $B$ sufficiently small, we can make sure that $W^+\subset V\cap C$. So $W^+\subset W\cap V\cap C$, since $W^+\subset W$. Conversely, $W\cap V\cap C\subset W\cap C\subset W\cap H^+=W^+$. Thus 
 \begin{equation}\label{eq:WcapVcapC}
 W^+=W\cap V\cap C.
\end{equation}
 Next note that
 $
 W\cap V=\inte(B)\cap\partial K\cap V=\inte(B)\cap V=\inte(B)\cap A\cap V.
 $
 Thus $W\cap V$ is a neighborhood of $p$ in $A$, since $\inte(B)\cap A$ and $V$ are both  neighborhoods of $p$ in $A$. So we may reset $V:=W\cap V$. This together with \eqref{eq:W-W+} and \eqref{eq:WcapVcapC} yields
 $$
 V-C=V-V\cap C\subset W-V\cap C=W-W\cap V\cap C=W-W^+\subset\inte(H^-),
 $$
which in turn immediately yields \eqref{eq:V-C} and thereby completes the proof.
  \end{proof}

Using the extension property established above, we next prove the other property of caps which is frequently used in this work:
 \begin{prop}[The uniqueness property]\label{prop:capunique}
 Let $C_1$,  $C_2\subset M$ be a pair of caps. Suppose that $C_1$, $C_2$ share a plane, and $C_1\cap C_2\neq\emptyset$. Then $C_1=C_2$.
 \end{prop}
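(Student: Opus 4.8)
The plan is to split into cases according to whether the shared plane $H$ makes $C_1$, $C_2$ degenerate or nondegenerate, and in each case to show that a nonempty intersection forces $C_1\subset C_2$ (and, by symmetry, the reverse).

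\medskip
\noindent\emph{Degenerate case.} Suppose first that $f(C_1)$ (hence $f(C_2)$, since $C_1\cap C_2\neq\emptyset$ and both meet $H$) lies in $H$. Pick $p\in C_1\cap C_2$. Applying Theorem \ref{thm:flatcap} is not directly possible if $p\in\p M$, but the argument of Lemma \ref{lem:subset} is: since $C_2$ is a degenerate cap with plane $H$, there is a neighborhood $U$ of $C_2$ with $f(U-C_2)\cap H=\emptyset$; as $f(C_1)\subset H$ this forces $C_1\cap U\subset C_2$, so $C_1\cap C_2$ is open in $C_1$. It is also closed in $C_1$ (both caps are compact), nonempty, and $C_1$ is connected, so $C_1\subset C_2$; symmetrically $C_2\subset C_1$. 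Hence $C_1=C_2$.

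\medskip
\noindent\emph{Nondegenerate case.} Now suppose $C_1$, $C_2$ are nondegenerate with common plane $H$. Their half-spaces could in principle differ, but if $C_1$ has half-space $H^+$ and $C_2$ has half-space $H^-$, then near a common point $p$ the extension property (Proposition \ref{prop:capextend}) forces $f$ to map a neighborhood of $p$ into both $\cl(H^+)$ and $\cl(H^-)$ off the caps, hence into $H$, contradicting local nonflatness when $p\in\p M$, and contradicting invariance of domain / convexity of the associated body when $p\in\inte M$; so $C_1$, $C_2$ share the half-space $H^+$. By Proposition \ref{prop:capextend}, $C_1$ is a component of $f^{-1}(H^+)$ and so is $C_2$. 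Two components of the same set which intersect are equal, so $C_1=C_2$.

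\medskip
\noindent The only remaining subtlety is to rule out a ``mixed'' configuration in which one of the caps is degenerate and the other nondegenerate but with the same plane $H$. If $C_1$ is degenerate (so $f(C_1)\subset H$) and $C_2$ is nondegenerate with plane $H$, then $f(\inte C_2)\cap H=\emptyset$, so $C_1\cap\inte C_2=\emptyset$ and $C_1\cap C_2\subset\p C_2\subset f^{-1}(H)$; but then, running the degenerate argument above with the roles reversed (using the neighborhood $U$ of the degenerate cap $C_1$ on which $f(U-C_1)$ avoids $H$, together with $f(\p C_2)\subset H$), one gets $\p C_2\cap U\subset C_1$, and pushing this around $\p C_2\cong\S^1$ shows $\p C_2\subset C_1$, forcing $f(C_1)$ to contain a closed convex curve and hence have interior in $H$; applying Proposition \ref{prop:capextend} to $C_2$ then traps a neighborhood of a point of $\p M$ (or an interior point) inside $H$, contradicting local nonflatness (resp. invariance of domain). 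I expect this mixed case to be the main obstacle, since it requires combining both defining properties of caps rather than a single clean dichotomy; everything else is a routine ``clopen in a connected set'' or ``components of $f^{-1}(\cdot)$'' argument built directly on Theorem \ref{thm:flatcap} and Proposition \ref{prop:capextend}.
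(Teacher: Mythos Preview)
Your degenerate case is correct and in fact tidier than the paper's: the clopen argument via $f(U-C_2)\cap H=\emptyset$ and $f(C_1)\subset H$ yields $C_1=C_2$ directly, with no need to track half-spaces. The same-half-space half of your nondegenerate case is also correct and identical to the paper's opening move.

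The real gap is in ruling out \emph{opposite} half-spaces when both caps are nondegenerate. Your claim that the extension property ``forces $f$ to map a neighborhood of $p$ into both $\cl(H^+)$ and $\cl(H^-)$ off the caps, hence into $H$'' is not what the extension property gives. What you actually get from $f(V\setminus C_1)\subset\inte(H^-)$ and $f(V\setminus C_2)\subset\inte(H^+)$ is only $V\subset C_1\cup C_2$; this does \emph{not} force $f(V)\subset H$, and no contradiction with local nonflatness or invariance of domain follows --- a convex body can perfectly well have its boundary cut by $H$ into two nondegenerate caps. The paper handles this differently: since $C_i$ is a component of $f^{-1}(H^\pm)$ (Proposition~\ref{prop:capextend}), and $\partial C_1$ is connected, contains $p$, and lies in $f^{-1}(H)\subset f^{-1}(H^-)$, one gets $\partial C_1\subset C_2$, hence $\partial C_1\subset\partial C_2$; by symmetry $\partial C_1=\partial C_2$, so $C_1\cup C_2$ is a topological sphere in $M$, contradicting $\partial M\neq\emptyset$.

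Your mixed case has a similar unfinished step. You correctly show $\partial C_2\subset C_1$, but ``applying Proposition~\ref{prop:capextend} to $C_2$ then traps a neighborhood \dots\ inside $H$'' is not justified as written. The clean fix is to run your own clopen argument once more in the other direction: the extension neighborhood $U_2$ of $C_2$ satisfies $f(U_2-C_2)\subset\inte(H^-)$, so $C_1\cap U_2\subset C_2$ (since $f(C_1)\subset H$), whence $C_1\subset C_2$ and therefore $C_1\subset\partial C_2$. Combined with $\partial C_2\subset C_1$ this gives $C_1=\partial C_2\cong\S^1$, impossible for a degenerate cap. This is exactly the paper's Cases~3--4.
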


\begin{proof}
 Let $H$ be a common plane, and $p$ be a common point of $C_i$, $i=1$, $2$. Further let $H^+$ be the corresponding half-space of $C_1$. If $H^+$ is also a half-space of $C_2$, then, by Proposition \ref{prop:capextend}, $C_i$ are each the component of $f^{-1}(H^+)$ which contains $p$, and thus $C_1=C_2$, as desired. 
 Suppose then, towards a contradiction, that $H^+$ is  not the half-space of $C_2$.   Then $H^-$ must be the half-space of $C_2$. So $C_2$ is the component of $f^{-1}(H^-)$ which contains $p$, again by Proposition \ref{prop:capextend}.

Let $\tilde C_i:= C_i\cap f^{-1}(H)$. Then $\tilde C_i$ is connected. Indeed, if $C_i$ is degenerate, then $\tilde C_i=C_i$, and if $C_i$ is nondegenerate, then $\tilde C_i=\p C_i$. Further, note that $f(p)\in f(C_1\cap C_2)=f(C_1)\cap f(C_2)\subset H^+\cap H^-=H$. Thus $p\in C_i\cap f^{-1}(H)=\tilde C_i$.
So it follows that $\tilde C_1\subset C_2$, since $\tilde C_1$ is  connected, contains $p$, and lies in $f^{-1}(H)\subset f^{-1}(H^-)$, while $C_2$ is the component of $f^{-1}(H^-)$ containing $p$. But we also have $\tilde C_1\subset f^{-1}(H)$. So $\tilde C_1\subset C_2\cap f^{-1}(H)=\tilde C_2$.
By symmetry, we also have $\tilde C_2\subset \tilde C_1$. So  $\tilde C_1=\tilde C_2$.

There are four possible cases to consider. (\emph{Case 1}) If $C_i$ are both degenerate, then $C_1=\tilde C_1=\tilde C_2=C_2$, which is a contradiction, since $C_i$ have different half-spaces.
(\emph{Case 2}) If $C_i$ are both nondegenerate, then $\p C_1=\tilde C_1=\tilde C_2=\p C_2$. On the other hand, $\inte(C_1)\cap\inte (C_2)=\emptyset$, since $f(\inte(C_1))\subset\inte (H^+)$, and $f(\inte(C_2))\subset\inte (H^-)$. Thus $C_1\cup C_2$ is  homeomorphic to $\S^2$, which is not possible, since $M$ is connected and $\p M\neq\emptyset$.
(\emph{Case 3}) If $C_1$ is degenerate and $C_2$ is not, then $C_1=\tilde C_1=\tilde C_2=\p C_2$, which is again a contradiction, since $\p C_2$ is a topological circle, while $C_1$ can only be homeomorphic to a point, an interval, or a disk. 
(\emph{Case 4}) If $C_2$ is degenerate and $C_1$ is not, then, by symmetry, $C_2=\p C_1$, which is not possible, as discussed in the previous case. Thus every case leads to a contradiction as desired.
\end{proof}

 \subsection{Distortion}\label{sec:distortion}

Let $X\subset \R^n$ be a path connected set. For any pair of points $p$, $q\in X$, we may define $d(p,q)$, the intrinsic distance of $p$ and $q$ in $X$, as the infimum of the lengths of all paths in $X$ which join $p$ and $q$. Then the  \emph{distortion} of $X$, in the sense of Gromov \cite[p. 11]{gromov:metric}, is defined as:
$$
\distort (X):=\sup\left\{\frac{d(x,y)}{\|x-y\|}\,\Big|\, x, y\in X, x\neq y\right\}.
$$
See \cite{pardon2,gromov&guth,freedman&krushkal} for recent developments on the distortion.
In this section we obtain an estimate for the distortion of caps $C$ in $\R^n$. Of course if $C$ is degenerate, i.e., it is a convex subset of a hyperplane, then its distortion is $1$, and thus we only need  to consider the case where $C$ is nondegenerate (the definition in Section \ref{subsec:capdef} for nondegenerate caps of $\R^3$ extends directly to $\R^n$). 
Let $o$ be any point in the interior of the region bounded by $\p C$ in $H$.  We define the \emph{inradius} (resp. \emph{outradius}) of $C$ with respect to $o$ as the radius of the largest (resp. smallest) hemisphere in $H^+$ centered at $o$ which is contained in (resp. contains)  $\conv(C)$.

\begin{prop}\label{prop:distort}
Let $C$ be a nondegenerate  cap in $\R^n$, $o$ be a point in the hyperplane  $H$ of $C$ which is contained in the interior of the region bounded by $\p C$, and $r$, $R$ denote, respectively, the inradius and outradius of $C$ with expect to $o$. Then 
$$
\distort(C)\leq 4(2+\pi)\frac{R}{r}.
$$
\end{prop}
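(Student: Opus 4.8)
\textbf{Proposal for the proof of Proposition \ref{prop:distort}.}
The plan is to bound the intrinsic distance $d(x,y)$ between two points $x,y\in C$ by constructing an explicit path in $C$ joining them whose length is controlled by $\|x-y\|$ times a constant multiple of $R/r$. The natural device is central projection of $\partial K$-portions, or rather projection onto the hemisphere: fix the point $o\in H$ in the interior of the region bounded by $\partial C$, and let $S_r\subset H^+$ be the hemisphere of radius $r$ centered at $o$, which by hypothesis satisfies $S_r\subset\conv(C)$. First I would reduce to two regimes depending on whether $\|x-y\|$ is large or small compared to $r$. If $\|x-y\|\geq r/2$, say, then since $C\subset\conv(C)$ has diameter at most $2R$ (it sits inside the hemisphere of radius $R$), we get $d(x,y)\le\length(C)\le$ (a dimensional constant)$\cdot R$, but more simply one shows $d(x,y)$ is at most a constant times $R$ because $C$ is a convex cap and any two of its points can be joined through a bounded-length arc on $\partial\conv(C)$; dividing by $\|x-y\|\ge r/2$ gives the bound $\le cR/r$. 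So the real content is the case $\|x-y\|$ small.

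For the small-distance case the key geometric idea is \emph{radial retraction onto $C$ from a center inside the inscribed hemisphere}. Because $\conv(C)$ contains the hemisphere $S_r$ and is contained in the hemisphere $S_R$, the ``gradient'' of the radial-type parametrization of $\partial\conv(C)$ over the flat disk $H\cap\conv(C)$ is controlled: writing $\partial\conv(C)\setminus H$ as a graph of a concave function $g$ over a convex domain $\Omega\subset H$, the Lipschitz constant of $g$ away from $\partial\Omega$ is bounded, and more importantly the nearest-point or central projection $\pi$ from the half-space onto $C=\cl(\partial K\cap\inte H^+)$ expands distances by at most a factor comparable to $R/r$. Then I would join $x$ to $y$ by: go from $x$ radially (from $o$, or from a point just above $o$ at height $r$) down toward $H$ a little, travel along a nearly-flat route, and come back up to $y$; projecting the corresponding short segment $[x,y]$ (or a two-segment polygonal path through a point near the apex of $S_r$) back onto $C$ via $\pi$ yields a path in $C$ of length at most $\distort$-constant times $\|x-y\|$. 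The factor $4(2+\pi)$ should emerge by carefully tracking: a factor $R/r$ from the graph/projection Lipschitz estimate, a factor $\pi/2$ (or $\pi$) from replacing a chord on the hemisphere by the corresponding great-circle arc, and small integer factors from the polygonal detour through the region above $S_r$.

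Concretely, I expect the cleanest route is: (1) show $\conv(C)$ lies between the two concentric hemispheres $S_r\subset\conv(C)\subset S_R$ over a common flat disk; (2) prove that the ``vertical'' projection (or the projection along rays from the center $o$) $q\mapsto q^\ast$ sending a point of the solid region $\conv(C)$ up to the corresponding point of $C$ is Lipschitz with constant $\le 2R/r$ — this is the one genuine computation, comparing the slope of the graph of $C$ to that of the inscribed and circumscribed hemispheres; (3) given $x,y\in C$ with $\|x-y\|$ small, note the straight segment $[x,y]$ lies in $\conv(C)$, so its image under the projection is a path in $C$ from $x$ to $y$ of length $\le (2R/r)\|x-y\|$ when $[x,y]$ does not pass too near $H$; (4) handle the remaining cases — $x,y$ close to $\partial C$, or $[x,y]$ dipping below the zone where the projection is well-behaved — by first pushing $x,y$ slightly toward the apex of $S_r$ along $C$ (a move of controlled length, contributing the additive $2$ and the $\pi$) and then applying (3); (5) assemble the constants. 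The main obstacle, and where I'd spend the most care, is step (2)/(4): making the projection globally defined and uniformly Lipschitz all the way out to $\partial C$, since near $\partial C$ the cap becomes ``vertical'' relative to $H$ and naive vertical projection degenerates — this is exactly why one must detour through the interior near the apex of the inscribed hemisphere rather than project straight down, and why the clean bound carries the $R/r$ factor rather than something worse.
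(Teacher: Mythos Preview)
Your large-distance case is fine in spirit and matches the paper's: when $\|x-y\|\ge r/4$, any path on $C$ has length at most $(2+\pi)R$ (by projecting onto $\partial\conv(C)$ via the nearest-point map), so the ratio is at most $4(2+\pi)R/r$.

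The small-distance case, however, has a real gap. Your whole plan rests on step~(2): that radial projection from $o$ (or from a nearby point) onto $C$ is Lipschitz with constant $\le cR/r$. This is exactly where the argument breaks. Near $\partial C$ the cap can be arbitrarily steep relative to $H$, and the Lipschitz constant of either vertical or radial projection blows up there; you acknowledge this but your proposed fix --- ``push $x,y$ slightly toward the apex of $S_r$ along $C$'' --- is not quantified. If the push is by a fixed amount (say of order $r$), the detour is far too long when $\|x-y\|$ is tiny; if the push is proportional to $\|x-y\|$, you have not explained why that small displacement moves you into a region where the projection has Lipschitz constant $\le cR/r$. In fact it need not: a point at distance $\|x-y\|$ from $\partial C$ can still sit on a portion of $C$ that is nearly vertical. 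So as written the scheme does not close.

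The paper avoids this entirely by a different device: it \emph{slices} $C$ with the $2$-plane $\Pi$ through $x$, $y$, and $o$. The intersection $\Gamma=\Pi\cap C$ is then a planar convex arc sandwiched between two half-disks of radii $r$ and $R$, and the problem becomes two-dimensional. For $\|x-y\|<r/4$, one takes support lines $Q_x,Q_y$ of $\Gamma$ at $x$ and $y$; because the inscribed half-disk has radius $r$, these lines meet at a point $o''$ on the far side of the chord $xy$, and the sub-arc $\overset{\frown}{xy}$ is inscribed in the triangle $o''xy$. An elementary similar-triangles computation bounds the height of this triangle by a constant times $(R/r)\|x-y\|$, whence $\length(\overset{\frown}{xy})\le 2\|o''y\|\le cR/r\cdot\|x-y\|$ by the short-map lemma. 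No projection onto $C$ is needed, and the boundary degeneracy never arises because the convex-arc picture is uniform in $\Pi$.

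If you want to salvage the projection idea, you would need to project from a point $o'$ in the interior of $\conv(C)$ (say at height $r/2$ above $o$) and prove a uniform bi-Lipschitz estimate for the radial map onto $\partial\conv(C)$ --- but then you must also rule out that the image of $[x,y]$ dips into the flat face $H\cap\conv(C)$ rather than staying on $C$. The slicing argument is both simpler and sharper.
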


To obtain this estimate, we will make use of the following well-known fact. Let $K\subset\R^n$ be a convex body. Then to each point $x\in\R^n-\inte(K)$, there is associated a unique point $\pi(x)\in\p K$ which minimizes the distance between $x$ and points of $\p K$. Thus we obtain a mapping $\pi\colon (\R^n-\inte(K))\to\p K$ which is known as the \emph{nearest point projection}.  Then for every pair of points $x$, $y\in\R^n-\inte(K)$, $\|\pi(x)-\pi(y)\|\leq \|x-y\|$, see \cite[Thm. 1.2.2]{schneider:book}. In particular it follows that for any rectifiable curve segment $\Gamma\subset \R^n-\inte(K)$, $\length(\pi(\Gamma))\leq \length(\Gamma)$. So we may say that $\pi$ is \emph{distance reducing} or is a ``short map". For future reference, let us record that:

\begin{lem}[\cite{schneider:book}]\label{lem:short}
For any convex body $K\subset\R^n$, the nearest point projection $\pi\colon (\R^n-\inte(K))\to\p K$ is distance reducing.
\end{lem}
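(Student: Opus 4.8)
The plan is to prove the sharper pointwise statement that $\pi$ is $1$-Lipschitz, i.e. $\|\pi(x)-\pi(y)\|\le\|x-y\|$ for all $x,y\in\R^n-\inte(K)$; the assertion about curve lengths then follows formally. First I would recall the variational characterization of the nearest point projection. For $x\in\R^n$ the point $p:=\pi(x)$ is the unique minimizer of $z\mapsto\|x-z\|^2$ over the closed convex set $K$ (existence by compactness of the intersection of $K$ with a large ball, uniqueness by strict convexity of the squared norm), and the first order optimality condition for this convex minimization is the \emph{obtuse angle inequality}
$$
\l x-p,\,z-p\r\le 0\qquad\text{for every }z\in K;
$$
geometrically, $K$ lies in the closed half-space through $p$ with outward normal $x-p$. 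When $x\notin\inte(K)$ one has $p\in\p K$, so the restricted map $\pi\colon(\R^n-\inte(K))\to\p K$ is exactly this projection.

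Next, given $x,y\in\R^n-\inte(K)$, set $p:=\pi(x)$ and $q:=\pi(y)$ and apply the inequality twice: with $z=q$ in the inequality for $x$, and with $z=p$ in the inequality for $y$, obtaining $\l x-p,\,q-p\r\le 0$ and $\l y-q,\,p-q\r\le 0$. Adding these and rearranging gives $\|p-q\|^2\le\l x-y,\,p-q\r$, whereupon the Cauchy--Schwarz inequality yields $\|p-q\|^2\le\|x-y\|\,\|p-q\|$, hence $\|p-q\|\le\|x-y\|$ (the case $p=q$ being trivial). This establishes the pointwise claim.

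Finally, to pass from the Lipschitz estimate to $\length(\pi(\Gamma))\le\length(\Gamma)$ for a rectifiable curve segment $\Gamma\subset\R^n-\inte(K)$, I would invoke the standard fact that post-composition with a $1$-Lipschitz map does not increase the length of a rectifiable curve: for any partition $t_0<\dots<t_m$ of the parameter interval, $\sum_i\|\pi(\Gamma(t_i))-\pi(\Gamma(t_{i-1}))\|\le\sum_i\|\Gamma(t_i)-\Gamma(t_{i-1})\|\le\length(\Gamma)$, and taking the supremum over partitions gives the conclusion. All of these steps are routine; the only point requiring any genuine care is the derivation of the obtuse angle inequality from the convexity of $K$, which is the heart of the argument, and which may simply be cited from \cite[Thm.\@ 1.2.2]{schneider:book}.
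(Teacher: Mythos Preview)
Your proof is correct and is exactly the standard argument; in fact the paper does not give its own proof of this lemma at all, but simply records it as a known fact with a citation to \cite[Thm.~1.2.2]{schneider:book}. What you have written is essentially the proof one finds there.
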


Now we are ready to establish the main result of this section:

\begin{proof}[Proof of Proposition \ref{prop:distort}]
Note that $\distort(C)=\distort(\inte(C))$ since  the ratio $d(x,y)/\|x-y\|$ depends continuously on $x$ and $y$. 
Let $x$, $y\in\inte(C)$ be distinct points. Since $C$ is star-shaped with respect to $o$, the points $x$, $y$, and $o$ are not colinear. Thus there exists a unique  ($2$-dimensional) plane $\Pi$ passing through $x$, $y$, and $o$. Further note that $\Pi\not\subset H$, since $x$, $y\in\inte(C)$. Consequently 
$
\Gamma:=\Pi\cap C
$
 is a convex arc (as opposed to a closed curve). In particular there exists a unique segment of $\Gamma$ bounded by $x$, $y$ which we denote by $\overset{\frown}{xy}$. Note that $d(x,y)\leq\length(\overset{\frown}{xy})$. Thus it suffices to show that
 $$
 \frac{\length(\overset{\frown}{xy})}{\|xy\|}\leq 4(2+\pi)\frac{R}{r},
 $$
 where $xy$ denotes the line segment connecting $x$ and $y$. Let $B_r$, $B_R$ denote the half-balls centered at $o$ such that $B_r\subset\conv(C)\subset B_R$, then $D_r:=\Pi\cap B_r$, and $D_R:=\Pi\cap B_R$ are half-disks centered at $o$ such that 
$$
D_r\subset\conv(\Gamma)\subset D_R.
$$ 
Then Lemma \ref{lem:short} implies that
$$
\length(\Gamma)\leq \length (\p\conv(\Gamma))\leq\length(\p D_R)= (2+\pi) R.
$$
Thus if $\|xy\|\geq r/4$, then
$$\frac{\length(\overset{\frown}{xy})}{\|xy\|}\leq\frac{\length(\Gamma)}{r/4}\leq\frac{(2+\pi)R}{r/4}=4\(2+\pi\)\frac{R}{r},$$
as desired.
So we may assume that
$$
\|xy\|< \frac{r}{4}.
$$

Let $L$ be the line passing through $x$ and $y$, see Figure \ref{fig:annulus}, which depicts the configuration in the plane $\Pi$. 
If $o$ lies on $L$, then it has to be between $x$ and $y$, because  $\Gamma$ is star-shaped with respect to $o$. Consequently, $\|xy\|=\|x-o\|+\|o-y\|\geq 2r$, which is not possible.
Thus $o$ lies in  the interior of one of the sides of $L$ in $\Pi$, say 
$
o\in\inte(L^-).
$
 \begin{figure}[h]
\begin{overpic}[height=1.25in]{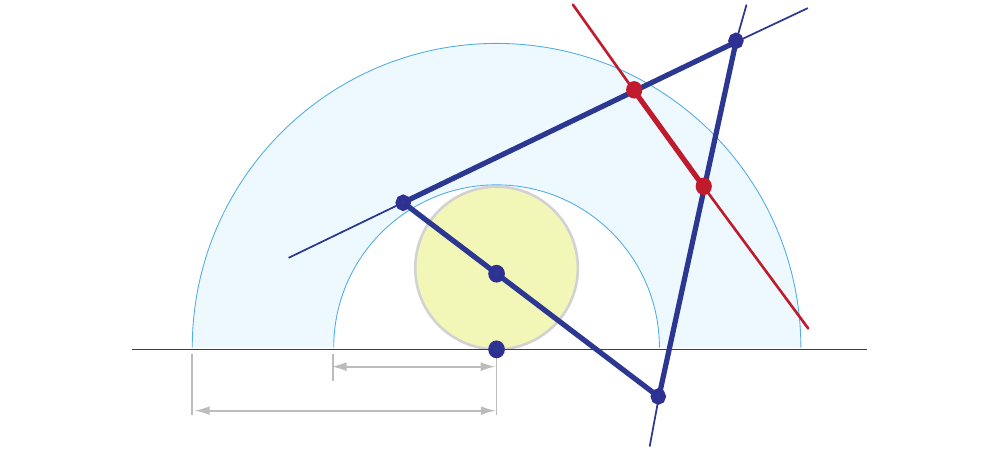}
\put(40,5){\small$r$}
\put(33,0){\small$R$}
\put(49,11.5){\small$o$}
\put(49,19.5){\small$o'$}
\put(8,9){\small$H$}
\put(44,14){\small$S$}
\put(58,36){\small$x$}
\put(67,24){\small$y$}
\put(75,38.5){\small$o''$}
\put(53.5,43.5){\small$L$}
\put(38,26){\small$x'$}
\put(67,3){\small$y'$}
\end{overpic}
\caption{}
\label{fig:annulus}
\end{figure}

We may assume that $\overset{\frown}{xy}$ is not a line segment for otherwise $\length(\overset{\frown}{xy})/\|xy\|=1$ and there is nothing to prove. Thus, since $\overset{\frown}{xy}$ is convex arc, $\inte(\overset{\frown}{xy})$ lies in the interior of one side of $L$.
If $\inte(\overset{\frown}{xy})\subset\inte(L^-)$, then, since $\Gamma$ is convex,  the rest of $\Gamma$, including its end points $a$, $b$ lie in $L^+$, which is not possible, since $o\in ab$.
  So we may assume that 
 $
\inte(\overset{\frown}{xy})\subset\inte(L^+).
 $ 

 Next let  $o'$ be the center of the  circle $S$ of diameter $r$ contained in $D_r$.  We claim that 
 $
 o'\in\inte(L^-).
 $
  Suppose, towards a contradiction, that 
 $o'\in L^+.$
 Then  $L\cap oo'\neq\emptyset$,  since  $o\in L^-$. Let $z \in L\cap oo'$. Then $x$ and $y$  lie on the same side of $z$ in $L$, since $\|xy\|<r$. We may suppose that $x\in\inte(zy)$.
 Let $Q$ be a support line of $\Gamma$ at $x$. Then $z$ and $y$  lie on the same side of $Q$, since $z\in D_r\subset\conv(\Gamma)$, and $y\in\Gamma$. So $zy\subset Q$.  But $z\in oo'\subset\inte(D_r)\subset\inte(\conv (\Gamma))$. So $Q\cap \inte(\conv (\Gamma))\neq\emptyset$. This  is the desired contradiction since $Q$ supports $\Gamma$.

Now let  $Q_x$,  $Q_y$ be   support lines of $\Gamma$ at $x$, $y$ respectively. Then  $Q_x$,  $Q_y$ also support $S$, since $S\subset D_r\subset \conv(\Gamma)$. So, since $\|xy\|<r$, it follows that $Q_x$, $Q_y$ intersect  at a point $o''$. Further, since  $o'\in\inte(L^-)$, $o''\in L^+$. Finally, since  $\overset{\frown}{xy}\cap\inte(L^+)\neq\emptyset$,  then $Q_x$, $Q_y\neq L$.  So it follows that 
$
o''\in\inte(L^+).
$
This in turn yields that there are points
$x'$, $y'$  on $Q_x$, $Q_y$ respectively such that $\|o''x'\|=\|o''y'\|$ and $o'\in x'y'$. Analyzing the triangle $o''x'y'$ yields our desired estimate as follows.

 \begin{figure}[h]
\begin{overpic}[height=1.0in]{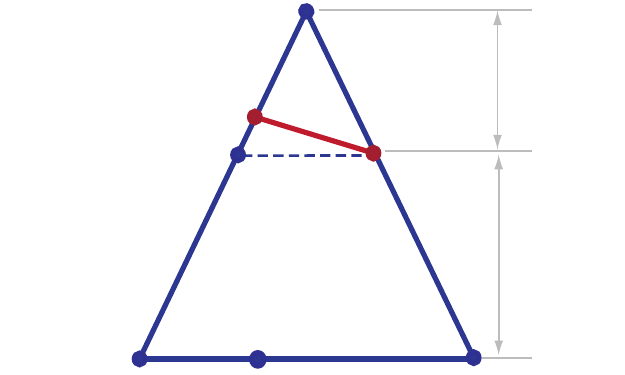}
\put(35.5,42){\small$x$}
\put(29,33){\small$x''$}
\put(63,34){\small$y$}
\put(75,-4){\small$y'$}
\put(18.5,-4.5){\small$x'$}
\put(48,61){\small$o''$}
\put(47,31){\small$a$}
\put(47,4.5){\small$b$}
\put(39,-5){\small$o'$}
\put(82,17){\small$c$}
\put(82,45){\small$d$}
\end{overpic}
\caption{}
\label{fig:triangle2}
\end{figure}

Suppose that $\|o''x\|\leq\|o''y\|$, and let $x''$ be the point on $o''x'$ so that $x''y$ is parallel to $x'y'$, see Figure \ref{fig:triangle2}.  Further let  $a:=\|x''y\|$, $b:=\|x'y'\|$, $c$ be the distance between $x''y$ and $x'y'$, and $d$ be  the distance between $o''$ and $x''y$. Then we have
$$
a\leq 2\|xy\|\leq r/2, \quad\quad\quad\quad b\geq r, \quad\quad\quad\quad c\leq \sqrt{r^2+R^2}.
$$ 
The first estimate holds because $o''x''y$ is an isosceles triangle. The second estimate holds because  $o'\in x'y'$, but $x'$, $y'$ lie outside $S$. The third estimate holds because $c\leq \|o'x\|$; further, since the angle $\measuredangle o'ox'\leq\pi/2$,  we have $\|o'x'\|^2\leq \|o'o\|^2+\|ox'\|^2\leq r^2+R^2$. 
Next note that, since $d/a=(c+d)/b$, 
$$
d=\frac{ac}{b-a}\leq\frac{2\|xy\|\sqrt{r^2+R^2}}{r/2}=4\sqrt{1+\(\frac{R}{r}\)^2}\,\|xy\|.
$$
Recall that  $\overset{\frown}{xy}$ lies on the same side of $L$ as does $o''$. So $\overset{\frown}{xy}$ is inscribed in the triangle $o''xy$. Consequently, by Lemma \ref{lem:short}, $\length(\overset{\frown}{xy})\leq\|o''x\|+\|o''y\|$. So, since $\|o''x\|\leq\|o''y\|$, we have
$$
\length (\overset{\frown}{xy})\leq 2\|o''y\|=2\sqrt{\frac{a^2}{4}+d^2}\leq 2\sqrt{17+16\(\frac{R}{r}\)^2}\|xy\|\leq 2\sqrt{33}\frac{R}{r}\,\|xy\|,
$$
where in the middle inequality we used the estimate $a\leq 2\|xy\|$, and in the last inequality we employed the assumption  that $R\geq r$.
Now we can  compute that
$$
\frac{\length(\overset{\frown}{xy})}{\|xy\|}\leq 2\sqrt{33}\,\frac{R}{r}\leq 4(2+\pi)\frac Rr,
$$
which completes the proof.
\end{proof}

\subsection{Convergence}\label{sec:convergence}
Here we use the distortion estimate established in the last section to obtain the following  convergence result. A sequence of caps $C_i$  is \emph{nested} provided that $C_i\subset C_{i+1}$.

\begin{prop}\label{prop:converge}
Let $C_i\subset M$ be a nested sequence of caps. Then $\cl(\cup_iC_i)$ is also a cap.
\end{prop}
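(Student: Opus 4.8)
Write $C := \cl(\cup_i C_i)$. Since the sequence is nested, we may split into two cases according to whether infinitely many $C_i$ are degenerate or not. If infinitely many (hence, by nesting, all sufficiently large) $C_i$ are degenerate, then all $C_i$ are degenerate and share a plane $H$ — here one uses the uniqueness property, since intersecting caps with a common plane coincide, so the planes of the $C_i$ can be chosen to stabilize — whence $f(C_i) \subset H$ for all $i$. Then $f(C) \subset H$ as well, $f$ is locally convex on $C$ relative to $H$ (local convexity is inherited from the $C_i$ since each point of $C$ lies in some $C_i$, or is a limit of such, and Lemma~\ref{lem:TN} applies to $\cl(\cup C_i)$ directly after checking it is closed and connected), and one verifies condition (ii) in the definition of degenerate cap using the extension property of each $C_i$; I expect this case to be routine. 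So the substance is the case where all but finitely many $C_i$ are nondegenerate; discarding the first few terms, assume all $C_i$ are nondegenerate, with planes $H_i$ and half-spaces $H_i^+$.

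The plan for the main case is to extract limits of the planes and of the caps, and identify the limit with a cap on a limiting plane. First I would fix a convex neighborhood structure: since $M$ is compact and $f$ is a locally convex immersion, there is a finite cover of $M$ by convex neighborhoods $U$ with associated bodies $K_U$. For each $C_i$, pick a point $o_i$ in the interior (in $H_i$) of the region bounded by $\p C_i$. The key quantitative input is the distortion estimate (Proposition~\ref{prop:distort}): I claim the caps $C_i$ have uniformly bounded distortion. One gets this by bounding $R/r$ for an appropriate choice of center: the outradius $R_i$ is bounded because $f(C_i) \subset f(M)$ which is bounded in $\R^3$; the inradius $r_i$ must be bounded below away from $0$ because $\cup_i C_i$ cannot shrink to nothing — indeed $C_1$ is a fixed nondegenerate cap contained in every $C_i$, so $\conv(C_i) \supset \conv(C_1)$, and choosing $o$ to be a fixed interior point of $\conv(C_1)$ gives $r_i \geq r_1 > 0$. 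Hence $\distort(C_i) \leq \distort := 4(2+\pi) R_1^{-1}\sup\|\,\cdot\,\|$ uniformly. Uniformly bounded distortion plus uniform diameter bound means the $f(C_i)$, regarded as metric spaces with their intrinsic metrics, are uniformly bi-Lipschitz to subsets of $\R^3$ of bounded size, so by a diagonal/Arzel\`a--Ascoli argument (or Blaschke selection applied to $\conv(C_i)$) a subsequence of $\conv(C_i)$ converges in Hausdorff distance to a convex body or lower-dimensional convex set $\widehat K$, the planes $H_i$ converge to a plane $H$, and correspondingly $f(C_i) \to S$ for some compact set $S \subset \p\widehat K$ lying on one side of $H$. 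One then shows $\cl(\cup_i f(C_i)) = S$ and that $S$ (together with $H$, $H^+$) is a cap in $\R^3$: it is $\cl(\p\widehat K \cap \inte H^+)$ if $\widehat K$ is full-dimensional (nondegenerate cap), or a convex subset of $H$ otherwise (degenerate cap).

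Finally I would pull this back to $M$: since $f$ is injective on each $C_i$ and the $C_i$ are nested, $f$ is injective on $\cup_i C_i$, and the uniform bi-Lipschitz control shows $f$ extends to a homeomorphism of $C = \cl(\cup_i C_i)$ onto $S$ — here one uses that the intrinsic metrics converge, so Cauchy sequences in $\cup C_i$ have limits and $f$ of them is determined. The set $C$ is closed by construction and connected since each $C_i$ is connected and $C_1 \subset C_i$. To finish, if $S$ is nondegenerate we are done by definition (the extension property, Proposition~\ref{prop:capextend}, is then automatic); if $S$ is degenerate we must verify condition (ii) of the cap definition, namely that some neighborhood $V$ of $C$ in $M$ has $f(V - C)$ disjoint from a half-space $H^+$ — this follows by applying the extension property to a large $C_i$ whose plane $H_i$ is close to $H$, together with the fact that $f$ is locally nonflat along $\p M$ so $f(V-C)$ genuinely leaves $H$ near boundary points of $C$ that meet $\p M$.

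\textbf{Main obstacle.} I expect the delicate step to be showing that the pulled-back limit $C \subset M$ is actually \emph{homeomorphic} to $f(S)$ via $f$ — i.e. that no "collapsing" or "folding" occurs in passing to the limit, and that $f$ remains injective on all of $C$ rather than just on $\cup_i C_i$. The uniform distortion bound is precisely the tool that prevents this: it gives a uniform modulus of continuity for $f^{-1}|_{f(C_i)}$, so the homeomorphisms $f^{-1}\colon f(C_i) \to C_i$ form an equicontinuous family and pass to a limit homeomorphism $f^{-1}\colon S \to C$. Making this rigorous — in particular handling the boundary behavior where $C$ meets $\p M$ and the planes $H_i$ may be converging nontransversally — is where the real work lies; everything else is bookkeeping with the extension and uniqueness properties already established.
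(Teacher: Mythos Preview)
Your distortion bound is where the argument breaks. You claim $r_i \geq r_1 > 0$ by fixing a point $o$ in $\conv(C_1)$, but the inradius in Proposition~\ref{prop:distort} is measured from a point $o$ that must lie \emph{in the plane $H_i$ of $C_i$}, inside the region bounded by $\partial C_i$. Since the planes $H_i$ move, you cannot use a single fixed $o$, and there is no reason the regions bounded by $\partial C_i$ in $H_i$ stay large. Concretely: take $K$ the unit ball, $H_i$ the horizontal plane at height $-1+1/i$, and $C_i$ the spherical cap above $H_i$. Then $C_i$ is nested and $\cl(\cup_i C_i) = \partial K$, the circles $\partial C_i$ shrink to the south pole, any admissible center $o_i$ is trapped near the south pole, and one computes $r_i \to 0$ while $R_i \to 2$, so $R_i/r_i \to \infty$. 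The distortion bound gives nothing here --- and indeed this is exactly the scenario that must be excluded, since $\partial K$ is not a cap.

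The paper's proof handles this by first passing to a limiting convex body $K$ with $f(C) \subset \partial K$, then invoking Lemma~\ref{lem:limitofcaps} to get a dichotomy: either $f(C)$ is a cap (and then, \emph{a posteriori}, Lemma~\ref{lem:Ci} gives the uniform distortion bound and Lemma~\ref{lem:Xi} gives injectivity of $f$ on $C$), or $f(C) = \partial K$. The real content is ruling out the second alternative. This is done by a slicing argument: when $f(C) = \partial K$, the boundaries $f(\partial C_i)$ collapse to a point or segment $X$; one takes a plane $H$ through $X$ and an interior point of $K$, builds two nested sequences of caps $C_i^\pm$ on either side of $H$, and applies the already-established case to each to obtain caps $C^\pm \subset M$ with common plane $H$ but opposite half-spaces. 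A point of $C$ mapping into $H \setminus X$ is shown to lie in both $C^+$ and $C^-$, so by the uniqueness property $C^+ = C^-$, contradicting that they lie on opposite sides of $H$. Your sketch has no analogue of this step; identifying that the limit might be all of $\partial K$ --- and excluding it --- is the heart of the argument, not bookkeeping.
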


To establish this result we need to recall some basic concepts from metric geometry.
 If $\mathcal{M}$ is any metric space, then the \emph{Hausdorff distance} between any pair of compact subsets $X$, $Y$ of $\mathcal{M}$ is defined as:
$$
d_H(X,Y):=\inf_{r\geq 0}\{X\subset B_r(Y)\; \text{and} \; Y\subset B_r(X)\},
$$
where $B_r(X)$ denotes the set of all points of $\mathcal{M}$ which are within a distance $r$ of $X$. It is well-known that $d_H$ defines a metric on the space of compact subsets of $\mathcal{M}$ \cite[Sec. 7.3]{bbi:book}. By \emph{convergence} in this paper we shall always mean convergence with respect to this metric, and we write $X_i\to X$ when $X_i$ converge to $X$. The  following observation is a simple exercise \cite[p. 253]{bbi:book}.

\begin{lem}[\cite{bbi:book}]\label{lem:blaschke}
Let $\mathcal{M}$ be a compact metric space and $X_i$ be a sequence of compact subsets of $\mathcal{M}$.
\begin{enumerate}
\item[(i)]{If $X_i\subset X_{i+1}$, then $X_i\to\cl(\cup_i X_i)$.}
\item[(ii)]{If $X_{i+1}\subset X_i$, then $X_i\to\cap_i X_i$.}
\end{enumerate}
\end{lem}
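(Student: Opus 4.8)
The plan is to reduce both parts to the elementary identity
$$
d_H(X_i,X)=\max\Big\{\,\sup_{x\in X_i}\dist(x,X),\ \sup_{x\in X}\dist(x,X_i)\,\Big\},
$$
which is immediate from the definition of $d_H$ (with $B_r(Y)$ the closed $r$-neighborhood of $Y$, the inclusion $X_i\subset B_r(X)$ holds precisely when $\sup_{x\in X_i}\dist(x,X)\le r$, and symmetrically for the reverse inclusion), and then to dispatch one of the two suprema using the nesting hypothesis and the other by a short compactness argument.

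For part (i), set $X:=\cl(\cup_i X_i)$; this is a closed subset of the compact space $\mathcal{M}$, hence compact, and $X_i\subset\cup_j X_j\subset X$, so the first supremum above vanishes and $d_H(X_i,X)=\sup_{x\in X}\dist(x,X_i)$. Suppose this did not tend to $0$. Then there are $\epsilon>0$, a subsequence $i_k\to\infty$, and points $x_k\in X$ with $\dist(x_k,X_{i_k})\ge\epsilon$; by compactness of $X$ we may assume $x_k\to x_\infty\in X$. Since $x_\infty$ lies in the closure of $\cup_j X_j$, there are an index $m$ and a point $y\in X_m$ with $\dist(x_\infty,y)<\epsilon/2$. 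For all $k$ large enough that $i_k\ge m$ and $\dist(x_k,x_\infty)<\epsilon/2$, the inclusions $X_m\subset X_{i_k}$ give $\dist(x_k,X_{i_k})\le\dist(x_k,X_m)\le\dist(x_k,y)<\epsilon$, a contradiction. (Alternatively one may observe that $x\mapsto\dist(x,X_i)$ is a nonincreasing sequence of continuous functions on the compact set $X$ converging pointwise to the continuous function $0$, so the convergence is uniform by Dini's theorem.)

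Part (ii) is the symmetric argument. Set $X:=\cap_i X_i$, a closed — hence compact — subset of every $X_i$, so now the second supremum vanishes and $d_H(X_i,X)=\sup_{x\in X_i}\dist(x,X)$. If this failed to converge to $0$ we would get $\epsilon>0$ and points $x_k\in X_{i_k}$ with $\dist(x_k,X)\ge\epsilon$; as $x_k\in X_{i_k}\subset X_1$ and $X_1$ is compact, we may assume $x_k\to x_\infty$, and then $\dist(x_\infty,X)\ge\epsilon$ forces $x_\infty\notin X_N$ for some $N$, whereas $x_k\in X_{i_k}\subset X_N$ for all large $k$ and $X_N$ is closed, giving $x_\infty\in X_N$ — a contradiction.

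I do not anticipate a genuine obstacle here: the only point needing slight care is that $\cl(\cup_iX_i)$ is genuinely compact, which is exactly where the hypothesis that $\mathcal{M}$ is compact enters (for part (ii), compactness of $X_1$ alone suffices), together with the trivial case of empty sets, which is handled vacuously.
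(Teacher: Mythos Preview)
Your proof is correct. The paper does not actually supply a proof of this lemma; it simply cites it as a standard exercise from \cite[p.~253]{bbi:book}. Your argument via the formula $d_H(X_i,X)=\max\{\sup_{x\in X_i}\dist(x,X),\sup_{x\in X}\dist(x,X_i)\}$ together with a compactness/subsequence contradiction is exactly the kind of short direct verification the reference has in mind, and both parts are handled cleanly.
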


\noindent
When discussing convergence  we will assume that various objects are endowed with their natural metric. For instance,  the boundary   of a convex body carries the metric which is induced on it from its ambient space. Further, the metric associated to $M$ will be the one induced on it by $f$, which is well-defined since $f$ is locally convex.

\begin{lem}\label{lem:limitofcaps}
Let $C_i$ be a sequence of nested caps in the boundary of a convex body $K\subset\R^3$. Then $C_i$ converge either to a cap or to $\p K$. Further, if $C_i\to\p K$, then $\p C_i$ converge either to a point or a line segment.
\end{lem}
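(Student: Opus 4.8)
The plan is to exploit compactness of $\p K$ together with the distortion estimate of Proposition \ref{prop:distort} and the Hausdorff-convergence facts from Lemma \ref{lem:blaschke}. First I would set $C := \cl(\cup_i C_i)$; since $C_i \subset C_{i+1}$ inside the compact metric space $\p K$, Lemma \ref{lem:blaschke}(i) gives $C_i \to C$, so $C$ is a well-defined compact connected set and the only question is whether $C$ is a cap or all of $\p K$. If $C = \p K$ we must still pin down the limiting behavior of $\p C_i$; if $C \neq \p K$ we must show $C$ itself satisfies the definition of a cap. I would split along whether infinitely many of the $C_i$ are degenerate or all but finitely many are nondegenerate.

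If infinitely many $C_i$ are degenerate, then (passing to that subsequence, which has the same limit) each $C_i$ lies in a plane $H_i$; by compactness of the space of planes meeting the bounded set $\p K$ we may pass to a further subsequence with $H_i \to H$. Because $C_i \subset C_{i+1}$ and all are planar, in fact the nested planar convex sets $C_i$ must all share a common plane once they have affine dimension $\geq 1$ in a fixed direction — more carefully, a nested union of convex subsets of planes is still convex and contained in the affine hull of the union, which is at most a plane — so $C = \cup_i C_i$ (already closed, or its closure) is a compact convex subset of a plane, i.e. a degenerate cap; one then checks condition (ii) of the cap definition for $C$ using condition (ii) for the $C_i$ and the nesting. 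In this case $C \neq \p K$ automatically and we are in the "limit is a cap" alternative. Alternatively, if all but finitely many $C_i$ are nondegenerate, discard the rest and let $H_i$, $H_i^+$ be the plane and half-space of $C_i$, with outward normal $n_i$; by nesting, $\partial K \cap \inte H_i^+$ is increasing. Pass to a subsequence so that $n_i \to n$ and the planes $H_i$ converge to a plane $H$ (or recede to infinity); since each $H_i$ meets the fixed convex body $K$, the planes cannot recede, so $H_i \to H$ and $H_i^+ \to H^+$ for the corresponding half-space. Set $C_\infty := \cl(\p K \cap \inte H^+)$.

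Now the dichotomy: either $\inte H^+$ contains an interior point of $K$, in which case, by the converse half of the definition of nondegenerate cap quoted in Section \ref{subsec:capdef}, $C_\infty$ is a nondegenerate cap with plane $H$; or $H$ is a support plane of $K$, in which case $\p K \cap H^+ = \p K \cap H$ is a degenerate cap (possibly a point or segment), using Theorem \ref{thm:flatcap} or directly the convexity of $K$. The remaining work is to identify $C = \lim C_i$ with $C_\infty$: since $C_i = \cl(\p K \cap \inte H_i^+)$ and $H_i^+ \to H^+$, a short argument with Hausdorff convergence shows $\cup_i C_i \subset C_\infty$ and, conversely, every point of $\p K \cap \inte H^+$ lies in $\inte H_i^+$ for large $i$ (because $H_i^+ \to H^+$), hence in $C_i$; thus $C = \cl(\cup_i C_i) = C_\infty$. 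This gives the first assertion: $C_i$ converge either to a cap (both the degenerate and nondegenerate subcases above) or, when $H$ supports $K$ but $\p K \cap H^+ = \p K$ — which forces $K$ to be degenerate, so $\p K$ itself is flat and $C = \p K$ is a convex subset of a plane — we land in the $C_i \to \p K$ alternative. Actually the only genuine way to get $C = \p K$ with $\p K$ two-dimensional on a plane is handled in the degenerate-planar case; I expect the clean statement is that $C_i \to \p K$ happens precisely when the exhausting half-spaces $H_i^+$ "fill up", i.e. $n_i \to n$ but the planes $H_i$ escape the interior only in the limit, and then $\p K \cap H = \lim \p C_i$ is a point or a segment. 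This is exactly the second assertion.

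For the second assertion, suppose $C_i \to \p K$. Then $\p K$ is covered by the increasing open sets $\p K \cap \inte H_i^+$, so in the limit $H$ supports $K$ and $\p K \cap H \neq \p K$ (else $K$ lies in a plane and has no interior, contradicting that $K$ is a convex body); being a face of the convex body $K$, the set $\p K \cap H$ is a compact convex set of affine dimension $0$ or $1$ — a point or a line segment. It remains to see $\p C_i \to \p K \cap H$: this follows because $\p C_i = \p K \cap H_i$ and $H_i \to H$, combined with the fact that on the complement of a neighborhood of $\p K \cap H$ the surface $\p K$ lies strictly on the negative side of $H$, so $\p C_i$ is eventually confined to shrinking neighborhoods of $\p K \cap H$; uniform control here is where I would invoke, if needed, that $\pi_H$ (nearest-point projection to $H$, a short map by Lemma \ref{lem:short}) does not increase distances. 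I expect the main obstacle to be the bookkeeping in the nondegenerate case: extracting a convergent subsequence of the planes $H_i$ and then upgrading "subsequential limit is a cap" to "the full nested union is that cap", which requires noticing that the nesting forces the $H_i^+$ to be monotone and hence the whole sequence — not just a subsequence — converges. Everything else is a routine application of the quoted convergence lemmas and the structure theory of convex bodies.
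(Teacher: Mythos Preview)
Your overall architecture matches the paper's: pass to the nondegenerate case, extract a limiting plane $H$ and half-space $H^+$, and identify $C=\cl(\cup_i C_i)$ with $\cl(\partial K\cap\inte H^+)$. The degenerate case in the paper is handled more cleanly than in your sketch (if some $C_i$ is degenerate then nested caps sharing a plane coincide, so the sequence is eventually constant), but your version can be made to work.

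There is, however, a genuine gap in your argument for the second assertion. You assert that ``being a face of the convex body $K$, the set $\partial K\cap H$ is a compact convex set of affine dimension $0$ or $1$.'' This is false in general: a convex body in $\R^3$ can perfectly well have a $2$-dimensional face (think of a cube). Nothing you have said up to that point rules this out, and your auxiliary remark that $\partial K\cap H\neq\partial K$ only prevents the face from being all of $\partial K$, not from being a disk. Relatedly, your earlier aside that $C=\partial K$ would ``force $K$ to be degenerate'' is simply wrong: $K$ can sit inside a half-space $H^+$ and still be a genuine convex body.

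What the paper does here, and what your sketch is missing, is to track the \emph{complements} $D_i:=\partial K-\inte(C_i)=H_i^-\cap\partial K$. These form a decreasing sequence, and by Lemma~\ref{lem:blaschke}(ii) they converge to $D:=\cap_i D_i=H\cap K$. In particular $D\subset D_i$ for every $i$, whence $\inte_H(D)\subset\inte(D_i)=\partial K-C_i$, so $C_i\cap\inte_H(D)=\emptyset$ for all $i$. Since $\inte_H(D)$ is open in $\partial K$, this forces $C\cap\inte_H(D)=\emptyset$ as well. Now if $C_i\to\partial K$, this says $\partial K\cap\inte_H(D)=\emptyset$; but $\inte_H(D)\subset\partial K$, so $\inte_H(D)=\emptyset$, i.e., $D$ is a point or a segment. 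This is the step you need to insert in place of the unsupported dimension claim; once you have it, your convergence $\partial C_i=\partial D_i\to\partial D=D$ finishes the lemma.
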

\begin{proof}
By Lemma \ref{lem:blaschke}, $C_i\to C:=\cl(\cup_i C_i)$. We need to show that $C$ is a cap or $C=\partial K$. First suppose that $C_i$ is degenerate for some $i$, then it follows from the uniqueness property (Proposition \ref{prop:capunique}) that, for all $j\leq i$, $C_j=C_i$. Thus, if all $C_i$ are degenerate, then all $C_i$ coincide, in which case $C=C_i$ and we are done. Suppose then that $C_i$ is nondegenerate for some $i$. Then, for all $j\geq i$, $C_j$ is also nondegenerate. Consequently, after a truncation of the sequence, we may assume that all $C_i$ are nondegenerate. Let
$H_i$ be the planes of $C_i$, and $H_i^+$ be the corresponding half-spaces. Then 
$$
C_i=\cl(\partial K\cap\inte(H_i^+)).
$$
Since each $H_i$ intersects $K$, $H_i$  converge to a plane $H$ after passing to a subsequence. Further, we may assume that $H_i^+\to H^+$. More precisely, after passing to another subsequence we may assume that the outward normals of $H_i$ converge to a normal of $H$ which we designate as the outward normal of $H$. We claim that 
$$
C=\cl(\p K\cap \inte(H^+)),
$$
 which will finish the proof.
To establish this claim, let $p\in\p K\cap\inte(H^+)$. Then,  for $i$ sufficiently large, $p\in \partial K\cap\inte (H_i^+)$.  Thus $p\in C_i\subset C$. So $\p K\cap\inte(H^+)\subset C$. Since $C$ is compact, it follows that 
$$
\cl(\p K\cap \inte(H^+))\subset C.
$$
It remains then to obtain the reverse inclusion. If $p\in\p K\cap\inte(H^-)$, then there exists a ball $B$ centered at $p$ such that $B \cap H^+=\emptyset$. So, for $i$ sufficiently large, $B \cap H^+_i=\emptyset$, which yields that $B\cap C_i=\emptyset$. Since $C_i$ is nested, it follows then that $B\cap C_i=\emptyset$ for all $i$. Thus  
$p\not\in C$. So $C\subset\p K\cap H^+$. Now if we set $D:=H\cap K$, then we have
 $$
 C\subset\p K\cap H^+=\cl(\p K\cap \inte(H^+))\cup \inte_H(D),
 $$
 where $\inte_H(D)$ here denotes the interior of $D$ in $H$.
Thus if  $C\cap\inte_H(D)=\emptyset$, then  we are done. In particular, if $H$ contains an interior point of $K$, we are finished, since in that case $\inte_H(D)\subset\inte(K)$  which is disjoint from $\partial K\supset C$. So we may assume that $H$  supports $K$. Then $D=H\cap\partial K=H^-\cap \p K$. Now we claim that
\begin{equation}\label{eq:CHD}
C\cap\inte_H(D)=\emptyset,
\end{equation}
 which is all we need. Let $D_i:=\partial K-\inte(C_i)=H_i^-\cap\p K$. Then $D_i \to H^-\cap \p K=D$.
 On the other hand, since $C_i\subset C_{i+1}$,  $D_{i+1}\subset D_i$. Thus $D_i\to\cap_i D_i$ by Lemma \ref{lem:blaschke}. So $D=\cap_i D_i$ by the uniqueness of limits in metric convergence. In particular $D\subset D_i$. So $\inte_H(D)\subset\inte(D)\subset\inte(D_i)=\p K-C_i$. Hence 
 \begin{equation}\label{eq:CiHD}
 C_i\cap\inte_H(D)=\emptyset.
\end{equation}
 Now note that $\inte_H(D)$ is open in $\p K$. Thus,  if $C$ intersects $\inte_H(D)$, then so does $C_i$  for large $i$, which is not possible.
 Hence 
  \eqref{eq:CiHD} implies \eqref{eq:CHD}, which completes the proof of the first claim of the lemma.
  
  To establish the second claim, suppose that $C_i\to\p K$. Then $\inte_H(D)=\emptyset$. Further recall that $D$ is convex, since $D=H\cap K$. So $D$ is either a point or a line segment. Finally since $D_i\to D$, it is easy to see that 
$\p C_i=\p D_i\to\p D=D$, which completes the proof.
 \end{proof}

\begin{lem}\label{lem:Ci}
Let $C_i$ be a sequence of caps in $\R^3$ which converge to a nondegenerate cap $C$. Then the distortion of $C_i$ is uniformly bounded.
\end{lem}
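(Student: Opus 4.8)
The plan is to equip each $C_i$ (for $i$ large) with a base point $o_i$ in its plane $H_i$, lying in the relative interior of the region bounded by $\partial C_i$, whose associated inradius is bounded below and outradius bounded above independently of $i$, and then apply Proposition~\ref{prop:distort}. Two reductions come first. A degenerate cap is a convex subset of a plane and so has distortion $1$; hence we may assume every $C_i$ is nondegenerate. And since $\conv$ is $1$-Lipschitz for the Hausdorff distance, $\conv(C_i)\to\conv(C)=:G$, where $G$ is a genuine $3$-dimensional convex body because $C$ is nondegenerate. For large $i$ we then have $\conv(C_i)\subset B_1(G)$, so the diameter of $\conv(C_i)$ is at most that of $G$ plus $2$; call this bound $R_0$. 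This bounds the outradius of $C_i$ with respect to any base point, so only the inradius, and hence the location of $H_i$, needs attention.

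The key step is that $H_i\to H$ and $H_i^+\to H^+$, where $H$ and $H^+$ are the plane and half-space of $C$. This is not automatic, since $C_i\to C$ gives no control on the convex surfaces carrying the $C_i$ away from the caps. Pass to a subsequence along which $H_i\to H'$ with converging outward normals (possible because each $H_i$ meets the uniformly bounded set $\conv(C_i)$). From $\conv(C_i)\subset H_i^+$ we get $G\subset(H')^+$, so $H'$ supports $G$. Write $D:=H\cap G$ and $D_i:=H_i\cap\conv(C_i)$; since $C_i$ is nondegenerate, $D_i$ is a $2$-dimensional convex region and $\partial(\conv(C_i))=C_i\cup D_i$. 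After a further subsequence $D_i\to D^\ast$ for some convex $D^\ast\subset H'$. Now $\partial(\conv(C_i))\to\partial G=C\cup D$ (boundaries of converging convex bodies converge), while also $\partial(\conv(C_i))=C_i\cup D_i\to C\cup D^\ast$; uniqueness of Hausdorff limits forces $C\cup D^\ast=C\cup D$. Since $C\cap H=\partial C$ is not $2$-dimensional whereas $D$ is, it follows that $\relint(D)\subset D^\ast$, hence $D\subset D^\ast\subset H'$. But $D\subset H$ is $2$-dimensional, and a plane containing it is unique, so $H'=H$; and since $G$ is $3$-dimensional and lies on the $H^+$ side, $(H')^+=H^+$. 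As every subsequence admits such a sub-subsequence, $H_i\to H$, $H_i^+\to H^+$, and likewise $D_i\to D$.

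It remains to transport the base point. By Proposition~\ref{prop:distort} applied to $C$ itself, fix $o\in\relint(D)$ whose associated inradius is some $r>0$, so that the half-ball of radius $r$ about $o$ on the $H^+$ side lies in $G$. Let $o_i$ be the nearest point of $H_i$ to $o$; since $H_i\to H$ and $o\in H$ we have $o_i\to o$. The claim is that for all large $i$ the half-ball of radius $r/2$ about $o_i$ on the $H_i^+$ side is contained in $\conv(C_i)$. Granting this, $o_i$ lies in the relative interior of the region bounded by $\partial C_i$ and the associated inradius of $C_i$ is at least $r/2$, so Proposition~\ref{prop:distort} gives $\distort(C_i)\le 4(2+\pi)R_0/(r/2)$ for all large $i$; the finitely many remaining (nondegenerate) terms each have finite distortion by the same proposition, so the bound is uniform. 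To prove the claim, suppose not: choose $q_i$ in the half-ball about $o_i$ but outside $\conv(C_i)$ and pass to a subsequence $q_i\to q$. Then $q$ lies in the half-ball of radius $r/2$ about $o$, and since $\conv(C_i)\to G$ one checks $q\notin\inte(G)$; but every point of that half-ball lying off $H$ belongs to $\inte(G)$ (because $r/2<r$), and every point of it on $H$ belongs to $\relint(D)$, so $q\in\relint(D)$. Near such a benign boundary point $G$ has positive thickness above $H$, and combining a fixed inscribed ball of $G$ (which, slightly shrunk, persists in $\conv(C_i)$ for large $i$) with the convergence $D_i\to D$ shows that $\conv(C_i)$ contains a half-ball over $H_i$ of some fixed positive radius centered near the projection of $q$ to $H_i$; since $q_i\to q$ with $\dist(q_i,H_i)\to 0$, eventually $q_i$ lies in this half-ball, contradicting $q_i\notin\conv(C_i)$.

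The main obstacle is the plane-convergence step of the second paragraph: $H_i\to H$ must be squeezed purely out of $C_i\to C$ together with nondegeneracy of $C$, the decisive point being that the $2$-dimensional base $D$ pins down its own plane. The secondary technical nuisance is the lower semicontinuity of the inradius under the move of the base point onto $H_i$, which is genuinely delicate only along the flat face $D$ of $G$, and is salvaged there by the observation that any offending limit point is forced into $\relint(D)$, where $G$ is fat.
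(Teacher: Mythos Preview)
Your proof is correct and, at its core, follows the same strategy as the paper: bound the inradius and outradius of $C_i$ uniformly and invoke Proposition~\ref{prop:distort}. The paper, however, dispatches the whole thing in three lines: it applies a rigid motion to each $C_i$ so that all planes $H_i$ coincide with $H$ (distortion is rigid-motion invariant), then observes that since $C$ is sandwiched between two concentric hemispheres in $H^+$ disjoint from $C$, so are the $C_i$ for large $i$.

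The paper's reduction ``after rigid motions assume $H_i=H$'' is terse: for the transformed caps still to converge to $C$ one needs the rigid motions to converge to the identity, which amounts to $H_i\to H$. You prove this convergence explicitly and carefully in your second paragraph, and that argument is correct and genuinely fills in what the paper leaves implicit. This is the real content of your write-up.

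Where you take a longer route is the third paragraph. Having established $H_i\to H$ and $H_i^+\to H^+$, you could simply choose rigid motions $\phi_i\to\mathrm{id}$ carrying $(H_i,H_i^+)$ to $(H,H^+)$; then $\phi_i(C_i)\to C$, all $\phi_i(C_i)$ share the plane $H$, and the paper's hemisphere sandwich applies verbatim. Your base-point transport $o\mapsto o_i$ and the contradiction argument chasing the limit point $q$ into $\relint(D)$ are correct, but they re-prove by hand a stability statement that the rigid-motion trick gives for free. In short: your plane-convergence step strengthens the paper's exposition; your inradius step works harder than necessary.
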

\begin{proof}
After rigid motions we may assume that the planes $H_i$ of $C_i$ coincide with the plane $H$ of $C$.
Let $o$ be a point in $H$  which lies in the interior of the region bounded by $\p C$. Then there exists a pair of concentric hemispheres in $H^+$ centered at $o$ which contain $C$ in between them, and are disjoint from $C$. Consequently, $C_i$ will also lie in between these hemispheres in $H^+$ for $i$ sufficiently large, and thus will have uniformly bounded distortion by Proposition \ref{prop:distort}.
\end{proof}

\begin{lem}\label{lem:Xi}
Let $X_i\subset M$ be a sequence of compact path connected sets which converge to a  set $X$. Suppose that $f$ is injective on each $X_i$ and $\distort(f(X_i))$ is uniformly bounded. Then $f$ is injective on $X$. 
\end{lem}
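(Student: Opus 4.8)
The plan is to argue by contradiction. Suppose $f$ is not injective on $X$; then there are distinct points $p,q\in X$ with $f(p)=f(q)$. Since $X_i\to X$ in the Hausdorff distance on compact subsets of $M$, I may choose $p_i,q_i\in X_i$ with $p_i\to p$ and $q_i\to q$ in $M$. For large $i$ these points are distinct, and by continuity of $f$ we obtain $\|f(p_i)-f(q_i)\|\to\|f(p)-f(q)\|=0$.

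Now I bring in the distortion bound. Let $D$ be a uniform bound for $\distort(f(X_i))$. For each $i$ with $p_i\neq q_i$ --- equivalently $f(p_i)\neq f(q_i)$, since $f$ is injective on $X_i$ --- the definition of distortion yields a path $\sigma_i$ inside $f(X_i)$ joining $f(p_i)$ to $f(q_i)$ with $\length(\sigma_i)\leq D\,\|f(p_i)-f(q_i)\|+1/i$. Because $f$ restricted to the compact set $X_i$ is a homeomorphism onto $f(X_i)$, the curve $\gamma_i:=(f|_{X_i})^{-1}\circ\sigma_i$ is a path in $X_i\subset M$ from $p_i$ to $q_i$ with $f\circ\gamma_i=\sigma_i$.

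Since the metric on $M$ is the length metric induced by $f$, the intrinsic distance $d$ on $M$ satisfies $d(p_i,q_i)\leq\length(f\circ\gamma_i)=\length(\sigma_i)\leq D\,\|f(p_i)-f(q_i)\|+1/i$, and the right-hand side tends to $0$; when $p_i=q_i$ this bound holds trivially. Hence $d(p_i,q_i)\to0$, and then the triangle inequality $d(p,q)\leq d(p,p_i)+d(p_i,q_i)+d(q_i,q)\to0$ forces $p=q$, which is the desired contradiction. Note that path-connectedness and compactness of the $X_i$ are used exactly to guarantee that the intrinsic distance on $f(X_i)$ is finite and that $f|_{X_i}$ is a homeomorphism.

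The argument is essentially soft; the only point requiring care is the bookkeeping between the three length structures involved --- the intrinsic distance of $f(X_i)\subset\R^3$ entering the distortion, the Euclidean length of curves in $\R^3$, and the intrinsic metric of $M$ --- and in particular the fact that pulling a nearly length-minimizing curve of $f(X_i)$ back through $f|_{X_i}$ produces a short curve in $M$, which rests on the metric of $M$ being obtained by pushing curve lengths forward via $f$. The distortion hypothesis is precisely what makes this go through: without it, $f(p_i)$ and $f(q_i)$ being close in $\R^3$ would not force $p_i$ and $q_i$ to be close in $M$, since the two could approach one another along distinct sheets of the immersion.
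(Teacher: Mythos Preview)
Your argument is correct and follows essentially the same route as the paper: assume $f(p)=f(q)$ for distinct $p,q\in X$, approximate by $p_i,q_i\in X_i$, and use the distortion bound together with the fact that the intrinsic distance in $f(X_i)$ dominates the $f$-induced distance in $M$ (since $f|_{X_i}$ is a homeomorphism) to force $d(p_i,q_i)\to 0$, contradicting $d(p,q)>0$. The paper phrases the contradiction by letting the distortion ratio blow up rather than by showing $d(p_i,q_i)\to 0$, and it invokes the inequality $\tilde d_i(f(p_i),f(q_i))\geq d(p_i,q_i)$ directly instead of explicitly picking a nearly length-minimizing path and pulling it back, but these are cosmetic differences.
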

\begin{proof}
Suppose, towards a contradiction, that there is a pair of distinct points $x$, $y\in X$ such that $f(x)=f(y)$. Since $X_i\to X$, there are sequences of points $x_i$, $y_i\in X_i$ which converge to $x$, $y$ respectively. Let $d$ be the metric which $f$ induces on $M$, and $\tilde d_i$ be the intrinsic distance of $f(X_i)\subset\R^3$. Then $\tilde d_i(f(x_i), f(y_i))\geq d(x_i,y_i)$, since $f$ is injective on $X_i$. So
$$
\distort(f(X_i))
\geq\frac{\tilde d_i(f(x_i),f(y_i))}{\|f(x_i)-f(y_i)\|}
\geq \frac{d(x_i,y_i)}{\|f(x_i)-f(y_i)\|}.
$$
Note that $d(x_i,y_i)\to d(x,y)$ which is positive by assumption, while
$\|f(x_i)-f(y_i)\|\to\|f(x)-f(y)\|=0$. Thus $\distort(f(X_i))$ grows indefinitely, which is the desired contradiction.
\end{proof}

Now we are ready to establish the main result of this section:

\begin{proof}[Proof of Proposition \ref{prop:converge}]
Let $C:=\cl(\cup_i C_i)$. We need to show that $f$ is injective on $C$ and $\ol C:=f(C)$ is a cap in $\R^3$.
By  Lemma \ref{lem:blaschke}, $C_i\to C$. Thus, 
since $f$ is continuous, $\ol C_i:=f(C_i)$ converge to $\ol C:=f(C)$.  For each $\ol C_i$ let $K_i$ be a convex body such 
that $\ol C_i\subset\p K_i$. Since $\ol C_i\to\ol C$, which  is compact, we may assume that  $K_i$  lie in some large ball. Further we may assume that the inradii of $K_i$ are bounded below, since  $\ol C_i$ are nested. Thus, by Blaschke's selection principal for convex bodies \cite[Thm. 1.8.6]{schneider:book}, $K_i$ converge to a convex body $K$, after passing to a subsequence. Further it is well-known that $\p K_i\to\p K$, e.g., see \cite[p. 424]{alexandrov:intrinsic}.
Further note that for $j\geq i$, $\ol C_i\subset \ol C_j\subset \p K_j$. Thus $\ol C_i\subset\p K$, which yields that $\ol C\subset\p K$. So it follows from Lemma \ref{lem:nestedcaps} that $\ol C$ is either a cap or $\ol C=\p K$.
If $\ol C$ is a cap, then, by Lemma \ref{lem:Ci}, $\ol C_i$ have uniformly bounded distortion. Consequently, by Lemma \ref{lem:Xi}, $f$ is injective on $C$, and we are done.

\begin{figure}[h]
\begin{overpic}[height=1.2in]{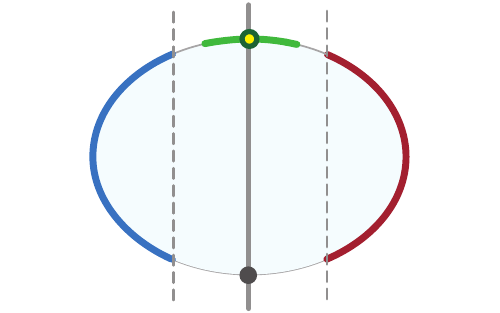}
\put(51.5,10.5){\Small $X$}
\put(47,-5){\Small $H$}
\put(52,48.5){\Small $\ol p$}
\put(56,56){\Small $\ol U$}
\put(84,30){\Small $\ol{C_i^+}$}
\put(7,30){\Small $\ol{C_i^-}$}
\put(39,30){\Small $K$}
\end{overpic}
\caption{}
\label{fig:sandwich}
\end{figure}

It remains then to show that  $\ol C\neq\p K$. Suppose, towards a contradiction, that $\ol C=\p K$. Then $\ol{\p C_i}$  converge  to a point or line segment $X\subset\p K$, by Lemma \ref{lem:limitofcaps}. Let $H$ be a plane which contains $X$, and   an interior point of $K$, see Figure \ref{fig:sandwich}. 
Further let $H^+$ be one of the sides of $H$.  Since   $\ol{\p C_i}\to X\subset H$, there exists a sequence $H_i^+$ of nested half-spaces converging to  $H^+$, such that   $\ol{\p C_i}\subset \inte(H_i^-)$. Let $C_i^+:=f^{-1}(H_i^+)\cap C_i$. Then $C_i^+$ form a nested sequence of caps in 
$M$. Thus, by Lemma \ref{lem:blaschke}, $C_i^+$ converge to a compact set $C^+\subset C$. Note that $\ol{C^+}:=f(C^+)$ cannot coincide with $\partial K$, since $\ol{C^+}\subset H^+$, and $\partial K$ has points in $\inte(H^-)$. So we conclude that $\ol{C^+}$ is a cap in $\R^3$, and therefore, $C^+$ is a cap in $M$, as discussed in the previous paragraph.
By applying the same argument to the other side of $H$, we may also produce a cap $C^-\subset C$ with half-space $H^-$, which is a limit of a nested sequence of caps $C_i^-$.

Now note that $C^+\cap C^-\neq\emptyset$. To see this  let $p\in C$ be a point such that $\ol p:=f(p)\in H-X$.
Then $\ol p\in\inte(\ol C_i)$ for large $i$, since $\ol{\p C_i}\to X$. So $p\in \inte(C_i)$.  Let $U\subset\inte(C_i)$ be a neighborhood of $p$ which is so small that $f$ is one-to-one on $U$. Then $\ol U:=f(U)$ is open in $\p K$ by the invariance of domain. Consequently, $\ol U$ has points in the interiors of both sides of $H$. So $U$ intersects $C_i^\pm$ for large $i$. Thus there are sequences $p_i^\pm\in C_i^\pm\subset C^\pm$ which converge to $p$. Since $C^\pm$ are compact,  $p\in C^\pm$. So $C^+\cap C^-\neq\emptyset$ as claimed.
 Consequently $C^+=C^-$ by the uniqueness property (Proposition \ref{prop:capunique}), since $H$ is the plane of both caps. This is the desired contradiction, since $\ol{C^\pm}$  lie on different sides of $H$.
\end{proof}

\begin{note}
One might suspect that Proposition \ref{prop:converge} hints at a more general phenomenon, namely that the space of caps of $M$ is compact; however, this is not so. For instance let $M$ be the surface obtained by revolving the curve in Figure \ref{fig:capline} about its axis of symmetry. 
\begin{figure}[h]
\begin{overpic}[height=0.7in]{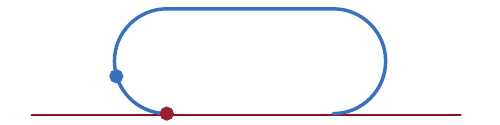}
\put(18,9){\Small $p_i$}
\put(32,-1.5){\Small $p$}
\end{overpic}
\caption{}
\label{fig:capline}
\end{figure}
Then $M$ is a nondegenerate cap. Let  $p_i$ be a sequence of points in the interior of $M$ which converge to a point $p\in \p M$. If $p_i$ are sufficiently close to $p$, then each $p_i$ is a cap, since there is a neighborhood $U$ of $\p M$ in $M$ such that $U-\p M$ is strictly convex. On the other hand $p$ itself is not a cap. Indeed,  by the extension property (Proposition \ref{prop:capextend}), if $p$ were a cap, then it would have to be a locally strictly convex point of $M$, which is not the case.
\end{note}

\subsection{The slicing theorem}\label{sec:slicing}

The main result of this section is the following comprehensive structure theorem for caps:
 
 \begin{thm}\label{thm:caps}
Let $H\subset\R^3$ be a plane, and $C$ be a component of 
$f^{-1}(H^+)$. Suppose that 
\begin{enumerate}
\item[(i)]{$f(C\cap\partial M)\subset H$,}
\item[(ii)]{$C\cap\inte(M)\neq\emptyset$.}
 \end{enumerate}
  Then $C$ is a cap.
\end{thm}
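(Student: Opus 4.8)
The plan is to reduce Theorem \ref{thm:caps} to the results already established, namely Theorem \ref{thm:flatcap} (structure of degenerate caps), the extension and uniqueness properties (Propositions \ref{prop:capextend} and \ref{prop:capunique}), and the convergence result (Proposition \ref{prop:converge}). First I would dispose of a trivial case: if $H$ does not meet $\inte(C)$ at all, i.e. $f(C)\subset H$, then by hypothesis (i) all of $C$ maps into $H$, $C$ is connected and closed, and applying Theorem \ref{thm:flatcap} at any interior point $p\in C\cap\inte(M)$ (which exists by (ii)) identifies the component of $f^{-1}(H)$ through $p$ as a degenerate cap; one then checks this component is all of $C$ using that $C$ is a component of $f^{-1}(H^+)$ and the extension property of the degenerate cap. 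So the substantive case is that $f$ sends some interior points of $C$ strictly into $\inte(H^+)$, and we expect $C$ to be a nondegenerate cap.

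In that main case, the idea is to exhaust $C$ from inside by genuine caps and invoke Proposition \ref{prop:converge}. Concretely, I would take a point $q\in C\cap\inte(M)$ with $f(q)\in\inte(H^+)$, push $H$ slightly toward $f(q)$ to obtain parallel planes $H_t$ (for small $t>0$) with $H_t\cap K$ well inside, where $K$ is the associated convex body of a convex neighborhood; the slice $\cl(\partial K\cap\inte(H_t^+))$ is a genuine nondegenerate cap in $\R^3$, and its preimage component through $q$ is a cap $C_t$ in $M$ by the local slicing argument (this is where Theorem \ref{thm:flatcap}'s technology — conditions (i)–(iii) of Proposition \ref{lem:3conditions} — gets reused near $\partial M$, using hypothesis (i) to guarantee the boundary part maps into $H$, and local nonflatness to rule out degenerate pathologies along $\partial M$). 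As $t\downarrow 0$ these $C_t$ form a nested family, and I would argue $\cl(\cup_t C_t)=C$: the inclusion $\subset$ is clear, and for $\supset$ one uses that $C$ is a \emph{component} of $f^{-1}(H^+)$ together with the extension property to show no point of $C$ can be separated from the exhaustion. Then Proposition \ref{prop:converge} gives that $C=\cl(\cup_t C_t)$ is a cap, as desired.

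The hard part will be organizing the local analysis near boundary points of $C$, i.e. proving that the preimage-component of a slightly-translated slice is genuinely a cap in $M$ rather than just a convex subset of $\partial K$ — this is exactly the kind of subtlety that made Theorem \ref{thm:flatcap} hard, and here one has the added complication that the slicing plane $H_t$ is transverse to $\partial K$ rather than supporting it. The clean way around this is probably to avoid re-deriving a "transverse slicing lemma" from scratch and instead argue as follows: for each $t$, the plane $H_t$ does support a truncated convex body (the part of $K$ on the $H_t^+$ side intersected with a small ball), so the slice is a cap of that truncation; combined with hypothesis (i), which forces the boundary trace into $H\subset H_t^-$, and the extension property, one promotes this to a cap $C_t\subset M$. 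Alternatively — and this may be the slicker route the author takes — one observes that $C$ itself already satisfies conditions (i)–(iii) of Proposition \ref{lem:3conditions} \emph{except} that $f(C)$ need not lie in a plane, so one mimics the proof of Theorem \ref{thm:flatcap} verbatim with $H$ replaced by $H^+$: show $H$ is a local support plane along $\partial C$ (using (i) and nonflatness), show each point of $C$ has a convex neighborhood $U$ with $f(U)\cap H^+$ equal to $f(U\cap C)$ and convex-ish, and then $f(C)$ embeds onto a cap. I would present the exhaustion-plus-convergence version as the main line since it most directly leverages Proposition \ref{prop:converge}, and remark that the direct adaptation of the proof of Theorem \ref{thm:flatcap} is an alternative.
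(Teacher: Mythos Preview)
Your overall architecture matches the paper's: dispose of the case $f(C)\subset H$ via Theorem~\ref{thm:flatcap}, then in the nondegenerate case exhaust $C$ by a nested family of caps and invoke Proposition~\ref{prop:converge}. The paper does exactly this (see Proposition~\ref{prop:cap1} and the proof of Theorem~\ref{thm:caps}).

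However, there is a real gap at the step you flag as ``the hard part.'' You propose that the component $C_t$ of $f^{-1}(H_t^+)$ through $q$ is a cap ``by the local slicing argument,'' reusing the technology behind Theorem~\ref{thm:flatcap} and Proposition~\ref{lem:3conditions}. This does not work: those tools are fundamentally about the case where $f$ maps into a plane (Tietze--Nakajima), and they give nothing for a transverse slice. More to the point, the obstacle is \emph{global}: you need to know that $f$ is injective on the entire component $X$ of $f^{-1}(\inte(H^+))$ containing $q$, and no amount of local analysis near individual points of $C$ (or $\partial M$) will deliver this. Your alternative route --- mimicking the proof of Theorem~\ref{thm:flatcap} with $H$ replaced by $H^+$ --- fails for the same reason.

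The paper supplies the missing global ingredient via Proposition~\ref{prop:cap1}: since $f(C\cap\partial M)\subset H$ by hypothesis~(i), the open component $X\subset f^{-1}(\inte(H^+))$ is disjoint from $\partial M$, hence a manifold without boundary. One then applies the projective transformation $(x,y,z)\mapsto(x/z,y/z,1/z)$ to send $H$ to infinity; this makes $\phi\circ f\colon X\to\R^3$ a \emph{complete} locally convex immersion, and van~Heijenoort's theorem (Lemma~\ref{lem:vH}) forces it to be an embedding onto the boundary of a convex set. Only after this is established does the exhaustion-and-convergence argument go through. You should incorporate this step; without it the proof is incomplete.
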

 
 The prove  the above theorem, let us   recall that an immersion, or locally one-to-one continuous map,  $f\colon M\to\R^3$, where $M$ is a manifold without boundary, is \emph{complete} provided that the metric which $f$ induces on $M$ is complete, i.e., all the Cauchy sequences in $M$ converge. We say $f$ is \emph{locally strictly convex} at some point $p\in M$, if there exists a neighborhood $U$  of $p$ 
and a plane $H$ such that $f(U)\cap H=\{f(p)\}$.

 \begin{lem}[van Heijenoort \cite{vH:convex}]\label{lem:vH}
 Let $M$ be a connected $2$ dimensional manifold without boundary, and $f\colon M\to\R^3$ be a complete immersion. Suppose that $f$ is locally convex on $M$, and is locally strictly convex at some point of $M$. Then $M$ is homeomorphic to $\R^2$, $f$ is one-to-one, and $f(M)$ forms the boundary of a convex subset of $\R^3$.
 \end{lem}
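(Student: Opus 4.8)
The plan is a ``sweeping planes'' argument: translate a family of parallel planes across $M$, show each one cuts off a genuine cap, and assemble these caps. Fix a point $p_0$ at which $f$ is locally strictly convex, so there are a plane $H_0$ and a neighborhood $U_0$ of $p_0$ with $f(U_0)\cap H_0=\{f(p_0)\}$, and, after shrinking $U_0$, $f(U_0)$ lies on one side of $H_0$. Let $u$ be the unit normal of $H_0$ pointing away from that side, so that $h:=\langle f(\cdot),u\rangle$ satisfies $h\le t_0:=h(p_0)$ on $U_0$ with equality only at $p_0$. Since $f$ is locally convex, $f(U_0)$ is an open piece of the boundary of a convex body, so for every $t<t_0$ close enough to $t_0$ the set $\{q\in U_0:h(q)\ge t\}$ is a closed disk carried homeomorphically by $f$ onto a nondegenerate cap with plane $\{x:\langle x,u\rangle=t\}$, and (because $h<t_0$ off $p_0$ on $U_0$) this set is the whole component of $\{q\in M:h(q)\ge t\}$ containing $p_0$. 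For arbitrary $t\le t_0$, let $C_t$ denote that component.

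The heart of the matter is to show that $C_t$ is a cap for \emph{every} $t\le t_0$. Let $G$ be the set of $t\le t_0$ for which $C_s$ is a cap for all $s\in[t,t_0]$; by the previous paragraph $G$ is a nonempty subinterval of $(-\infty,t_0]$, and it suffices to prove $G=(-\infty,t_0]$. First, $G$ is open downward: if $C_t$ is a cap with convex body $K_t:=\conv(f(C_t))$, then because $\partial M=\emptyset$ the plane $\{\langle\cdot,u\rangle=t\}$ supports $K_t$ along all of $\partial C_t$ from the correct side, and local convexity of $f$ at the points of $\partial C_t$ furnishes a collar in which the cut region can be enlarged; the Tietze--Nakajima theorem \cite{bjorndahl&karshon} then promotes local convexity along the enlarged region to convexity of its $f$-image, so $C_{t-\varepsilon}$ is again a cap for small $\varepsilon>0$. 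Second, and this is the delicate point, $G$ is closed, which is where \emph{completeness} is indispensable: if $t_\ast:=\inf G$ were finite, the caps $C_s$ ($s\in(t_\ast,t_0]$) increase as $s\downarrow t_\ast$, their convex bodies $K_s$ increase correspondingly, and one must show $C_{t_\ast}$ is still a compact cap. Completeness forbids $C_{t_\ast}$ from being noncompact, since a Cauchy sequence in $C_{t_\ast}$ must converge, and the strict convexity at $p_0$ prevents the cut curves $\partial C_s$ from running off to infinity while their heights stay in the bounded range $[t_\ast,t_0]$; hence Blaschke's selection theorem \cite[Thm.\ 1.8.6]{schneider:book} produces a limiting convex body $K_{t_\ast}$, the distortion bound of Proposition \ref{prop:distort} together with a convergence argument as in Lemmas \ref{lem:Xi} and \ref{lem:limitofcaps} shows that $f$ remains injective on $C_{t_\ast}$ with $f(C_{t_\ast})\subset\partial K_{t_\ast}$, and so $C_{t_\ast}$ is a cap and $t_\ast\in G$, forcing $t_\ast=-\infty$. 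Observe that $C_t$ can never equal $M$, since a cap is homeomorphic to a closed disk and hence has nonempty boundary, whereas $M$ has none; this is the sole use of the hypothesis $\partial M=\emptyset$, and it is what keeps the sweep from prematurely closing up.

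It remains to assemble the conclusion. By connectedness of $M$, any $q\in M$ is joined to $p_0$ by a path whose height attains some minimum $m$, so $q\in C_t$ for all $t\le m$; hence $M=\bigcup_{t\le t_0}C_t$, and since $h(q)>t$ forces $q\notin\partial C_t$, also $M=\bigcup_t\inte(C_t)$, an increasing union of open $2$-disks each relatively compact in the next, so $M\cong\R^2$. For $t'<t$ one has $K_t=K_{t'}\cap\{h\ge t\}$, so the bodies $K_t$ are nested; let $K$ be the closure of $\bigcup_tK_t$, a (possibly unbounded) closed convex set. Since $f(C_t)\subset\partial K_{t'}$ for every $t'\le t$ and $K_{t'}\uparrow K$, one gets $f(M)=\bigcup_tf(C_t)\subset\partial K$, and conversely every boundary point of $K$ is approximated by boundary points of the $K_t$ lying on the caps $f(C_t)$, so $f(M)=\partial K$. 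Finally $f$ is one-to-one: given distinct $p,q\in M$, choose $t$ small enough that $p,q\in C_t$; since $f$ is injective on the cap $C_t$ by definition, $f(p)\neq f(q)$.

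The step I expect to be the main obstacle is the closedness of $G$: controlling the limit of the swept caps and ruling out their escape to infinity at a finite height. This is exactly the technical core of van Heijenoort's theorem and precisely where the completeness hypothesis does its work; everything else is bookkeeping assembled from the extension, uniqueness, distortion, and convergence properties of caps developed above.
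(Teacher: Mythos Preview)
The paper does not prove this lemma at all; it is quoted from van Heijenoort's 1952 paper \cite{vH:convex} as a black box and used immediately in Proposition~\ref{prop:cap1}. So there is no ``paper's own proof'' to compare against.

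That said, your sketch follows the classical sweeping-planes strategy, which is essentially van Heijenoort's original argument, and the overall architecture (small caps near the strict-convexity point; openness of the good set of heights via local extension; closedness via a compactness/convergence argument using completeness; assembly of $M$ as an exhaustion by disks) is correct. Two remarks. First, invoking Proposition~\ref{prop:distort} and Lemmas~\ref{lem:limitofcaps}, \ref{lem:Xi} is not circular, since those results are proved in Section~\ref{sec:basic} without reference to Lemma~\ref{lem:vH}; but you should note that the ambient $M$ there is compact with boundary, whereas here $M$ is open and noncompact, so the Hausdorff-convergence statements (Lemma~\ref{lem:blaschke}) need the limit set to be compact, which is precisely what is in question. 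Second, your justification of closedness (``a Cauchy sequence in $C_{t_\ast}$ must converge'') does not by itself give compactness of $C_{t_\ast}$; what is actually needed is that the caps $f(C_s)$ remain uniformly bounded in $\R^3$ as $s\downarrow t_\ast$ (they lie between two fixed parallel planes and over a fixed convex base containing the image of $U_0$), hence have uniformly bounded intrinsic diameter by the distortion estimate, and then completeness of the induced length metric on $M$ (Hopf--Rinow type argument) upgrades closed and bounded to compact. You correctly identify this as the crux, and it is; the rest is indeed bookkeeping.
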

 
 The above lemma, via a projective transformation, yields:

 \begin{prop}\label{prop:cap1}
 Let $H^+$ be a  half-space, and  $X$ be a  component of $f^{-1}(\inte (H^+))$. Suppose that $X\cap\p M=\emptyset$. Then $\cl (X)$ is a cap.
 \end{prop}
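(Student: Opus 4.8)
The plan is to send the plane $H$ to infinity by a projective transformation so that $f$ restricted to $X$ becomes a \emph{complete} locally convex immersion, apply van Heijenoort's Lemma~\ref{lem:vH}, and pull the conclusion back to recognize $\cl(X)$ as a cap. We may assume $X\neq\emptyset$; then $X$ is a nonempty open subset of $\inte(M)$ with $f(X)\subset\inte(H^+)$. After a rigid motion take $H=\{x_3=0\}$ and $H^+=\{x_3\geq0\}$, and let $T(x_1,x_2,x_3):=(x_1/x_3,x_2/x_3,1/x_3)$: this is a projective diffeomorphism of $\{x_3>0\}$ onto $\{y_3>0\}$ carrying convex bodies contained in $\{x_3>0\}$ to convex bodies (so it preserves local convexity), and for a sequence $q_i\in\{x_3>0\}$ the images $T(q_i)$ converge in $\R^3$ exactly when $q_i$ either converges in $\{x_3>0\}$ or leaves every compact set, and $T(q_i)$ leaves every compact set exactly when $q_i\to H$.

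I would then set $g:=T\circ f|_X\colon X\to\R^3$ and verify that $g$ is a complete locally convex immersion of the boundaryless surface $X$: local convexity and local injectivity are routine, and for completeness, if $p_i\in X$ is Cauchy for the metric induced by $g$, then $g(p_i)\to q\in\R^3$; since $f(M)$ is compact, $f(p_i)=T^{-1}(g(p_i))$ stays bounded, so $q\in\{y_3>0\}$ and $f(p_i)\to T^{-1}(q)\in\inte(H^+)$, whence a subsequence of $p_i$ converges in $M$ to a point $p$ with $f(p)\in\inte(H^+)$, forcing $p\in X$ (as $p$ is a limit of points of the component $X$ of $f^{-1}(\inte(H^+))$) and $p_i\to p$ in the $g$-metric. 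If $f(X)$ lies in a plane, then $X$ is contained in a component of the preimage of that plane, which is a degenerate cap by Theorem~\ref{thm:flatcap}, and the hypothesis $X\cap\p M=\emptyset$ forces $\cl(X)$ to equal that whole component, hence to be a degenerate cap; so assume henceforth that $f(X)$, hence $g(X)$, is not planar.

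In the non-planar case one first argues that $g$ is locally strictly convex at some point of $X$ --- using that a complete locally convex surface that is nowhere locally strictly convex is a plane or a cylinder, the cylinder being impossible since $T^{-1}$ of a cylinder inside $\{y_3>0\}$ is unbounded, contrary to compactness of $f(M)$. Then Lemma~\ref{lem:vH} gives that $X\cong\R^2$, that $g$ is injective (hence an embedding, by invariance of domain), and that $g(X)=\p Q$ for a closed convex set $Q\subset\R^3$; since $g(X)\subset\{y_3>0\}$, $Q\subset\{y_3\geq0\}$, and $Q$ cannot be bounded, for then $f(X)=T^{-1}(\p Q)$ would be a compact convex surface in $\inte(H^+)$, forcing $X$ closed in $M$ and hence $X=M$, contrary to $X\cap\p M=\emptyset$. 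So $Q$ is unbounded; checking that $Q$ lies in a proper cone about the positive $y_3$-axis (because $g(X)\subset T(f(M))$ does), $Q$ misses the exceptional plane of $T$, so extending $T^{-1}$ to the projective transformation $\Phi$ of $\RP^3$ carrying the plane at infinity onto $H$, the set $K:=\Phi(\cl_{\RP^3}Q)=\cl_{\R^3}(T^{-1}(Q))$ is a convex body with $\p K\cap\inte(H^+)=f(X)$ and $\p K\setminus\inte(H^+)\subset H$; hence $\cl(f(X))=\cl(\p K\cap\inte(H^+))$ is a nondegenerate cap with plane $H$. Finally $f$ is injective on all of $\cl(X)$ by Lemma~\ref{lem:Xi} applied to the exhaustion $X_i:=f^{-1}(\{x_3\geq1/i\})\cap X$, which converges to $\cl(X)$ (Lemma~\ref{lem:blaschke}) with $f$ injective on each $X_i$ and with images $f(X_i)$ caps converging to the nondegenerate cap $\cl(f(X))$, hence of uniformly bounded distortion by Lemmas~\ref{lem:limitofcaps} and~\ref{lem:Ci}. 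Therefore $\cl(X)$ is a cap.

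The step I expect to be the main obstacle is the return through the projective transformation: showing that the possibly unbounded convex set $Q$ produced by van Heijenoort lies in a cone missing the exceptional plane of $\Phi$, so that $\Phi(\cl_{\RP^3}Q)$ is a genuine convex body $K$ and $\cl(f(X))$ is exactly $\cl(\p K\cap\inte(H^+))$, together with extending injectivity of $f$ across $\p X$. The verification that $g$ is complete (which rests entirely on compactness of $f(M)$), the treatment of the planar case, and the elimination of the cylinder alternative in the dichotomy are the other points requiring care.
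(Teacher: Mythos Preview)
Your approach is the paper's: projectively send $H$ to infinity, apply van Heijenoort's Lemma~\ref{lem:vH}, then use the cap-convergence machinery to pass from $X$ to $\cl(X)$. The paper streamlines two of your steps. For local strict convexity it argues directly, \emph{before} applying $T$: since $f(\partial X)\subset H$ is bounded while $f(X)$ has a point off $H$, one can choose a ball $B$ containing $f(\partial X)$ but not all of $f(X)$; the point of $\cl(X)$ maximizing distance to the center of $B$ then lies in $X$ and is locally strictly convex. This avoids your plane/cylinder dichotomy (which is not a result already available in the paper and would itself need proof in the $C^0$ locally convex setting) and renders the planar case vacuous. That matters because your planar-case argument has a gap: if $f(X)\subset\Pi$ and $H$ slices through the interior of a flat region with plane $\Pi$, the component $A$ of $f^{-1}(\Pi)$ is the whole flat region while $\cl(X)$ is only the part on one side of $H$, so $\cl(X)\subsetneq A$; the hypothesis $X\cap\partial M=\emptyset$ does not force equality.

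For the final step the paper simply exhausts $X$ by the caps $C_i:=f^{-1}(H_i^+)\cap X$ (with nested $H_i^+\to H^+$) and invokes Proposition~\ref{prop:converge}, rather than unpacking its proof via Lemmas~\ref{lem:blaschke},~\ref{lem:Ci},~\ref{lem:Xi} as you do; this also sidesteps the projective-closure bookkeeping you flagged as the main obstacle, since one just sets $K:=\cl(T^{-1}(Q))$ and uses only that each $H_i^+\cap\partial K$ is a cap.
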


 \begin{proof}
 First we show that $f$ is locally strictly convex at some point $p$ of $X$. To see this, let $\p X$ denote the topological boundary of $X$ in $M$. Then $\p X$ is compact, since $\p X$ is closed in $M$, and $M$ is compact. So $f(\p X)$ is bounded. Further note that $f(\p X)\subset H$. Consequently it is not hard to show that there exists a ball $B$ in $\R^3$ which contains $f(\p X)$ but not all of $f(X)$, since $f(X)$ has a point outside of $H$ by definition of $X$. Let $o$ be the center of $B$, define $h\colon \cl(X)\to \R$  by $h(x):=\|f(x)-o\|$, and let $p$ be a maximum point of $h$. Since $f(\p X)\subset B$, it follows that $p\in X$. Note that $X$ is an open neighborhood of $p$ in $M$, $f(X)$ is contained in a ball $B'$ of radius $h(p)$ centered at $o$, and $f(p)\in\p B'$. Thus $f$ is locally strictly convex at $p$, as desired.

 After a rigid motion, we may suppose that $H$ coincides with the $xy$-plane and $H^+$ is the upper half-space $z\geq 0$.  Consider the projective transformation
 $$
\inte(H^+)\ni(x,y,z)\overset{\phi}{\longmapsto}\(\frac{x}{z},\frac{y}{z},\frac{1}{z}\)\in\inte(H^+).
 $$ 
 Then $\phi\circ f\colon X\to\R^3$ is a complete locally convex immersion, which is locally strictly convex at $p$.  
 Further note that since $X$ is open in $M$, and $X\cap\p M=\emptyset$, $X$ is a manifold without boundary.
 Thus, by Lemma \ref{lem:vH},  $\phi\circ f$ embeds $X$ onto the boundary of a convex set $A\subset\R^3$. This in turn implies that $f=\phi^{-1}(\phi\circ f)$ maps $X$ injectively into the boundary of the convex body  $K:=\cl(\phi^{-1}(A))$. 
It then follows that
 $$
 f(X)=\p K\cap\inte(H^+).
 $$ 
Now let $H_i^+$ be a sequence of nested half-spaces in $\inte(H^+)$ which converge to $H^+$, 
and  set $C_i:=f^{-1}(H_i^+)\cap X$. Then
$$
X=\cup_i C_i.
$$
Note that $C_i$ is compact and $f$ is one-to-one on $C_i$ since $f$ is one-to-one on $X$. Thus $C_i$ is homeomorphic to $f(C_i)=H_i^+\cap f(X)=H_i^+\cap \p K$. Furthermore, $H_i^+\cap \p K$ is a cap since $K$ has points in the interiors of both sides of $H_i$. So $C_i$ form a nested sequence of caps in $M$. Thus  $\cl(\cup_i C_i)$ is a cap by Proposition \ref{prop:converge}. So $\cl(X)$ is a cap as claimed.
  \end{proof}

We now combine Proposition \ref{prop:cap1} with Theorem \ref{thm:flatcap} to complete the proof of the main result of this section:

\begin{proof}[Proof of Theorem \ref{thm:caps}]
First suppose that $f(C)\subset H$. Then $H$ locally supports $f$ at a point $p\in \inte(M)$ and $C$ is the component of $f^{-1}(H)$ which contains $p$. So $C$ is a cap by Theorem \ref{thm:flatcap}.
We may assume then that there is a point $p\in C$ such that $f(p)\in \inte(H^+)$. Let $C'$ be  the closure of the component of $f^{-1}(\inte(H^+))$ which contains $p$. Then $C'$  is a cap by Proposition \ref{prop:cap1}. We claim that  
$C=C'$, which will finish the proof. That $C'\subset C$, follows immediately from the definition of $C'$ and connectedness of $C$. To establish the reverse inclusion, recall that, by the extension property (Proposition \ref{prop:capextend}),  there is a neighborhood $U$ of $C'$ in $M$ such that $f(U-C')$ is disjoint from $H^+$.   So $U-C'$ is disjoint from $C$. Consequently, since $C$ is connected,  $C\subset C'$. So $C=C'$ as claimed.
\end{proof}

\section{Existence of Maximal Caps}\label{sec:maxcap}
A cap $C\subset M$ is \emph{maximal} provided that it is contained in no other cap of $M$. The main result of this section, Theorem \ref{thm:maxcap}, is that through each point $p\in\partial M$ there passes a maximal cap. The intuitive idea here, as we described in Section \ref{subsec:outline}, is to construct these caps  by rotating the tangent planes of $f$ along its boundary. To this end, first we discuss what we mean by a tangent plane in Section \ref{sec:tancone}. Then we use these planes to show that through each point $p\in \partial M$ there passes some cap. Finally we associate an \emph{angle} to each cap at $p$ in Section \ref{subsec:maxcap} and use this notion, together with Theorem \ref{thm:caps} (the slicing theorem) developed in the last section, to prove the existence of maximal caps.

\subsection{Tangent cones}\label{sec:tancone}
Here we show that the $\C^1$ regularity assumption on $\f$ together with the local convexity of $f$ allow us to assign a plane $H_p(0)$ to each  point $p\in \p M$ which coincides with the standard notion of the tangent plane when $f$ is differentiable at $p$. Furthermore, much like its counterpart in the smooth case, $H_p(0)$ will locally support $f$ at $p$.
To establish these results, we start with a quick review of  the concept  of tangent cones; for more background here see \cite{ghomi&howard:tancones}.

For any set $X\subset\R^n$ and $p\in X$, the \emph{tangent cone} of $X$ at $p$, which we denote by $T_p X$, consists of the limits of all (secant) rays which originate from $p$ and pass through a sequence of points $p_i\in X-\{p\}$ which converge to $p$. A basic fact that we will employ in this work is that if $H\subset \R^n$ is a hyperplane and $\pi\colon\R^n\to H$ denotes the corresponding orthogonal projection, then 
\begin{equation}\label{eq:piTp}
\pi( T_p X)=T_{\pi(p)}\pi(X).
\end{equation}
The other more specific facts that we need about tangent cones of convex bodies are enumerated in the next lemma. A subset $X$ of $\R^n$ is a \emph{cone}, based at a point $p$, provided that for every point $x\in X-\{p\}$ the ray originating from $p$ and passing through $x$ is contained in $X$. If $K\subset\R^n$ is a convex body and $p\in \p K$, then the \emph{support cone}, $S_pK$, of $K$ at $p$ is defined as the intersection of all closed half-spaces which contain $K$ and whose boundary passes through $p$.

\begin{lem}\label{lem:tancone1}
Let $K\subset\R^n$ be a convex body and $p\in\p K$. Then
\begin{enumerate}
\item[(i)]{$T_p K=S_p K$; in particular, $T_pK$ is a closed convex set.}
\item[(ii)]{$\p (T_p K)=T_p\p K$.}
\end{enumerate}
\end{lem}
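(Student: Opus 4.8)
## Proof Proposal for Lemma \ref{lem:tancone1}

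The plan is to prove the two statements in sequence, with (i) doing most of the work and (ii) following by a projection argument. Throughout I will use that $K$ is convex with nonempty interior, so that it equals the closure of its interior and every boundary point has a supporting hyperplane.

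For part (i), I would show the two inclusions $T_pK \subseteq S_pK$ and $S_pK \subseteq T_pK$ separately. For $T_pK \subseteq S_pK$: let $H^-$ be any closed half-space containing $K$ whose boundary hyperplane passes through $p$, and let $v$ be a tangent vector at $p$, say $v = \lim_i (p_i - p)/\|p_i - p\|$ for points $p_i \in K - \{p\}$ with $p_i \to p$. Since $p_i \in K \subseteq H^-$ and $p \in \partial H^-$, the secant vectors $p_i - p$ all point into $H^-$, i.e. lie in the half-space $T_p H^- = H^- - p$ (a closed set), hence so does the limit $v$. Intersecting over all such half-spaces gives $T_pK \subseteq S_pK$. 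For the reverse inclusion, first note $S_pK$ is a closed convex cone, so it suffices to show every ray from $p$ through an interior point of $S_pK$ lies in $T_pK$ and then take closures; alternatively, since $K$ has interior points, pick $q \in \inte(K)$ and observe that the segment $[p,q)$ lies in $\inte(K)$ except possibly at $p$, so the direction $q - p$ is a tangent direction and in fact the whole cone generated by $\inte(K) - p$ consists of tangent directions (for $w$ in this cone, $p + tw \in K$ for small $t > 0$ by convexity, giving a sequence witnessing $w \in T_pK$). Taking closures, and using that $S_pK$ is the closure of the cone on $\inte(K) - p$ (a standard fact about support cones of convex bodies with interior), yields $S_pK \subseteq T_pK$. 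The ``in particular'' clause is then immediate since an intersection of closed half-spaces is closed and convex.

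For part (ii), the inclusion $T_p\partial K \subseteq \partial(T_pK)$ should come from the fact that secants through boundary points cannot converge into the interior of the cone: if $v \in T_p\partial K$ is witnessed by $p_i \in \partial K$, then each $p_i$ lies on a supporting hyperplane $H_i$ of $K$, and after passing to a subsequence $H_i \to H$ a supporting hyperplane through $p$ whose half-space appears in the intersection defining $S_pK = T_pK$; the vector $v$ lies in the boundary hyperplane of that half-space, hence $v \in \partial(T_pK)$. For the reverse inclusion $\partial(T_pK) \subseteq T_p\partial K$: take $v \in \partial(T_pK)$ with $\|v\| = 1$. The idea is to move from $p$ in the direction $v$ and hit $\partial K$: since $v \in T_pK = S_pK$ but $v$ is not in the interior of the cone, the ray $p + tv$ should meet $\partial K$ for arbitrarily small $t > 0$ (if a whole segment $p + tv$, $t \in [0,\epsilon]$, stayed in $\inte(K)$ then $v$ would be an interior direction of $S_pK$), producing points of $\partial K$ whose secant directions from $p$ converge to $v$. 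Here I would be slightly careful: it may happen that $p + tv \in K$ only for $t=0$, in which case one instead perturbs — approximate $v$ by nearby directions $v_j \to v$ for which the ray does enter $K$, apply the previous argument to get $v_j \in T_p\partial K$, and use that $T_p\partial K$ is closed.

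The main obstacle I anticipate is the reverse inclusion in part (ii), i.e. showing $\partial(T_pK) \subseteq T_p\partial K$: one must rule out the degenerate possibility that a boundary direction of the tangent cone is only ``asymptotically'' approached by boundary points rather than along an honest ray into $K$, and handle the case where $p$ is a highly singular boundary point (a vertex of $K$). The clean way around this is to invoke formula \eqref{eq:piTp}: projecting orthogonally onto a supporting hyperplane $H$ of $K$ at $p$ with outward normal realizing $v$ (so that $v$ lies in $H - p$), one has $\pi(T_pK) = T_{\pi(p)}\pi(K)$ and $\pi$ maps $\partial K$ locally onto a neighborhood of $p$ in $H$, reducing the claim to the elementary planar/codimension-one statement that the boundary of a tangent cone of a convex body \emph{in the hyperplane $H$} is the tangent cone of its boundary — which can be proven by a direct one-dimensional argument along rays. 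I would present part (ii) this way, citing \eqref{eq:piTp} and part (i), rather than wrestling with the ray-hitting argument in full generality.
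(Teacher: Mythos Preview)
Your argument for part~(i) is correct and in fact a bit more direct than the paper's. The paper establishes $T_pK\subset S_pK$ the same way you do, but for the reverse inclusion it argues by contradiction: assuming $T_pK\subsetneq S_pK$, it picks a boundary ray $r$ of $T_pK$ lying in $\inte(S_pK)$, takes a support hyperplane $H$ of $T_pK$ through $r$, and observes that $K\subset T_pK\subset H^-$ forces $H^-\cap S_pK\subsetneq S_pK$, contradicting the definition of $S_pK$. Your approach---identifying the open cone on $\inte(K)-p$ inside $T_pK$ and then taking closures---is cleaner and avoids the implicit appeal to convexity of $T_pK$ that the paper's contradiction needs in order to produce the support plane $H$. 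One small point: when you ``take closures'' you are using that $T_pK$ is closed; this follows from a routine diagonal argument on the defining sequences, but is worth a sentence.

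For part~(ii) the paper simply cites \cite[Lem.~5.6]{ghomi&howard:tancones} and gives no argument, so you are attempting more than the paper does. Your inclusion $T_p\partial K\subset\partial(T_pK)$ via limiting support hyperplanes is fine. The reverse inclusion, however, has a genuine gap in the projection route you propose at the end. The claim that ``$\pi$ maps $\partial K$ locally onto a neighborhood of $p$ in $H$'' is false in general: if $H$ happens to contain a face or an edge of $K$ through $p$, then $\pi|_{\partial K}$ collapses a nontrivial set to a point and is not locally injective. More seriously, projection does not commute with taking boundaries: for $v\in\partial(T_pK)\cap(H-p)$ one can have $\pi(v)=v\in\inte_H(\pi(T_pK))$, so the reduction to a lower-dimensional statement fails. (Concretely, take $K$ a solid cylinder in $\R^3$, $p$ a point on the bottom edge, $H$ the plane of the bottom face, and $v$ a horizontal direction into the disk.) Your ray-hitting idea is actually closer to a valid proof: once you know from part~(i) that the ray $p+tv$ never enters $\inte(K)$ (else $v$ would be an interior direction of $T_pK$), the set $\{t\ge 0:p+tv\in K\}$ is a closed interval $[0,t_0]$; if $t_0>0$ you are done, and if $t_0=0$ one approximates $v$ by interior directions and uses that the corresponding rays must exit through $\partial K$ arbitrarily close to $p$. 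Making that last perturbation step precise is the actual work, and is what the cited reference carries out.
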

\begin{proof} 
To see (i), let $H$ be a support plane of $K$ at $p$, and $H^-$ be the side containing $K$.  Then $H^-$ contains every ray which emanates from $p$ and goes through a point of $K$. Furthermore, $H^-$ also contains the limit of any family of such rays since it is closed. So $T_p K\subset S_p K$. Now suppose, towards a contradiction, that $T_p K$ is a proper subset of $S_p K$. There exists then a ray $r$ on the boundary of $T_p K$ which lies in the interior of $S_p K$. Let $H$ be a support plane of $T_p K$ which contains $r$ and $H^-$ be the corresponding half-space which contains $T_p K$. Then $H^-\cap S_p K$ is a proper subset of $S_p K$, which is the desired contradiction, by the definition of $S_p K$ and since $K\subset T_p K\subset H^-$. 
For a proof of item (ii), see  \cite[Lem.\@ 5.6]{ghomi&howard:tancones}. 
\end{proof}

A convex subset of $\R^3$ is a \emph{wedge} provided that it is either a closed half-space, or is the intersection of a pair of closed half-spaces determined by non-parallel planes. Note that a closed convex subset of $\R^3$ which has interior points in $\R^3$ is a wedge if and only if it is bounded by a pair of half-planes with a common boundary line.

\begin{lem}\label{lem:tancone2}
Let $K\subset\R^3$ be a convex body, $\Gamma\subset\p K$ be a $\C^1$-embedded curve segment, $p$ be an interior point of $\Gamma$, and $L$ be the tangent line of $\Gamma$ at $p$. Then  $T_p\p K$ consists of a pair of half planes meeting along $L$. In particular, $T_p K$ is a wedge.
\end{lem}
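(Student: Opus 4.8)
The plan is to identify $W:=T_pK$ as a solid convex cone containing the line $L$ through its vertex, and then to factor out the direction of $L$ so as to reduce matters to a two-dimensional angular sector. As preliminaries, note that by Lemma \ref{lem:tancone1}(i) $W=T_pK=S_pK$ is a closed convex cone with vertex $p$; since $K\subseteq S_pK=W$ and $K$ has interior points, so does $W$; and if $H$ supports $K$ at $p$ with $K\subseteq H^-$, then passing to tangent cones gives $W=T_pK\subseteq T_p(H^-)=H^-$ (the tangent cone of a half-space at a boundary point being the half-space itself). Hence $W$ is a proper, solid, closed convex cone with vertex $p$.

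Next I would use the $\C^1$ hypothesis to place the whole line $L$ inside $\p W$. Parametrize $\Gamma$ near $p$ by a regular $\C^1$ curve $\gamma$ with $\gamma(0)=p$. As $t\to 0^+$ the secant rays from $p$ through $\gamma(t)$ converge to the ray $p+\R_{\geq 0}\gamma'(0)$, and since $p\in\inte(\Gamma)$, letting $t\to 0^-$ shows the opposite ray lies in $T_p\Gamma$ as well; hence $T_p\Gamma=L$. Because $\Gamma\subseteq\p K$, every such secant ray at $p$ also lies in $\p K$, so $T_p\Gamma\subseteq T_p\p K$, and then Lemma \ref{lem:tancone1}(ii) gives $L=T_p\Gamma\subseteq T_p\p K=\p(T_pK)=\p W$. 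In particular the full line $L$ through the vertex $p$ lies in $W$.

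The key reduction is then to a planar angular sector. A closed convex set that contains a line in direction $v$ is invariant under every translation $x\mapsto x+sv$, since for $w\in W$ one has $w+sv=\lim_{t\to\infty}\big((1-\tfrac1t)w+\tfrac1t(p+tsv)\big)$, a limit of convex combinations of $w\in W$ and points $p+tsv\in L\subseteq W$. Hence $W=W_0+\R v$, where $v$ spans the direction of $L$ and $W_0:=W\cap\Pi$ with $\Pi:=p+v^{\perp}$. Now $W_0$ is a closed convex cone in the plane $\Pi\cong\R^2$ with vertex $p$; it has nonempty interior in $\Pi$ (otherwise $W$ would lie in a plane), and it is not all of $\Pi$ (otherwise $W=\Pi+\R v=\R^3$, contradicting the first paragraph). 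A proper closed convex cone in $\R^2$ with vertex $p$ and nonempty interior is an angular sector with vertex $p$ and opening angle $\theta\in(0,\pi]$, bounded by two rays $r_1,r_2$ issuing from $p$ (the two opposite rays of a single line when $\theta=\pi$).

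Finally I would let $P_i$ be the plane spanned by $r_i$ and the direction $v$; each $P_i$ contains $L$, and since $W_0$ is the intersection of the two half-planes of $\Pi$ bounded by the lines carrying $r_1$ and $r_2$, it follows that $W=P_1^-\cap P_2^-$ for the appropriate sides, so $W$ is a wedge. When $\theta<\pi$ the planes $P_1,P_2$ are distinct, hence non-parallel (they share $L$), and $\p W=T_p\p K$ is the union of the two half-planes $P_1\cap\p W$ and $P_2\cap\p W$, which meet precisely along $P_1\cap P_2=L$; when $\theta=\pi$, $W$ is a half-space --- the degenerate case of a wedge --- whose boundary plane contains $L$ and is the union of the two half-planes into which $L$ divides it. I expect the only genuine subtlety to be this last stretch: deducing from the solidity and properness of $W$ that $W_0$ is a bona fide two-dimensional sector (not a ray, a line, or all of $\Pi$), and retaining the case $\theta=\pi$, since the notion of \emph{wedge} adopted here admits half-spaces. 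The $\C^1$ assumption enters only to make $T_p\Gamma$ a full line rather than a ray or a pair of non-opposite rays; everything else is a routine chain of standard facts about tangent cones of convex bodies.
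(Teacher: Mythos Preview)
Your proof is correct and follows essentially the same approach as the paper: show $L\subset T_pK$, use that a convex set containing a line is cylindrical in that direction, slice by the orthogonal plane $\Pi$ through $p$, and observe that the resulting planar convex cone is an angular sector. The only cosmetic differences are that the paper cites Rockafellar for the cylindricity while you supply a direct limiting argument, and the paper verifies that $T_pK\cap\Pi$ has interior in $\Pi$ by noting $\Pi$ meets $\Gamma$ transversely (hence is not a support plane of $K$), whereas you deduce it from $W$ being solid in $\R^3$.
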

\begin{proof}
Note that $L\subset T_p K$, since each of the two rays in $L$ originating from $p$ is a limit of secant rays of $\Gamma$. 
Further recall that,  by Lemma \ref{lem:tancone1}, $T_p K$ is convex. Thus it follows that $T_p K$ is cylindrical with respect to $L$, i.e., through each point of $T_pK$ there passes a line parallel to $L$, see \cite[Thm. 8.3]{rockafellar}. Let $\Pi$ be the plane orthogonal to $L$ at $p$ and consider the cross section $T_p K\cap \Pi$.

 Since $T_p K$ is a convex cone, then so is $T_p K\cap \Pi$. Further note that since $\Pi$ meets $\Gamma$ transversely, it cannot be a support plane of $K$. So $K\cap\Pi$ has interior points in $\Pi$, which in turn yields that so does $T_p K\cap\Pi$, since $K\subset T_pK$. Thus $T_p K\cap \Pi$ is a convex planar cone with interior points, which yields that $\p (T_p K\cap\Pi)$ consists of a pair of half-lines meeting at $p$. 
 
 Now note that  $\p (T_pK)$ is an orthogonal cylinder over  $\p (T_p K\cap\Pi)$, due to the cylindricity of $T_p K$. Consequently $\p (T_pK)$ consists of a pair of half-planes meeting along $L$. It only remains to note that $\p (T_p K)=T_p(\p K)$ by Lemma \ref{lem:tancone1}, which completes the proof.
\end{proof}

We  define the \emph{tangent cone} of $f$ a point $p\in M$ as
 $$
 T_p f:=T_{f(p)} f(U),
 $$ 
 where $U$ is a neighborhood of $p$ in $M$.
 Note that $T_p f$ depends only on an arbitrarily small neighborhood of $f(p)$ in $f(U)$ and thus is well defined, i.e., it does not depend on the choice of $U$. Indeed, if $U_1$, $U_2$ are neighborhoods of $p$ in $M$ which are embedded by $f$ into the boundaries of  convex bodies, then $T_{f(p)} f(U_1)=T_{f(p)} f(U_1\cap U_2)=T_{f(p)} f(U_2)$. The main result of this section is as follows:
 
 \begin{prop}\label{prop:halfplane}
 Let $p\in\p M$, $U$ be a convex neighborhood of $p$ in $M$ with associated body $K$, and $L$ be the tangent line of $\f$ at $p$. Then $T_p f$ coincides with one of the half planes of $T_{f(p)}\p K$ determined by $L$. 
  \end{prop}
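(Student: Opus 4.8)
The plan is to reduce the statement to the two-dimensional analysis already packaged in Lemma \ref{lem:tancone2}, applied to a suitable curve on $\partial K$. Since $U$ is a convex neighborhood of the boundary point $p$, by definition $K=\conv(f(U))$ and $f$ embeds $\cl(U)$ into $\partial K$; moreover $\partial U\cap\partial M$ is a segment of $\partial M$ near $p$, so $f$ restricts to a $\C^1$-embedded curve segment $\Gamma\subset\partial K$ through $f(p)$, with tangent line $L$ at $f(p)$, and with nonvanishing differential since $f|_{\partial M}$ is an immersion. Lemma \ref{lem:tancone2} then applies to $K$ and $\Gamma$: it tells us that $T_{f(p)}\partial K$ consists of a pair of half-planes $P^+$, $P^-$ meeting along $L$, and that $T_{f(p)}K$ is the wedge they bound. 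It remains to identify $T_pf=T_{f(p)}f(U)$ as exactly one of these two half-planes.

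First I would show $L\subset T_pf$: the two rays of $L$ emanating from $f(p)$ are limits of secant rays of $\Gamma\subset f(U)$, hence lie in $T_{f(p)}f(U)$. Next, since $f(U)\subset\partial K$, we have $T_pf\subset T_{f(p)}\partial K=P^+\cup P^-$. Because $f(U)$ lies locally on the boundary of a convex body, $T_pf$ is itself convex (it is the tangent cone of a hypersurface piece lying on $\partial K$; one can argue, as in Lemma \ref{lem:tancone1}(i), that $T_pf$ equals the support cone of $f(U)$ at $f(p)$, which is convex). A convex subset of $P^+\cup P^-$ that contains the common edge line $L$ must lie entirely in $P^+$ or entirely in $P^-$ — otherwise it would contain points strictly on both sides of $L$ within the plane $P^+\cup P^-$ is ``folded'' from, hence by convexity contain an interior point of the wedge off the union $P^+\cup P^-$, a contradiction. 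Thus $T_pf$ is contained in one half-plane, say $P^+$.

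Finally I would show the inclusion is an equality, i.e. $T_pf$ is all of $P^+$, not a proper convex subcone. Here is where the local structure of $f$ near the boundary point $p$ enters: near $p$, $U$ is a half-disk with $\partial U\cap\partial M=\Gamma$ mapping to a curve with tangent $L$, and $U-\partial M$ maps into $\partial K$ on one side. Projecting orthogonally onto the plane $H$ spanned by $P^+$ and using \eqref{eq:piTp}, $\pi(T_pf)=T_{\pi(f(p))}\pi(f(U))$; since $f(U)$ is (the graph of a convex function over) a full half-disk neighborhood in that plane — which follows from $K=\conv(f(U))$ together with the fact that $f(U)$ is a neighborhood of $f(p)$ in $\partial K$ on the $H^+$-side — its projection fills a half-plane in $H$ with boundary the projection of $L$, forcing $T_pf=P^+$. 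The main obstacle I expect is precisely this last step: ruling out the possibility that $T_pf$ is a proper subcone of $P^+$, which amounts to checking that $f(U)$ genuinely ``opens up'' to the full half-plane in the tangent cone rather than pinching — this requires using that $f|_{\partial M}$ is a $\C^1$ immersion (so $\Gamma$ has a genuine tangent line, not a cusp) together with the fact that $U$ contains a full neighborhood of $p$ in $M$ mapping onto a neighborhood of $f(p)$ in $\partial K\cap H^+$, so the argument of Lemma \ref{lem:tancone2} concerning the cross-section $T_{f(p)}K\cap\Pi$ having nonempty interior transfers to give the half-plane rather than just a ray.
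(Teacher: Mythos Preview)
Your overall strategy---invoke Lemma~\ref{lem:tancone2}, then project and use \eqref{eq:piTp}---is the same as the paper's, but the execution has two genuine gaps.

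\textbf{The convexity of $T_pf$ is not justified.} You appeal to Lemma~\ref{lem:tancone1}(i), but that lemma concerns convex \emph{bodies}: for a convex body $K$, the tangent cone coincides with the support cone, hence is convex. The set $f(U)$ is a piece of $\partial K$, not a convex body, and for non-convex sets the tangent cone is in general only a closed cone, strictly contained in the (always convex) support cone. So the assertion ``$T_pf$ equals the support cone of $f(U)$, which is convex'' is unfounded. Without convexity, your folded-plane argument---that a convex subset of $P^+\cup P^-$ containing $L$ must lie in one half---has no force.

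\textbf{The half-space case is not covered.} Lemma~\ref{lem:tancone2} allows $T_{f(p)}K$ to be a half-space (the definition of wedge explicitly includes this), in which case $P^+\cup P^-$ is a single plane. Your convexity argument then says nothing: a convex subset of a plane containing the line $L$ need not lie in one half-plane of $L$. So even granting convexity, the inclusion $T_pf\subset P^+$ is not established in this case.

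The paper sidesteps both issues by projecting \emph{first}. It chooses a support plane $H$ of $T_{f(p)}\partial K$ such that $T_{f(p)}\partial K$ is a graph over $H$; then $\pi\colon T_{f(p)}\partial K\to H$ is a bijection, uniformly in both the folded and flat cases. By \eqref{eq:piTp}, $\pi(T_pf)=T_{\pi(f(p))}\pi(f(U))$, and the right-hand side is the tangent cone of a planar disk $\overline D$ at a boundary point where $\partial\overline D$ is $\C^1$. An elementary ray argument (which the paper spells out) shows this tangent cone is exactly a half-plane bounded by $L$; pulling back by the bijection $\pi$ gives the result. This handles the containment and the equality in one stroke, and never needs convexity of $T_pf$. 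Your final paragraph is groping toward exactly this argument---project and use the $\C^1$ boundary to get the full half-plane---so the fix is to lead with the projection rather than with the convexity claim.
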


\begin{proof}
Let $D:=\cl(U)$. Recall that by the definition of convex neighborhoods,  $f\colon D\to f(D)\subset \p K$ is a homeomorphism. Thus we may identify $f(D)$  with $D$,  and drop all references to $f$ for convenience. Then $D$ is an embedded  disk in $\p K$, $p\in\p D$, and $\p D$ is $\C^1$ near $p$. Now we simply need to show that $T_p D$ coincides with one of the half planes of $T_{p}\p K$ determined by $L$.

Let $H$ be a support plane of $T_{p}(\p K)$ at $p$ such that $T_{p}(\p K)$ is a graph over $H$. Then $L$   lies in $H$. Let  $L^+$, $L^-$ denote the sides of $L$ in $H$.
As is well-known, e.g., see \cite{ghomi&howard:tancones}, $T_{p}(\p K)$ is the limit of the homothetic dilations of $\p K$ based at $p$, and thus, $\p K$ is locally a graph over $H$ near $p$. In other words, if $\pi\colon \R^3\to H$ denotes the orthogonal projection, then $\pi$ is one-to-one in a neighborhood  of $p$ in $\p K$. So, we may assume that $\pi\colon D\to H$ is one-to-one. Then $\ol D:=\pi(D)$ is an embedded disk in $H$ whose boundary $\ol{\p D}:=\pi(\p D)$ is $\C^1$ near $p$ and is tangent to $L$ at $p$. 

Since $T_p D\subset T_p\p K$, and $\pi\colon T_p\p K\to H$ is one-to-one, we just need to show that
$\pi(T_pD)$ coincides with one of the sides of $L$ in $H$. But recall that $\pi(T_p D)= T_p(\pi (D))$ by \eqref{eq:piTp}. So we just need to verify that $T_p(\ol D)$ is one of the half-planes $L^+$ or $L^-$. To this end note that since $\ol{\p D}$ is tangent to $L$ at $p$, there is a normal vector $v$ to $L$ at $p$ which points inside $\ol D$.  Let $L^+$ be the side of $L$ to which $v$ points. Then  it follows that $T_p(\ol D)=L^+$.

To establish the last claim let $r$ be a ray in $H$ originating from $p$ which makes an angle of $0\leq\theta\leq\pi$ with $v$. If $\theta<\pi/2$, then $r\cap \ol D$ contains a neighborhood of $p$ in $r$. Thus $r\subset T_p\ol D$, which  yields that 
 $\inte(L^+)\subset T_p(\ol D)$. Of course $L\subset T_p(\ol D)$ as well, since $L$ is the tangent line of $\ol{\p D}$ at $p$. So $L^+\subset T_p(\ol D)$. For the reverse inclusion, suppose that $\theta > \pi/2$. Then $r$ lies in the interior of a cone based at $p$ which does not intersect $\ol D-\{p\}$ near $p$. It follows then that $\inte(L^-)$ is disjoint from  $T_p(\ol D)$, e.g., see \cite[Lem. 2.1]{ghomi&howard:tancones}. So $T_p(\ol D)\subset L^+$ which completes the proof.
\end{proof}

 We may refer to $T_p f$ as the \emph{tangent half-plane} of $f$ at $p$. Accordingly, the (full) \emph{tangent plane} of $f$ at a point $p\in\p M$ may be defined as
 $$
 H_p(0):=\text{the plane containing $T_p f$}.
 $$
 Note that, by the last observation, $H_p (0)$ is a local support plane of $f$ at $p$. Another useful notion is that of the  \emph{conormal vector} of $f$ at  $p\in\p M$, which is defined as
   $$
 \nu(p):=\text{the unit  normal vector of $\f$ at $p$ which points into $T_pf$.}
 $$

\subsection{Caps at the boundary}\label{sec:boundarycaps}
Using the notion of tangent planes $H_p(0)$ and the conormal vector $\nu(p)$ developed above, we next  show that through each point $p\in\partial M$ there passes a cap. For every $p\in\p M$, let 
$$
C_p(0):=\text{the component of $f^{-1} (H_p(0))$ which contains $p$}.
$$
When $C_p(0)$ is a cap, we call it the \emph{tangential cap} of $M$ at $p$. Let $N(p)$ denote the principal normal of $f|_{\partial M}$ at $p$. The main result of this section is as follows:

\begin{prop}\label{prop:boundarycap}
Through each point $p\in \p M$ there passes a  cap. In particular, $C_p(0)$ is a cap
whenever $\nu(p)\neq -N(p)$.
\end{prop}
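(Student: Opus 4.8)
The goal is to produce, through each boundary point $p$, a cap in $M$. The natural candidate is the tangential cap $C_p(0)$, the component of $f^{-1}(H_p(0))$ through $p$, where $H_p(0)$ is the tangent plane constructed in Proposition \ref{prop:halfplane}. Since $H_p(0)$ is a local support plane of $f$ at $p$, after choosing a side $H^+$ so that $f$ lies locally in $H^-$, the component $C$ of $f^{-1}(H^+)$ containing $p$ agrees with $C_p(0)$ near $p$, and one would like to invoke Theorem \ref{thm:caps} (the slicing theorem) to conclude that $C$ is a cap. To apply that theorem we must verify its two hypotheses: (i) $f(C\cap\partial M)\subset H_p(0)$, and (ii) $C\cap\inte(M)\neq\emptyset$. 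Hypothesis (ii) is where the condition $\nu(p)\neq -N(p)$ enters.

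\textbf{Step 1: verifying hypothesis (ii).} The plane $H_p(0)$ contains the tangent line $L$ of $\f$ at $p$, and $T_p f$ is one of the two half-planes of $T_{f(p)}\partial K$ bounded by $L$; the conormal $\nu(p)$ is the inward unit normal to $\f$ at $p$ pointing into $T_p f$. Near $p$, the curve $\f$ lies in $H_p(0)$ only to second order via its curvature vector $\kappa(p) N(p)$. I would argue: if $\nu(p)\neq -N(p)$, then the curvature vector $\kappa(p)N(p)$ has a nonzero component pointing \emph{into} the tangent half-plane $T_p f$ (equivalently, $\langle N(p),\nu(p)\rangle > -1$, so $N(p)$ is not the outward conormal $-\nu(p)$). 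Projecting $f(U)$ orthogonally onto $H_p(0)$ and using \eqref{eq:piTp} together with Proposition \ref{prop:halfplane}, one sees that a neighborhood of $f(p)$ in $f(U)$ projects into the closed tangent half-plane, with $\partial M$ mapping to a $\C^1$ curve tangent to $L$; the hypothesis $\nu(p)\neq -N(p)$ forces this boundary curve to curve \emph{strictly into} the half-plane near $p$ (a second-order estimate via Lemma \ref{lem:Gamma-L}-type Taylor expansion), so there are interior points of $M$ arbitrarily close to $p$ whose images lie strictly on the $H^+$ side — after possibly the right choice of $H^+$ among the two sides of $H_p(0)$. Hence $C\cap\inte(M)\neq\emptyset$. (If instead $\nu(p)=-N(p)$, the curve may osculate $H_p(0)$ from the wrong side and $C_p(0)$ need only meet $\partial M$; this is precisely the degenerate situation the hypothesis excludes, and one handles the general existence statement by perturbing the plane slightly — rotating $H_p(0)$ about $L$ by a small angle to cut off a genuine cap — which is the content of the first sentence of the proposition.)

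\textbf{Step 2: verifying hypothesis (i).} One must show $f(C\cap\partial M)\subset H_p(0)$. Since $f$ is locally nonflat along $\partial M$ and $\f$ has no inflections, $\f$ meets any plane in an isolated set near a point unless it lies in that plane on a whole arc — impossible by local nonflatness. More to the point, $C$ is connected and $C\cap\partial M$ is contained in $f^{-1}(H^+)\cap\partial M$; I would show that along $\partial M$, $\f$ cannot enter $\inte(H^+)$, because $H_p(0)$ is a local support plane of $f$ and the support property propagates: $f(M)$ lies locally in $H^-$ near each point of $C\cap\partial M$ where $H_p(0)$ is still a support plane, and an argument analogous to Lemma \ref{lem:HA} (using the invariance of domain for interior points and the locally-nonflat assumption to rule out flat boundary pieces) shows $H_p(0)$ supports $f$ globally along all of $C$. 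Thus $f(C\cap\partial M)\subset H_p(0)\cap f(\partial M)$, giving (i). Then Theorem \ref{thm:caps} applies and $C=C_p(0)$ is a cap.

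\textbf{Main obstacle.} The delicate point is Step 1: correctly extracting from the purely first-order data $\nu(p)\neq -N(p)$ a second-order conclusion that $f(U)$ genuinely pokes to one definite side of $H_p(0)$ at interior points near $p$, and simultaneously pinning down \emph{which} side $H^+$ to choose so that both (i) and (ii) hold with the same choice. This requires carefully relating the Frenet data $(T(p),N(p),B(p))$ of $\f$ to the geometry of the tangent wedge $T_{f(p)} K$ and its half-plane $T_p f$ — essentially showing $\langle \nu(p), N(p)\rangle$ controls whether the osculating behavior of $\f$ lifts off $H_p(0)$ into the surface. The general existence clause (some cap through every $p$, with no hypothesis on $\nu$) is comparatively easy once the machinery is in place: rotate $H_p(0)$ slightly about $L$ into the convex side so that the slicing plane cuts a small genuine nondegenerate piece off $f(U)$, to which the slicing theorem applies directly.
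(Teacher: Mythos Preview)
Your approach has genuine gaps in both steps, traceable to invoking Theorem~\ref{thm:caps} (the slicing theorem) rather than Theorem~\ref{thm:flatcap}.

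\textbf{Step 1 fails already on the hemisphere.} Take $M$ the closed upper hemisphere of $\S^2$ and $p=(1,0,0)$. Then $H_p(0)=\{x=1\}$, $\nu(p)=(0,0,1)$, $N(p)=(-1,0,0)$, so $\nu(p)\neq -N(p)$; yet $f^{-1}(H_p(0))=\{p\}$, hence $C_p(0)=\{p\}$ and hypothesis~(ii) of Theorem~\ref{thm:caps} fails. Your ``equivalently, $\langle N(p),\nu(p)\rangle>-1$'' is not the same as ``$N(p)$ has a nonzero component into $T_pf$''; the latter requires $\langle N(p),\nu(p)\rangle>0$. More basically, with $f(U)\subset H^-$ no nearby point can map strictly to the $H^+$ side at all; what would actually be needed is an interior point mapping \emph{onto} $H$, and that is precisely what fails here.

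\textbf{Step 2 is unjustified.} The plane $H_p(0)$ supports $f$ only locally at $p$; the component of $f^{-1}(H_p(0)^+)$ through $p$ may well contain boundary points $q$ with $f(q)\in\inte(H^+)$, killing hypothesis~(i). Lemma~\ref{lem:HA} propagates support only along the set $A_0\subset f^{-1}(H)$, not along $f^{-1}(H^+)$, so the analogy does not go through.

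The paper avoids both problems by using Theorem~\ref{thm:flatcap}. If $C_p(0)$ meets $\inte(M)$, that theorem (applied at an interior point) makes $C_p(0)$ a cap directly. Otherwise $C_p(0)\subset\partial M$ is either the single point $\{p\}$ --- trivially a degenerate cap, since $H_p(0)$ locally supports $f$ and $\{p\}$ is an isolated component of $f^{-1}(H)$ --- or a segment $I$ with $f(I)\subset H_p(0)$. In that last case Lemma~\ref{lem:nuN} forces $\nu(p)=-N(p)$: $f(I)$ sits in the boundary of the flat face $D=H\cap\partial K$, so $N(p)$ points into $D$ while $\nu(p)\in T_pf$ points out of $D$ by Lemma~\ref{lem:TpC}. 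Thus $\nu(p)\neq -N(p)$ excludes the segment case, and $C_p(0)$ is a cap. Your rotation idea for the residual case $\nu(p)=-N(p)$ is correct and matches the paper.
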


First we require the following simple observation:

\begin{lem}\label{lem:TpC}
Let $C$ be a nondegenerate cap in $\R^3$ with plane $H$,  $D$ be the region in $H$ bounded by $\partial C$,  and $p\in\partial C$. Then $T_p C\cap\inte(D)=\emptyset$.
\end{lem}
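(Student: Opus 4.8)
\textbf{Proof proposal for Lemma \ref{lem:TpC}.}

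The plan is to exploit the fact that, for a convex body $K$ with $C=\cl(\p K\cap\inte H^+)$, the tangent cone $T_pK$ is the support cone $S_pK$ (Lemma \ref{lem:tancone1}(i)), and in particular is a closed convex set contained in the half-space $H^+$, since $H^-$ is a support plane of... wait --- here $H$ need not support $K$. So first I would reduce to the intrinsic picture of $C$ alone. Set $D$ to be the closed convex region in $H$ bounded by $\p C$, and let $K':=\conv(C)=\conv(C\cup D)$; this is a convex body whose boundary is $C\cup D$ (with $C$ the part in $H^+$ and $D$ the part in $H$). Then $p\in\p C\subset\p K'$, and I claim $T_pC\subset T_p\p K' = \p(T_pK')$ (using Lemma \ref{lem:tancone1}(ii)), since $C\subset\p K'$ implies every secant ray of $C$ at $p$ is a secant ray of $\p K'$ at $p$, and limits are preserved.

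Next I would observe that $H$ is a support plane of $K'$ at $p$: indeed $K'\subset H^+$ by construction (the convex hull of $C\cup D$, both of which lie in $H^+$), and $p\in H$. Hence $T_pK' = S_pK'\subset H^+$ as well. Now suppose towards a contradiction that there is a point $q\in T_pC\cap\inte(D)$. Since $q\in\inte(D)\subset\inte_H(H^+)$'s boundary... more precisely $q$ lies in the relative interior of $D$ inside $H$, so $q$ is an interior point of $K'$ (a point in the relative interior of a face lying in a supporting hyperplane... no). Let me instead argue: $q\in\inte(D)$ means $q$ lies in the interior of the segment from $p$ to some other point $q'\in\relint(D)$, hence $q$ lies in $\inte(K')$? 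Not quite, since $D\subset\p K'$. The cleaner route: $q\in T_pC$ forces the ray $pq$ to lie in $T_pC\subset\p(T_pK')$. But $T_pK'$ is a convex cone in $H^+$ containing the whole half-plane direction field spanned by $D$ near $p$; since $p\in\relbd(D)$ and $q\in\relint(D)$, the ray $pq$ is in the relative interior of the two-dimensional cone $T_p D$ (computed in $H$), which sits inside $\relint(T_pK'\cap H)$, contradicting $pq\subset\p(T_pK')$.

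The main obstacle I anticipate is pinning down this last incidence claim rigorously: that the ray from a boundary point $p$ of the convex region $D$ into its relative interior lands in the relative interior of $T_pK'$ rather than on its boundary. I would handle this by working in the support plane $H$: project everything orthogonally to $H$ (using \eqref{eq:piTp}, $\pi(T_pK')=T_{p}\pi(K')=T_pD'$ where $D'$ is the image, which for $p\in\p K'$ lying in the supporting plane $H$ is just $T_pD$ since $\pi(K')\supset D$ and $\pi$ collapses the $H^+$-part onto $D$). Then $q\in\inte(D)$ gives $\pi(q)=q\in\relint(T_pD)$ by convexity of $D$ (a ray from a boundary point through an interior point of a convex set enters the interior), so $q\notin\p(T_pK')\supset T_pC$, the desired contradiction. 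Alternatively, one avoids tangent cones of $K'$ altogether: since $C$ meets $H$ only along $\p C$ and lies strictly in $\inte(H^+)$ off of $\p C$, any secant ray of $C$ at $p$ either lies in $H$ (limiting to a ray in $T_p(\p C)\subset\p D$) or points strictly into $\inte(H^+)$, so its limit cannot be the ray $pq$ which lies in $H$ but enters $\inte(D)$; a short compactness/angle argument makes this precise, and this may in fact be the slickest presentation.
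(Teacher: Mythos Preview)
Your main approach via $K':=\conv(C)$ has a genuine gap. You correctly observe that $T_pC\subset T_p(\p K')=\p(T_pK')$ and that $H$ supports $K'$, so $T_pK'\subset H^+$. But the conclusion you draw---that a point $q\in\relint(T_pD)$ cannot lie in $\p(T_pK')$---is false. Since $D\subset\p K'$, we have $T_pD\subset T_p(\p K')=\p(T_pK')$; in other words, the entire face $T_pD=T_pK'\cap H$ sits on the boundary of the cone $T_pK'$, relative interior and all. So the containment $T_pC\subset\p(T_pK')$ does not by itself separate $T_pC$ from $\inte(D)$: both live inside the two-dimensional set $\p(T_pK')$, and what you actually need is to distinguish the ``upper'' half $T_pC$ from the ``lower'' half $T_pD$ within that boundary. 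Your projection argument has the same defect: $\pi(\p(T_pK'))$ covers all of $T_pD$, so $q\in\relint(T_pD)$ gives no contradiction.

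Your final alternative sketch is the right instinct, but ``a short compactness/angle argument makes this precise'' is exactly the missing content. The paper supplies this cleanly and elementarily: given $q\in\inte(D)$, take a ball $B\subset\R^3$ centered at $q$ and disjoint from $C$, and set $A:=\conv(\{p\}\cup B)$. One checks directly that $A\cap C=\{p\}$ by splitting into three pieces: $A\cap\inte(H^-)$ is disjoint from $C\subset H^+$; $A\cap\inte(H^+)$ lies in $\inte(\conv(C))$ and hence misses $C\subset\p(\conv(C))$; and $(A\cap H)\setminus\{p\}\subset\inte(D)$, which is disjoint from $C$. Since $q$ lies in the interior of the solid cone $A$ and $A$ meets $C$ only at its apex, it follows (as in the proof of Proposition~\ref{prop:halfplane}) that $q\notin T_pC$. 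This is precisely the compactness/angle argument you were gesturing at, and it bypasses the tangent-cone machinery of $K'$ entirely.
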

\begin{proof}
Let $q\in\inte(D)$. Then there exists a ball $B\subset\R^3$ centered at $q$ which is disjoint from $C$. Let $A:=\conv(\{p\}\cup B)$,  or the union of all line segments with an end point at $p$ and the other in $B$, see Figure \ref{fig:capcone}. 
\begin{figure}[h]
\begin{overpic}[height=0.9in]{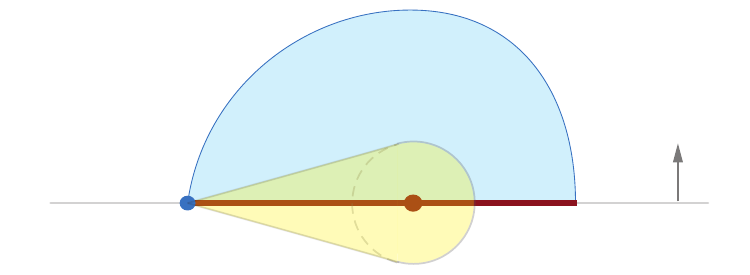}
\put(54,6){\Small $q$}
\put(24,6){\Small $p$}
\put(54,25){\Small $C$}
\put(53.5,13){\Small $B$}
\put(40,11){\Small $A$}
\put(69,5){\Small $D$}
\put(96,8){\Small $H$}
\put(85,18){\Small $H^+$}
\end{overpic}
\caption{}
\label{fig:capcone}
\end{figure}
To show that $q\not\in T_p C$, it suffices to check that 
$
A\cap C=\{p\}, 
$
 as we had mentioned in the proof of Proposition \ref{prop:halfplane}. To this end note that $A\cap\inte(H^-)$  is disjoint from $C$, since  $C\subset H^+$. Further $A\cap \inte(H^+)$ lies in the interior of $K:=\conv(C)$, since $B\cap\inte(H^+)\subset\inte(K)$, and $K$ is convex.
So $A\cap\inte(H^+)$ is disjoint from $C$, since $C\subset\partial K$.
 Finally, $A\cap H-\{p\}$ lies in  $\inte(D)$, since $B\cap D\subset\inte (D)$, and thus is disjoint from $C$ as well. 
\end{proof}

The last lemma yields the following observation. Note that the point $p$ below may be an end point of the segment $I$.

\begin{lem}\label{lem:nuN}
Let $p\in\p M$ and $H=H_p(0)$. Suppose that there exists a segment $I\subset\p M$ containing $p$ such that $f(I)\subset H$. Then
\begin{enumerate}
\item[(i)]{There exists a neighborhood $V$ of $p$ in $M$ such that $f(V-\p M)\cap H=\emptyset$.}
\item[(ii)]{$\nu(p)=-N(p)$}
\end{enumerate}
\end{lem}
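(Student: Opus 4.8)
# Proof Proposal for Lemma \ref{lem:nuN}

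The plan is to prove (i) first and then deduce (ii) as a consequence. The hypothesis gives a segment $I\subset\p M$ through $p$ with $f(I)\subset H:=H_p(0)$. Since $f|_{\partial M}$ has no inflections, $f(I)$ cannot be a line segment (a curve with nonvanishing curvature cannot lie on a line), so $f(I)$ is a genuine curved arc lying in the plane $H$. The key point is that $H$ is a local support plane of $f$ at $p$ by Proposition \ref{prop:halfplane}, and in fact by the argument of Lemma \ref{lem:HA}/Theorem \ref{thm:flatcap}, $H$ locally supports $f$ along all of (a subsegment of) $I$. I would apply the slicing machinery: let $A$ be the component of $f^{-1}(H)$ containing $p$; since $H$ locally supports $f$ and $I\subset A\cap\p M$, we are in a position to invoke Theorem \ref{thm:flatcap}'s circle of ideas. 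However, Theorem \ref{thm:flatcap} requires $p\in\inte(M)$, so I cannot cite it directly; instead I would argue locally using Proposition \ref{lem:3conditions}.

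\textbf{Proof of (i).} First I would show directly that $f$ maps a neighborhood of $p$ in $\p M$ into $H$ implies local one-sidedness. Let $V$ be a convex neighborhood of $p$ with associated body $K=\conv(f(V))$. Since $H_p(0)$ locally supports $f$ at $p$, and $f(V)$ lies on one side of $H$, say $f(V)\subset H^-$. I claim $f(V-\p M)\cap H=\emptyset$ after shrinking $V$. Suppose not: then there are interior points $q_j\to q\in\p M\cap V$ (possibly $q=p$) with $f(q_j)\in H$. Then $H$ locally supports $K$ at an interior-of-$M$ point, so (by invariance of domain, as in Lemma \ref{lem:HA}) $f$ maps an open subset of $\partial K$ into $H$, forcing an open disk-neighborhood of some boundary point into $H$ — but this contradicts the local nonflatness assumption, exactly as in the proof of Proposition \ref{lem:3conditions} (the step establishing \eqref{eq:pApm}). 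Hence $f(V-\p M)\cap H=\emptyset$, which is (i).

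\textbf{Proof of (ii).} Now recall $\nu(p)$ is the unit normal of $f|_{\partial M}$ at $p$ pointing into $T_pf$, and $T_pf$ is a half-plane of $H$ bounded by the tangent line $L$ of $f|_{\partial M}$ at $p$ (Proposition \ref{prop:halfplane}). By Lemma \ref{lem:Gamma-L}, since the curvature of $f(I)$ at $f(p)$ is nonzero, $f(I)$ locally lies on one side of $L$ within $H$, and the side it lies on is precisely the one into which $N(p)$ points (from $h(t)=s^2\kappa(s)/2>0$, the curve bends toward $N(p)$). On the other hand, the half-plane $T_pf$ lies on the opposite side: any secant ray from $f(p)$ to a nearby point of $f(V)$ near $p$ must, after taking limits, stay on the side of $L$ away from $N(p)$, because $f(V)$ lies in $H^-$ and the arc $f(I)$ curving toward $N(p)$ would otherwise force $T_pf$ to contain points of $\inte(D)$-type regions cut off on the $N(p)$-side — precisely the obstruction ruled out by Lemma \ref{lem:TpC} applied to the (possibly degenerate) cap structure. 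More cleanly: $T_pf=T_p(f(V))$, and near $p$ the set $f(V)$ is sandwiched between $L$ and the arc $f(I)$, which sits strictly on the $N(p)$-side of $L$; hence every secant ray of $f(V)$ from $f(p)$, and thus every limiting ray, makes a nonpositive inner product with $N(p)$, so $T_pf$ lies in the closed half-plane $\{N(p)\le 0\}$ of $H$. Therefore $\nu(p)$, the inward conormal, satisfies $\l\nu(p),N(p)\r\le 0$; since both are unit vectors normal to $L$ in $H$ (and $\nu(p)\ne N(p)$ because they lie on opposite sides), we conclude $\nu(p)=-N(p)$.

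\textbf{Main obstacle.} The delicate point is part (ii): carefully justifying that $T_pf$ lies on the side of $L$ \emph{opposite} to $N(p)$. One must rule out the degenerate-looking possibility that $f(V)$ "wraps around" $L$ near $p$; this is exactly where local nonflatness and the no-inflections hypothesis combine — nonflatness prevents $f(V)$ from being planar, and the nonvanishing curvature of $f(I)$ (via Lemma \ref{lem:Gamma-L}) pins down which side the boundary arc occupies, while the fact that $f(V)$ lies in $H^-$ with $H$ a support plane forces the tangent cone to respect that one-sidedness. I would present (ii) by reducing to the orthogonal projection $\pi$ onto $H$ as in the proof of Proposition \ref{prop:halfplane}, where $\pi(T_pf)=T_p(\pi(f(V)))$ and $\pi(f(V))$ is a planar disk whose boundary contains the curved arc $\pi(f(I))=f(I)$; then the claim is the purely planar statement that a disk's tangent cone at a smooth convex boundary point lies on the inward side of the tangent line, which is immediate.
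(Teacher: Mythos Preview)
Your argument for (i) has the right spirit but the contradiction step is not justified. You assert that if interior points $q_j$ map into $H$ then ``by invariance of domain $f$ maps an open subset of $\partial K$ into $H$,'' but $H$ supporting $K$ at $f(q_j)$ does not by itself force any open set into $H$ (think of a round ball). What is needed is that $D:=H\cap\partial K$ is a convex \emph{disk} in $H$---this is exactly where the observation ``$f(I)$ is not a line segment'' gets used, but you never connect it to $D$---and that $f(q_j)\in\inte_H(D)$. Even then, the flat neighborhood you produce is around an interior point of $M$, and local nonflatness is a boundary hypothesis; you still owe an argument that the flat patch reaches $\partial M$. The paper proceeds differently: it first shows $J:=\cl(U)\cap\partial M$ avoids $\inte_H(D)$ (by local nonflatness at boundary points), so $J\subset\partial D=\partial C$ where $C:=\cl(\partial K\cap\inte(H^+))$ is the nondegenerate cap over $D$. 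The half-disk $f(U)$, having boundary arc $f(J)$ on $\partial D$, must then lie in $C$ rather than $D$, giving (i).

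The serious gap is in (ii). Your ``sandwiched'' sentence is internally inconsistent: if $f(V)$ lies between $L$ and $f(I)$, and $f(I)$ is on the $N(p)$-side of $L$, then secant rays of $f(V)$ from $f(p)$ have \emph{nonnegative} inner product with $N(p)$, yielding $\nu(p)=N(p)$, the opposite of what you want. Your final planar reduction is circular: the tangent cone of $\pi(f(V))$ at $\pi(p)$ is indeed the ``inward'' half-plane, but the entire question is \emph{which} side of $L$ is inward for $\pi(f(V))$, and that is not determined by the boundary arc $f(I)$ alone---a planar disk can sit on either side of a given convex boundary arc. What pins this down is precisely the cap $C$ built in the paper's proof of (i): once $V\subset C$, one has $\nu(p)\in T_pf=T_pV\subset T_pC$, and Lemma~\ref{lem:TpC} gives $T_pC\cap\inte(D)=\emptyset$, so $\nu(p)$ points \emph{out} of $D$ within $H$; on the other hand $f(I)\subset\partial D$ is a convex planar curve, so $N(p)$ points \emph{into} $D$. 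Thus $\nu(p)=-N(p)$. You invoke Lemma~\ref{lem:TpC} in passing but never set up the cap $C$ to which it applies; without that construction the direction of $\nu(p)$ remains undetermined.
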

\begin{proof}
After replacing $I$ with a subsegment containing $p$, 
we may assume that $I$  lies in a convex neighborhood $U$ of $M$ with associated  body $K$. Since $f\colon\cl(U)\to f(\cl(U))\subset\partial K$ is a homeomorphism, we may identify $\cl(U)$ with its image and suppress $f$ henceforth. Recall that $H$  supports $U$ by Lemma \ref{prop:halfplane}. So $H$ supports $K$, since $K=\conv(U)$. Thus $D:=H\cap \p K=H\cap K$ is  convex. Further $D$ has interior points in $H$ since it contains $I$, which is not a line segment, since by assumption $\f$ has no inflections. Thus $D$ is a convex disk, see Figure \ref{fig:flatbottom}.
\begin{figure}[h]
\begin{overpic}[height=1.0in]{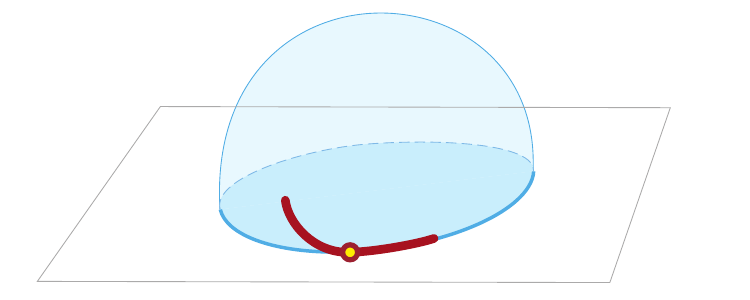}
\put(45,2){\Small $p$}
\put(34,13){\Small $J$}
\put(47,29){\Small $K$}
\put(68,32){\Small $C$}
\put(47,14){\Small $D$}
\put(75,5){\Small $H$}
\end{overpic}
\caption{}
\label{fig:flatbottom}
\end{figure}

Let $H^+$ be the side of $H$ which contains $K$, and set $C:=\cl(\p K\cap\inte(H^+))$. Then $C$ is a convex cap, since $H\cap K=D$ has interior points in $H$. Let 
$J:=\cl(U)\cap\p M$. Then $I\subset J$. Also recall that $J$ is a segment by the definition of $U$. 
 Next note that if there is a point $p'\in J\cap\inte(D)$, then,  $f^{-1}(\inte(D))$ is a neighborhood of $p'$ in $M$ which gets mapped to $D\subset H$,  and thus violates the local nonflatness assumption. So $J\subset C$. 
This, together with the local non flatness assumption, yields that there exists  a neighborhood $V$ of $p$ in $U$ such that $V-\p M=V-J$ is disjoint from $H$. In particular, $V\subset C$.
Consequently, 
$$
\nu(p)\in T_pf=T_{p}V\subset T_{p} C.
$$
Further recall that $\nu(p)\in T_pf\subset H$. 
 So $\nu(p)$ lies in $H$ and points outside $D$,  by Lemma \ref{lem:TpC}. 
 On the other hand, since $I\subset J\subset C$, and $I\subset H$, it follows that $I\subset C\cap H=\p C=\p D$.
 Thus $N(p)$ lies in $H$ and points inside $D$. So we conclude that $\nu(p)=-N(p)$ as desired.
  \end{proof}

Now we can prove the main result of this section:

\begin{proof}[Proof of Proposition \ref{prop:boundarycap}]
By Lemma \ref{prop:halfplane}, $H:=H_p(0)$ locally supports $f$ at $p$. Thus, by Proposition \ref{thm:flatcap}, $C:=C_p(0)$ is a cap as soon as it contains an interior point of $M$, in which case we are done. We may suppose then that 
$
C\subset\p M.
$
 Then $C$ is either a single point or else is a segment containing $p$. In the former case again we are done. So  we may assume  that $C$ is a segment of $\partial M$.  Then, by Lemma \ref{lem:nuN}, 
 $$
 \nu(p)=-N(p)
 $$
  and $f(V-\p M)$ is disjoint from $H$ for a neighborhood $V$ of $p$ in $M$.
In particular we have shown that $C\cap\inte(M)=\emptyset$ only if the above equality holds. Consequently, if $\nu(p)\neq-N(p)$, then $C$ is a cap, as claimed.

Next we are going to show that we may rotate $H$ about the tangent line of $f$ at $p$ so as to produce a nondegenerate cap at $p$ which will finish the proof.
To this end,
let $U\subset V$ be a convex neighborhood of $p$ with associated body $K$. We may assume that $I\subset U$, after replacing $I$ with a subsegment containing $p$. Further, as in the proof of  Lemma \ref{lem:nuN}, we may identify $\cl(U)$ with $f(\cl(U))$ and suppress $f$ henceforth.
 We will show that $U$ contains a cap  passing through $p$ as follows.

 Let $\Gamma:=\cl(U)\cap \p M$, and  $\tilde \Gamma$ denote the projection of $\Gamma$ into $H$. Further let $L^+$ be the half plane of $H$ determined by $L$  into which $\nu(p)$ points. Then, since $N(p)=-\nu(p)$, $N(p)$ points into the opposite half plane $L^-$, see Figure \ref{fig:halfplanes}. 
  \begin{figure}[h]
\begin{overpic}[height=1.0in]{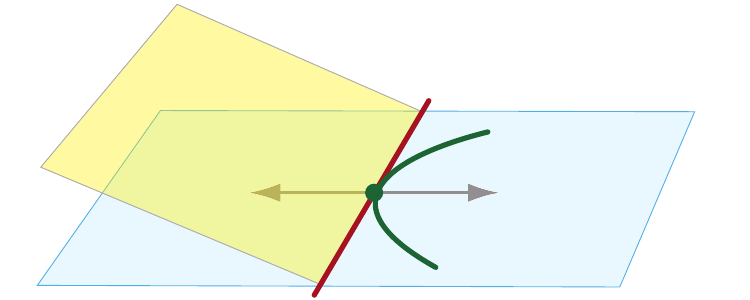}
\put(24,13){\Small $\nu(p)$}
\put(68,13){\Small $N(p)$}
\put(66,20){\Small $\tilde\Gamma$}
\put(23,32){\Small $\tilde L^+$}
\put(47,17){\Small $p$}
\put(76,4){\Small $L^-$}
\put(10,4){\Small $L^+$}
\put(39,-3){\Small $L$}
\end{overpic}
\caption{}
\label{fig:halfplanes}
\end{figure}
 Consequently, $\tilde \Gamma\subset L^-$ and $\tilde \Gamma\cap L^+=\{p\}$, assuming that $U$  is sufficiently small. This yields in particular that $\Gamma\cap L^+=\{p\}$. So it follows that
$$
L^+\cap\p U=\{p\},
$$
since  $\p U-\Gamma\subset U-\p M\subset V-\p M$  is disjoint from $H$. In particular  $\Gamma':=\cl(\p U-\Gamma)$  is disjoint from $L^+$, since $p\in \inte(\Gamma)$. Since $\Gamma'$ is compact, we may then rotate $L^+$ about $L$ into the side of $H$ containing $U$ so as to obtain a half plane $\tilde L^+$ bounded by $L$  such that $\tilde L^+\cap\Gamma'=\emptyset$. Further, since $\tilde \Gamma\subset L^-$, $\Gamma$ lies in the half-space over $L^-$. Thus  we may assume that $\tilde L^+\cap \Gamma=\{p\}$. So
 $$
 \tilde L^+\cap\p U=(\tilde L^+\cap \Gamma)\cup(\tilde L^+\cap\Gamma')=\{p\}\cup\emptyset=\{p\}.
 $$
Consequently, $\p U$ is contained in the wedge formed by $\tilde L^+$ and $L^-$.
In particular, if $\tilde H$ is the plane containing $\tilde L^+$, then $\p U$ lies on one side of $\tilde H$, say 
$$
\p U\subset\tilde H^-.
$$
 In that case $\nu(p)$ points into $\inte(\tilde H^+)$. Thus
 $$
U\cap\inte(\tilde H^+)\neq\emptyset.
 $$  
So $\cl(U\cap \inte(\tilde H^+))=\cl(\p K\cap \inte(\tilde H^+))$ is the desired cap.
\end{proof}

\subsection{Angle of caps}\label{subsec:maxcap}

Here we will prove the main result of Section \ref{sec:maxcap}: 

\begin{thm}\label{thm:maxcap}
Through each point $p\in \p M$ there passes a maximal cap.
\end{thm}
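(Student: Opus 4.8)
The plan is to fix $p\in\p M$ and take a suitable supremum over all caps through $p$, then show the supremum is attained and the attaining cap is maximal. By Proposition \ref{prop:boundarycap} there is at least one cap $C$ through $p$, so the family $\mathcal{C}_p$ of caps containing $p$ is nonempty. To compare caps through $p$, I would assign to each $C\in\mathcal{C}_p$ an \emph{angle} $\alpha(C)\in[0,\pi]$ measuring the rotation of its plane about the tangent line $L$ of $\f$ at $p$, relative to the tangent plane $H_p(0)$: concretely, orient each plane by the support vector $n$ of $f$ pointing away from $f(M)$, and let $\alpha(C)$ be the angle between $n$ and the outward normal $n_p(0)$ of $H_p(0)$ (all these planes contain $L$, so this is well defined; degenerate caps lying in $H_p(0)$ get angle $0$, and the nondegenerate caps produced at the end of the proof of Proposition \ref{prop:boundarycap} get positive angle). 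The first step is to check this angle is well defined even when $C$ is degenerate of rank $\geq 2$: a degenerate cap containing $p$ with $p$ not an endpoint of its rank-$2$ boundary segment forces, via Lemma \ref{lem:nuN}, $\nu(p)=-N(p)$ and pins down its plane through $p$ uniquely as $H_p(0)$, so $\alpha=0$ is forced; the remaining degenerate caps are segments or points with $p$ at an end, and the plane through $L$ is again determined. Thus $\alpha\colon\mathcal{C}_p\to[0,\pi]$ is well defined.

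Next I would show $\mathcal{C}_p$ is totally ordered by the angle in the following strong sense: if $C,C'\in\mathcal{C}_p$ with $\alpha(C)\le\alpha(C')$, then $C\subset C'$. This is where the uniqueness property (Proposition \ref{prop:capunique}) and the slicing theorem (Theorem \ref{thm:caps}) do the work. If $\alpha(C)=\alpha(C')$ the two caps share the plane $H$ through $L$ at that angle and share the point $p$, so $C=C'$ by uniqueness (in the degenerate case one also uses that the distinguished half-space is forced by condition (ii)). If $\alpha(C)<\alpha(C')$, write $H=H_{C'}$ for the plane of $C'$ with outward normal $n$; since rotating from $H_{C}$ to $H$ about $L$ sweeps $f(C)$ into the open half-space $\inte(H^-)$ near $p$ while keeping $f(C\cap\p M)$ on the correct side, one checks that $C$ lies in a single component of $f^{-1}(H^-)$ meeting $\inte(M)$ with its boundary trace in $H$; that component is a cap by Theorem \ref{thm:caps}, it shares the plane $H$ and a point with $C'$, hence equals $C'$, and it contains $C$. (The degenerate cases are handled by the same slicing argument together with Lemma \ref{lem:nuN}.)

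Granting the total order, set $\alpha_p:=\sup\{\alpha(C):C\in\mathcal{C}_p\}$ and pick caps $C_i\in\mathcal{C}_p$ with $\alpha(C_i)\nearrow\alpha_p$. By the nesting just proved we may arrange $C_i\subset C_{i+1}$, so by Proposition \ref{prop:converge} the set $C_p:=\cl(\cup_i C_i)$ is a cap, and it clearly contains $p$. Its plane is the limit of the planes $H_{C_i}$, all containing $L$, so $\alpha(C_p)=\alpha_p$; hence $C_p$ realizes the supremum. Finally $C_p$ is maximal: any cap $C$ containing $C_p$ contains $p$, so $C\in\mathcal{C}_p$ and $\alpha(C)\le\alpha_p=\alpha(C_p)$, whence $C\subset C_p$ by the nesting property, so $C=C_p$. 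Applying this at the given $p\in\p M$ produces a maximal cap through $p$, completing the proof.

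The main obstacle I expect is the nesting/comparison step: making rigorous that a larger rotation angle forces containment. One must carefully control the degenerate cases (when either $C$ or $C'$ is a point, a segment, or a flat disk, and when $p$ is an interior point versus an endpoint of a boundary segment in $H_p(0)$), invoking Lemma \ref{lem:nuN} to rule out ambiguous planes, and one must verify the hypotheses (i)–(ii) of the slicing theorem for the component of $f^{-1}(H^-)$ in play — in particular that its boundary trace lands in $H$, which uses the local nonflatness assumption and the fact that the rotation is taken in the direction into which $f(M)$ opens. Everything else (well-definedness of the angle, extraction of a nested maximizing sequence, the limit being a cap, maximality) is then routine given the structural results already established.
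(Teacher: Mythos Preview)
Your approach is essentially the paper's: parametrize caps through $p$ by the angle of their plane about the tangent line $L$ relative to $H_p(0)$, show the caps are totally ordered by this angle, take the supremum of angles, and invoke Proposition~\ref{prop:converge} on a nested maximizing sequence. The paper packages the comparison step as Lemmas~\ref{lem:C0}, \ref{lem:nestedC0}, \ref{lem:Cptheta}, and \ref{lem:nestedcaps}, and then runs exactly your sup-argument.

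Two points to tighten. First, your nesting step has the half-spaces reversed: if $H=H_{C'}$ with outward normal $n$ and half-space $H^+$, then $f(C')\subset H^+$ and, for $\alpha(C)<\alpha(C')$, one needs $f(C)\subset H^+$ (not $\inte(H^-)$); then $C$, being connected and containing $p$, lies in the component of $f^{-1}(H^+)$ through $p$, which by the extension property (Proposition~\ref{prop:capextend}) is exactly $C'$. No appeal to the slicing theorem is needed here, and indeed its hypothesis~(i) is not available for an arbitrary component of $f^{-1}(H^\pm)$. The paper's Lemma~\ref{lem:nestedcaps} gets the inclusion directly by intersecting $f(C(\theta_0))$ with $H(\theta)^+$, which is the clean way to do it.

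Second, you should single out the case $\alpha_p=0$. If the supremum is zero there is no strictly increasing sequence of angles to feed into Proposition~\ref{prop:converge}; instead one argues (this is Lemma~\ref{lem:C0}) that every degenerate cap at $p$ is contained in $C_p(0)$, so $C_p(0)$ itself is the maximal cap. Your sketch of why degenerate caps have angle $0$ gestures at this but does not quite supply it; the key input is that any local support plane of $f$ at $p$ contains $L$ and, when $\nu(p)\neq -N(p)$, one shows the image of the degenerate cap actually lies in $T_pf\subset H_p(0)$.
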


First let us record the following elementary observation:

 \begin{lem}\label{lem:nN}
Let $H$ be a local support plane of $f$ at  $p\in \p M$, $n$ be the outward normal to $H$, and $N$ be the principal normal of $\f$ at $p$. Then $\l n, N\r \leq 0$.
\end{lem}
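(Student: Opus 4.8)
\textbf{Proof proposal for Lemma \ref{lem:nN}.}

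The plan is to reduce the statement to a purely local, two-dimensional fact about a convex curve together with one of its support lines. Since $H$ is a local support plane of $f$ at $p$, after passing to a convex neighborhood $U$ of $p$ with associated body $K$ we may identify $\cl(U)$ with $f(\cl(U))\subset\p K$ and assume that $H$ supports $K$, with $K\subset H^-$. The key point is that $\f$ near $p$ is a $\C^1$ (indeed $\C^2$) curve lying in the half-space $H^-$, tangent to $H$ is not assumed, but in any case $\Gamma:=f(\p M)$ near $p$ satisfies $\Gamma\subset H^-$. Writing $\gamma$ for a unit-speed parametrization with $\gamma(0)=p$, the function $h(s):=\l\gamma(s)-p,\,n\r$ satisfies $h(s)\le 0$ for all small $s$, with $h(0)=0$; hence $s=0$ is a local maximum of $h$.

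From this the conclusion is immediate by the second-derivative test: $h'(0)=\l\gamma'(0),n\r$ and $h''(0)=\l\gamma''(0),n\r=\l\kappa N,n\r=\kappa\,\l N,n\r$, where $\kappa>0$ is the curvature (nonvanishing since $\f$ has no inflections). If $h'(0)\neq 0$ then $h$ cannot have a local maximum at $0$, a contradiction; so $h'(0)=0$, i.e.\ $L=\gamma'(0)$ is tangent to $H$. Then, since $0$ is a local maximum, $h''(0)\le 0$, which gives $\kappa\,\l N,n\r\le 0$, and therefore $\l N,n\r\le 0$ as claimed. (One can phrase the last step via Taylor's theorem exactly as in Lemma \ref{lem:Gamma-L}, writing $h(s)=\tfrac{s^2}{2}\kappa(\xi)\l N(0),n\r+o$ with the appropriate sign bookkeeping, to avoid invoking differentiability of $h''$ beyond what $\C^2$ gives.)

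I do not expect a genuine obstacle here; the only mild care needed is to make sure the purely local picture is legitimate, namely that $\Gamma$ does lie in $H^-$ near $p$. This is exactly what ``$H$ is a local support plane of $f$ at $p$'' provides — $f(U)\subset H^-$ for a suitable neighborhood $U$, and $\p M\cap U$ is a sub-segment of $\p M$ through $p$ by the definition of a convex neighborhood — so no additional work is required. The statement and argument only use that $\f$ is $\C^2$ with nonvanishing curvature at $p$, consistent with the remark in the paper that $\C^3$ regularity is not needed before Section \ref{subsec:orientexpress}.
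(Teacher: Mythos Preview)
Your proposal is correct and follows essentially the same approach as the paper: parametrize $\f$ near $p$ by unit speed, consider the height function $h(t)=\l\gamma(t)-f(p),n\r$, observe that it has a local maximum at $0$, and conclude $0\geq h''(0)=\kappa(p)\l N(p),n\r$. The paper's proof is just a terser version of your argument, omitting the intermediate observation that $h'(0)=0$ and the discussion of the convex neighborhood.
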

\begin{proof}
Let $\phi\colon (-\epsilon, \epsilon)\to\p M$ be a local unit speed parametrization, with $\phi(0)=p$, and set $\gamma:=f\circ\phi$. Further let
 $h(t):=\l\gamma(t)-f(p),n\r$. Then  $h$ has a local maximum  at $0$, since $n$ is the outward normal of $H$. Consequently $0\geq h''(0)=\l N,n\r\|\gamma''(0)\|$, which completes the proof.
\end{proof}

Using the last observation, we next show that no degenerate cap of $M$ at $p$ can be larger than the tangential cap $C_p(0)$:

\begin{lem}\label{lem:C0}
Suppose that $M$ has a degenerate cap $C$ at $p\in\p M$. Then $C_p(0)$ is also a cap  and $C\subset C_p(0)$.
\end{lem}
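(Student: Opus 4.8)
The plan is to show that if $C$ is a degenerate cap at $p\in\p M$, then the plane of $C$ must essentially be the tangent plane $H_p(0)$, so that $C$ is contained in the tangential cap. First I would let $H$ be a plane of $C$ and $H^+$ a half-space of $C$, so that (by the extension property, Proposition \ref{prop:capextend}, or directly from the definition of a degenerate cap) there is a neighborhood $U$ of $C$ in $M$ with $f(U-C)\subset\inte(H^-)$; in particular $H$ locally supports $f$ at $p$, so its outward normal $n$ is a support vector of $f$ at $p$. The key point to establish is that $H=H_p(0)$, equivalently that $n=n_p(0)$. Since $\f$ is $\C^1$, its tangent line $L$ at $p$ must lie in any local support plane of $f$ at $p$ (the boundary curve lies in $f(U)\subset H^-$ and passes through $p\in H$), so both $H$ and $H_p(0)$ contain $L$; thus $H$ is obtained from $H_p(0)$ by rotating about $L$.

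Next I would use the structure of $C$ near $p$. Since $C$ is degenerate and contains $p\in\p M$, and since $\f$ has no inflections, $C\cap\p M$ cannot be a nondegenerate arc lying in a line, so $C\cap\p M$ is a segment of $\p M$ through $p$ with $f(C\cap\p M)$ a nondegenerate convex arc in $H$ — unless $C=\{p\}$, a case I would dispose of separately (if $C=\{p\}$ then there is nothing forcing $H=H_p(0)$, but then I must argue differently). Actually the cleanest route: in all cases $f(C)\subset H$ is convex, and $T_pf\subset T_p(f(U))$; since $f(U)$ lies in the wedge $T_p f(U)$ which is contained in $H^-$ near $p$, the tangent half-plane $T_pf$ lies in $H$. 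But $T_pf$ also lies in $H_p(0)$ by definition, and $T_pf$ is a genuine half-plane (Proposition \ref{prop:halfplane}) bounded by $L$, hence spans a unique plane. Therefore $H_p(0)$, being the plane containing $T_pf$, equals $H$. So $H=H_p(0)$ and $C$ is contained in a component of $f^{-1}(H_p(0))$, namely $C_p(0)$ (since $C$ is connected and contains $p$).

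It remains to check that $C_p(0)$ is itself a cap. By Proposition \ref{prop:boundarycap}, $C_p(0)$ fails to be a cap only when $\nu(p)=-N(p)$ and moreover $C_p(0)\subset\p M$. But if $C$ is a degenerate cap at $p$, I would argue $C_p(0)$ cannot be contained in $\p M$: since $f(C)\subset H=H_p(0)$ is convex with nonempty interior in $H$ (using no inflections again when $C$ is a segment; when $C$ is a disk it is automatic; the case $C=\{p\}$ needs the rotation argument in the proof of Proposition \ref{prop:boundarycap} which produces a nondegenerate cap, contradicting that the maximal degenerate object is a point — I'll have to be careful here), the component $A$ of $f^{-1}(H_p(0))$ through $p$ contains $C$ which already has interior points of $M$ whenever $C$ is two-dimensional; and if $C$ is one-dimensional, then $f(C)$ lying in $H_p(0)$ as a nondegenerate arc together with local nonflatness forces nearby interior points of $M$ to map into $H_p(0)$ as well (otherwise $C$ would be a free boundary arc, contradicting that $f(C)$ has interior in $H$ bounded by $\p C$). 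Hence $C_p(0)\cap\inte(M)\ne\emptyset$, so by Theorem \ref{thm:flatcap} (applied at an interior point of $C_p(0)$) $C_p(0)$ is a cap; then $C\subset C_p(0)$ follows from connectedness and Lemma \ref{lem:subset}.

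The main obstacle I anticipate is the degenerate sub-case $C=\{p\}$: a single point is a cap with \emph{many} planes, so the identification $H=H_p(0)$ is not forced by $C$ alone, and I would instead have to invoke condition (ii) in the definition of a cap together with the tangent-cone analysis — namely that the half-space $H^+$ witnessing $\{p\}$ is a cap must satisfy $f(U-\{p\})\cap H^+=\emptyset$, which pins down $n$ relative to $\nu(p)$ and $N(p)$ and lets me conclude $n=n_p(0)$ via Lemma \ref{lem:nN} and the wedge structure of $T_pf(U)$ from Lemma \ref{lem:tancone2}. I'll need to handle this carefully to make the argument uniform across all three degenerate types.
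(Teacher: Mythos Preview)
Your central claim --- that the tangent half-plane $T_pf$ lies in $H$ --- is not justified by what you wrote. From $f(U)\subset H^-$ (which follows from the extension property) you get only $T_pf\subset H^-$, not $T_pf\subset H$; a half-plane bounded by $L\subset H$ can perfectly well tilt into $\inte(H^-)$. The sentence ``since $f(U)$ lies in the wedge $T_p f(U)$ which is contained in $H^-$ near $p$, the tangent half-plane $T_pf$ lies in $H$'' is a non sequitur. What actually forces $T_pf\subset H$ is the presence of a point of $f(C)$ off $L$: if $q\in C$ with $f(q)\notin L$, convexity of $f(C)$ puts the whole segment $f(p)f(q)$ in $f(C)\subset H$, so the ray from $f(p)$ through $f(q)$ lies in $T_pf\cap H$, and since $T_pf$ is a half-plane bounded by $L$ this pins it down inside $H$. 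That is exactly how the paper argues in its second case, and it is the missing ingredient in your ``cleanest route.'' When $f(C)\subset L$ (in particular when $C=\{p\}$) this argument is unavailable, and you have no mechanism left to identify $H$ with $H_p(0)$.

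The paper avoids this by a different case split: either $\nu(p)=-N(p)$ or not. If $\nu(p)=-N(p)$, then Lemma~\ref{lem:nN} gives $\langle n,N(p)\rangle\le 0$ while $\nu(p)\in T_pf\subset H^-$ gives $\langle n,\nu(p)\rangle\le 0$, hence $\langle n,N(p)\rangle=-\langle n,\nu(p)\rangle\ge 0$; together these force $\langle n,N(p)\rangle=0$, i.e.\ $H=H_p(0)$ is the \emph{only} local support plane of $f$ at $p$. Then $C=C_p(0)$ directly by the uniqueness property, and in particular $C_p(0)$ is a cap. If $\nu(p)\neq -N(p)$, Proposition~\ref{prop:boundarycap} already makes $C_p(0)$ a cap, and then one runs the ray argument above (or notes trivially that $f(C)\subset L\subset H_p(0)$ when no such $q$ exists). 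This dichotomy handles the $C=\{p\}$ case cleanly without any separate tangent-cone analysis, and it also renders your attempted proof that $C_p(0)\cap\inte(M)\neq\emptyset$ unnecessary --- which is fortunate, since that attempt is muddled (local nonflatness pushes interior points \emph{off} $H_p(0)$, not onto it, so your one-dimensional subcase does not go through as stated).
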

\begin{proof}
First suppose that $\nu(p)=-N(p)$. Let $H$ be a local support plane of $f$ at $p$ with outward normal $n$. Then, by Lemma \ref{lem:nN}, $\l n,N(p)\r\leq 0$. On the other hand, by assumption $f(U)\subset H^-$, for some  neighborhood $U$ of $p$ in $M$. Consequently,  $\nu(p)\in T_p f\subset H^-$. Thus 
$
0\geq\l n,\nu(p)\r=-\l n,N(p)\r\geq 0.
$
 So $\l n,\nu(p)\r=0$, which yields that $H=H_p(0)$. So $H_p(0)$ is the only local support plane of $f$ at $p$. Now recall that the plane of any degenerate cap at $p$ locally supports $f$ at $p$, by the extension property. Thus $H_p(0)$ is the plane of $C$.  Consequently $C=C_p(0)$ by the uniqueness property of caps (Proposition \ref{prop:capunique}).
In particular $C_p(0)$ is a cap.

It remains then to consider the case where $\nu(p)\neq -N(p)$. In this case $C_p(0)$ is a cap by Proposition \ref{prop:boundarycap}. So we just need to verify that $C\subset C_p(0)$. To this end, it suffices to show that $f(C)\subset H_p(0)$, because by assumption $C$ is a connected set containing $p$, and $C_p(0)$ is by definition the component of $f^{-1}(H_p(0))$ containing $p$.
To see  that $f(C)\subset H_p(0)$,
let $H$ be a plane of $C$. Then $H$ is a local support plane of $f$ at $p$ (again by the extension property). In particular, it follows that $H$ locally supports $\f$ at $p$ and therefore contains the tangent line $L$ of $\f$ at $p$. So if $f(C)\subset L$, then there is nothing to prove. 

Suppose then that there is a point $q\in C$ such that $f(q)\not\in L$. Let $r$ be the ray which emanates from $f(p)$ and passes through $f(q)$. Since $f(C)$ is convex, the segment of $r$ between $f(p)$ and $f(q)$ lies in $f(C)$. It follows then that if $U$ is any neighborhood of $p$ in $M$, then $f(U)$ contains a subsegment of $r$ with an end point at $f(p)$. Thus $r\in T_p f$, which yields that $f(q)\in T_pf$. So, since $f(q)\not\in L$, it follows that $T_pf\subset H$, and thus $H=H_p(0)$. So $f(C)\subset H_p(0)$ as desired.
\end{proof}

The following lemma collects some basic facts for easy reference:

\begin{lem}\label{lem:TpfH+}
Let $C$ be a nondegenerate cap of $M$ at $p\in\partial M$, $H$ be the plane of $C$,  and $L$ be the tangent line of $\f$ at $p$. Then $L\subset H$,  $T_p f\subset H^+$,  $T_p f\cap H=L$, and $f(\partial C)$ lies on one side of $L$.
\end{lem}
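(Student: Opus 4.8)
Throughout I abbreviate $T_pf$ for the tangent half-plane, write $T_pC:=T_{f(p)}f(C)$, orient $H$ by the unit normal $n$ for which $f(C)\subset H^+$, and fix a convex neighborhood $V$ of $p$ so small that, by the extension property (Proposition~\ref{prop:capextend}), $f(V\setminus C)\subset\inte(H^-)$; recall that $C$ is a component of $f^{-1}(H^+)$. First I would record that $f(p)\in H$, i.e.\ $p\in\partial C$: otherwise $f(p)\in\inte(H^+)$, so the component of $f^{-1}(\inte H^+)$ through $p$ is an $M$-open set contained in $C$, whence $C$ contains a neighborhood of $p$ in $M$; since $p\in\partial M$ this neighborhood is a half-disk, forcing $p\in\partial C$ and hence $f(p)\in f(C)\cap H\subset H$, contrary to $f(p)\in\inte(H^+)$. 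The plan is to prove the two assertions $L\subset H$ and $T_pf\subset H^+$; the remaining two then follow formally.

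For $L\subset H$ I would argue that $H$ locally supports $\f$ at $p$. Indeed, every point $q\in\partial M$ near $p$ is either in $C$ — and then, by the half-disk argument just used, $q\in\partial C$, so $f(q)\in f(C)\cap H\subset H$ — or in $V\setminus C$, so $f(q)\in\inte(H^-)$; in both cases $f(q)\in H^-$. Thus, for a local unit-speed parametrization $\gamma$ of $\f$ with $\gamma(0)=f(p)$, the function $t\mapsto\langle\gamma(t)-f(p),n\rangle$ is nonpositive near $0$ and vanishes at $0$, so $\langle\gamma'(0),n\rangle=0$, which together with $f(p)\in H$ gives $L\subset H$.

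For $T_pf\subset H^+$: by Proposition~\ref{prop:halfplane}, $T_pf$ is a half-plane with edge line $L$, so (as $L\subset H$) it lies in $H$, in $H^+$, or in $H^-$, and I must exclude the last two. Here $T_pC\subset T_pf$, $T_pC\subset H^+$ (since $f(C)\subset H^+$), and $f(\inte C)$, which lies in $\inte(H^+)$, accumulates at $f(p)$ (as $p\in\partial C$), so $T_pC$ contains a ray which is a limit of secant rays of $f$ issuing from $f(p)$ into $\inte(H^+)$. If $T_pf\subset H^-$, then this ray lies in $T_pf\cap H^+=L$; but then $T_pC\subset L$, contradicting that $T_pC$, being the tangent cone at a boundary point of the two-dimensional convex surface patch $f(C)$, is two-dimensional. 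If instead $T_pf\subset H$, then $H$ is the tangent plane $H_p(0)$ and hence locally supports $f$ at $p$; this is incompatible with $f(V\setminus C)\subset\inte(H^-)$ when $V\setminus C$ accumulates at $p$, and when $C$ contains a neighborhood of $p$ in $M$ it forces $f(\partial C)=f(\partial M)$ near $f(p)$ to be a planar arc on which the convex graph representing $f(C)$ over $H$ vanishes, hence a line segment — contradicting that $\f$ has no inflections. Therefore $T_pf\subset H^+$. I expect this step, pinning down which side of $H$ the tangent half-plane falls on when the surface genuinely meets $H$ near $p$, to be the main obstacle.

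Finally, $T_pf\not\subset H$ was shown above, so $T_pf\cap H=L$, which is the third assertion. For the last one, $f(\partial C)=\partial D$ where $D:=H\cap K$ is a convex disk with $f(p)\in\partial D$; since $f(\partial C)\subset f(C)\cap H$ we get $T_{f(p)}f(\partial C)\subset T_pC\cap H\subset T_pf\cap H=L$, so the support cone of $D$ at $f(p)$ is a half-plane of $H$ bounded by $L$, and in particular $D$, and with it $f(\partial C)$, lies on one side of $L$. (Lemma~\ref{lem:TpC} gives the alternative route: $T_pC\cap\inte(D)=\emptyset$, which likewise yields the conclusion once $T_pC\cap H\subset L$ is known.)
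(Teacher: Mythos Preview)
Your argument is largely parallel to the paper's, and your treatments of $L\subset H$ and of the last assertion (via the support cone of $D$) are fine alternatives. There is, however, a genuine gap in your exclusion of the case $T_pf\subset H$, specifically in sub-case~(b), where $C$ contains a neighborhood of $p$ in $M$. You assert that $f(\partial M)$ near $f(p)$ ``is a planar arc on which the convex graph representing $f(C)$ over $H$ vanishes, hence a line segment''. The first clause is correct---writing $f(C)$ as the graph of a concave $g\colon D\to[0,\infty)$ with $g=0$ on $\partial D$ and $g>0$ on $\inte_H D$, the arc lies in $\{g=0\}=\partial D$---but nothing here forces the arc to be straight: $\partial D$ is merely a convex curve. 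So the contradiction with ``no inflections'' is not established.

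The paper's route avoids this sub-case analysis entirely. It shows (2) and (3) via the single fact that $T_pC\not\subset L$ and $T_pC\not\subset H$ ``since $f(C)$ is nondegenerate'': indeed, for any direction $v$ from $x_0:=\pi_H(f(p))$ into $\inte_H D$, concavity gives $g'(x_0;v)\ge g(x_0+tv)/t>0$, so $T_pC$ contains a ray with positive $H^+$-component; as $T_pC\subset T_pf$ and $T_pf$ is a half-plane with edge $L$, this forces $T_pf-L\subset\inte(H^+)$. This one-line observation replaces your case split; you could also salvage sub-case~(b) directly by noting that $T_pf=T_pC$ there, and then the same concavity argument yields the contradiction (or, equivalently, that $g\equiv 0$ on $D$, violating $g>0$ on $\inte_H D$). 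For (4) the paper instead observes that $H_p(0)$ locally---hence, by convexity, globally---supports $f(\partial C)$, and $H_p(0)\cap H=L$. (Minor slip: ``exclude the last two'' should read ``exclude the first and the last'' of your three listed alternatives.)
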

\begin{proof}
It follows from the extension property (Proposition \ref{prop:capextend}) that $H$ locally supports $\f$ at $p$. Thus $L\subset H$. 

Recall that $T_p f$ is a half-plane bounded by $L$.  Thus to show that $T_p f\subset H^+$ it suffices to check that a point of  $T_p f-L$  lies in $H^+$. To  this end note that $T_{f(p)} f(C)\subset T_p f$ by the definition of $T_p f$, and $T_{f(p)} f(C)\subset H^+$ since $f(C)\subset H^+$. So it suffices to check that $T_{f(p)} f(C)\not\subset L$, which is indeed the case since $f(C)$ is nondegenerate. So $T_p f\subset H^+$.

If $T_p f\cap H\not\subset L$, then $T_p f \subset H$. This in turn yields that 
$T_{f(p)} f(C)\subset H$, which again is not the case since $f(C)$ is nondegenerate. So  $T_p f\cap H=L$.

Recall that $H_p(0)$ locally supports $f$ at $p$. Thus $H_p(0)$ locally supports $f(\p C)$, which in turn yields that $H_p(0)$ globally supports $f(\p C)$, since $f(\p C)$ is a convex curve. Further, since $T_p f\cap H=L$,  we have $H_p(0)\cap H=L$. Thus, $L$ supports $f(\partial C)$.
\end{proof}

The last two observations now yield the following basic fact:

\begin{lem}\label{lem:nestedC0}
Suppose that $M$ has a nondegenerate cap $C$ at $p\in\p M$. Then $C$ contains  all degenerate caps of $M$ at $p$.
\end{lem}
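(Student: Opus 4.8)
Let $D$ be a degenerate cap of $M$ at $p$, with plane $H_D$, and let $C$ be the given nondegenerate cap at $p$, with plane $H$. The plan is to reduce the claim to the single statement $D\subset C_p(0)$ together with $C_p(0)\subset C$, and then invoke the uniqueness property to glue everything together. First I would observe, via the extension property (Proposition \ref{prop:capextend}), that $H_D$ locally supports $f$ at $p$, hence contains the tangent line $L$ of $\f$ at $p$. As in the proof of Lemma \ref{lem:C0}, a case split on whether $f(D)\subset L$ shows that either $f(D)\subset L\subset H$ trivially, or else some $f(q)\notin L$ with $q\in D$ forces (using convexity of $f(D)$ and the definition of $T_pf$) the ray from $f(p)$ through $f(q)$ to lie in $T_pf$, hence $T_pf\subset H_D$, hence $H_D=H_p(0)$. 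In either case $f(D)\subset H_p(0)$, so $D$ lies in the connected set $f^{-1}(H_p(0))$ containing $p$, i.e. $D\subset C_p(0)$.

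It remains to show $C_p(0)\subset C$. Since $C$ is a nondegenerate cap at $p$, Lemma \ref{lem:TpfH+} gives $L\subset H$ and $T_pf\subset H^+$ with $T_pf\cap H=L$; in particular $\nu(p)\in T_pf$ points into $\inte(H^+)$ unless $\nu(p)\in L$, which cannot happen since $\nu$ is normal to $\f$ at $p$. Because $C$ is nondegenerate, $f(U-C)\subset\inte(H^-)$ for a neighborhood $U$ of $C$ by the extension property, so the part of $M$ near $p$ that meets $\inte(H^+)$ lies inside $C$; combined with $\nu(p)\in\inte(H^+)$ this shows $f$ is not locally flat at $p$ along $H$, and more to the point it shows that $H\neq H_p(0)$ would be contradicted only if $C_p(0)$ strayed off $H$. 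The clean way to finish: since $D$ is a degenerate cap at $p$, $D\neq C$, so by the uniqueness property (Proposition \ref{prop:capunique}) $D$ and $C$ cannot share a plane; thus $H_D\neq H$. But we showed $f(D)\subset H_p(0)$, and if $C$ itself were degenerate we would be in the situation of Lemma \ref{lem:C0}; since $C$ is nondegenerate, $H_p(0)$ locally supports $f$ at $p$ and (arguing as in Lemma \ref{lem:C0}, last paragraph) every plane of a cap at $p$ contains $L$, so $H_p(0)\cap H=L$. Hence $C_p(0)$, which by Theorem \ref{thm:flatcap} is a cap with plane $H_p(0)$, is a \emph{degenerate} cap at $p$, so by the first part $C_p(0)\subset C_p(0)$ trivially and, applying the already-proved inclusion $D\subset C_p(0)$ with $D:=C_p(0)$ in the role of an arbitrary degenerate cap, we get $D\subset C_p(0)$. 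Finally, to conclude $D\subset C$ I would note that $C_p(0)$ is itself degenerate and apply Lemma \ref{lem:C0} to $C$ only if $C$ were degenerate; since it is not, instead observe directly: $f(C_p(0))\subset H_p(0)$ lies on one side of $L$ (Lemma \ref{lem:TpfH+}), on the same side as $f(\partial C)$, and the extension property for $C$ forces $f(C_p(0)-\{p\}$-near-$p)$ into $C$, whence $C_p(0)\subset C$ by connectedness and the fact that $C$ is a component of $f^{-1}(H^+)$.

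A cleaner route, which I would actually adopt, avoids the tangle above: by Lemma \ref{lem:C0}, \emph{if} $C_p(0)$ is a cap then $D\subset C_p(0)$. Lemma \ref{lem:C0} also tells us $C_p(0)$ is automatically a cap whenever $M$ has a degenerate cap at $p$ at all (that is exactly its conclusion). So $D\subset C_p(0)$. Now I claim $C_p(0)\subset C$. Since $C$ is nondegenerate, by the extension property there is a neighborhood $U$ of $C$ with $f(U-C)\subset\inte(H^-)$; in particular $C$ is the component of $f^{-1}(H^+)$ containing $p$. Because $f(C_p(0))\subset H_p(0)$ and $H_p(0)\cap H=L\subset H\subset H^+$ — the equality $H_p(0)\cap H=L$ holding by Lemma \ref{lem:TpfH+} applied to $C$, since both planes contain $L$ and are support planes of the convex curve $f(\partial C)$ meeting it along $L$ — we get $f(C_p(0))\subset H^+$. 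Thus $C_p(0)$ is a connected subset of $f^{-1}(H^+)$ containing $p$, hence $C_p(0)\subset C$. Combining, $D\subset C_p(0)\subset C$, as desired.

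The main obstacle is the middle step: showing $f(C_p(0))\subset H^+$, equivalently that the flat piece $C_p(0)$ does not poke to the $H^-$ side of the nondegenerate cap $C$. This hinges on correctly extracting from Lemma \ref{lem:TpfH+} that the two support planes $H$ and $H_p(0)$ of the convex boundary curve $f(\partial C)$ intersect exactly along the tangent line $L$ — so that $H_p(0)$, and with it the degenerate cap lying in it, sits on the closed side $H^+$. Everything else is a routine application of the extension and uniqueness properties already established, plus the component characterization of caps in Theorem \ref{thm:flatcap} and Theorem \ref{thm:caps}.
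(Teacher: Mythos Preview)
Your ``cleaner route'' has the right architecture, identical to the paper's: use Lemma~\ref{lem:C0} to get $D\subset C_p(0)$, then show $C_p(0)\subset C$ by proving $f(C_p(0))\subset H^+$ and invoking the component characterization from the extension property. The first two paragraphs are tangled and should simply be dropped.

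The genuine gap is in your justification of $f(C_p(0))\subset H^+$. You write: ``Because $f(C_p(0))\subset H_p(0)$ and $H_p(0)\cap H=L\subset H\subset H^+$ \ldots\ we get $f(C_p(0))\subset H^+$.'' This is a non sequitur. The plane $H_p(0)$ meets $H$ along $L$, but $H_p(0)$ has points in both $\inte(H^+)$ and $\inte(H^-)$; knowing the intersection is $L$ tells you nothing about which side of $L$ within $H_p(0)$ the set $f(C_p(0))$ occupies. Your last paragraph acknowledges this is the crux but still does not supply the missing argument.

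What is actually needed is the stronger fact $f(C_p(0))\subset T_pf$, since Lemma~\ref{lem:TpfH+} already gives $T_pf\subset H^+$. The paper obtains this in one line via tangent cones: since $C':=C_p(0)$ is degenerate, $f(C')$ is a convex set in a plane containing $f(p)$, hence $f(C')\subset T_{f(p)}f(C')$; and $T_{f(p)}f(C')\subset T_pf$ because $C'$ sits inside any convex neighborhood of $p$. Chaining these gives $f(C')\subset T_pf\subset H^+$, and then your connectedness argument finishes the proof. (This containment $f(C_p(0))\subset T_pf$ is isolated later as Lemma~\ref{lem:Cp0TpM}, but the paper proves it on the spot here.)
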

\begin{proof}
If $M$ has a degenerate cap at $p$, then, 
by Lemma \ref{lem:C0}, $C':=C_p(0)$  contains all the degenerate caps of $M$ at $p$. Thus it suffices to show that $C'\subset C$. 
To see this let $H^+$ be the half-space of $C$. Note that $T_pf\subset H^+$, by Lemma \ref{lem:TpfH+}. Thus $T_p f(C')\subset H^+$, since $T_p f(C')\subset T_pf$. Further note that since $C'$ is degenerate, $f(C')\subset T_p f(C')$. Thus  $f(C')\subset H^+$. Now recall that, by Proposition \ref{prop:capextend}, $C$ is the component of $f^{-1}(H^+)$ which contains $p$. Thus, since $C'$ is connected, $p\in C'$, and $C'\subset f^{-1}(H^+)$, it follows that $C'\subset C$ as desired.
\end{proof}

 Let $C$ be a nondegenerate cap of $M$ at  $p\in\p M$, $H$ be the plane of $C$ and $L$ be the tangent line of $\f$ at $p$. Then, by Lemma \ref{lem:TpfH+}, $f(\p C)$ lies on one side of $L$ in $H$, which we denote by $L^+$. We  define the \emph{angle} $0\leq \theta\leq\pi$ of $C$ at $p$ as the angle between the half planes $T_p f$ and $L^+$, see Figure \ref{fig:angle}.  Note that one of these half-planes extends to the tangent plane $H_p(0)$, while the other  lies on the plane of $C$. Thus  the angle $\theta$ of a nondegenerate cap uniquely determines its plane. Consequently, the uniqueness property (Proposition \ref{prop:capunique}) immediately yields that
for each angle $0<\theta\leq \pi$ there exists at most one nondegenerate cap of $M$ at $p$.

 \begin{figure}[h]
\begin{overpic}[height=1in]{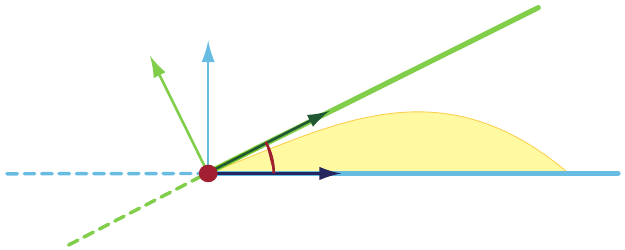}
\put(42,24.5){\Small $\nu_p(0)$}
\put(51,7){\Small $\nu_p(\theta)$}
\put(58,17){\Small $f(C_p(\theta))$}
\put(31,6){\Small $f(p)$}
\put(46,14){\Small $\theta$}
\put(32,35){\Small $n_p(\theta)$}
\put(16,33){\Small $n_p(0)$}
\put(100,11){\Small $L^+$}
\put(88,37){\Small $T_p f$}
\put(-14,11){\Small $H_p(\theta)$}
\put(-4,0){\Small $H_p(0)$}
\end{overpic}
\caption{}
\label{fig:angle}
\end{figure}

 For $0\leq\theta\leq\pi$, let $\nu_p(\theta)$ be the unit normal to $\f$ at $p$ which points into $H_p(0)^-$, and makes an angle of $\theta$ with $\nu(p)=\nu_p(0)$,  the conormal of $f$ at $p$. Next let $H_p(\theta)$ be the plane which contains $L$ and $\nu_p(\theta)$. Further, let $n_p(0)$ 
 be the outward normal of $H_p(0)$, and $n_p(\theta)$ be the normal  to $H_p(\theta)$ at $p$ so that the frame $(\nu_p(\theta), n_p(\theta))$ induces the same orientation on the orthogonal plane $\Pi$ of $\f$ at $p$, as does $(\nu_p(0), n_p(0))$. Then we declare $n_p(\theta)$ to be the outward normal of $H_p(\theta)$. Accordingly, $H_p(\theta)^+$ will denote the side where $n_p(\theta)$ points. Finally, we set
 $$
 C_p(\theta):=\text{the component of $f^{-1}(H_p(\theta)^+)$ which contains $p$}.
 $$
 Note that if $C_p(\theta)$ is a cap, then its angle is $\theta$ by construction. Let us then summarize these observations as follows:
 
 \begin{lem}\label{lem:Cptheta}
 Let $p\in\p M$. Then for every $0\leq\theta\leq\pi$ there exists at most one cap  with angle $\theta$ at $p$. This cap, if it exists, coincides with $C_p(\theta)$.\qed
 \end{lem}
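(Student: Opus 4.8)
The statement has two halves: uniqueness (at most one cap with angle $\theta$ at $p$), and identification (if a cap with angle $\theta$ exists, it must be $C_p(\theta)$). The plan is to reduce both to the uniqueness property of caps (Proposition \ref{prop:capunique}) together with the observation, already recorded above, that the angle of a nondegenerate cap determines its plane. First I would dispose of the degenerate case $\theta=0$: if $C$ is a degenerate cap at $p$, then by Lemma \ref{lem:C0} we have $C\subseteq C_p(0)$ and $C_p(0)$ is itself a cap; conversely any degenerate cap at $p$ has angle $0$ since its plane locally supports $f$ at $p$ and hence contains $T_p f$, forcing the plane to be $H_p(0)$. By the uniqueness property, two degenerate caps at $p$ sharing the plane $H_p(0)$ coincide, and likewise a degenerate and a nondegenerate cap of angle $0$ cannot coexist because a nondegenerate cap of angle $0$ would have plane $H_p(0)$, and Proposition \ref{prop:capunique} applied to the pair (they intersect at $p$, share plane $H_p(0)$) forces equality, contradicting nondegeneracy versus degeneracy. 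Thus at angle $0$ there is at most one cap, and if it exists it is $C_p(0)=C_p(\theta)|_{\theta=0}$.

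Next I would treat $0<\theta\leq\pi$, where every cap with angle $\theta$ at $p$ is automatically nondegenerate (a degenerate cap has angle $0$ by the paragraph above). Here the key point, already established in the discussion preceding the lemma, is that for a nondegenerate cap $C$ at $p$ with plane $H$ and tangent line $L$ of $\f$ at $p$, we have $L\subset H$ and $T_pf\cap H=L$ (Lemma \ref{lem:TpfH+}), and the angle $\theta$ is precisely the angle between the half-plane $T_pf$ and the half-plane $L^+\subset H$ containing $f(\partial C)$. Since $H$ is obtained from $H_p(0)$ by rotating about $L$ through angle $\theta$ in a prescribed direction, and the half-space $H^+$ of $C$ is the one whose outward normal makes the sign conventions agree, the pair $(H,H^+)$ is determined by $\theta$: indeed it is exactly $(H_p(\theta),H_p(\theta)^+)$ by the very definition of $H_p(\theta)$ and $n_p(\theta)$ given in the excerpt. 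Consequently $C$ is a component of $f^{-1}(H_p(\theta)^+)$ containing $p$, and by the extension property (Proposition \ref{prop:capextend}) it is \emph{the} component of $f^{-1}(H_p(\theta)^+)$ containing $p$, i.e. $C=C_p(\theta)$. This simultaneously gives uniqueness (any two caps of angle $\theta$ equal $C_p(\theta)$, hence each other) and the identification.

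The main obstacle I anticipate is purely bookkeeping: verifying carefully that the orientation conventions in the definitions of $\nu_p(\theta)$, $n_p(\theta)$, and $H_p(\theta)^+$ match the half-space $H^+$ attached to an actual cap of angle $\theta$, rather than the opposite half-space $H^-$. This requires checking that the outward normal $n_p(\theta)$ of $H_p(\theta)$ points to the side of $H_p(\theta)$ \emph{not} containing $f(\partial C)$—equivalently, that $T_pf\subset H_p(\theta)^+$, which follows from Lemma \ref{lem:TpfH+} ($T_pf\subset H^+$ for the plane of any nondegenerate cap) once one confirms the frame $(\nu_p(\theta),n_p(\theta))$ was set up to reproduce the orientation of $(\nu_p(0),n_p(0))$, as stipulated. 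Everything else—$L\subset H$, $T_pf\cap H=L$, the rotation picture, and the reduction to Proposition \ref{prop:capextend} and Proposition \ref{prop:capunique}—is immediate from the results already proved. No new geometric input is needed; the lemma is essentially a summary, and the proof is a short synthesis of Lemmas \ref{lem:C0}, \ref{lem:TpfH+}, \ref{lem:nestedC0} and Propositions \ref{prop:capextend}, \ref{prop:capunique}.
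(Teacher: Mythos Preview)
Your proposal is correct and matches the paper's approach: the lemma is stated with a \qed\ and treated as an immediate summary of the preceding discussion, which already records that the angle of a nondegenerate cap determines its plane (hence, by Proposition~\ref{prop:capunique}, the cap itself), and that $C_p(\theta)$ has angle $\theta$ by construction. Your elaboration of the $\theta=0$ case via Lemma~\ref{lem:C0} and of the identification step via Proposition~\ref{prop:capextend} is sound, though note that a nondegenerate cap cannot actually have angle $0$ (Lemma~\ref{lem:TpfH+} gives $T_pf\cap H=L$, so $T_pf\not\subset H$), making that sub-case vacuous rather than requiring the uniqueness argument you supply.
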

 
 It now quickly follows that:
 
\begin{lem}\label{lem:nestedcaps}
Let $p\in\p M$, and suppose that 
$C_p(\theta_0)$ is a cap, for some $0\leq \theta_0\leq\pi$. Then so is $C_p(\theta)$ for all $0\leq\theta< \theta_0$, and $C_p(\theta)\varsubsetneq C_p(\theta_0)$.
\end{lem}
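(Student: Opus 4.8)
The plan is to realise each $C_p(\theta)$ with $\theta<\theta_0$ as a planar slice of the given cap $C:=C_p(\theta_0)$. One may assume $\theta_0\in(0,\pi]$, since for $\theta_0=0$ there is nothing to prove. First I would check that $C$ is then \emph{nondegenerate}: were it degenerate, the plane $H_p(\theta_0)$ would locally support $f$ at $p$, so its outward normal $n_p(\theta_0)$ would be a support vector of $f$ at $p$, whence $\l n_p(\theta_0),\nu_p(0)\r\le 0$ because $T_pf$ is the half-plane containing $\nu(p)=\nu_p(0)$ (Proposition~\ref{prop:halfplane}); but in the normal plane $\Pi$ of $\f$ at $p$ one has $n_p(\theta)=\sin\theta\,\nu_p(0)+\cos\theta\,n_p(0)$, so $\l n_p(\theta_0),\nu_p(0)\r=\sin\theta_0$, which is positive for $\theta_0\in(0,\pi)$, and $\theta_0=\pi$ is excluded by local nonflatness. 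Write $H:=H_p(\theta_0)$, $H^+:=H_p(\theta_0)^+$, $L$ for the tangent line of $\f$ at $p$, and $L^+$ for the half-plane of $H$ bounded by $L$ that contains $f(\p C)$ (Lemma~\ref{lem:TpfH+}). By the extension property (Proposition~\ref{prop:capextend}), $C$ is the component of $f^{-1}(H^+)$ containing $p$, and there is a neighborhood $W\supseteq C$ with $f(W\setminus C)\subseteq\inte(H^-)$. Now fix $\theta\in[0,\theta_0)$ and put $G:=H_p(\theta)$, $G^+:=H_p(\theta)^+$. Since $H_p(0)$, $G$, $H$ all contain $L$, the half-spaces $H_p(s)^\pm$ are cylinders over half-planes of $\Pi$, and an elementary computation there — using $n_p(0)=-n_p(\pi)$, so that over a rotation of at most $\pi$ the intersection of the half-planes $H_p(s)^+$, $s\in[\theta,\pi]$, equals that of the two extreme members — gives (a) $G^+\cap H_p(0)^-\subseteq H^+$, while $\l\nu_p(\theta_0),n_p(\theta)\r=\sin(\theta-\theta_0)<0$ gives (b) $L^+\setminus L\subseteq\inte(G^-)$.

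Next I would show $C_p(\theta)\subseteq C$. Since $H_p(0)$ locally supports $f$ at $p$, fix a convex neighborhood $U\subseteq W$ of $p$ with $f(U)\subseteq H_p(0)^-$, and set $Y:=C_p(\theta)\cap C$. Then $p\in Y$ and $Y$ is closed in $C_p(\theta)$, so by connectedness it is enough to prove $Y$ open in $C_p(\theta)$. At a point of $\inte_M(C)=C\setminus\p C$ this is clear. If $q\in C_p(\theta)\cap\p C$ then $f(q)\in G^+$ (definition of $C_p(\theta)$) and $f(q)\in f(\p C)\subseteq L^+$ (Lemma~\ref{lem:TpfH+}), so $f(q)\in L$ by (b); since $\f$ has no inflections, $f(\p M)$ is nowhere straight, so this forces $q=p$ unless $f(\p C)$ contains a whole segment of $L$ — the latter case being handled by the same kind of local argument (the arc of $\p C$ in question lies in $\inte M$) and suppressed here. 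For $q=p$, any $x\in U\cap C_p(\theta)$ satisfies $f(x)\in f(U)\cap G^+\subseteq H_p(0)^-\cap G^+\subseteq H^+$ by (a), so $x\in f^{-1}(H^+)\cap W\subseteq C$; hence $U\cap C_p(\theta)\subseteq Y$ is the desired neighborhood of $p$ in $C_p(\theta)$.

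With $C_p(\theta)\subseteq C$ in hand, set $\ol C:=f(C)$ — a nondegenerate cap in $\R^3$ with plane $H$ lying on $\p K_C$, $K_C:=\conv(\ol C)$ — and put $E:=\ol C\cap G^+$. Since $L$ supports the rim $f(\p C)\subseteq H$ of $\ol C$, it supports the flat bottom $K_C\cap H$ and so avoids its relative interior; hence $\ol C\cap G=\p K_C\cap G$. If $G$ does not support $K_C$, this is the single closed convex curve $\p(K_C\cap G)$, and $E=\cl(\ol C\cap\inte(G^+))$ is the Jordan sub-disk of $\ol C$ it bounds, so $E$ is a nondegenerate cap in $\R^3$ with plane $G$; if $G$ does support $K_C$, then $E=K_C\cap G$ is a compact convex subset of $G$, a degenerate cap (possibly just $\{f(p)\}$). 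In either case $f|_C$ is a homeomorphism onto $\ol C$ and $E\subseteq\ol C$ is connected through $f(p)$, so $(f|_C)^{-1}(E)$ is a connected subset of $C$ through $p$ mapped homeomorphically onto the cap $E$; a short argument using $C_p(\theta)\subseteq C$ (namely $(f|_C)^{-1}(E)\subseteq f^{-1}(G^+)$ is connected through $p$, hence inside $C_p(\theta)$, while $f(C_p(\theta))\subseteq\ol C\cap G^+=E$) identifies it with $C_p(\theta)$, which is therefore a cap — in the degenerate case, condition (ii) of the definition of a cap is checked from $f(W\setminus C)\subseteq\inte(H^-)$ together with (a). Finally the inclusion is strict: $C$ being nondegenerate, $f(\p C)\subseteq L^+$ has points $\neq f(p)$, which lie in $\inte(G^-)$ by (b), so $E\subsetneq\ol C$ and hence $C_p(\theta)\subsetneq C$.

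For $\theta=0$ the case analysis is a little different: $C_p(0)$ is a cap already by Proposition~\ref{prop:boundarycap} when $\nu(p)\neq -N(p)$, and by Lemma~\ref{lem:C0} (it is then the degenerate tangential cap) when $\nu(p)=-N(p)$; in both cases $C_p(0)\subsetneq C$ because $C$ is a nondegenerate cap of angle $\theta_0>0$, so $f(\p C)\not\subseteq H_p(0)$. The step I expect to be the main obstacle is the second one — controlling the global component $C_p(\theta)$ of $f^{-1}(G^+)$ near the rim $\p C$ and proving it cannot escape $C$ — together with the careful bookkeeping of the half-space and orientation data of the planes $H_p(\cdot)$ in the normal plane $\Pi$; the degenerate sub-cases (where $E$ is a point or a face of $K_C$) and the exceptional case where $f(\p C)$ meets $L$ in a segment are routine but must be itemized.
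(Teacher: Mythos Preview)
Your argument is essentially correct but takes the long way around, and the step you yourself flag as the main obstacle --- showing directly via a connectedness argument that the component $C_p(\theta)$ of $f^{-1}(H_p(\theta)^+)$ cannot escape $C=C_p(\theta_0)$ --- is precisely the step the paper's proof avoids. The paper argues in a few lines: since $0<\theta<\theta_0$, the plane $H_p(\theta)$ meets the interior of $\conv(f(C))$ while $f(\p C)\subset H_p(\theta)^-$, so the slice $C':=f^{-1}(H_p(\theta)^+)\cap C$ is a (nondegenerate) cap in $M$ with plane $H_p(\theta)$ and hence angle $\theta$ at $p$. Then Lemma~\ref{lem:Cptheta} says that \emph{any} cap of angle $\theta$ at $p$ must coincide with $C_p(\theta)$, so $C'=C_p(\theta)$ and the containment $C_p(\theta)\subset C$ drops out for free. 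The case $\theta=0$ is handled by Lemma~\ref{lem:nestedC0}, and strictness follows since the two caps have different planes.

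So the difference is order of operations: you first prove $C_p(\theta)\subseteq C$ and then identify the slice, while the paper identifies the slice as $C_p(\theta)$ via uniqueness and gets containment as a corollary. Your route buys nothing, since Lemma~\ref{lem:Cptheta} is an immediate consequence of the uniqueness property (Proposition~\ref{prop:capunique}) already available. Worse, your ``suppressed'' case --- when $f(\p C)\cap L$ is a nondegenerate segment and you must treat a point $q\in\p C\cap\inte(M)$ with $f(q)\in L$ --- is less routine than you suggest: at such a $q$ you no longer have $H_p(0)$ as a local support plane, so the half-space inclusion (a) that drove the argument at $p$ does not transfer, and you would need a genuinely new local argument to rule out $C_p(\theta)$ leaking across $\p C$ there. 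This is likely fixable, but the paper's route via Lemma~\ref{lem:Cptheta} makes the whole issue disappear.
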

\begin{proof}
Let us fix $p$ and drop all references to it for convenience. 
If $\theta_0=0$, then there is nothing to prove. So suppose that $\theta_0>0$. Now if $\theta=0$, again we are done by Lemma \ref{lem:nestedC0}, so we may assume that $\theta>0$ as well. 
Then, since $0<\theta <\theta_0$, it follows that $H(\theta)$ intersects the interior of $\conv(f(C(\theta_0)))$. Further
$f(\p C(\theta_0))$ lies in $H(\theta)^-$. Thus $H(\theta)^+\cap f(C(\theta_0))$ is a cap in $\R^3$, which in turn yields that $f^{-1}(H(\theta)^+)\cap C(\theta_0)$  is a cap 
of angle $\theta$ which is contained in $C(\theta_0)$. Since this cap has angle $\theta$,  it must coincide with $C(\theta)$ by Lemma \ref{lem:Cptheta}. Thus $C(\theta)\subset C(\theta_0)$. Finally,  since $\theta<\theta_0$, we have $H(\theta)\neq H(\theta_0)$, i.e., $C(\theta)$ and $C(\theta_0)$ have different planes. So, since nondegenerate caps have  unique planes, it follows  that $C(\theta)\neq C(\theta_0)$, which completes the proof.
\end{proof}

Now we are ready to prove the main result of this section:     

\begin{proof}[Proof of Theorem \ref{thm:maxcap}]
Let $\Theta$  be the set  of all angles $0\leq\theta\leq\pi$ such that $C_p(\theta)$ is a cap. By Proposition \ref{prop:boundarycap}, $\Theta\neq\emptyset$. So the supremum $\ol\theta$ of $\Theta$ exists. 
First suppose that $\ol\theta=0$. Then $\Theta=\{0\}$. 
So $C_p(0)$ is a cap. Furthermore, all caps of $M$ at $p$ must be degenerate, since nondegenerate caps have positive angles. But $C_p(0)$ contains all degenerate caps of $M$ at $p$, by Lemma \ref{lem:C0}. Thus $C_p(0)$ is the desired maximal cap.
Next assume that $\ol\theta>0$. Then, by Lemma \ref{lem:nestedcaps}, $[0,\ol\theta)\subset\Theta$.
Let $\theta_i\in(0,\ol\theta)$ be an increasing sequence  converging to $\ol\theta$. Then $C_i:=C(\theta_i)$ is a nested sequence of caps by Lemma \ref{lem:nestedcaps}. Hence, by Proposition \ref{prop:converge}, there exists a cap $C$ in $M$ which contains all $C_i$. Then, by Lemma \ref{lem:nestedcaps}, the angle of $C$ cannot be smaller than any $\theta_i$, and therefore must be equal to $\ol\theta$. This in turn yields that $C$ contains all caps at $p$, again by Lemma \ref{lem:nestedcaps}, which completes the proof.
\end{proof}

\section{Proof of the Main Theorem}
Here we complete the proof of the main result of this work, Theorem \ref{thm:main2} below, which is a refinement of Theorem \ref{thm:main} mentioned in the introduction. To this end we first show in Section \ref{sec:singularcap} that the maximal caps, whose existence were finally established in the last section, induce a nested partition on $\partial M$. This will ensure the  existence of singular maximal caps, i.e., maximal caps which intersect $\partial M$ along connected sets. Next we show that the planes of these singular caps are osculating planes of $\f$ in Section \ref{subsec:osculateplane}.  Further we show in Section \ref{subsec:orientexpress} that $\f$ may be oriented so that each singular maximal cap lies on the side of the osculating plane where the binormal vector $B(p)$ points. This will  ensure,  as we will show in Section \ref{subsec:torsion}, that   torsion changes sign in a consistent way near the points where each singular maximal cap intersects $\partial M$, which quickly completes the proof.

\subsection{Singular maximal caps}\label{sec:singularcap}
A cap $C$ of $M$ has \emph{rank} $n$ provided that $C\cap\p M$ has $n$ components.
We say that a maximal cap $C\subset M$ is \emph{singular} provided that $\mathrm{rank}(C)=1$.  For each $p\in\p M$, let $C_p$ be the corresponding maximal cap (which  exists by Theorem \ref{thm:maxcap}). Set
$$
 [p]:=C_p\cap\p M,\quad\text{and}\quad\P:=\{[p]\mid p\in \p M\}.
$$
Let $\mathcal{S}$ be the number of singular parts of $\P$, and $\mathcal{T}$ be the triangularity of $\P$ as defined by \eqref{eq:T}.
In this section we establish the following result, which yields a Bose type formula for maximal caps of $M$.

\begin{thm}\label{thm:singularcap}
Suppose that $M$ is not a cap. Then $\P$ is a nontrivial nested partition of $M$. In particular, if $C$ is a maximal cap of $M$, then each component of $\partial M-C$ has a point where the corresponding maximal cap is singular. Thus
\begin{equation}\label{eq:singularcap}
\mathcal{S}\geq \mathcal{T}+2.
\end{equation}
\end{thm}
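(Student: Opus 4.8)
The plan is to follow the template of Example \ref{ex:partition} and of the classical case in Corollary \ref{cor:classic}: once we know that $\P$ is a \emph{nontrivial nested} partition of $\p M\simeq\S^1$, both the ``in particular'' clause and the inequality \eqref{eq:singularcap} follow immediately from Lemma \ref{lem:partition}. So the entire content is to verify these two properties of $\P$.

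First I would check that $\P$ really is a partition of $\p M$. It covers $\p M$ since $p\in[p]$. For disjointness I would observe that the maximal cap through a point is \emph{unique}: by the construction in Theorem \ref{thm:maxcap}, $C_p$ contains every cap of $M$ through $p$, so any maximal cap containing $p$ equals $C_p$; hence if $r\in[p]\cap[q]$ then $C_p=C_r=C_q$, so $[p]=[q]$. Nontriviality is where the hypothesis that $M$ is not a cap enters: if $\P$ were trivial, then $\p M=[p]=C_p\cap\p M$ for some $p$, so the cap $C_p$ would contain the circle $\p M$; since a cap is homeomorphic to a point, a segment, or a disk, $C_p$ would have to be a disk whose boundary circle contains $\p M$, forcing $C_p=M$ and making $M$ a cap, a contradiction.

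The heart of the matter is nestedness, and this is the step I expect to be the main obstacle. Suppose $\P$ is not nested: then there is a maximal cap $C:=C_p$ and a point $q$ such that $[q]=C_q\cap\p M$ (set $C':=C_q$) has points $p_1'$, $p_2'$ in two distinct components of $\p M-(C\cap\p M)$. Necessarily $C\neq C'$, and one may pick $p_1$, $p_2\in C\cap\p M$ separating $p_1'$ and $p_2'$ in $\p M$, so the cyclic order on $\p M$ is $p_1',p_1,p_2',p_2$. Unlike the circle case, caps are $2$-dimensional and two distinct caps can overlap substantially, so I would first pass to the ``boundary curve'' $\hat C$ of a cap (that is, $\p C$ when $C$ is nondegenerate, and $C$ itself when $C$ is degenerate), whose image $f(\hat C)$ is a convex planar curve, segment, or point lying in a plane of $C$. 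Using the local nonflatness assumption together with Theorem \ref{thm:flatcap}, I would argue that $C\cap\p M\subseteq\hat C$, so that $p_1,p_2\in\hat C$ and $p_1',p_2'\in\hat{C'}$. Then simple connectivity of $M$ — via the elementary fact that two arcs in a disk with interleaved endpoints on the boundary circle must intersect — applied to the two components of $\hat{C'}\setminus\{p_1',p_2'\}$ against the two components of $\hat C\setminus\{p_1,p_2\}$, yields at least four points in $\hat C\cap\hat{C'}$, hence at least four points in $f(\hat C)\cap f(\hat{C'})$.

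It remains to turn this into a contradiction. Let $H$, $H'$ be the planes of $C$, $C'$. If $H=H'$, then $C$ and $C'$ intersect and share a plane, so $C=C'$ by the uniqueness property (Proposition \ref{prop:capunique}), a contradiction. If $H\neq H'$, then the $\geq 4$ common points of the convex curves $f(\hat C)\subset H$ and $f(\hat{C'})\subset H'$ all lie on the line $H\cap H'$; since a convex planar curve meets a line in at most two points unless it contains a segment of it, both curves must contain a common segment $S\subset H\cap H'$, and then $f^{-1}(S)\cap\hat C$ is an arc of $\hat C$ whose relative interior must lie in $\p M$ — otherwise an interior point of it would, by Theorem \ref{thm:flatcap}, give a degenerate cap whose boundary circle equals $\p C$, and $C$ together with that cap would be a $2$-sphere inside $M$, which is impossible — so $\f$ would contain a straight arc, contradicting the absence of inflections. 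This contradiction shows $\P$ is nested, and Lemma \ref{lem:partition} then completes the proof. The delicate point to watch throughout is the handling of degenerate caps, especially a degenerate segment-cap of rank $\geq 2$, for which $\hat C$ is only an arc rather than a circle and the interleaving argument above must be replaced by a separate, more hands-on treatment.
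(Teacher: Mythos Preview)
Your overall strategy matches the paper's: verify that $\P$ is a nontrivial nested partition and then invoke Lemma~\ref{lem:partition}. Your treatment of the partition axioms and of nontriviality is fine (indeed, the paper leaves nontriviality implicit). The divergence is entirely in how nestedness is established, and there your argument has a genuine gap in the ``segment'' subcase.

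The paper streamlines the nestedness proof with two short preliminary facts you do not isolate: (i) distinct maximal caps can meet only in $\inte(M)$ (Lemma~\ref{lem:maxcapintersect}), and (ii) two \emph{degenerate} maximal caps whose interiors meet must coincide (Lemma~\ref{lem:C1C2}). From these, one of the two caps --- say $C_1$ --- may be taken nondegenerate, and the remaining case split is on whether $C_2$ is a segment or a disk. In the disk case the paper then counts \emph{components} of $\p C_1\cap\p C_2$ rather than points: since $\ol{\p C_1}$ is a convex planar curve, $\ol{\p C_1}\cap L$ has at most two components, hence so does $\p C_1\cap\p C_2$; but the interleaving in the simply connected $M$ forces at least four. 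Counting components sidesteps entirely the possibility that the intersection contains a segment.

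Your route instead counts four \emph{points} in $f(\hat C)\cap f(\hat C')$, and then, when $H\neq H'$, must separately rule out the case that $f(\hat C)$ and $f(\hat C')$ share a common line segment $S\subset H\cap H'$. That is precisely where your argument breaks: you invoke Theorem~\ref{thm:flatcap} at an interior point $q$ of the arc $f^{-1}(S)\cap\hat C$, but Theorem~\ref{thm:flatcap} requires $H$ to be a \emph{local support plane} of $f$ at $q$, and for $q\in\p C$ with $C$ nondegenerate this generally fails --- near $q$, $f$ has points in $\inte(H^+)$ (from $\inte(C)$) and in $\inte(H^-)$ (from the extension property), so $H$ supports nothing there. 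The ``sphere'' contradiction you sketch is the mechanism inside the proof of Proposition~\ref{prop:capextend}, but there it is derived \emph{from} the (counterfactual) assumption that a convex neighborhood lies on one side of $H$; you have not established that hypothesis. The fix is simply to argue with components as the paper does, which makes the segment case evaporate; alternatively, once you know $C_1$ is nondegenerate and $C_2$ is a segment, observe directly that the two points of $\p C_1$ hit by $C_2$ force $f(C_2)\subset H_1$, contradicting that $f(\inte(C_1))\cap H_1=\emptyset$ while $C_2$ meets $\inte(C_1)$.
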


To prove the above theorem, we need to record a pair of simple observations:

\begin{lem}\label{lem:maxcapintersect}
Distinct maximal caps of $M$  may intersect only in the interior of $M$.
\end{lem}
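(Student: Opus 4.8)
The statement is that if $C_1$ and $C_2$ are distinct maximal caps of $M$, then $C_1\cap C_2\subset\inte(M)$. The natural approach is by contradiction: suppose $p\in C_1\cap C_2\cap\p M$, and derive that $C_1$ and $C_2$ must actually coincide, contradicting distinctness. Since $p$ lies in both caps, $C_i$ is a cap of $M$ at $p$ in the sense of Section~\ref{sec:boundarycaps}, so the machinery developed there — tangential caps, the angle of a nondegenerate cap, and the nesting lemmas (Lemmas~\ref{lem:C0}, \ref{lem:nestedC0}, \ref{lem:Cptheta}, \ref{lem:nestedcaps}) — all apply to both $C_1$ and $C_2$ simultaneously.

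\textbf{Key steps.} First I would dispose of the degenerate cases. If both $C_1$ and $C_2$ are degenerate caps at $p$, then by Lemma~\ref{lem:C0} each is contained in the tangential cap $C_p(0)$; but $C_p(0)$ is a cap containing both, so by maximality $C_1=C_p(0)=C_2$, a contradiction. If exactly one of them, say $C_1$, is nondegenerate, then by Lemma~\ref{lem:nestedC0} the cap $C_1$ contains every degenerate cap of $M$ at $p$, in particular $C_2\subset C_1$; by maximality of $C_2$ this forces $C_1=C_2$, again a contradiction. So we are reduced to the case where both $C_1$ and $C_2$ are nondegenerate caps of $M$ at $p$. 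Now each has a well-defined angle $\theta_i\in(0,\pi]$ at $p$ in the sense of Section~\ref{subsec:maxcap}, and $C_i=C_p(\theta_i)$ by Lemma~\ref{lem:Cptheta}. If $\theta_1=\theta_2$ then $C_1=C_p(\theta_1)=C_p(\theta_2)=C_2$, contradicting distinctness; so we may assume $\theta_1<\theta_2$. But then Lemma~\ref{lem:nestedcaps} (applied with $\theta_0=\theta_2$) gives $C_p(\theta_1)\varsubsetneq C_p(\theta_2)$, i.e.\ $C_1\varsubsetneq C_2$, contradicting the maximality of $C_1$. Thus in every case the assumption that $C_1$ and $C_2$ share a boundary point of $M$ leads to a contradiction, so distinct maximal caps meet only in $\inte(M)$.

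\textbf{Main obstacle.} There is essentially no analytic difficulty here — the entire proof is a bookkeeping argument that routes through the structural lemmas of Section~\ref{sec:maxcap}. The one point requiring a little care is making sure that the notions of ``cap of $M$ at $p$'', tangential cap, and ``angle at $p$'' genuinely apply to an arbitrary maximal cap passing through $p$, rather than only to the specific caps $C_p(\theta)$ constructed by rotating the tangent plane; this is guaranteed by Lemma~\ref{lem:Cptheta}, which says that \emph{any} cap at $p$ with angle $\theta$ equals $C_p(\theta)$, so no generality is lost. Once that identification is in hand, the case analysis above is immediate.
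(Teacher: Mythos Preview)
Your proof is correct, but the paper takes a shorter route. Instead of running a case analysis on the degeneracy of $C_1$ and $C_2$, the paper simply invokes the maximal cap $C_x$ at the common boundary point $x$ (whose existence is Theorem~\ref{thm:maxcap}): since $C_x$ contains every cap at $x$ (this is precisely what the proof of Theorem~\ref{thm:maxcap} establishes via Lemmas~\ref{lem:C0}, \ref{lem:nestedC0}, \ref{lem:nestedcaps}), we get $C_1, C_2\subset C_x$; then maximality of $C_1$ and $C_2$ forces $C_1=C_x=C_2$.

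In effect your three cases re-derive, at the point $p$, exactly the containment property already packaged into Theorem~\ref{thm:maxcap}. Nothing is wrong with doing so, and your version has the minor advantage of being self-contained with respect to the structural lemmas rather than appealing to a statement buried inside the proof of Theorem~\ref{thm:maxcap}. But the paper's two-line argument is cleaner: once you know that through each boundary point there passes a \emph{unique} maximal cap containing all caps at that point, the lemma is immediate.
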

\begin{proof}
Suppose that $C_p$, $C_q$ are maximal caps which intersect at  $x\in\p M$. Then $C_p$, $C_q\subset C_x$, the maximal cap of $M$ at $x$. In particular $p$, $q\in C_x$, which implies that $C_x\subset C_p$, $C_q$, by the maximality of $C_p$, $C_q$. Thus $C_p=C_x=C_q$.
\end{proof}

Note that by interior in the next observation, we mean the interior of a cap $C$ as a manifold, as opposed to a subset of $M$. Thus for instance if $C$ is a segment, then its interior is homeomorphic to the open interval $(0,1)$.

\begin{lem}\label{lem:C1C2}
Let $C_1$, $C_2$ be  maximal caps of $M$ which are degenerate. Suppose that $\inte(C_1)\cap\inte(C_2)\neq\emptyset$.  Then $C_1=C_2$.
\end{lem}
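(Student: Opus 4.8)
The plan is to reduce everything to the uniqueness property (Proposition \ref{prop:capunique}) by showing that $C_1$ and $C_2$ share a plane. Since both caps are degenerate, each $C_i$ lies in a plane, and we want to exhibit a common one. First I would pick a point $p\in\inte(C_1)\cap\inte(C_2)$. The key observation is that at an interior point of a degenerate cap, the tangent cone $T_pf$ must lie in the plane of the cap: indeed, near $p$ the cap looks like a full neighborhood of $p$ inside its plane (because $p$ is an interior point of the manifold $C_i$, and $f$ maps $C_i$ homeomorphically onto a convex set in that plane), so every secant ray of $f$ through points of $C_i$ near $p$ lies in that plane, hence so do their limits; combined with the fact (extension property, Proposition \ref{prop:capextend}) that the plane of $C_i$ locally supports $f$ at $p$, this forces $T_pf$ to be contained in the plane $H_i$ of $C_i$.

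Next I would argue that $T_pf$ actually spans a plane, i.e.\ is not contained in a line. This uses the local nonflatness assumption and the structure of $T_pf$: if $p\in\inte(M)$, local nonflatness is irrelevant but $T_pf$ is the tangent cone to the boundary of a convex body and we can see it has interior points in a plane whenever $f(\inte(C_i))$ is two-dimensional near $p$; and if $f(C_i)$ near $p$ is a segment rather than a disk, we still get that $p$ lies in the relative interior of that segment so $T_pf$ contains the full line through $p$ in direction of the segment — but here I need to be slightly careful and split on whether $f(C_i)$ is $0$-, $1$-, or $2$-dimensional. The cleanest route: if either $f(C_1)$ or $f(C_2)$ is a convex disk, then $T_pf$ has interior points in a plane, and since $T_pf\subset H_1$ and $T_pf\subset H_2$ and a $2$-dimensional set lies in a unique plane, $H_1=H_2$, so $C_1=C_2$ by Proposition \ref{prop:capunique}. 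If both $f(C_i)$ are segments, then $T_pf$ contains the lines through $p$ in the directions of both segments; if these directions differ, $T_pf$ spans a plane and we proceed as before to get $H_1=H_2$; if the directions coincide, then $C_1$ and $C_2$ are collinear segments overlapping in a neighborhood of $p$, and one checks directly (e.g.\ via Lemma \ref{lem:subset} or the extension property applied to a common plane containing that line) that they coincide. The degenerate sub-case where some $f(C_i)$ is a single point cannot occur since then $\inte(C_i)=\emptyset$, contradicting the hypothesis.

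The main obstacle I anticipate is the last case — two collinear degenerate caps that are both line segments along the same line — because here the plane is genuinely non-unique, so Proposition \ref{prop:capunique} does not apply verbatim. To handle it I would choose a plane $H$ containing the common line $L$ that is a local support plane of $f$ at $p$ (such a plane exists: $H_p(0)$ works, or any support plane through $L$), note that by the extension/uniqueness machinery the component of $f^{-1}(H)$ through $p$ is a cap, and argue that both $C_1$ and $C_2$ are forced to equal this component. Concretely, for a suitable choice of $H$ one has $f(C_1), f(C_2)\subset H$ (since both lie in $L\subset H$), both caps satisfy the defining conditions relative to $H$, and then Proposition \ref{prop:capunique} with this common plane $H$ finally gives $C_1=C_2$. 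The bookkeeping of verifying condition (ii) of the cap definition (disjointness of $f(U-C_i)$ from a half-space of $H$) for this manufactured plane is the only genuinely technical point, and it follows from the extension property of $C_1$ (resp.\ $C_2$) together with the fact that $H$ can be taken tangent to the surface along $L$.

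Overall structure: (1) pick $p\in\inte(C_1)\cap\inte(C_2)$; (2) show $T_pf\subset H_1\cap H_2$ where $H_i$ is \emph{a} plane of $C_i$; (3) if $T_pf$ spans a plane, conclude $H_1=H_2$ and apply Proposition \ref{prop:capunique}; (4) otherwise $C_1,C_2$ are collinear segments, produce a common support plane $H$ through their line, verify both are the cap-component of $f^{-1}(H^+)$, and apply Proposition \ref{prop:capunique} again. The hardest step is (4).
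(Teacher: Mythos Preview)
Your step (2) has a genuine gap. The argument you give only shows that $T_{f(p)}f(C_i)\subset H_i$ (secant rays through points of $C_i$) and that $T_pf\subset H_i^{-}$ (local support). It does \emph{not} force $T_pf\subset H_i$. Consider for instance an edge of a triangular prism: the tangent cone of the boundary at an interior point of that edge is a wedge bounded by two half-planes, only one of which lies in any given support plane through the edge. So when $f(C_i)$ is a segment, $T_pf$ may well stick out of $H_i$ into $\inte(H_i^-)$, and your mechanism for pinning down a common plane collapses precisely in the case you flagged as hardest. You also never invoke Lemma~\ref{lem:maxcapintersect} to secure $p\in\inte(M)$, which is needed for several of the steps you have in mind.

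The paper avoids all of this by not trying to match the planes $H_1$, $H_2$ at all, and by using maximality directly rather than the uniqueness property. First, Lemma~\ref{lem:maxcapintersect} gives $p\in\inte(M)$. Then pick \emph{any} local support plane $H$ of $f$ at $p$. Since each $f(C_i)$ is a planar convex set and $H$ supports it at the interior point $f(p)$, one has $f(C_i)\subset H$ (a convex planar set with a support plane through a relative interior point must lie in that plane). Now Theorem~\ref{thm:flatcap} applies: the component $A$ of $f^{-1}(H)$ containing $p$ is a cap. Both $C_i$, being connected subsets of $f^{-1}(H)$ through $p$, lie in $A$; maximality then forces $C_1=A=C_2$. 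No tangent cones, no case split on the dimension of $f(C_i)$, and the collinear-segment case evaporates.
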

\begin{proof}
Let $p\in\inte(C_1)\cap\inte(C_2)$. Then $p\in\inte(M)$,
by Lemma \ref{lem:maxcapintersect}. Let $H$ be a local support plane of $f$ at $p$. Then $H$ locally supports $\ol{C_i}:=f(C_i)$  at $\ol p:=f(p)$, for $i=1$, $2$. Further $\ol p\in\inte(\ol{C_i})$, since $f\colon C_i\to\ol C_i$ is a homeomorphism. It follows then that  $\ol{C_i}\subset H$, since $\ol{C_i}$ are planar convex sets. Let $A$ be the component of $f^{-1}(H)$ containing $p$. Then $A$ is a  cap by Theorem \ref{thm:flatcap}. Since $C_i$ are connected, $C_i\subset f^{-1}(H)$, and $p\in C_i$,  it follows that $C_i\subset A$. 
Thus $C_1=A=C_2$, since $C_i$ are maximal.
\end{proof}

We are now ready to prove the main result of this section:

\begin{proof}[Proof of Theorem \ref{thm:singularcap}]
We just need to check that $\P$ is a nested partition; the other claims then follow immediately from Lemma \ref{lem:partition}.
First we check that $\P$ is  a partition, i.e., parts of $\P$ cover $\p M$ and are pairwise disjoint. The former property is clear since $p\in[p]$. To see the latter property, suppose  that $q\in[p]$. Then  $q\in C_p$. Thus, by maximality, $C_p\subset C_q$. In particular, $p\in C_q$. So, again by maximality, $C_q\subset C_p$. Thus $C_q=C_p$, which in turn yields $[q]=[p]$ as desired.

It remains to establish that $\P$  is nested.  Suppose, towards a contradiction, that it is not. Then there are maximal caps $C_i$, $i=1$, $2$,  such $C_2$ intersects different components of $\p M-C_1$. Then  $C_i$  cannot
be a point, and thus  each is either homeomorphic to a disk, or to a line segment.  Further, recall that $C_i$  may not intersect each other on $\p M$, by Lemma \ref{lem:maxcapintersect}. Thus, since $M$ is simply connected,  $\inte(C_i)$ must intersect at a point $p$. 

If $\inte(C_i)$ intersect, then, by  Lemma \ref{lem:C1C2}, $C_i$  cannot both be degenerate.
We may suppose then that $C_1$ is nondegenerate. In particular, $\p C_1$ is a simple closed curve. Then, since $p\in\inte(C_1)$,  it follows that $C_2$ intersects $\p C_1$ at a pair of points $x$, $y$ which are separated in $\p C_1$ by points of $\p M$. There are two cases to consider: either $C_2$ is homeomorphic to a line segments or a disk. For any set $X\subset M$, set $\ol X:=f(X)$.

(\emph{Case 1}) First suppose that $C_2$ is homeomorphic to a line segment. Then $\ol C_2$ is a line segment. Also note that  $\ol C_2$ passes through the points $\ol x$ and $\ol y$ which belong to $\ol{\p C_1}$, and thus lies in the plane $H_1$ of $\ol C_1$. So $\ol C_2\subset H_1$, which in turn yields that $\ol p\in H_1$.  
But  $p\in\inte(C_1)$, and thus $\ol p\in\inte(\ol C_1)$, which is disjoint from $H_1$. So we arrive at a contradiction.

(\emph{Case 2}) Next suppose that $C_2$ is a topological disk. Then $C_2$ has a unique plane $H_2$. If $H_1=H_2$ then we are done by the uniqueness property. So suppose that $H_1\neq H_2$. Note that $H_1$, $H_2$ cannot be parallel, for then it would follow that $\ol C_1\subset \ol C_2$ or $\ol C_2\subset \ol C_2$ which is not possible by maximality. So $H_1\cap H_2= L$, where $L$ is a line. Consequently $\ol{\p C_1}\cap\ol{\p C_2}\subset (\ol{ \p C_1}\cap H_1)\cap H_2=\ol{\p C_1}\cap L$. Since $\ol{\p C_1}$ is a convex planar curve it follows that $\ol{\p C_1}\cap L$  can have at most two components.  Thus $\ol{ \p C_1}\cap\ol{\p C_2}$ can have at most two components, which implies the same for $\p C_1\cap \p C_2$.
On the other hand, $\p C_i$ are simple closed curves in $M$, and by assumption 
$\p C_2$ intersects different components of $\p M-\p C_1$.
 Thus, just as we had argued in Example \ref{ex:partition}, since $M$ is simply connected, 
 $\p C_1\cap \p C_2$ must have at least four components. So again we arrive at a contradiction, which completes the proof.
\end{proof}

\begin{note}\label{note:umehara2}
It is likely the case that the partition $\P$ in Theorem \ref{thm:singularcap} is actually an \emph{intrinsic circle system} as defined by Umehara \cite{umehara2}, see Note \ref{note:umehara}. In the present context, this means that the maximal caps of $M$ satisfy the following lower semicontinuity property: Let $p_i\in\p M$ be a sequence  converging to $p\in\p M$, $C_i:=C_{p_i}$ be the corresponding maximal caps of $M$,  $q_i\in C_i$, and $q$ be a limit point of $q_i$; then $q\in C_{p}$. 
If this is indeed the case, then it would follow that $\P$ is an intrinsic circle system, since we have already established in Theorem \ref{thm:singularcap} that $\P$ is nested. Consequently,  Umehara's generalization of Bose's formula \cite[Thm. 2.7]{umehara2} would yield  that whenever $\mathcal{S}<\infty$, 
equality holds in \eqref{eq:singularcap}.
\end{note}

\subsection{Osculating planes}\label{subsec:osculateplane}
The \emph{osculating plane} of $\f$ at a point $p$ is the plane which is tangent to $\f$ at $p$, and contains the principal normal $N(p)$. In this section we show that if $C$ is a singular maximal cap of $M$ at $p$, then the plane $H$ of $C$ is the osculating plane of $\f$ at $p$. Note that $H$ is unique when $C$ is nondegenerate. Further, by Lemma \ref{lem:C0}, when
 $C$ is  degenerate, $C=C_p(0)$, the tangential cap  defined in Section \ref{sec:boundarycaps}. In particular, $H_p(0)$ is a plane of $C$, which is what we mean by \emph{the} plane $H$ of $C$.  In short, singular maximal caps have a well-defined plane.

\begin{prop}\label{prop:osculate}
Let $p\in\p M$, $C$ be a singular maximal  cap of $M$ at $p$, and $H$ be the plane of $C$. Then $H$ is the osculating plane of $\f$ at $p$. 
\end{prop}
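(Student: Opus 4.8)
The plan is to show that the plane $H$ of a singular maximal cap $C$ at $p$ necessarily contains the tangent line $L$ of $\f$ at $p$ and the principal normal $N(p)$, which together force $H$ to be the osculating plane (recall $\f$ has no inflections, so the osculating plane is well defined). That $L\subset H$ is already recorded: for a nondegenerate $C$ it is the first assertion of Lemma \ref{lem:TpfH+}, and for a degenerate $C$ we have $C=C_p(0)$ by Lemma \ref{lem:C0}, so $H=H_p(0)$ contains $L$ by construction in Section \ref{sec:tancone}. So the whole content is to prove that $N(p)\in H$, equivalently that $H$ does not cross $\f$ transversally at $p$ in the $N$-direction, i.e., that $\f$ lies locally on one side of $H$ near $p$.

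First I would set up a local unit-speed parametrization $\phi\colon(-\epsilon,\epsilon)\to\p M$ with $\phi(0)=p$ and let $\gamma:=\f\circ\phi$, and let $n$ be the outward normal of $H$. Since $H$ is a plane of the cap $C$, the extension property (Proposition \ref{prop:capextend}) gives a neighborhood $U$ of $C$ in $M$ with $f(U-C)\subset\inte(H^-)$; in particular $H$ locally supports $f$ at $p$, so the function $h(t):=\l\gamma(t)-f(p),n\r$ satisfies $h\le 0$ near $0$ with $h(0)=0$, hence $h'(0)=0$ and $h''(0)\le 0$; by Frenet–Serret, $h''(0)=\l N(p),n\r\kappa(p)$, and since $\kappa(p)>0$ this gives $\l N(p),n\r\le 0$ (this is exactly Lemma \ref{lem:nN}). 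The point is now to upgrade this to $\l N(p),n\r=0$ using that $C$ is \emph{singular}, i.e., $C\cap\p M$ is connected. The idea: because $C$ is singular, a whole one-sided arc of $\p M$ issuing from $p$ immediately \emph{leaves} $C$, hence immediately enters $\inte(H^-)$ by the extension property; so for $t$ on that side, $h(t)<0$ strictly for $t\neq 0$ small. Combined with $h(0)=h'(0)=0$ this is compatible with $\l N(p),n\r\le 0$, but the reverse inequality must come from the \emph{other} direction: I would argue that the conormal $\nu(p)\in T_pf$ points into $H^-$ (since $T_pf\subset H^+$ with $T_pf\cap H=L$ when $C$ is nondegenerate, Lemma \ref{lem:TpfH+}; here one must be careful about orientations of $n$), so $\l\nu(p),n\r\le 0$; but the two one-sided tangent half-planes at $p$ meeting along $L$ — the tangent plane $H_p(0)$ (containing $\nu(p)=-$\,conormal direction relative to $N$... ) — force a relation between $\nu(p)$ and $N(p)$. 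Concretely, the cleanest route: if $\l N(p),n\r<0$ strictly, then $H$ crosses $\f$ transversally at $p$, so $\f$ has points on \emph{both} sides of $H$ arbitrarily near $p$ on $\p M$; but on the $C$-side those boundary points lie in $C\subset H^+$ (since $C\cap\p M$ is a connected segment containing $p$ in its interior when $\mathrm{rank}=1$ and $C$ is nondegenerate), contradicting transversality. Thus $\l N(p),n\r=0$, i.e. $N(p)\in H$, which with $L\subset H$ finishes the proof.

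More carefully, I would split into two cases exactly as the paper does elsewhere. \emph{Case $C$ nondegenerate.} Then $p$ lies in a connected arc $C\cap\p M\subset f^{-1}(\p C)$, and by singularity $\mathrm{rank}(C)=1$ this arc is a genuine segment with $p$ in its interior (if $p$ were an endpoint, I would treat it by the same one-sided argument). A neighborhood of $p$ in $\p M$ therefore maps into $f(\p C)\subset H$, so $h\equiv 0$ on a whole interval around $0$, whence $h''(0)=0$ and $N(p)\in H$ immediately. If $p$ is an endpoint of the arc $C\cap\p M$, then on one side $\gamma(t)\in f(\p C)\subset H$ (so $h=0$ there) and on the other side $\gamma(t)\in f(U-C)\subset\inte(H^-)$ (so $h<0$ there); a one-sided Taylor expansion at $0$ with $h(0)=h'(0)=0$ then forces $h''(0)=0$ from the vanishing side, again giving $N(p)\in H$. \emph{Case $C$ degenerate.} Then $C=C_p(0)$ by Lemma \ref{lem:C0} and $H=H_p(0)$; moreover $\nu(p)=-N(p)$ by Lemma \ref{lem:nuN}(ii) applied to the segment $C\cap\p M$ (which contains $p$), and since $\nu(p)\in H_p(0)$ we get $N(p)=-\nu(p)\in H_p(0)=H$. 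In all cases $H$ contains both $L$ and $N(p)$, hence is the osculating plane of $\f$ at $p$.

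\textbf{Main obstacle.} The routine parts (Frenet expansion, $L\subset H$) are immediate from the lemmas; the real care is in the nondegenerate case when $p$ happens to be an \emph{endpoint} of the connected set $C\cap\p M$ — there the surface is simultaneously flat against $H$ on one side (along $\p C$) and strictly interior to $H^-$ on the other, and I must make sure the one-sided second-order vanishing argument is legitimate, i.e., that $\gamma$ really does coincide with an arc of the convex curve $f(\p C)\subset H$ to first order and not merely touch it. This is where the $\C^3$ (in fact $\C^2$ suffices here) regularity of $\f$ and the no-inflection hypothesis enter, ruling out pathological tangencies, and it is the step I expect to require the most attention.
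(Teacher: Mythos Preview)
Your argument has a genuine gap: you never handle the case $C\cap\partial M=\{p\}$, and this is precisely the case where all the work lies. Singularity only says $C\cap\partial M$ is connected; it can perfectly well be a single point. Your nondegenerate case assumes $C\cap\partial M$ is a nontrivial arc (so that $\gamma$ lies in $H$ on at least one side of $p$), and your degenerate case invokes Lemma~\ref{lem:nuN}(ii), which requires a segment $I\subset\partial M$ with $f(I)\subset H$. Neither is available when $C\cap\partial M=\{p\}$. Moreover, the sketch in your plan---``if $\langle N(p),n\rangle<0$ strictly, then $H$ crosses $\f$ transversally at $p$, so $\f$ has points on both sides of $H$''---is simply wrong: since $L\subset H$ we have $h(0)=h'(0)=0$, and $h''(0)=\kappa(p)\langle N(p),n\rangle<0$ forces $\gamma$ to lie \emph{strictly in $H^-$ on both sides} near $p$, which is entirely compatible with $C\cap\partial M=\{p\}$ and gives no contradiction.

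The deeper issue is that you never use maximality in any essential way, and the proposition is false without it: any (not necessarily maximal) cap at $p$ has $\langle n,N(p)\rangle\le 0$ by Lemma~\ref{lem:nN}, and the inequality can be strict. The paper's proof proceeds by contradiction from $\langle n,N(p)\rangle<0$: once one reduces to $C\cap\partial M=\{p\}$, the strict inequality means (via Lemma~\ref{lem:GammaNn}) that $n$ remains a strict support vector for a curve $\overline\Gamma=f(\partial U)$ enclosing $C$ after small rotations of $H$ about $L$. One then tilts $H$ to a nearby plane $\tilde H$ whose $\tilde H^+$ still contains $f(C)$ (using the wedge picture for the angle of $C$, and the tangent-cone wedge at $p$ in the degenerate case) while $\overline\Gamma\subset\tilde H^-$; the slicing theorem (Theorem~\ref{thm:caps}) then produces a strictly larger cap $\tilde C\supsetneq C$, contradicting maximality. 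This rotation-and-slicing step is the heart of the argument and is absent from your proposal.
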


First we need to record the following three basic observations:

\begin{lem}\label{lem:GammaNn}
Let $\Gamma\subset\R^3$ be a compact embedded $\C^2$ curve,  $p$ be an interior point of $\Gamma$, $N_p\Gamma$ be the space of unit normal vectors of $\Gamma$ at $p$, and $A\subset N_p\Gamma$ consist of those normals $n$ such that (i) $n$ strictly supports $\Gamma$ at $p$, and (ii) $\l n, N(p)\r <0$. Then $A$ 
is open in $N_p\Gamma$.
\end{lem}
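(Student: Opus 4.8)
The plan is to show that the complement $N_p\Gamma - A$ is closed, i.e., that if $n_i \in N_p\Gamma$ with $n_i \to n$ and each $n_i$ fails condition (i) or condition (ii), then $n$ also fails one of these conditions. The key technical tool is that for a fixed curve $\Gamma$, ``strict support at $p$'' is equivalent, in a quantitative sense, to a second-order non-degeneracy condition coming from Taylor's theorem, which behaves well under limits. Concretely, parametrize $\Gamma$ by arclength $\gamma\colon(-\delta,\delta)\to\R^3$ with $\gamma(0)=p$, and for $n\in N_p\Gamma$ consider the height function $h_n(t) := \l \gamma(t)-p, n\r$. Then $h_n(0)=0$, $h_n'(0)=\l T(0),n\r=0$ (since $n\perp T(0)$), and $h_n''(0) = \l\gamma''(0),n\r = \kappa(0)\l N(0),n\r$, where $\kappa(0)>0$ because $\f$ has no inflections. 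So condition (ii), $\l n,N(0)\r<0$, is exactly the statement $h_n''(0)<0$.

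The main steps are as follows. First, reduce to a purely local statement: since strict support is a global condition on $\Gamma$, but conditions at nearby normals only differ near $p$, I would split $\Gamma = U \cup (\Gamma\setminus U)$ for a small neighborhood $U$ of $p$. On the compact set $\Gamma\setminus\inte(U)$, the distances $\|\gamma(t)-p\|$ are bounded below by some $c>0$, so if $n$ strictly supports $\Gamma$ there (i.e. $h_n<0$ on $\Gamma\setminus\inte(U)$), this persists for all $n'$ near $n$ by compactness and continuity — the inequality $h_{n'}(t) < 0$ is stable. Second, handle the local part near $p$: if $n$ satisfies both (i) and (ii), then in particular $h_n''(0) = \kappa(0)\l N(0),n\r < 0$, and combined with $h_n(0)=h_n'(0)=0$ and a Taylor expansion with uniform control on $h_n''$ (the second derivatives depend continuously on $(t,n)$ and $\gamma\in\C^2$), one gets $h_n(t)<0$ for $0<|t|\le\epsilon$ with $\epsilon$ uniform over $n'$ in a neighborhood of $n$. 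Putting these together, $h_{n'}<0$ on all of $\Gamma\setminus\{p\}$ for $n'$ near $n$, so (i) holds for $n'$; and (ii) is manifestly open since $\l\cdot,N(p)\r<0$ is an open condition. Hence $A$ contains a neighborhood of $n$.

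Actually the cleanest route bypasses working directly with the closedness of the complement: I would just show directly that $A$ is open. Fix $n\in A$. Condition (ii), being the strict inequality $\l n,N(p)\r < 0$, is open in $N_p\Gamma$, so it suffices to show (i) persists near $n$. By the Taylor argument above, since $h_n''(0)<0$, there exist $\epsilon>0$ and a $C^0$-neighborhood $W$ of $n$ in $N_p\Gamma$ such that $h_{n'}(t)<0$ for all $n'\in W$ and all $0<|t|\le\epsilon$ — here one uses that $(t,n')\mapsto h_{n'}''(t)$ is continuous and $\gamma$ is $\C^2$, so one can quote the standard estimate $h_{n'}(t) = \tfrac{1}{2}t^2 h_{n'}''(s)$ for some $s$ between $0$ and $t$ (as in the proof of Lemma~\ref{lem:Gamma-L}) and make $h_{n'}''$ uniformly negative on a small interval. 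Then on the complement $\Gamma\setminus\gamma(-\epsilon,\epsilon)$, which is compact and on which $h_n<0$ strictly, continuity of $(q,n')\mapsto\l q-p,n'\r$ gives a possibly smaller neighborhood $W'\subset W$ with $h_{n'}<0$ there too. For $n'\in W'$, $h_{n'}<0$ on all of $\Gamma\setminus\{p\}$, so $n'$ strictly supports $\Gamma$ at $p$, i.e. $n'\in A$.

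The main obstacle — really the only subtlety — is the uniformity in the local Taylor estimate: one must ensure that the single $\epsilon$ works for all $n'$ in a neighborhood of $n$, not just for $n$ itself. This is where the $\C^2$ regularity of $\Gamma$ (so that $h_{n'}''$ is continuous in $t$) and the compactness of the unit normal circle $N_p\Gamma$ are essential; without joint continuity of $h_{n'}''(t)$ in $(t,n')$ one could not pass from the pointwise bound $h_n''(0)<0$ to a uniform negative bound on a fixed small interval for nearby $n'$. Everything else is routine compactness and continuity bookkeeping, and the non-vanishing of $\kappa$ (the no-inflections hypothesis) is what makes $h_n''(0)$ genuinely nonzero so that the second-order term controls the sign.
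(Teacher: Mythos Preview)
Your proposal is correct and follows essentially the same approach as the paper: introduce the height function $h_{n'}(t)=\l\gamma(t)-p,n'\r$, use $h_{n'}(0)=h_{n'}'(0)=0$ together with $h_n''(0)<0$ and Taylor's theorem to control a fixed neighborhood of $p$, then handle the compact remainder $\Gamma\setminus\gamma(-\epsilon,\epsilon)$ by continuity.

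The only noteworthy technical difference is in how uniformity of $\epsilon$ over nearby normals is obtained. You invoke joint continuity of $(t,n')\mapsto h_{n'}''(t)$ to make $h_{n'}''$ uniformly negative on a small interval. The paper instead picks two normals $n_0,n_1$ flanking $n$, applies Taylor separately to $h_0,h_1$ to get $h_i<0$ on $(-\delta_i,\delta_i)\setminus\{0\}$, and then observes that every $h_{n_\lambda}$ is a positive scalar multiple of the convex combination $\lambda h_1+(1-\lambda)h_0$, so $h_{n_\lambda}<0$ on $(-\min\delta_i,\min\delta_i)\setminus\{0\}$ for all $\lambda\in[0,1]$ automatically. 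Your route is more standard; the paper's convex-interpolation trick is a bit slicker in that it avoids any appeal to joint continuity of second derivatives and reduces to Taylor for just two functions. Both arguments are equally valid and yield the same conclusion.

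One minor remark: you refer to ``the no-inflections hypothesis'' to ensure $\kappa(p)>0$, but that is a standing assumption on $\f$, not a hypothesis of this lemma. Here the existence of $N(p)$ in condition (ii) already presupposes $\kappa(p)\neq 0$, so the point stands, just with different justification.
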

\begin{proof}
Let $n\in A$, $\epsilon>0$, and $n_i\in N_p\Gamma$, $i=0$, $1$, be  distinct normals such that $\l n, n_i\r=1-\epsilon$. For $\lambda\in[0,1]$, set
$$
n_\lambda:=\frac{\lambda n_1 +(1-\lambda)n_0}{\|\lambda n_1 +(1-\lambda)n_0\|}.
$$  
These vectors generate a segment $I_\epsilon$ of $N_p\Gamma$  centered at $n=n_{1/2}$, see Figure \ref{fig:normals}.
\begin{figure}[h]
\begin{overpic}[height=1in]{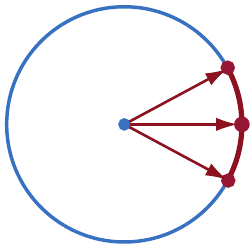}
\put(-7,5){\Small $N_p\Gamma$}
\put(30,35){\Small $f(p)$}
\put(102,49){\Small $n$}
\put(95,72){\Small $n_1$}
\put(95,23){\Small $n_0$}
\end{overpic}
\caption{}
\label{fig:normals}
\end{figure}
Note that $\length(I_\epsilon)\to 0$ as $\epsilon\to 0$. 
We claim that there exists an $\epsilon$ such that $I_\epsilon\subset A$, which will finish the proof.
To this end let $\gamma\colon(-\delta,\delta)\to\Gamma$, $\gamma(0)=p$, be  a local unit speed parametrization, and set 
$$
h_\lambda(t):=\l\gamma(t)-\gamma(0),n_\lambda\r.
$$
 Then $h'_\lambda(0)=\l T(p),n_\lambda\r=0$. Further  $h_\lambda''(0)=\l\kappa(p)N(p),n_\lambda\r$. Thus $h_{1/2}''(0)<0$, since $n_{1/2}=n\in A$, and so $\l N(p),n_{1/2}\r<0$. Consequently, if $\epsilon$ is sufficiently small, then  $h_{i}''(0)<0$.
So, by Taylor's theorem, there exist $0<\delta_i\leq\delta$ such that 
$$
h_i< 0
$$
 on $(-\delta_i,\delta_i)-\{0\}$. Let $\ol\delta:=\min\{\delta_0,\delta_1\}$. Then, for every $t\in (-\ol\delta,\ol\delta)-\{0\}$,
 $$
 h_\lambda(t)=\frac{\lambda h_1(t) +(1-\lambda)h_0(t)}{\|\lambda n_1 +(1-\lambda)n_0\|}<0.
 $$
 So we conclude that  $n_\lambda$ is a strict support vector of $U:=\gamma((-\ol\delta,\ol\delta))$, which is a neighborhood of $p$ in $\Gamma$. Let $H_\lambda$ be the plane which passes through $f(p)$ and is orthogonal to $n_\lambda(p)$.
Then $\Gamma-U$ lies on one side of $H_{1/2}$ and is disjoint from it by assumption. Thus since $\Gamma-U$ is compact, the same holds for $H_\lambda$ provided that $\epsilon$ is sufficiently small. Hence  $n_\lambda$ is a strict support vector of $\Gamma$. In addition, since by assumption $\l n_{1/2},N(p)\r<0$, we can  make sure that $\epsilon$ is so small that $\l n_\lambda,N(p)\r<0$. Thus $I_\epsilon\subset A$, as desired.
\end{proof}

\begin{lem}\label{lem:capPi}
Let $p\in\p M$, $C$ be a nondegenerate cap of $M$ at $p$, and $H$ be the plane of $C$. Suppose that the angle of $C$ at $p$ is $\pi$. Then $H$ is the osculating plane of $\f$ at $p$.
\end{lem}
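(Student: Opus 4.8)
The plan is first to identify the plane $H$ of $C$ with the tangent plane $H_p(0)$, and then to prove that $\f$ lies in $H_p(0)$ on an entire neighborhood of $p$ in $\partial M$; the conclusion follows at once, since a $\C^2$ curve that lies in a plane $P$ and has nonvanishing curvature has $P$ as its osculating plane.

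For the first step, I would invoke Lemma \ref{lem:Cptheta}: a cap of angle $\pi$ at $p$ must coincide with $C_p(\pi)$. Since $\nu_p(\pi)=-\nu(p)$ and $L$ both lie in $H_p(0)$, the plane $H_p(\pi)$ spanned by $L$ and $\nu_p(\pi)$ is $H_p(0)$ itself; and since $n_p(\pi)=-n_p(0)$ by the orientation convention, the half-space $H_p(\pi)^+$ of $C$ equals $H_p(0)^-$. Hence $H=H_p(0)$ and $f(C)\subset H_p(0)^-$, which is consistent with the fact (Proposition \ref{prop:halfplane}) that $H_p(0)$ is a local support plane of $f$ at $p$, so that $f(U_0)\subset H_p(0)^-$ for some neighborhood $U_0$ of $p$ in $M$.

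For the second step, I would use the extension property (Proposition \ref{prop:capextend}): there is a neighborhood $W$ of $C$ in $M$ with $f(W-C)\subset\inte(H_p(0)^+)$. Parametrizing a short arc $I\subset\partial M$ through $p$ so small that $I\subset W\cap U_0$, I note that no point $q\in I$ can lie outside $C$: otherwise $f(q)\in\inte(H_p(0)^+)$ while simultaneously $f(q)\in f(U_0)\subset H_p(0)^-$, a contradiction. So $I\subset C$. Finally I would show that in fact $f(I)\subset H_p(0)$, by proving that every $q\in C\cap\partial M$ satisfies $f(q)\in H_p(0)$: if not, then $f(q)\in\inte(H_p(0)^-)$, so by continuity some open neighborhood $U_q\subset W$ of $q$ in $M$ has $f(U_q)\subset\inte(H_p(0)^-)$, forcing $f(U_q-C)\subset\inte(H_p(0)^-)\cap\inte(H_p(0)^+)=\emptyset$, i.e.\ $U_q\subset C$; since $q\in\partial M$ this makes a neighborhood of $q$ in the disk $C$ look like a half-disk, so $q\in\partial C$ and hence $f(q)\in f(\partial C)=f(C)\cap H_p(0)\subset H_p(0)$ --- a contradiction. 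Thus $\f$ carries a neighborhood of $p$ into the plane $H_p(0)$, and since $\f$ is $\C^3$ with $\kappa(p)\neq 0$, its osculating plane at $p$ is $H_p(0)=H$.

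The main obstacle is the bookkeeping in the first step: tracking the orientation conventions that define $\nu_p(\theta)$ and $n_p(\theta)$ carefully enough to conclude that the half-space of $C_p(\pi)$ is exactly the support side $H_p(0)^-$ of the tangent plane. The remaining steps are a routine combination of the extension property with the elementary observation that a nondegenerate cap meets $\partial M$ only along its boundary.
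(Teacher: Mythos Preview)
Your argument is correct, but it takes a genuinely different route from the paper's proof and actually establishes more than is needed.

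The paper's proof is a quick two-sided squeeze on the principal normal $N(p)$. After observing (as you do) that angle $\pi$ forces $H=H_p(0)$ and that the half-space $H^+$ of $C$ is $H_p(0)^-$, the paper notes that the outward normal $n=n_p(0)$ points into $H^-$. Then Lemma~\ref{lem:nN} gives $\langle n,N(p)\rangle\leq 0$, so $N(p)$ points weakly into $H^+$; on the other hand the extension property forces $\f$ to lie locally in $H^-$, so the second-derivative test gives that $N(p)$ points weakly into $H^-$. Hence $N(p)\in H$, and $H$ is the osculating plane. No local planarity of $\f$ is asserted.

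Your proof, by contrast, actually shows the stronger statement that $\f$ carries an entire arc $I\ni p$ of $\partial M$ into the plane $H$, and then reads off the osculating plane from that. This works, and it is an interesting byproduct: whenever a nondegenerate cap has angle $\pi$ at $p$, the boundary curve is locally planar there. The price is a longer argument and the need to justify the topological step ``$q\in\partial M$ and $U_q\subset C$ imply $q\in\partial C$.'' Your phrasing of this step is a bit loose; a clean justification is that $\inte(C)$ is open in $M$ (since $C$ is the component of $f^{-1}(H^+)$ through $p$ and $f(\inte(C))\subset\inte(H^+)$, any small connected neighborhood in $M$ of a point of $\inte(C)$ lies in $f^{-1}(H^+)$ and hence in $C$), so if $q\in\inte(C)$ then $q$ has a full open-disk neighborhood in $M$, contradicting $q\in\partial M$ by invariance of domain. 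Once that is in place, your chain $I\subset C\cap\partial M\subset\partial C$, hence $f(I)\subset f(\partial C)\subset H$, is immediate --- and in fact the detour through the auxiliary neighborhood $U_q$ is then unnecessary.
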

\begin{proof}
 Since  the angle of $C$ is $\pi$, $T_p f\subset H$, which in turn yields that $H=H_p(0)$. Consequently the outward normal $n$ of $H_p(0)$  is orthogonal to $H$. Furthermore, since $f(C)\subset H^+$,  $n$ points into $H^-$. Consequently  $N(p)$  points into $H^+$, since $\l n, N(p)\r\leq 0$, by Lemma \ref{lem:nN}. On the other hand, since $f(C)\subset H^+$,  $\f$ lies in $H^-$ near $p$, by the extension property. Thus $N(p)$ points into $H^-$. So we conclude that $N(p)$  lies in $H$.  Hence $H$ is the osculating plane.
\end{proof}

\begin{lem}\label{lem:Cp0TpM}
If $C_p(0)$ is a cap of $M$, for some $p\in\p M$, then $f(C_p(0))\subset T_p f$.
\end{lem}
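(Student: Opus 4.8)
The plan is to combine the convexity of $f(C_p(0))$ with the defining property of the tangent cone, in the same spirit as the argument used in the proof of Lemma~\ref{lem:C0}. First I would observe that, since $C_p(0)$ is by definition the component of $f^{-1}(H_p(0))$ containing $p$, we have $f(C_p(0))\subset H_p(0)$, so $C_p(0)$ is a degenerate cap; consequently $f$ restricts to a homeomorphism of $C_p(0)$ onto the compact convex set $\ol C:=f(C_p(0))\subset H_p(0)$, which is a point, a line segment, or a convex disk. Since $f(p)$ is the apex of $T_pf$ and hence belongs to it, it then suffices to show that $f(q)\in T_pf$ for each $q\in C_p(0)$ with $q\neq p$.

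To carry this out I would fix such a $q$ and let $r$ be the ray emanating from $f(p)$ and passing through $f(q)$. Because $\ol C$ is convex and contains both $f(p)$ and $f(q)$, the segment joining these points lies in $\ol C$, so an initial subsegment of $r$ lies in $\ol C$. Next, for an arbitrary neighborhood $U$ of $p$ in $M$, I would note that $U\cap C_p(0)$ is a neighborhood of $p$ in $C_p(0)$, so $f(U\cap C_p(0))$ is a neighborhood of $f(p)$ in $\ol C$; in particular $f(U)$ contains a subsegment of $r$ having $f(p)$ as an endpoint and not reducing to $f(p)$. Hence $r$ contains points of $f(U)-\{f(p)\}$ accumulating at $f(p)$, and the secant rays from $f(p)$ through them all coincide with $r$, so $r\subset T_{f(p)}f(U)=T_pf$. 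In particular $f(q)\in r\subset T_pf$, and since this holds for every $q\in C_p(0)$ we obtain $f(C_p(0))\subset T_pf$.

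The step I expect to require the most care is the interplay with the definition of $T_pf$: since $T_pf$ is defined using arbitrarily small neighborhoods of $p$, one cannot simply choose $U$ large enough to contain $q$. The resolution is precisely the convexity of $\ol C$, which guarantees that an initial piece of the secant ray $r$ lies in $f(U)$ no matter how small $U$ is; this is the only nonformal ingredient, everything else being bookkeeping with the definitions of caps and tangent cones already established above.
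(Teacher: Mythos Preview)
Your argument is correct. It is essentially a direct reuse of the ray argument that already appears in the paper's proof of Lemma~\ref{lem:C0}: for each $q\in C_p(0)$ with $f(q)\neq f(p)$, convexity of $\ol C$ puts an initial subsegment of the secant ray $r$ inside every $f(U)$, so $r\subset T_pf$. This is sound and self-contained, needing only the definition of the tangent cone.

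The paper proves the lemma by a different (and slightly shorter) route. It first observes that $T_pf$ is one of the half-planes of $H:=H_p(0)$ bounded by $L$, so by convexity of $\ol C$ it suffices to check that a \emph{single} neighborhood of $f(p)$ in $\ol C$ already lies in $T_pf$. Taking a convex neighborhood $U$ with associated body $K$ and wedge $W:=T_{f(p)}K$, it then uses the chain
\[
f(U\cap C)\subset K\cap H\subset W\cap H=T_pf,
\]
invoking the structure developed in Section~\ref{sec:tancone}. In other words, the paper leans on Proposition~\ref{prop:halfplane} and the identification $T_pf=W\cap H$, whereas you bypass this by working directly from the definition of the tangent cone. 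Your approach is more elementary and would go through even without the half-plane description; the paper's approach is more structural and exhibits $f(U\cap C)\subset T_pf$ in one line once those earlier facts are in hand.
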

\begin{proof}
Let $C:=C_p(0)$, and $H:=H_p(0)$. By definition $\ol C:=f(C)\subset H$, and $\ol C$ is convex. Further recall that $T_p f$ is one of the half-planes of $H$ determined by the tangent line $L$ of $\f$ at $p$. So we just need to check that a neighborhood of $f(p)$ in $\ol C$ lies on the same side of $L$ where $T_p f$ lies. To see this let $U$ be a convex neighborhood of $p$ in $M$ with body $K$. Then $\ol{U\cap C}:=f(U\cap C)$ is a neighborhood of $f(p)$ in $\ol C$.
Let $W:=T_{f(p)}K$, and recall that $T_pf=W\cap H$. Thus $\ol{U\cap C}=\ol{U\cap C}\cap H\subset K\cap H\subset W\cap H=T_p f$, as desired.   
\end{proof}

Now we are ready to prove the main result of this section:

\begin{proof}[Proof of Proposition \ref{prop:osculate}]
Suppose that $H$ is not the osculating plane. Then we show that $C$ is not maximal. Note that $C\cap \p M$ is connected by assumption. Thus if $C\cap\p M\neq\{p\}$, then there is a segment $I\subset \p M$ containing $p$ such that $f(I)\subset f(\p C)\subset H$. Then $N(p)$ lies in $H$, which is not possible since $H$ is not the osculating plane.
So we may suppose  that 
$$
C\cap\p M=\{p\}.
$$ 
Recall that, by the extension property, $H$ locally supports $\f$ at $p$. Thus, if $n$ is the outward normal of $H$, then $\l n, N(p)\r\leq0$ by Lemma \ref{lem:nN}. Also note that $\l n, N(p)\r\neq 0$, since $H$ is not the osculating plane. So we have
$$
\l n, N(p)\r<0.
$$
By the extension property, there exists a neighborhood $U$ of $C$ in $M$ such that 
$$
f(\cl(U)-C)\subset\inte(H^-).
$$
 After replacing $U$ by a smaller subset,
we may assume that $\Gamma:=\p U$ is a simple closed curve, which coincides with a segment of $\partial M$ near $p$, see Figure \ref{fig:disks}. 
\begin{figure}[h]
\begin{overpic}[height=1.0in]{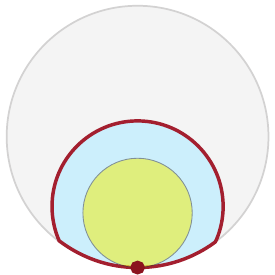}
\put(47,-7){\Small $p$}
\put(45.5,18.5){\Small $C$}
\put(45,44.25){\Small $U$}
\put(13,34){\Small $\Gamma$}
\put(43.75,69){\Small $M$}
\end{overpic}
\caption{}
\label{fig:disks}
\end{figure}
Since $C\cap\p M=\{p\}$, we may assume that $\Gamma\cap C=\{p\}$.
Further, since $f$ is one-to-one on $C$, which is compact, we may suppose that $U$ is so small that $f$ is one-to-one on $\cl(U)$. Consequently $\ol\Gamma:=f(\Gamma)$ will be an embedded curve. 
Now note that, since $\Gamma\cap C=\{p\}$, 
$$
\ol\Gamma-f(p) =f(\Gamma-p)\subset f(\cl(U)-C)\subset \inte(H^-).
$$
Thus, $n$ is a strict support vector of $\ol\Gamma$. Further recall that $\l n, N(p)\r<0$. Thus $n\in A\subset N_p\ol\Gamma$, where $A$ is as in Lemma \ref{lem:GammaNn}.
Consequently, we may rotate $H$ about the tangent line $L$ of $\f$ at $p$ by small angles, in either direction, without compromising the property that $\ol\Gamma\subset H^-$. This will allow us to enlarge $C$ as follows.

(\emph{Case 1}) First suppose that $C$ is nondegenerate. Then the angle $\theta$ of $C$ at $p$ is positive. Further, by Lemma \ref{lem:capPi}, $\theta\neq\pi$. So $0<\theta<\pi$. Recall that $f(\p C)$ lies on one side of $L$, say $L^+$, in $H$ by Lemma \ref{lem:TpfH+}.
 Then $\theta$ is the angle between $L^+$ and $T_{p} M$.  So if  $W$ is the wedge bounded by $L^+$ and $T_p f$, then $W$ is a proper subset of $H^+$. In particular there exists a plane $\tilde H$ 
 which intersects $W$ only along $L$, see Figure \ref{fig:wedge}.
 \begin{figure}[h]
\begin{overpic}[height=1.1in]{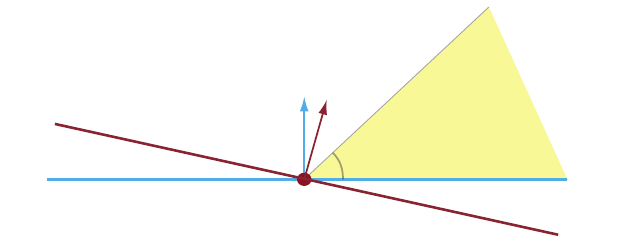}
\put(45,6){\Small $f(p)$}
\put(47,26){\Small $n$}
\put(52,25){\Small $\tilde n$}
\put(55.5,13.5){\Small $\theta$}
\put(4,20){\Small $\tilde H$}
\put(3,10){\Small $H$}
\put(65,16){\Small $W$}
\end{overpic}
\caption{}
\label{fig:wedge}
\end{figure}
 Let $\tilde H^+$ be the side of $\tilde H$ containing $W$, and $\tilde n$ be the unit normal to $\tilde H$ which points into $\tilde H^+$. Then 
 $$
f(C)\subset W\subset \tilde H^+.
 $$
  Further, recall that $\ol\Gamma\subset H^-$. Thus,
by Lemma \ref{lem:GammaNn}, we may assume that  
$$
\ol\Gamma\subset\tilde H^-,
$$
 by choosing $\tilde n$ sufficiently close to $n$.  
Now let 
$$
\tilde  C:=f^{-1}(\tilde H^+)\cap\cl(U).
$$
Then $C\subset \tilde  C$, since $f(C)\subset  \tilde H^+$. Furthermore,  $\tilde  C$ is contained in the region of $M$ bounded by $\Gamma$, and thus $\tilde  C\cap f^{-1}(\inte (\tilde H^+))$ is disjoint from $\p M$. Thus, by  the slicing theorem (Theorem \ref{thm:caps}), $\tilde  C$ is a cap. But note that by construction the angle of $\tilde  C$ at $p$ is strictly bigger than that of $C$. Hence, by Lemma \ref{lem:nestedcaps}, $C$ is a proper subset of $\tilde  C$. So $C$ is not maximal, as claimed.  

(\emph{Case 2}) It remains then to consider the case where $C$ is degenerate. 
Then, by Lemma \ref{lem:C0}, $C=C_p(0)$, since $C$ is maximal.
Let $U$ be a convex neighborhood of $p$ in $M$ with associated body $K$. Recall  that, by Lemma \ref{lem:tancone2}, $W:=T_{f(p)} K$ is a wedge bounded by a pair of half-planes meeting along $L$, one of which is $T_p f$. In particular, the tangent plane $H=H_p(0)$ (which is just the extension of $T_p f$) supports $W$, and $H\cap W=T_p f$. Further, by Lemma \ref{lem:TpfH+}, $f(C)\subset T_p f$.

\begin{figure}[h]
\begin{overpic}[height=1.1in]{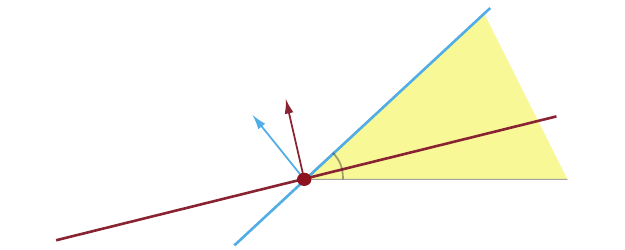}
\put(46,5){\Small $f(p)$}
\put(38,22){\Small $n$}
\put(45,25){\Small $\tilde n$}
\put(31.5,-3){\Small $H_p(0)$}
\put(3,0){\Small $\tilde H$}
\put(66,19){\Small $W$}
\end{overpic}
\caption{}
\label{fig:wedge2}
\end{figure}

Let $\tilde H$ be a plane which contains  $L$ and an interior point of $W$, $\tilde n$ be the unit normal to $\tilde H$ so that $\l n, \tilde n\r>0$, and $\tilde H^+$ be the side of $\tilde H$ into which $\tilde n$ points, see Figure \ref{fig:wedge2}. Then $T_p f\subset \tilde H^+$.
Again, by Lemma \ref{lem:GammaNn}, we may assume that $\ol\Gamma\subset \tilde H^-$, if $\tilde n$ is sufficiently close to $n$. Let $\tilde  C:=f^{-1}(\tilde H^+)\cap \cl(U)$. Then once again $\tilde  C$ will be a cap by the slicing theorem. Further, $C\subset \tilde  C$, since $f(C)\subset T_p f\subset \tilde H^+$, by Lemma \ref{lem:Cp0TpM}. Finally, by construction, the angle of $\tilde  C$  is strictly bigger than that of $C$, which is $0$. Thus, again by Lemma \ref{lem:nestedcaps}, $C$ is a proper subset of $\tilde  C$, and therefore is not maximal.
 \end{proof}

\subsection{Orientation}\label{subsec:orientexpress}
An \emph{orientation} of $\p M$ is the choice of a continuous nonvanishing tangent vector field $p\mapsto u_p$ along $\p M$. Any such choice generates a unit tangent vector field along $\f$ given by $T(p):=df_p(u_p)/\|df_p(u_p)\|$. Accordingly,  the \emph{binormal} vector field $B:=T\times N$ of $\f$ will  be determined as well. Note that $B$ is orthogonal to the osculating planes of $\f$ and thus orients these planes in a continuos way. If $H$ is the osculating plane of $\f$ at $p$, then the side of $H$ into which $B(p)$ points will be called the region \emph{above} $H$, and the other side will be referred to as the region \emph{below} $H$.
Recall that, by Proposition \ref{prop:osculate}, the planes of singular maximal caps of $M$ are osculating planes of $\f$, and thus are oriented by $B$, once $\p M$ has been oriented.  Thus we may talk about whether a singular maximal cap lies above or below its plane, with respect to an orientation of $\p M$.
The main result of this section is as follows:

\begin{prop}\label{prop:orientation}
$\p M$ may be oriented so that each singular maximal cap of $M$ lies above its plane.
\end{prop}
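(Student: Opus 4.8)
The plan is to show that the orientation of $\partial M$ can be chosen \emph{locally} in a consistent way near each singular maximal cap, and then to propagate this choice globally using connectedness. First I would fix an arbitrary orientation of $\partial M$, which determines $T$, $N$, $B$, and hence the notions of "above" and "below" for the osculating plane $H$ of $f|_{\partial M}$ at each point. For a singular maximal cap $C$ at $p$ with plane $H$ (which is the osculating plane by Proposition~\ref{prop:osculate}), the extension property says $f$ maps a neighborhood of $C$ into $H^-$; so $f(C)$ lies on one side of $H$, and the question is merely whether that side is the one into which $B(p)$ points ("above") or the opposite ("below"). Reversing the orientation of $\partial M$ reverses $T$, hence reverses $B=T\times N$ (since $N$, being $\gamma''/\|\gamma''\|$, is orientation-independent), and therefore interchanges "above" and "below" simultaneously at \emph{every} point of $\partial M$. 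So the content of the proposition is that the answer ("above" vs. "below") is the \emph{same} for all singular maximal caps — once that is established, we simply pick whichever of the two global orientations makes that common answer be "above."

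The key step is thus to prove that the side of $H$ containing $f(C)$, measured against $B$, does not depend on the singular maximal cap $C$. I would argue this via connectedness of $\partial M$ together with the nested partition structure from Theorem~\ref{thm:singularcap}. Define a function on $\partial M$ recording, at each $q\in\partial M$ with maximal cap $C_q$, the side of the plane of $C_q$ on which $f(C_q)$ lies relative to $B(q)$ — but caps need not be singular, so instead I would work with the \emph{normal vector} $n_q$ to the plane of $C_q$ pointing away from $f(C_q)$ (i.e.\ the outward normal, so $f(C_q)\subset H_q^-$), which is defined for every nondegenerate cap and, via the tangential cap $C_p(0)$ and the conormal, for degenerate ones too. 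The relevant quantity is the sign of $\langle n_q, B(q)\rangle$; since each plane locally supports $f|_{\partial M}$, Lemma~\ref{lem:nN} gives $\langle n_q, N(q)\rangle\le 0$, and for a \emph{singular} maximal cap Proposition~\ref{prop:osculate} forces $n_q$ to be $\pm B(q)$, so the sign is $\mp 1$ and never $0$. The heart of the matter is a continuity/monotonicity argument: as $q$ varies over $\partial M$, the maximal caps $C_q$ vary "upper-semicontinuously" (cf.\ Note~\ref{note:umehara2}), and one can show the outward normal $n_q$ cannot jump from the $B$-side to the $-B$-side without passing through an osculating plane, i.e.\ without $\langle n_q, N(q)\rangle$ hitting $0$ — but where it hits $0$ the cap is forced (by the argument in Proposition~\ref{prop:osculate}) to be non-maximal unless it is singular, and at a singular cap the orientation of $f(C_q)$ relative to $B$ is locally constant. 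Combining these along connected arcs of $\partial M$ between consecutive singular caps, and using that between any two singular parts there are no "sign reversals" forced, yields that $\langle n_q, B(q)\rangle$ has a constant sign over all singular maximal caps.

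The main obstacle I anticipate is making the "$n_q$ cannot jump without passing through an osculating plane" step precise, because the maximal-cap assignment $q\mapsto C_q$ is not known to be continuous — only the convergence statements in Section~\ref{sec:convergence} (Propositions~\ref{prop:converge}, \ref{prop:distort}) and the nestedness of Theorem~\ref{thm:singularcap} are available. The cleanest route is probably to avoid tracking $C_q$ directly and instead argue by contradiction: suppose two singular maximal caps $C_1$, $C_2$ lie on opposite sides of their osculating planes relative to $B$. Pick the arc $\alpha$ of $\partial M$ between the singular points $p_1$, $p_2$ (using Theorem~\ref{thm:singularcap} to find singular parts in each component of $\partial M - C_1$, etc.), consider the maximal caps along $\alpha$, and extract a limiting cap via Proposition~\ref{prop:converge} at a boundary value of the "sign-change set"; show this limit cap is itself maximal and singular with $\langle n, N\rangle = 0$ but $\langle n, B\rangle = 0$ too, which is impossible since $n$, $N$, $B$ span the normal/binormal plane and $n$ is a unit vector orthogonal to $T$. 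This reduces the global statement to the local rigidity already captured by Proposition~\ref{prop:osculate} and Lemma~\ref{lem:nN}, and the simple-connectivity of $M$ (hence connectedness of $\partial M$) then closes the argument.
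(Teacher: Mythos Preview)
Your overall reduction is correct: reversing the orientation of $\partial M$ flips $B$ globally, so the content of the proposition is exactly that all singular maximal caps lie on the \emph{same} side of their osculating planes relative to $B$. But the mechanism you propose for establishing this uniformity has a genuine gap, and the paper's argument proceeds along an entirely different line.

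Your plan hinges on some kind of continuity or semicontinuity of the assignment $q\mapsto C_q$ along $\partial M$, so that the sign of $\langle n_q,B(q)\rangle$ cannot flip. You invoke Proposition~\ref{prop:converge} to ``extract a limiting cap'' at a boundary point of the sign-change set, but that proposition applies only to \emph{nested} sequences of caps; the maximal caps $C_q$ as $q$ moves along $\partial M$ are not nested, and nothing in Section~\ref{sec:convergence} gives convergence of an arbitrary sequence of maximal caps. Note~\ref{note:umehara2} explicitly flags the lower-semicontinuity of maximal caps as an open issue, so you cannot lean on it. Your terminal contradiction is also not well-formed: at a singular cap the plane is osculating, so $n=\pm B$ and $\langle n,B\rangle=\pm1$; there is no way to force $\langle n,B\rangle=0$ there by a limiting argument unless you already have the continuity you are trying to prove.

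The paper bypasses the maximal-cap assignment entirely. The key observation is that while the conormal $\nu$ and the tangent-plane normal $n$ are not individually continuous, their cross product $\nu\times n$ \emph{is} continuous on $\partial M$ (Lemma~\ref{lem:nuTn}). One then orients $\partial M$ by setting $T:=\nu\times n$; a one-line computation (Lemma~\ref{lem:nuB}) using $\langle n,N\rangle\le 0$ gives $\langle\nu,B\rangle\ge 0$ everywhere. With this orientation in hand, each singular maximal cap $C$ at $p$ is handled \emph{individually}: by Lemma~\ref{lem:TpfH+} one has $\nu(p)\in T_pf\subset H^+$, and if $\nu(p)\in\inte(H^+)$ then $\langle\nu(p),B(p)\rangle\ge 0$ forces $B(p)$ into $H^+$. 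The residual case $\nu(p)\in H$ means $H=H_p(0)$; if $B(p)$ then pointed the wrong way one shows, by propagating along $A=C\cap\partial M$, that $H$ locally supports $f$ all along $A$, which combined with the extension property forces $C=M$. No comparison between different singular caps, and no continuity of $C_q$, is ever needed.
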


The chief difficulty in establishing this observation is that the conormal vector field $\nu$ is not in general continuous. Note, however, that for each $p\in\p M$, $(\nu(p), n(p))$ determines an orientation for the orthogonal plane of $\f$ at $p$, where $n(p)$ is the outward to the tangent plane $H_p(0)$. The following lemma shows that this orientation depends continuously on $p$.

\begin{lem}\label{lem:nuTn}
$\nu\times n$ is continuous on $\p M$.
\end{lem}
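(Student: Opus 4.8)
The plan is to reduce the continuity of $\nu\times n$ to the continuity of the tangent vector $T$ along $\f$ (which holds since $\f$ is $\C^1$) together with a careful understanding of how the tangent half-plane $T_pf$, and hence the conormal $\nu(p)$ and the outward normal $n(p)$ of $H_p(0)$, vary with $p$. The subtle point, as the remark before the lemma emphasizes, is that $\nu$ alone need not be continuous: as $p$ moves along $\p M$, the tangent cone $T_pf$ can ``flip" from one side of the tangent plane to the other (this is exactly the phenomenon by which a degenerate tangential cap can degenerate), and when it flips, $\nu$ jumps by (roughly) $\pi$ and simultaneously $n$ jumps by $\pi$ as well; the ordered pair $(\nu,n)$, equivalently the cross product $\nu\times n$, is insensitive to this simultaneous flip. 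So the content of the lemma is that these are the only discontinuities and that they cancel.

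First I would fix $p_0\in\p M$ and a sequence $p_i\to p_0$, and pass to a convex neighborhood $U$ of $p_0$ with associated body $K$, identifying $\cl(U)$ with $f(\cl(U))\subset\p K$ as in the earlier proofs. For $i$ large, $p_i\in U$. I would first handle the generic situation where $H_{p_0}(0)$ is a \emph{strict} local support plane of $f$ at $p_0$, or more precisely where $\p K$ is locally not contained in any line through $f(p_0)$ in its own tangent plane; here Proposition \ref{prop:halfplane} identifies $T_{p_0}f$ as a definite half-plane of $T_{f(p_0)}\p K$ determined by $L=$ tangent line of $\f$, and a limiting argument using that tangent cones of $\p K$ depend upper-semicontinuously on the base point, together with the $\C^1$ convergence $T_{p_i}\Gamma\to T_{p_0}\Gamma$ of the boundary tangent lines, shows $T_{p_i}f\to T_{p_0}f$ as half-planes. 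Consequently $\nu(p_i)\to\nu(p_0)$ and $n(p_i)\to n(p_0)$ in this case, so $\nu\times n$ is already continuous at $p_0$.

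The remaining, harder case is when $f^{-1}(H_{p_0}(0))$ contains a neighborhood of $p_0$ in $\p M$ that is a genuine segment $I$ with $f(I)\subset H_{p_0}(0)$ — equivalently, by Lemma \ref{lem:nuN}, $\nu(p_0)=-N(p_0)$ — or limiting versions thereof, where the wedge $T_{f(p_0)}K$ may degenerate. Here I would argue directly in the orthogonal plane $\Pi$ of $\f$ at $p_0$ (using that $\Pi$ varies continuously since $T$ does). In $\Pi$, the relevant data is the planar convex cone $T_{f(p)}K\cap\Pi_p$; its two bounding rays are $\nu(p)$ and (the projection of) $-N(p)$ type directions, and $n(p)$ is determined as the outward normal of the supporting line of this cone that contains $\nu(p)$, with the sign fixed by the orientation convention $(\nu_p(\theta),n_p(\theta))$. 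The key computation: $\nu\times n$ equals $\pm$ the fixed orientation normal of $\Pi$ — no, more precisely, writing everything in the continuously-varying orthonormal frame of $\Pi_p$, the ordered pair $(\nu(p),n(p))$ is always positively oriented by the very definition of $n$, so $\nu(p)\times n(p)$ is exactly $T(p)$ up to a fixed sign. Thus $\nu\times n=\pm T$, which is continuous. I expect the main obstacle to be making this last identification airtight in the degenerate sub-cases where the support cone of $K$ in $\Pi$ is a half-plane or a line (so that $n$ is not uniquely forced by $K$ alone but is pinned down by the orientation convention and by $H_p(0)$ being a support plane); one must check that the convention $\l n_p(\theta)$ inducing the same orientation as $(\nu_p(0),n_p(0))\r$ picks out the value that makes $\nu\times n$ agree with the limit from the generic case. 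Once that bookkeeping is done, the equality $\nu\times n=\pm T$ gives continuity on all of $\p M$ at once, which is the cleanest way to finish.
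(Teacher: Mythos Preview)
Your key reduction is correct and is exactly what the paper uses: since $\nu(p)$ and $n(p)$ are orthonormal vectors in the normal plane $\Pi_p$ of $\f$, their cross product $\nu(p)\times n(p)$ is a unit vector along the tangent line, hence equals $\pm T(p)$. The whole lemma then amounts to showing that this sign is locally constant.

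The gap is in your justification of that constancy. You assert that ``the ordered pair $(\nu(p),n(p))$ is always positively oriented by the very definition of $n$,'' but this is not so. The vector $n=n_p(0)$ is defined \emph{geometrically} as the outward normal of the local support plane $H_p(0)$ (i.e.\ the side of $H_p(0)$ not containing $f(U)$); there is no orientation convention built into it. The orientation convention in Section~\ref{subsec:maxcap} only fixes $n_p(\theta)$ for $\theta>0$ \emph{relative to} $(\nu_p(0),n_p(0))$; it says nothing about the orientation of $(\nu_p(0),n_p(0))$ itself. So the sign of $\nu\times n$ relative to $T$ is genuinely a geometric question, not a matter of bookkeeping, and your degenerate-case paragraph does not address it. Your ``generic case'' argument is also incomplete: tangent cones of $\p K$ are only upper semicontinuous in the base point, so $T_{p_i}f\to T_{p_0}f$ does not follow without further work, and in any event you would still need to bridge the generic and degenerate regimes.

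The paper closes this gap differently. After observing $\nu\times n=\pm T$ and setting $A^\pm=\{p:\nu\times n=\pm T\}$, it reformulates $p\in A^+$ as ``$T_pf$ is the \emph{left} half-plane $L_p^+$ of $L_p$ in $H_p(0)$'' (left being the side of $n(p)\times T(p)$). It then takes a convex neighborhood $U$ with body $K$, chooses a support plane $H$ over which $\p K$ is a graph, and projects everything orthogonally to $H$. Under this projection the question becomes purely two-dimensional: for each $q\in\Gamma=U\cap\p M$, is $T_{\bar q}\bar U$ the left half-plane of the tangent line $L_{\bar q}$ of $\bar\Gamma$? Both sides of this equation vary continuously in $q$ (the left side because the inward normal of the planar disk $\bar U$ along its $\C^1$ boundary $\bar\Gamma$ is continuous, the right side because $\bar T$ is continuous), so the equality propagates from $p$ to all of $\Gamma$. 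This projection step is the missing idea in your proposal: it replaces the ill-behaved three-dimensional tangent cones $T_q\p K$ by a single planar region with $\C^1$ boundary, where the ``which side'' question is manifestly continuous.
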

\begin{proof}
Fix an orientation for  $\p M$, and let $T$ be the corresponding tangent vector field along $\f$. 
Then, for every $p \in\p M$, either $\nu\times n=T$ or $\nu\times n=-T$. Let $A^\pm$ consist of points of $\p M$ such that $\nu\times n=\pm T$ respectively. We need to show that either $A^+=\p M$ or $A^-=\p M$. If $A^+$ is open, then by symmetry $A^-$ is also open, which in turn implies that $A^+$ is closed. Thus, since $\p M$ is connected, it suffices to show that $A^+$ is open. To this end we proceed as follows.

First let us record the following observation. Let the \emph{left} side $L_p^+$ of $L_p$ in $H_p(0)$ be defined as the side to which $n(p)\times T(p)$ points. Further note that $\nu(p)=n(p)\times T(p)$ if and only if $T(p)=\nu(p)\times n(p)$
or $p\in A^+$. Thus, since $\nu(p)\in T_p f$, it follows that  $p\in A^+$ if and only if  $T_p f=L_p^+$. 

Now let $p\in A^+$,  and $U$ be a convex neighborhood of $p$ in $M$ with associated body $K$. 
 As usual, we will identify $U$ with its image $f(U)\subset\p K$, and will suppress $f$. 
 We need to show then that $\Gamma:=U\cap \p M\subset A^+$. More explicitly, given that $T_p U=L_p^+$, we claim that, for all $q\in\Gamma$, 
\begin{equation}\label{eq:nuTn1}
T_q U=L_q^+.
\end{equation}

To establish the last claim, let $H$ be a support plane of $K$ at $p$ such that $\p K$ is a graph over $H$ near $p$. Let $\pi\colon\R^3\to H$ denote the orthogonal projection, and for any object $X\subset \R^3$, set $\ol X:=\pi(X)$. Then $\ol{U}$ will be  embedded  in $H$, assuming that $U$ is sufficiently small. Further, for each $q\in\Gamma$, $\pi\colon H_q(0)\to H$ will be injective. Thus, \eqref{eq:nuTn1} is equivalent to
\begin{equation}\label{eq:nuTn2}
\ol{T_q U}=\ol{L_q^+}.
\end{equation}

Define the left side $(\ol{L_q})^+$ of $\ol{L_q}$ in $H$ as the side to which $u\times \ol T$ points, where $u$ is the outward normal of $H$. Since $\l u, n(q)\r>0$, it follows that the left side of $L_q$ projects onto the left side of $\ol{L_q}$, or $\ol{L_q ^+}=(\ol{L_q})^+$. Further $\ol{L_q}$ coincides with the tangent line $L_{\ol q}$ of $\ol\Gamma$ at $\ol q$. 
Thus $(\ol{L_q})^+=L_{\ol q}^+$. Finally $\ol{T_q U}=T_{\ol{q}}\ol U$ by \eqref{eq:piTp}.
So \eqref{eq:nuTn2} holds if and only if
\begin{equation}\label{eq:nuTn3}
T_{\ol{q}}\ol U=L_{\ol q}^+.
\end{equation}

Finally note that $L_{\ol q}^+$ depends continuously on $q$, since it is determined by  $u\times \ol T$. Further $T_{\ol{q}}\ol U$ depends only on the inward normal of $\ol U$ along $\ol\Gamma$,  which  again depends continuously on $\ol q$. Thus \eqref{eq:nuTn3} holds 
 for all $\ol q\in\ol\Gamma$, since by assumption it holds when $\ol q=\ol p$. 
\end{proof}

The last lemma yields  the key fact needed for the proof of Proposition \ref{prop:osculate}:

\begin{lem}\label{lem:nuB}
$\p M$ may be oriented so that   $\l \nu, B\r\geq 0$.
\end{lem}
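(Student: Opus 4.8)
The plan is to reduce the statement to the continuity result of Lemma \ref{lem:nuTn} together with the pointwise sign information already recorded in Lemma \ref{lem:nN}. First I would fix an arbitrary orientation of $\p M$, which determines the tangent field $T$ and hence the binormal $B=T\times N$ along $\f$, and also (via the outward normal $n=n_p(0)$ of the tangent plane $H_p(0)$) the continuous field $\nu\times n$ from Lemma \ref{lem:nuTn}. The key observation is that at each $p\in\p M$, the vectors $\nu(p)$, $n(p)$ lie in the orthogonal plane $\Pi_p$ of $\f$ at $p$ and are orthogonal to each other (both being unit normals to $\f$ at $p$, with $n\perp T_pf\ni\nu$... more precisely $n\perp H_p(0)\supset T_pf\ni\nu$, so $\l\nu,n\r=0$), so $(\nu(p),n(p))$ is an orthonormal frame of $\Pi_p$. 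Likewise $(N(p),B(p))$ is an orthonormal frame of $\Pi_p$. Hence there is an angle-type relation, and the quantity $\l\nu,B\r$ can be computed from the two frames.

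The crucial sign computation is this: since $H_p(0)$ locally supports $f$ at $p$, Lemma \ref{lem:nN} gives $\l n,N\r\le 0$. Writing everything in the frame $(\nu,n)$ of $\Pi_p$, we have $N=\l N,\nu\r\nu+\l N,n\r n$ and $B=\l B,\nu\r\nu+\l B,n\r n$; since $(N,B)$ is a positively or negatively oriented orthonormal pair, $\l B,\nu\r=\pm\l N,n\r$ and $\l B,n\r=\mp\l N,\nu\r$, where the sign is $+$ on $A^+$ and $-$ on $A^-$ in the notation of Lemma \ref{lem:nuTn} (i.e.\ according to whether $\nu\times n=T$ or $-T$). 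Thus on $A^+$ we get $\l\nu,B\r=\l B,\nu\r=\l N,n\r\le 0$, while on $A^-$ we get $\l\nu,B\r=-\l N,n\r\ge 0$. By Lemma \ref{lem:nuTn}, $\nu\times n$ is continuous, so it equals $T$ everywhere or $-T$ everywhere (as $\p M$ is connected, exactly as argued inside that lemma). If it equals $-T$ everywhere we are done with the chosen orientation; if it equals $T$ everywhere, I reverse the orientation of $\p M$, which replaces $T$ by $-T$ and $B=T\times N$ by $-B$ (note $N$ is unchanged, being $\gamma''/\|\gamma''\|$ for a unit-speed parametrization, independent of direction), hence replaces $\l\nu,B\r$ by $-\l\nu,B\r\ge 0$.

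There is one degenerate subtlety I would address: the conormal $\nu(p)$ need not be continuous, but the argument above is entirely pointwise once the \emph{continuous} discriminant $\nu\times n$ has been used to pin down a single global sign. A second point to check is the orientation bookkeeping in the cross-product identities $\l B,\nu\r=\pm\l N,n\r$: these follow from the fact that rotating an orthonormal basis of an oriented plane by a right angle is given by a fixed formula, so if $(N,B)$ and $(\nu,n)$ are two orthonormal bases related by $\nu\times n=\epsilon T$ with $\epsilon=\pm1$, then the coordinate matrix of $(N,B)$ in the basis $(\nu,n)$ is a rotation by some angle when $\epsilon$ matches the orientation of $(N,B)$ and a reflection otherwise; comparing entries yields the claimed relations. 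The main obstacle, then, is not any deep geometry but simply carrying out this linear-algebra bookkeeping carefully and confirming that reversing the orientation of $\p M$ has the stated effect on $T$, $N$, and $B$ — after which the inequality $\l\nu,B\r\ge 0$ on all of $\p M$ drops out, and combined with Proposition \ref{prop:osculate} this says exactly that each singular maximal cap lies above its (osculating) plane, proving Proposition \ref{prop:orientation}.
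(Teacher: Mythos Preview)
Your approach is essentially the same as the paper's: both use Lemma \ref{lem:nuTn} to pin down a single global sign and Lemma \ref{lem:nN} for the pointwise inequality, linking $\l\nu,B\r$ to $\l n,N\r$ via the orthonormal frame structure of the normal plane. However, your case analysis carries a sign error: on $A^+$ (where $\nu\times n=T$), both $(\nu,n)$ and $(N,B)$ are \emph{positively} oriented orthonormal bases of $T^\perp$ (since $N\times B=T$ as well), so the change of basis is a rotation and one obtains $\l B,\nu\r=-\l N,n\r\ge 0$, not $\le 0$. The correction is harmless --- it just means you keep the orientation when $\nu\times n=T$ and flip it when $\nu\times n=-T$, the opposite of what you wrote --- but it is exactly the bookkeeping slip you warned yourself about.

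The paper's proof is a slicker execution of the same idea: it first orients $\p M$ so that $\nu\times n=T$ (which Lemma \ref{lem:nuTn} shows can be done globally), and then computes in one line
\[
\l\nu,B\r=\l T\times\nu,\,T\times B\r=\l n,-N\r=-\l n,N\r\ge 0,
\]
using that $T\times(\cdot)$ is an isometry of $T^\perp$, the cyclic relation $T\times\nu=n$, and $T\times B=T\times(T\times N)=-N$. This bypasses the two-case coordinate expansion entirely.
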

\begin{proof}
By Lemma \ref{lem:nuTn}, we may orient $\p M$ so that $\nu\times n=T$.
Since $T$ is orthogonal to both $\nu$ and $B$,    
$$
\l \nu, B\r=\l T\times\nu, T\times B\r=\l n, -N\r =-\l n, N\r\geq 0,
$$
where the last inequality holds by Lemma \ref{lem:nN}.
\end{proof}

Now we are ready to prove the main result of this section:

\begin{proof}[Proof of Proposition \ref{prop:orientation}]
Let $p\in\p M$, and $C$ be a singular maximal cap of $M$ at $p$.
If $M=C$, then it is clear that the proposition holds. So assume that $M\neq C$.
Orient $\p M$ as in Lemma \ref{lem:nuB}, and
let $H$ be the plane of $C$, which  is the osculating plane of $\f$ at $p$ by Proposition \ref{prop:osculate}. We need to show that $B(p)$ points into the half-space $H^+$ of $C$. 

Recall that $\nu(p)\in T_pf\subset H^+$, by Lemma \ref{lem:TpfH+}. 
If $\nu(p)$ points into $\inte(H^+)$, then $B(p)$  points into $H^+$ as well, since $\l B,\nu\r\geq 0$ by Lemma \ref{lem:nuB}, and $B$ is orthogonal to $H$. 
So we may suppose that  $\nu(p)$ lies in $H$. Then $H=H_p(0)$, the tangent plane of $\f$ at $p$, and so $B(p)=\pm n(p)$. Note that here $n(p)$ points into $H^-$ because it is the outward normal of the tangent plane by definition and $f(C)\subset H^+$. Thus
if $B(p)=-n(p)$, then we are done. So  we may suppose  that $B(p)=n(p)$. We will show then that $C=M$, which is a contradiction and thereby completes the proof.

First  we claim that $H$ is  tangent to  $\f$ all along $A:=C\cap\partial M$.
If $A=\{p\}$ then there is nothing to prove. So suppose that there is a point $q\in A$ different from $p$. Then, since $A$ is connected by assumption, there exists a segment $I\subset\p M$ joining $p$ and $q$. Consequently,  $f(I)\subset f(\partial C)\subset H$. So $B$ is constant along $I$. In particular, $B(q)=B(p)=n(p)$. Thus $\l  \nu(q),n(p)\r=\l\nu(q), B(q)\r\geq 0$, by Lemma \ref{lem:nuB}. On the other hand,  $\nu(q)$ points into $H^+$, while $n(p)$ points into $H^-$. So $\l \nu(q), n(p)\r\leq 0$. Hence $\l \nu(q),n(p)\r=0$, which means that $\nu(q)$ lies in $H$. So $H$ is the tangent plane of $\f$ at $q$ as claimed.

Now it follows that $H$ locally supports $f$  along $A$. Consequently there is a neighborhood $V$ of $A$ in $M$ such that 
$$
f(V)\subset H^+.
$$
 On the other hand, by the extension property, there is a neighborhood $U$ of $C$ in $M$ such that $f(U-C)\subset \inte(H^-)$. Note that $U$ is also a neighborhood of $A$ since $A\subset C$. So we may assume that $V\subset U$. Then it follows that 
 $$
 f(V-C)\subset \inte(H^-).
 $$
  So  $V-C=\emptyset$, or $V\subset C$. This in turn yields that $V\cap\partial C\subset\partial M$, since $V$ is open in $M$. Consequently $A$ is open in $\partial M$. But $A$ is compact by definition and therefore is closed in $\partial M$ as well. Thus $A=\partial M$. This yields that $C=M$, which is the desired contradiction.
\end{proof}

\subsection{Sign of torsion; Finishing the proof}\label{subsec:torsion}
It is well-known that if a curve lies locally on one side of its osculating plane then its torsion vanishes, which is precisely how we establish the existence of vertices in this work. Indeed Theorem \ref{thm:singularcap} together with Proposition \ref{prop:osculate} have already ensured the existence of at least two vertices along $\f$.
In order to obtain two more vertices, we need to study  how the torsion  changes sign when the curve lies on one side of its osculating plane. Our first observation below may be regarded as a version of the maximum principle for torsion:

\begin{lem}\label{lem:signchange}
Let $\gamma\colon[0,b]\to\R^3$ be a $\C^3$ immersed curve without inflections. Suppose that $\tau\geq 0$ on $[0,b]$. Then there exists an $\epsilon>0$ such that $\gamma([0,\epsilon])$ lies above the osculating plane of $\gamma$ at $0$.
\end{lem}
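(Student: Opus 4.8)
The plan is to analyze the Taylor expansion of $\gamma$ at $0$ with respect to the Frenet frame $(T(0),N(0),B(0))$, and extract the sign of the $B(0)$-component. After reparametrizing by arclength, write $\gamma(s)-\gamma(0) = s\,T(0) + \tfrac{s^2}{2}\kappa(0)N(0) + \tfrac{s^3}{6}\big(-\kappa(0)^2 T(0) + \kappa'(0)N(0) + \kappa(0)\tau(0)B(0)\big) + o(s^3)$, which follows from the Frenet--Serret formulas. Hence the component $h(s):=\langle \gamma(s)-\gamma(0),\, B(0)\rangle$ satisfies $h(s) = \tfrac{s^3}{6}\kappa(0)\tau(0) + o(s^3)$. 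Since $\gamma$ has no inflections, $\kappa(0)>0$; so if $\tau(0)>0$ then $h(s)>0$ for small $s>0$ and we are done immediately with any sufficiently small $\epsilon$.

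The only remaining case is $\tau(0)=0$. Here the cubic term vanishes and a naive Taylor argument is inconclusive, so this is the main obstacle. The idea is to use the hypothesis $\tau\geq 0$ on all of $[0,b]$ rather than just the value at $0$. I would integrate: write $h'(s) = \langle T(s),B(0)\rangle$, then $h''(s) = \langle \kappa(s)N(s),B(0)\rangle$, then differentiate once more. A cleaner route is to use the identity (valid for a $\C^3$ unit-speed curve without inflections) that the torsion is, up to the positive factor $\kappa$, the third-order invariant governing the $B(0)$-component; concretely, by three applications of the fundamental theorem of calculus starting from $h(0)=h'(0)=h''(0)=0$ one gets
\[
h(s) = \int_0^s\!\!\int_0^{u}\!\!\int_0^{v} \big\langle \gamma'''(w),\, B(0)\big\rangle \, dw\, dv\, du,
\]
and by Frenet--Serret $\langle \gamma'''(w),B(0)\rangle = \kappa(w)\tau(w)\langle B(w),B(0)\rangle + (\text{terms in }\langle T(w),B(0)\rangle,\langle N(w),B(0)\rangle)$. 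For $w$ small, $\langle B(w),B(0)\rangle$ is close to $1$ hence positive, and $\kappa(w)>0$; the troublesome lower-order terms $\langle T(w),B(0)\rangle$ and $\langle N(w),B(0)\rangle$ are $O(w)$ and $O(w)$ respectively but carry unknown signs, so I must handle them.

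To control those terms I would instead argue more carefully by Taylor's theorem with integral remainder centered at $0$: since $h$, $h'$, $h''$ all vanish at $0$, Taylor gives $h(s) = \tfrac{1}{2}\int_0^s (s-w)^2 h'''(w)\,dw$, and $h'''(w) = \tfrac{d}{dw}\langle \kappa(w)N(w),B(0)\rangle = \langle \kappa'(w)N(w) + \kappa(w)(-\kappa(w)T(w)+\tau(w)B(w)),\,B(0)\rangle$. Evaluating the bracket and using $\langle N(0),B(0)\rangle = \langle T(0),B(0)\rangle = 0$ together with $\langle B(0),B(0)\rangle=1$, one sees $h'''(0) = \kappa(0)\tau(0) = 0$ in this case, but more importantly $h'''(w) = \kappa(w)\tau(w) + E(w)$ where $E(w)\to 0$ as $w\to 0$ and $E(w)$ collects the terms with $\langle T(w),B(0)\rangle$ and $\langle N(w),B(0)\rangle$, each of which is itself $O(w)$. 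This shows $h'''(w)\geq \kappa(w)\tau(w) - C w \geq -Cw$ near $0$, which is not yet enough for positivity. The resolution: differentiate once more. Since $h''''(0)$ involves $\tau'(0)$ and the sign is still not forced, the genuinely correct approach — and the one I expect the author uses — is to observe that $\tau\ge 0$ on the whole interval forces the curve to ``curl upward'' monotonically in a suitable sense; precisely, one shows the function $g(s):=\langle \gamma(s)-\gamma(0),B(0)\rangle$ has $g(s)>0$ for all small $s>0$ by noting that if $g$ had a zero at some small $s_0>0$ then between $0$ and $s_0$ the osculating plane at $0$ would cross $\gamma$, and repeated integration of $\tau\ge0$ (the binormal turns consistently in one direction) prevents this. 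I would make this rigorous by choosing $\epsilon$ so small that $\langle B(w),B(0)\rangle > 1/2$ and $\|\gamma(w)-\gamma(0)\|$ is small enough that the error terms are dominated, then using $h(s)=\tfrac12\int_0^s(s-w)^2 h'''(w)\,dw$ with the refined lower bound $h'''(w)\ge \tfrac12\kappa(w)\tau(w) - (\text{small}) \cdot \|\gamma(w)-\gamma(0)\| \ge -\text{(higher order)}$, concluding $h(s)>0$ for $s\in(0,\epsilon]$. The main obstacle, as indicated, is exactly the degenerate case $\tau(0)=0$ where one must leverage the global sign condition $\tau\ge 0$ rather than a single Taylor coefficient.
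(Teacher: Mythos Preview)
Your approach via the $B(0)$-component $h(s)=\langle\gamma(s)-\gamma(0),B(0)\rangle$ is the natural one, and you correctly isolate the case $\tau(0)=0$ as the real issue. But you never close that case. The bound you arrive at, $h'''(w)\ge \tfrac12\kappa(w)\tau(w)-C\|\gamma(w)-\gamma(0)\|$, only gives $h'''(w)\ge -Cw$ near $0$; integrating three times yields $h(s)\ge -C's^4$, which is negative, not nonnegative. The difficulty is structural: the cross terms $\langle T(w),B(0)\rangle$ and $\langle N(w),B(0)\rangle$ are each $O(w)$, they enter $h'''$ multiplied by bounded quantities like $\kappa'$ and $\kappa^2$, and nothing in the hypothesis $\tau\ge 0$ controls their sign. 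So the triple-integral formula for $h$ in terms of $h'''$ cannot be made to work without a new idea, and your closing paragraph does not supply one.

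The paper's proof avoids this by a different parametrization. Instead of arc length, write $\gamma(t)=(t,y(t),z(t))$ in Frenet coordinates at $0$; here $h=z$, and one has $y''(0)>0$, $z''(0)=0$. The point is that in this parametrization the torsion satisfies the identity
\[
\kappa^2\|\gamma'\|^3\,\tau \;=\; z'''y''-z''y''' \;=\; \Big(\frac{z''}{y''}\Big)'(y'')^2,
\]
so $\tau\ge 0$ says precisely that $z''/y''$ is nondecreasing. Since $z''(0)/y''(0)=0$, this gives $z''\ge 0$ on $[0,\epsilon]$, and then $z(0)=z'(0)=0$ forces $z\ge 0$ by Taylor. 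The trick is that the ratio $z''/y''$, not $z'''$ itself, is the quantity whose monotonicity is governed by the sign of $\tau$; this absorbs exactly the error terms that were defeating your estimate.
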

\begin{proof}
After a rigid motion, we may suppose that $\gamma(0)=(0,0,0)$, and the Frenet frame $(T,N, B)$ of $\gamma$ at $t=0$ coincides with the standard basis $(i,j,k)$ of $\R^3$. In particular, the tangent line of $\gamma$ at $t=0$ coincides with the $x$-axis. Consequently, for $\epsilon$ sufficiently small, the projection of $\gamma([0,\epsilon])$ into the $xy$-plane, which coincides with the osculating plane of $\gamma$ at $0$, is a graph over the $x$-axis. So after a reparametrization, we may assume that 
$$
\gamma(t)=(t,y(t),z(t)),
$$
 for $t\in[0,\epsilon]$, assuming $\epsilon$ is sufficiently small. We need to show that, for small $\epsilon$,  $z\geq 0$ on $[0,\epsilon]$. To this end, first note that
 $$
 (0,1,0)=N(0)=\frac{\gamma''(0)-\l\gamma''(0),T(0)\r T(0)}{\|\gamma''(0)-\l\gamma''(0),T(0)\r T(0)\|}=\frac{(0,y''(0),z''(0)}{\sqrt{(y''(0))^2+(z''(0))^2}}.
 $$
 Thus  $y''(0)>0$ and $z''(0)=0$. In particular we may choose $\epsilon$ so small that $y''>0$ on $[0,\epsilon]$. 
 Now recall that $\tau=\det(\gamma',\gamma'',\gamma''')/(\kappa^2 \|\gamma'\|^3)$, where $\kappa$ is the curvature of $\gamma$. Thus
 $$
\kappa^2\|\gamma'\|^3\tau=
\left|
\begin{array}{ccc}
1 & y' & z'\\
0 & y'' & z''\\
0 & y'''& z'''\\
\end{array}
\right|
=z'''y''-z''y'''=\(\frac{z''}{y''}\)'(y'')^2.
$$
Consequently, since $z''(0)=0$, it follows that, for any $t\in[0,\epsilon]$, 
$$
\frac{z''(t)}{y''(t)}=\frac{z''(t)}{y''(t)}-\frac{z''(0)}{y''(0)}=\int_0^t \(\frac{z''}{y''}\)'ds=\int_0^t \frac{\kappa^2\|\gamma'\|^3\tau}{(y'')^2}\,ds\geq 0,
$$
 since by assumption $\tau\geq 0$.
Thus $z''\geq 0$ on $[0,\epsilon]$, since $y''>0$ on $[0,\epsilon]$. Further recall that $z(0)=z'(0)=0$,  since by assumption  $\gamma$ passes through the origin and is tangent to the $x$-axis at $t=0$.
So for every $t\in[0,\epsilon]$, there exists $\ol t\in [0,t]$ such that $z(t)=z''(\ol t)\ol t^2/2$, by Taylor's theorem. Hence $z\geq 0$ on $[0,\epsilon]$, as desired. 
\end{proof}

The last observation quickly yields:

\begin{lem}\label{lem:signchange2}
Let $a<0<b$, $\gamma\colon[a,b]\to\R^3$ be a $\C^3$ immersed curve without inflections,  and $H$ be the osculating plane of $\gamma$ at $0$. Suppose that $\gamma([a,b])$ lies below $H$, and $\gamma(a)$, $\gamma(b)$ are disjoint from $H$. Then there are points $\tilde a\in [a,0)$ and $\tilde b\in (0, b]$ such that $\tau(\tilde a)>0$ and $\tau(\tilde b)<0$.
\end{lem}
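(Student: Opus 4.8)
The plan is to reduce the statement to two applications of Lemma \ref{lem:signchange}, one on each side of $0$. First I would deal with the interval $[0,b]$. The claim is that there is $\tilde b\in(0,b]$ with $\tau(\tilde b)<0$. Suppose not; then $\tau\geq 0$ on all of $[0,b]$. Applying Lemma \ref{lem:signchange} to $\gamma|_{[0,b]}$, we would conclude that $\gamma([0,\epsilon])$ lies \emph{above} the osculating plane $H$ of $\gamma$ at $0$ for some $\epsilon>0$. But by hypothesis $\gamma([a,b])$ lies below $H$. So $\gamma([0,\epsilon])$ lies both above and below $H$, which forces $\gamma([0,\epsilon])\subset H$; this contradicts $\gamma$ having no inflections (a nondegenerate arc of a curve with nonvanishing curvature cannot lie in a plane containing its tangent and principal normal at an interior point, unless it is locally planar, forcing $\tau$ and hence the osculating plane to degenerate in a way incompatible with $\kappa\neq0$ — more simply, by Lemma \ref{lem:Gamma-L}-type reasoning the curve leaves any such plane). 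Hence some $\tau(\tilde b)<0$.

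For the interval $[a,0]$ I would run the mirror-image argument. Reparametrize $\gamma|_{[a,0]}$ by reversing orientation: set $\delta(s):=\gamma(-s)$ for $s\in[0,-a]$. Then $\delta$ is again a $\C^3$ immersed curve without inflections, its osculating plane at $s=0$ is still $H$, and $\delta([0,-a])=\gamma([a,0])$ lies below $H$. The key point is how torsion transforms under orientation reversal: reversing the parameter flips the sign of $T$ and of $B=T\times N$ (since $N$ is unchanged), so the notions of ``above'' and ``below'' $H$ swap, and $\tau$ changes sign, $\tau_\delta(s)=-\tau_\gamma(-s)$. Thus ``$\gamma$ lies below $H$'' becomes ``$\delta$ lies above $H$'' for the Frenet frame of $\delta$; equivalently, I should phrase it as: if $\tau_\gamma\leq 0$ on $[a,0]$ then $\tau_\delta\geq 0$ on $[0,-a]$, and Lemma \ref{lem:signchange} applied to $\delta$ puts $\delta([0,\epsilon'])$ above its osculating plane, i.e. $\gamma([-\epsilon',0])$ above $H$ — again contradicting that $\gamma$ lies below $H$, by the same local-nonflatness/no-inflection argument as before. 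Therefore $\tau_\gamma(\tilde a)>0$ for some $\tilde a\in[a,0)$.

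The one delicate point deserving care is the ``lies above and below implies lies in $H$, contradiction'' step: I must make sure that an arc $\gamma([0,\epsilon])$ that is simultaneously $\subset H^+$ and $\subset H^-$ actually reduces to being contained in $H$, and that this genuinely contradicts the no-inflection hypothesis. The first part is immediate since $H^+\cap H^-=H$. For the second part, if $\gamma((0,\epsilon))\subset H$ then, since $\gamma(0)\in H$ and $H$ is a plane, $\gamma|_{[0,\epsilon]}$ is a planar arc, so $\tau\equiv 0$ there and the osculating plane is constant and equal to $H$; this is not itself contradictory, so I must instead argue directly: $H$ contains $T(0)$ and $N(0)$, and if $\gamma$ stayed in $H$ near $0$ then the binormal $B$ would be the constant normal of $H$; this is consistent. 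So the cleanest route is to avoid claiming a contradiction from planarity and instead observe that Lemma \ref{lem:signchange} gives $\gamma([0,\epsilon])$ \emph{above} $H$ meaning in the closed half-space $H^+$ where $B(0)$ points, while the hypothesis gives $\gamma([a,b])\subset$ the closed half-space \emph{below} $H$ — and crucially the hypothesis also asserts $\gamma(b)$ is disjoint from $H$; combined with the strict inequality one extracts from a careful reading of Lemma \ref{lem:signchange}'s proof (the function $z$ there satisfies $z(t)=z''(\bar t)\bar t^2/2$ and $z''\geq0$, and if $z\equiv 0$ on $[0,\epsilon]$ then $\gamma$ is planar there, whence by unique continuation of real-analytic-type Taylor behavior $z''/y''\equiv 0$, forcing $\tau\equiv 0$ on $[0,\epsilon]$, which is fine, but then ``below $H$'' would already be violated by nothing) — so actually the slick fix is: apply Lemma \ref{lem:signchange} to $-\gamma$ reparametrized, or note that ``below'' here is the \emph{open} condition away from the endpoints. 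I would resolve this by stating Lemma \ref{lem:signchange2}'s hypothesis as $\gamma((a,b))$ contained in the open half-space below $H$ except possibly touching $H$ only at $0$; then ``$\gamma([0,\epsilon])$ above $H$'' for $\epsilon$ small with $\gamma$ also strictly below near points $\neq 0$ is a genuine contradiction, finishing the proof. The main obstacle, then, is not the logical skeleton but pinning down these open-versus-closed half-space conventions so that the two invocations of Lemma \ref{lem:signchange} collide cleanly with the hypothesis.
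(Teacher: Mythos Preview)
Your overall plan—contradiction via Lemma \ref{lem:signchange} on $[0,b]$, then symmetry by reversing orientation for $[a,0]$—is exactly the paper's approach, and you have correctly located the one real difficulty: from $\gamma([0,\epsilon])\subset H^+\cap H^-=H$ you need a contradiction, and you recognize that planarity alone does not contradict ``no inflections.'' Indeed a planar arc in $H$ with $\kappa>0$ is perfectly fine; its osculating plane is just $H$ throughout. So your appeals to Lemma \ref{lem:Gamma-L} or to the no-inflection hypothesis do not close the gap. Your final suggestion—to strengthen the hypothesis so that $\gamma$ touches $H$ only at $0$—is not permissible: the lemma must be proved as stated, and in its intended application (Proposition \ref{prop:penultimate}) the curve may meet $H$ along a whole segment $A_i=C_i\cap\partial M$, not just at a point.

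The missing idea is to exploit a supremum. Assume for contradiction that $\tau\geq 0$ on $[0,b]$, and let
\[
o:=\sup\{t\in[0,b]:\gamma([0,t])\subset H\}.
\]
Then $o<b$ since $\gamma(b)\notin H$ (this is precisely where that hypothesis enters). On $[0,o]$ the curve lies in $H$, so its osculating plane equals $H$ at every point of $[0,o]$, in particular at $o$. Now apply Lemma \ref{lem:signchange} at $o$: $\gamma([o,o+\epsilon])\subset H^+$; combined with $\gamma\subset H^-$ this gives $\gamma([o,o+\epsilon])\subset H$, contradicting the definition of $o$. The paper phrases this slightly differently—taking $o$ to be the supremum of $\gamma^{-1}(H)$ and then translating the parameter so that $o=0$—but the mechanism is the same: the contradiction is with $o$ being a supremum, not with the curve having no inflections.
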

\begin{proof}
Existence of $\tilde a$ follows  from that of $\tilde b$, since replacing $t$ by $-t$ in $\gamma$ switches the sign of $\tau$.  To find $\tilde b$ let $o$ be the supremum of $\gamma^{-1}(H)$ in $[a,b]$. Then $o\in [0,b)$, since $\gamma(b)\not\in H$. After a translation of  $[a,b]$ we may assume that $o=0$ for convenience. Suppose now, towards a contradiction, that $\tau\geq 0$ on $[0,b]$.  Then, by Lemma \ref{lem:signchange}, there exists $\epsilon>0$ such that  $\gamma([0,\epsilon])$ lies above  $H$. But $\gamma([0,\epsilon])$ also lies below $H$ by assumption. Thus  $\gamma([0,\epsilon])\subset H$. In particular $0$ is not the supremum of $\gamma^{-1}(H)$, which is the desired contradiction.
\end{proof}

We need to record only one more lemma:

\begin{lem}\label{lem:CcapdM}
Let $C$ be a cap of $M$,  $H$ be a plane of $C$, and set $A:=C\cap\p M$. Suppose that  $A\neq \p M$.
Then  there exists a segment $I\subset \p M$ such that $A\subset \inte(I)$, $f(I)\subset H^-$ and $f(I-A)\cap H=\emptyset$.
\end{lem}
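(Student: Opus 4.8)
The goal is to thicken $C$ to a neighborhood $U$ in $M$ that still sits on the $H^-$ side of $H$ (this is exactly the extension property), and then intersect $U$ with $\partial M$ to produce the segment $I$. The extension property (Proposition~\ref{prop:capextend}) gives a neighborhood $U$ of $C$ in $M$ with $f(U-C)\subset\inte(H^-)$; since $f(C)\subset H^+$ with $f(C\cap f^{-1}(H))$ equal to either $\partial C$ (nondegenerate case) or all of $C$ (degenerate case), and in any case $f(C)\subset H^+\subset H^-\cup H$, we have $f(U)\subset H^-$ after possibly shrinking, and $f(U-C)\cap H=\emptyset$.

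\emph{First step: control the topology of $U\cap\partial M$.} Since $A=C\cap\partial M$ is a connected compact subset of $\partial M$ — it is connected because the candidate $C$'s here arise as maximal caps with the structure established above; more precisely, $f$ embeds $C$ onto a convex set in $\R^3$ or onto the boundary of a cap, so $C\cap\partial M$ is homeomorphic to a point or a closed subarc of the circle $\partial M$ (it cannot be all of $\partial M$ by hypothesis). Shrinking $U$, I may assume $U\cap\partial M$ is an open subarc of $\partial M$; since $A\subset U$ and $A$ is compact and connected and not all of $\partial M$, there is a closed subarc $I\subset\partial M$ with $A\subset\inte(I)\subset I\subset U\cap\partial M$. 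This is where one uses $A\neq\partial M$: it guarantees $\partial M - U$ is nonempty or at least that $A$ is a proper subarc, so an open arc strictly containing $A$ and contained in $U$ exists.

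\emph{Second step: verify the three conclusions.} We then have $I\subset U$, hence $f(I)\subset f(U)\subset H^-$; and $I-A\subset U-A\subset U-C$, so $f(I-A)\subset f(U-C)\subset\inte(H^-)$, which is disjoint from $H$. Thus $f(I-A)\cap H=\emptyset$, as required. The containment $A\subset\inte(I)$ holds by construction. The only subtlety to double-check is that $U\cap\partial M$ is genuinely an arc (not disconnected), which follows from choosing $U$ to be a union of small convex neighborhoods of points of $C$ as in the proof of Proposition~\ref{prop:capextend}, each meeting $\partial M$ in a connected set (guaranteed by the definition of convex neighborhood, where $\partial U\cap\partial M$ is connected), together with the fact that $A=C\cap\partial M$ is connected.

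\emph{Main obstacle.} The delicate point is ensuring that, in passing from the abstract neighborhood $U$ supplied by the extension property to the arc $I$, no component of $U\cap\partial M$ far from $C$ interferes — i.e.\ that we can choose $U$ so that $U\cap\partial M$ is a single arc containing $A$ in its interior. This is handled by shrinking $U$ toward $C$: since $A$ is compact and connected and $\partial M\simeq\S^1$, a sufficiently small neighborhood of $A$ in $\partial M$ is an arc, and since $f(U-C)\cap H=\emptyset$ persists under shrinking, the construction goes through. Everything else is a routine unwinding of the extension property and elementary point-set topology of the circle.
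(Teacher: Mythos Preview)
Your approach is the paper's: invoke the extension property (Proposition~\ref{prop:capextend}) to get a neighborhood $U$ of $C$ with $f(U-C)\subset\inte(H^-)$, choose a segment $I\subset U\cap\partial M$ with $A\subset\inte(I)$, and read off the conclusions from $I-A\subset U-C$ together with $f(A)\subset H$. The paper's proof is exactly these three lines.

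One point deserves correction. Your argument that $A=C\cap\partial M$ is connected (``$f$ embeds $C$ onto a convex set \dots\ so $C\cap\partial M$ is a point or a closed subarc'') is wrong: a cap can meet $\partial M$ in several components---that is precisely what $\rank(C)$ counts. Nothing in the lemma's hypotheses forces $A$ to be connected. The paper's proof tacitly leans on the same point when it asserts that a single segment $I\subset U\cap\partial M$ with $A\subset\inte(I)$ exists; if $A$ had components lying in distinct components of $U\cap\partial M$, no such $I$ would exist. This is harmless in practice because the lemma is invoked only in Proposition~\ref{prop:penultimate}, where $C$ is a \emph{singular} maximal cap and hence $A$ is connected by definition. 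Once $A$ is granted to be connected, your effort to shrink $U$ so that $U\cap\partial M$ is a single arc is unnecessary: $A$ automatically lies in one component of the open set $U\cap\partial M$, and $I$ is taken inside that component.
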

\begin{proof}
By the extension property (Proposition \ref{prop:capextend}), there exists  
a neighborhood $U$ of $C$ in $\p M$ such that $f(U-C)\subset\inte(H^-)$.
Note that $U\cap\p M$ is a neighborhood  of $A$. Thus, since $A\neq\partial M$,  and $A$ is closed, there exists a segment $I\subset U\cap \p M$ such that $A\subset \inte(I)$. This is the desired segment, because
$I=A\cup (I-A)$,  $f(A)\subset f(\p C)\subset H$, and  $f(I-A)\subset f(U-C)\subset \inte(H^-)$.
\end{proof}

The above lemmas now yield our penultimate result. Recall that $\mathcal{S}$ denotes the number of singular maximal caps of $M$, and let $\mathcal{V}$ be the number of times the torsion $\tau$ of $\f$ changes sign.

\begin{prop}\label{prop:penultimate}
Let $C$ be a singular maximal cap of $M$, and set $A:=C\cap\partial M$. Suppose that $M$ is not a cap, and $\p M$ is oriented as in  Proposition \ref{prop:orientation}. Then every neighborhood of $A$ contains a segment $I$ containing $A$ such that $\tau<0$ at the initial point of $I$ and $\tau>0$ at the final point of $I$. In particular, 
$$
\mathcal{V}\geq 2\mathcal{S}.
$$
\end{prop}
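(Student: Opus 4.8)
The plan is to apply the preceding lemmas to each singular maximal cap separately, and then use the nested/partition structure from Theorem \ref{thm:singularcap} to combine the resulting local sign changes into the global count $\mathcal{V}\geq 2\mathcal{S}$. First I would fix a singular maximal cap $C$ at a point $p\in\p M$ and set $A:=C\cap\p M$. Since $M$ is not a cap, $A\neq\p M$, so Lemma \ref{lem:CcapdM} applies: taking $H$ to be the plane of $C$ (well-defined by Proposition \ref{prop:osculate} and the remarks before it), there is a segment $I\subset\p M$ with $A\subset\inte(I)$, $f(I)\subset H^-$, and $f(I-A)\cap H=\emptyset$; and by choosing $I$ inside any prescribed neighborhood of $A$ (Lemma \ref{lem:CcapdM} lets us shrink the neighborhood $U$ of $C$), we get $I$ arbitrarily close to $A$. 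By Proposition \ref{prop:osculate}, $H$ is the osculating plane of $\f$ at $p$, and by Proposition \ref{prop:orientation} (with the chosen orientation) $C$ lies \emph{above} $H$, so $f(I)$ lies \emph{below} $H$ in the sense of Section \ref{subsec:orientexpress}, i.e.\ in the half-space $H^-$ opposite to where $B(p)$ points. Parametrize $\f$ along $I$ by $\gamma\colon[a,b]\to\R^3$ with $\gamma(0)=f(p)$; then $\gamma([a,b])\subset H^-$, $H$ is the osculating plane of $\gamma$ at $0$, and $\gamma(a),\gamma(b)\notin H$ since $f(I-A)\cap H=\emptyset$ (here I use that the endpoints of $I$ lie in $I-A$). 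Now Lemma \ref{lem:signchange2} yields points $\tilde a\in[a,0)$ and $\tilde b\in(0,b]$ with $\tau(\tilde a)>0$ and $\tau(\tilde b)<0$.

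Wait — I need to match signs with the orientation convention. Going along $I$ in the direction of increasing parameter, the lemma gives $\tau>0$ before $p$ and $\tau<0$ after $p$, which is the \emph{reverse} of what the proposition claims ("$\tau<0$ at the initial point of $I$ and $\tau>0$ at the final point"). This is harmless: it simply means I should orient $I$ so that its initial point corresponds to parameter $b$ and its final point to parameter $a$ — equivalently, traverse $I$ against the orientation of $\p M$ when reading off "initial" and "final," or absorb the discrepancy into how the segment $I$ is presented. In the writeup I would fix this bookkeeping once and state plainly that, traversing $I$ appropriately, $\tau<0$ near the initial end and $\tau>0$ near the final end, with $A$ sandwiched in between. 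The key point that survives any sign convention is: \emph{across each singular maximal cap, the torsion goes strictly negative on one side of $A$ and strictly positive on the other}, at points that can be taken arbitrarily close to $A$.

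For the inequality $\mathcal{V}\geq 2\mathcal{S}$, I would argue as follows. There are $\mathcal{S}$ singular parts $[p]=A$ of the partition $\P$, and they are pairwise disjoint closed connected subsets of $\p M$ (disjointness and the partition property come from Theorem \ref{thm:singularcap}). For each, pick the segment $I$ as above, with $\tau$ of one sign just before $A$ and the opposite sign just after; since the $I$'s can be chosen inside arbitrarily small neighborhoods of the respective $A$'s, and the $A$'s are disjoint and compact, we may choose the $I$'s pairwise disjoint. Label the $2\mathcal{S}$ endpoints of these segments by their torsion signs: reading cyclically around $\p M$, the sign sequence restricted to these $2\mathcal{S}$ points alternates, because immediately after leaving one $I$ with sign "$+$" at its far end we must, on entering the next $I$, encounter sign "$-$" at its near end — and symmetrically. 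Hence between consecutive chosen endpoints the continuous function $\tau$ (well-defined since $\f$ has no inflections) changes sign, giving at least $2\mathcal{S}$ sign changes around the circle, i.e.\ $\mathcal{V}\geq 2\mathcal{S}$.

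The main obstacle — and the step I would be most careful about — is the global alternation argument: verifying that the $2\mathcal{S}$ endpoints, read cyclically, carry genuinely alternating signs of $\tau$. Locally, each singular cap contributes a "$-$ then $+$" pattern (in the right orientation), but to conclude that consecutive \emph{inter-cap} endpoints also disagree in sign one needs precisely that every $A$ in the cyclic order is a singular part and that each contributes exactly the pattern "$-$,$\dots$,$+$" with the \emph{same} handedness; this uniform handedness is exactly what Proposition \ref{prop:orientation} buys us (all singular caps lie above their planes with respect to one fixed orientation of $\p M$), so the local patterns are coherent and the cyclic sign word is $(-,+,-,+,\dots)$ of length $2\mathcal{S}$, which has $2\mathcal{S}$ sign changes. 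I would also need the minor observation that the endpoints of distinct segments $I$ do not coincide — guaranteed by disjointness of the $I$'s — and that $\tau$ is continuous on all of $\p M$, which holds because $\f$ is $\C^3$ without inflections so $\tau=\det(\gamma',\gamma'',\gamma''')/(\kappa^2\|\gamma'\|^3)$ is a well-defined continuous function. Everything else is a direct assembly of Lemmas \ref{lem:signchange}, \ref{lem:signchange2}, \ref{lem:CcapdM} and Propositions \ref{prop:osculate}, \ref{prop:orientation}, \ref{thm:singularcap}.
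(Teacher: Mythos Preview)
Your approach is essentially identical to the paper's: apply Lemma \ref{lem:CcapdM} and Lemma \ref{lem:signchange2} at each singular maximal cap, then use disjointness of the $A_i$ together with the coherent orientation from Proposition \ref{prop:orientation} to obtain a cyclically alternating sign pattern of length $2\mathcal{S}$. The only refinement worth importing from the paper is that it argues with finitely many caps $C_1,\dots,C_k$ for arbitrary $k\leq\mathcal{S}$ rather than all $\mathcal{S}$ at once, which transparently handles the case $\mathcal{S}=\infty$; the sign discrepancy you flagged between Lemma \ref{lem:signchange2} and the statement of the proposition is real (the paper's own proof quotes the signs opposite to its Lemma \ref{lem:signchange2}), and as you correctly observe it is immaterial since only the \emph{uniformity} of the pattern across all singular caps is needed.
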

\begin{proof}
Since $M$ is not a cap,  $\mathcal{S}\geq 2$ by Theorem \ref{thm:singularcap}.
Let $C_i$, $i=1, \dots, k\leq\mathcal{S}$ be distinct singular maximal caps of $M$ for some finite integer $k$. Recall that $A_i:=C_i\cap\p M$ are disjoint, since, as we verified in Theorem \ref{thm:singularcap}, the sets where the maximal caps of $M$ intersect $\p M$ form a partition of $\p M$.  Furthermore, $A_i$ are proper subsets of $\p M$,  since $M$ is not a cap by assumption. Thus we may let $I_i\subset\p M$ be intervals containing $A_i$ as in 
Lemma \ref{lem:CcapdM}. Since $A_i$ are disjoint, and $k$ is finite, we may suppose that $I_i$ are disjoint as well.

\begin{figure}[h]
\begin{overpic}[height=1.1in]{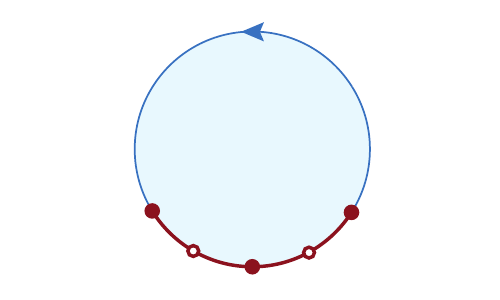}
\put(48,1){\Small$p_i$}
\put(47,28){\Small$M$}
\put(23,14){\Small$a_i$}
\put(34,4){\Small$\tilde a_i$}
\put(73,14){\Small$b_i$}
\put(64,3){\Small$\tilde b_i$}
\put(37.5,12.5){\Small$-$}
\put(59,12.5){\Small$+$}
\end{overpic}
\caption{}
\label{fig:circle}
\end{figure}

Identify each $I_i$ with an interval $[a_i,b_i]\subset\R$, where $a_i<0<b_i$, via an orientation preserving parametrization which we suppress for simplicity, see Figure \ref{fig:circle}.
 By Proposition \ref{prop:osculate}, the planes $H_i$ of $C_i$ are osculating planes of $\f$ at some points $p_i\in A_i\subset\inte(I_i)$, which we may identify with $0\in(a_i, b_i)$. By Proposition \ref{prop:orientation},  $f(C_i)$ lie above $H_i$. Thus, by Lemma \ref{lem:CcapdM}, $f([a_i, b_i])$ lie below $H_i$, while $f(a_i)$, $f(b_i)$ are disjoint from $H_i$.
Consequently, by Lemma \ref{lem:signchange2}, there are points $\tilde a_i\in[a_i,p_i)$ and $\tilde b_i\in(p_i,b_i]$ such that 
 $\tau(\tilde a_i)<0$ and $\tau(\tilde b_i)>0$. Since $I_i$ are disjoint, these points  have the cyclic ordering $[\tilde a_1, \tilde b_1, \dots, \tilde a_k, \tilde b_k]$ in $\p M$. Thus $\mathcal{V}\geq 2k$, which completes the proof.
\end{proof}

The last proposition quickly completes the proof of Theorem \ref{thm:main}, and  yields a refinement of it via Theorem \ref{thm:singularcap} as follows:

\begin{thm}[Main Theorem, Full Version]\label{thm:main2}
 Suppose that $\tau\not\equiv 0$, and let $C\subset M$ be a maximal cap. Then each component $U$ of $\p M-C$ contains a point where the corresponding maximal cap is singular.  Thus $\tau$ changes sign in $U$, and 
$$
\mathcal{V}\geq 2\mathcal{S}\geq 2(\mathcal{T}+2).
$$
\end{thm}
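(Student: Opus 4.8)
The plan is to assemble Theorem \ref{thm:main2} from the two principal results already in hand: the Bose-type inequality $\mathcal S\ge\mathcal T+2$ of Theorem \ref{thm:singularcap}, and the sign-change estimate $\mathcal V\ge 2\mathcal S$ of Proposition \ref{prop:penultimate}. Both of these carry the hypothesis that $M$ is not a cap, so the first step is to deduce this from $\tau\not\equiv 0$. If $M$ were a cap, then $\f$ would be a planar curve: when $M$ is a nondegenerate cap, $f(\partial M)$ is the boundary of the disk $f(M)$, which equals $f(M)\cap H$ for the plane $H$ of the cap, hence lies in $H$; when $M$ is a degenerate cap, $f(M)\subset H$ already. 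A $\C^3$ planar curve has $\tau\equiv 0$ wherever it is defined (and $\tau$ is defined everywhere since $\f$ has no inflections), contradicting the hypothesis. So $M$ is not a cap and the machinery of Sections \ref{sec:singularcap}, \ref{subsec:osculateplane}, \ref{subsec:orientexpress} and \ref{subsec:torsion} is available.

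Next I would fix an orientation of $\partial M$ as in Proposition \ref{prop:orientation}, so that each singular maximal cap lies above its (osculating) plane, and record that $P:=C\cap\partial M$ is a single part of $\P$: if $p\in C\cap\partial M$ then the maximal cap $C_p$ through $p$ coincides with $C$, since two distinct maximal caps can meet only in $\inte(M)$ by Lemma \ref{lem:maxcapintersect}; thus $[p]=C_p\cap\partial M=P$, and consequently $\partial M-C=P'$. Theorem \ref{thm:singularcap} then tells us that each component $U$ of $\partial M-C=P'$ contains a singular part $[p']$, and by the nested property $[p']\subset U$. Applying Proposition \ref{prop:penultimate} to the singular maximal cap $C_{p'}$, using a neighborhood of $A':=C_{p'}\cap\partial M=[p']$ that is small enough to be contained in the open arc $U$, produces a segment $I\subset U$ with $\tau<0$ at its initial endpoint and $\tau>0$ at its final endpoint; in particular $\tau$ changes sign somewhere in $U$, which is the first assertion of the theorem.

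The inequality chain is then essentially bookkeeping: Proposition \ref{prop:penultimate} gives $\mathcal V\ge 2\mathcal S$ directly (its proof selects $k$ disjoint singular maximal caps and exhibits $2k$ sign changes of $\tau$ cyclically arranged along $\partial M$), Theorem \ref{thm:singularcap} gives $\mathcal S\ge\mathcal T+2$, and concatenating yields $\mathcal V\ge 2\mathcal S\ge 2(\mathcal T+2)$. This also finishes Theorem \ref{thm:main}: either $\tau\equiv 0$, or else $\tau\not\equiv 0$ and then $\mathcal V\ge 4$, i.e.\ $\tau$ changes sign at least $4$ times.

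I do not expect a genuine obstacle here: all the substantive work — existence of maximal caps (Theorem \ref{thm:maxcap}), the nested property (Theorem \ref{thm:singularcap}), identification of the planes of singular maximal caps with osculating planes (Proposition \ref{prop:osculate}), the coherent orientation of $\partial M$ (Proposition \ref{prop:orientation}), and the maximum principle for torsion (Lemmas \ref{lem:signchange}--\ref{lem:signchange2}) — has been carried out in the preceding sections. The only two points that require a little care are the reduction ``$\tau\not\equiv 0\Rightarrow M$ is not a cap'' and the observation that the singular part produced in a component $U$ of $\partial M-C$ lies entirely inside $U$, so that Proposition \ref{prop:penultimate} can be localized to $U$; both follow immediately from Lemma \ref{lem:maxcapintersect} together with the nested property of $\P$.
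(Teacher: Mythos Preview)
Your proposal is correct and follows essentially the same assembly of earlier results as the paper: reduce to the case that $M$ is not a cap, invoke Theorem \ref{thm:singularcap} for the existence of a singular maximal cap in each component $U$ and for the Bose inequality, and invoke Proposition \ref{prop:penultimate} for the sign changes of $\tau$. The only minor difference is that you establish the sign change in $U$ by localizing Proposition \ref{prop:penultimate} to a singular cap found \emph{inside} $U$, whereas the paper argues via the components $A_i$ of $C\cap\partial M$ at the endpoints of $\cl(U)$; your route is arguably cleaner, and your explicit verification that $\tau\not\equiv 0$ forces $M$ not to be a cap fills in a point the paper leaves implicit.
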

\begin{proof}
If $C$ is singular, then $\tau$ changes sign in $U$ by Proposition \ref{prop:penultimate}. So we may suppose that $\p M-C$ has more than one component. Then $\cl(U)$ is a segment of $\p M$.
Let $\p M$ be oriented as in Proposition \ref{prop:orientation}, and $p_1$, $p_2$ be the initial and final points of $\cl(U)$ respectively. Each $p_i$ belongs to a component $A_i$ of $\p M\cap C$. Let $I_i$ be intervals containing $A_i$ as in Proposition \ref{prop:penultimate}. Then $\tau(a_i)<0$ and $\tau(b_i)>0$ where $a_i$, $b_i$ are the initial and final points of $I_i$ respectively.
Choosing $I_i$ sufficiently small, we may ensure that  $b_1$, $a_2 \in U$. So $\tau$ changes sign in $U$.
Finally, $\mathcal{V}\geq 2\mathcal{S}$, by Proposition \ref{prop:penultimate},  and $\mathcal{S}\geq \mathcal{T}+2$, by Theorem \ref{thm:singularcap}, which complete the proof.
\end{proof}

\section{Examples}
Here we discuss a number of examples which establish  the sharpness of our main results in various respects.

\begin{example}[Local flatness on the boundary]\label{ex:dumbbell}
There exists a $\C^\infty$ embedded disk in $\R^3$ which is locally convex (and therefore has $4$ boundary vertices by Theorem \ref{thm:main}); however, its boundary torsion changes sign only twice. Thus the local nonflatness assumption in Theorem \ref{thm:main} is essential for ensuring that  torsion either changes sign (at least) $4$ times, or else is identically zero. First we describe the boundary curve. Let $h\colon[0,\infty)\to\R$ be the convex function defined by
$$
h(t):=
\left\{
\begin{array}{lrl}
\exp(\frac{8}{1-4t}),   &   \frac14   &  \hspace{-6pt}< t,\\
 0,   &      0   &  \hspace{-6pt}\leq t\leq \frac14.\\
\end{array}
\right.
$$
Extend $h$ to an odd function on $\R$ by setting $h(t):=-h(-t)$. Next  let $\gamma\colon [0,2\pi]\to\R^3$ be the curve given by
$$
\gamma(t):=\(\,\cos(t), \,\(5/4 - \sin^2(t)\)\sin(t),\, 5\, h\(\cos(t)\)\,\),
$$
and let $\Gamma$ denote the trace of $\gamma$.
Note that  $\Gamma$ lies on the cylindrical surface $S\subset\R^3$ given by $z=5h(x)$. Further, $\Gamma$ is a simple closed curve on $S$ as can be seen from its projection into the $xy$-plane, see Figure \ref{fig:dumbbell2}. Let $M$ be the compact region bounded by $\Gamma$ in $S$. Then $M$ is locally convex with respect to the inclusion map $f\colon M\to\R^3$; however, a calculation shows that the torsion of $\Gamma$ changes sign only twice. The graph of torsion appears on the right hand side of Figure \ref{fig:dumbbell2}.

  \begin{figure}[h]
\begin{overpic}[height=1.25in]{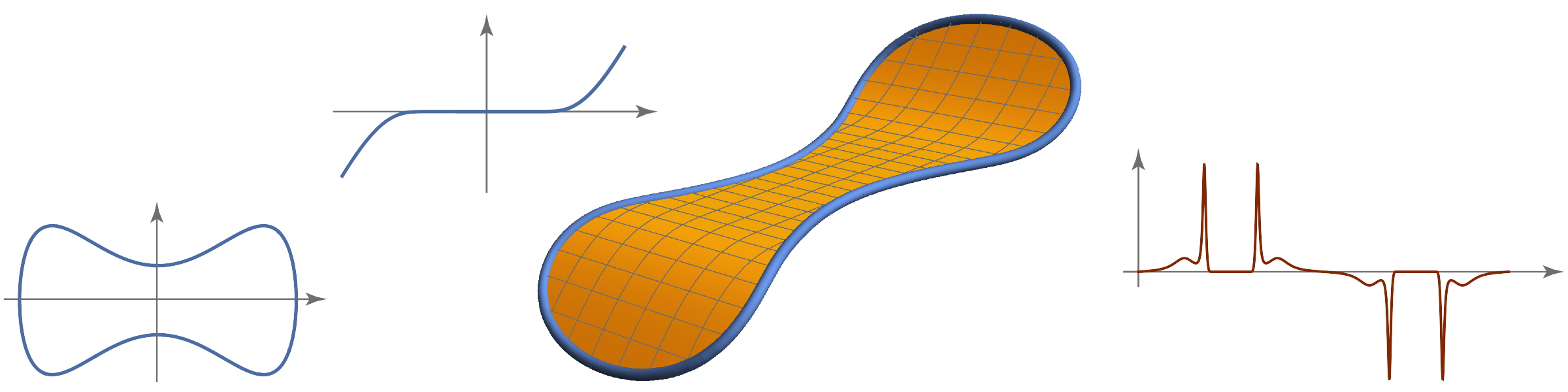}
\put(21.5,5){\Small $x$}
\put(10.5,12.5){\Small $y$}
\put(42.5,17){\Small $x$}
\put(31.5,24){\Small $z$}
\put(100,8){\Small $t$}
\put(73,15.5){\Small $\tau$}
\put(72,5){\Small $0$}
\put(83.5,5){\Small $\pi$}
\put(95,5){\Small $2\pi$}
\put(73,15.5){\Small $\tau$}
\end{overpic}
\caption{}
\label{fig:dumbbell2}
\end{figure}

\end{example}

\begin{example}[Zero curvature on the boundary]\label{ex:dumbbell2}
There exists a $\C^\infty$ embedded disk  in $\R^3$ which has nonnegative curvature and only two boundary vertices. Thus the condition in Corollary \ref{cor:main} that $f$ have positive curvature on $\p M$ is necessary. This surface is another cylindrical disk which is quite similar to the construction in Example \ref{ex:dumbbell} with the  exception that here the boundary has a different third coordinate:
$$
\gamma(t):=\(\,\cos(t), \,\(5/4 - \sin^2(t)\)\sin(t), \,\cos^3(t)/2\,\).
$$
So $\Gamma$ will have the same dumbbell shaped projection into the $xy$-plane as in the previous example, but it will lie on a different cylindrical surface $S\subset\R^3$ given by $z=x^3/2$, see Figure \ref{fig:dumbbell}. Once  again we let $M$ be the compact region bounded by $\Gamma$ in $S$. As the graph on the right hand side of Figure \ref{fig:dumbbell} shows, the torsion of $\Gamma$ vanishes only twice in this example. A curve with similar properties  is  discussed in a paper of R{\o}gen \cite[Ex. 13]{rogen:flat}, where it is also shown that the boundary of a disk of zero curvature in $\R^3$  always has at least two vertices.

 \begin{figure}[h]
\begin{overpic}[height=1.25in]{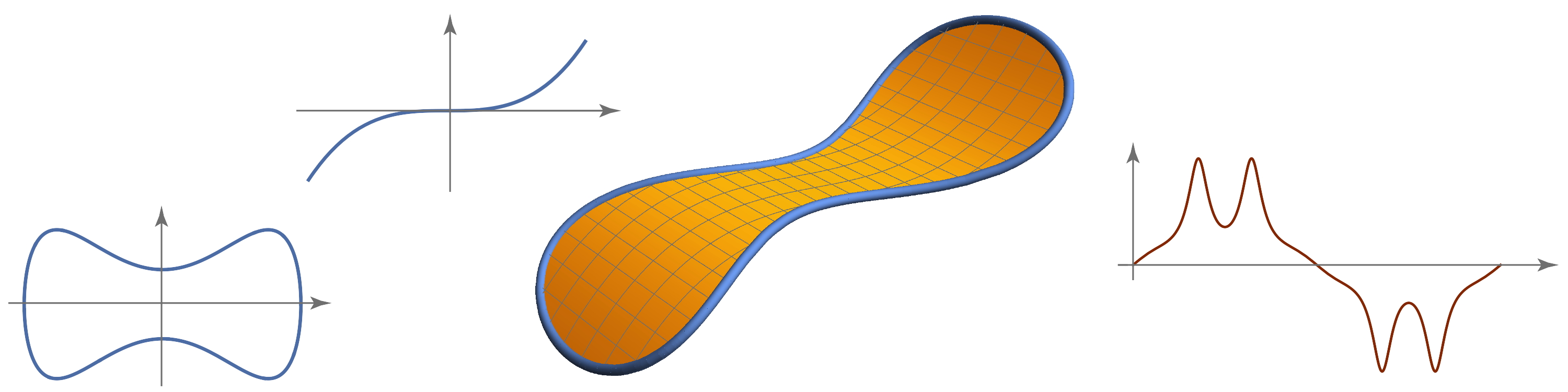}
\put(21.5,4.5){\Small $x$}
\put(10.75,12){\Small $y$}
\put(40,17){\Small $x$}
\put(29.2,24){\Small $z$}
\put(100,8){\Small $t$}
\put(71.75,5.5){\Small $0$}
\put(83,5.5){\Small $\pi$}
\put(95,5.5){\Small $2\pi$}
\put(73,15.5){\Small $\tau$}
\end{overpic}
\caption{}
\label{fig:dumbbell}
\end{figure}
\end{example}

\begin{example}[Negative curvature in the interior]\label{ex:negative}
There exists a $\C^\infty$ simple closed curve $\Gamma\subset\R^3$ which has non vanishing torsion, but bounds a smooth immersion of a disk with positive curvature along $\Gamma$. Thus the condition in Corollary \ref{cor:main} that the curvature of $f$ be nonnegative everywhere is essential. More generally, Theorems \ref{thm:main} or  \ref{thm:main2} do not hold if $f$ is  locally convex only near $\p M$. 

As we mentioned in the introduction, Rosenberg \cite{rosenberg:constant} has shown that a necessary condition for $\Gamma$ to bound a surface with positive curvature near $\Gamma$ is that the self-linking number  $\mathrm{SL}(\Gamma)$ be even. Here we show that this condition is also sufficient.

First note that there are $\C^\infty$ simple closed curves $\Gamma\subset \R^3$ with non vanishing torsion and even self-linking number. For instance, we may let $\Gamma$ be a  torus knot of type $(1,2m)$ for large $m$. Indeed, the self-linking number of any torus knot of type $(1,q)$ is $q$. Thus,
$$
\mathrm{SL}(\Gamma)=2m.
$$
Furthermore, as has been studied by Costa \cite{costa:twisted}, the torsion of $\Gamma$ does not vanish when $m$ is sufficiently large.  Alternatively, we may use a result of Aicardi \cite{aicardi} who has constructed closed curves of non vanishing torsion with any self-linking number. 

Next note that, as has been observed by Gluck and Pan \cite{gluck&pan}, see also  \cite{ghomi:stconvex},   we may extend $\Gamma$, or any closed curve without inflections, to a smooth embedded positively curved annulus $A$. More precisely, there exists a $\C^\infty$ embedding $f\colon \S^1\times [0,1]\to\R^3$ such that $f(\S^1\times\{0\})=\Gamma$, and $f$ has positive curvature everywhere. Next recall that, by Lemma \ref{lem:nN}, $\l n, N\r\leq 0$ along $\Gamma$, where $N$ is the principal normal of $\Gamma$ and $n$ is the outward normal of $A$. Set $\tilde n:=-n$. Then
$$
\mathrm{SL}(\Gamma):=\mathrm{L}(\Gamma,\Gamma+\epsilon N)=\mathrm{L}(\Gamma,\Gamma+\epsilon \tilde n),
$$
where $\mathrm{L}$ stands for linking number, and $\Gamma+\epsilon N$, $\Gamma+\epsilon \tilde n$ denote, respectively, small perturbations of $\Gamma$ along the vector fields $N$, $\tilde n$. The second inequality above holds because, for small $\epsilon$, and all $\theta\in[0,\pi/2]$, $h(\theta):=\Gamma+\epsilon(\cos(\theta)N+\sin(\theta)\tilde n)$ is disjoint from $\Gamma$, since $\l N,\tilde n\r\geq 0$.   Thus $h(\theta)$ yields an isotopy between $\Gamma+\epsilon N$, and $\Gamma+\epsilon \tilde n$ in the complement of $\Gamma$.

So $\mathrm{L}(\Gamma,\Gamma+\epsilon \tilde n)=2m$, which  indicates that $A$ ``twists around" $\Gamma$ an even number of times. Consequently, by a deep result of  Whitney \cite[Thm. 7]{whitney:2n-1}, a neighborhood of $\Gamma$ in $A$ may be extended  to a smooth immersion of a disk, which yields our desired surface. More generally, for any compact connected orientable surface $M$ with connected boundary $\p M$, Whitney constructs a smooth immersion $f\colon M\to\R^3$ such that $f(\p M)=\Gamma$, $f$ is one-to-one of $\p M$, and $f(U)\subset A$ for some neighborhood $U$ of $\p M$ in $M$. 
\end{example}

\begin{example}[The complete case]
There exists a $\C^\infty$ complete noncompact positively curved surface in $\R^3$ bounded by a closed curve with only two vertices, where by ``complete" we mean that the induced metric on the surface is complete in the sense of convergence of Cauchy sequences. Thus this example shows that Theorem \ref{thm:main} cannot be extended to complete surfaces which are  noncompact. To generate this surface we  employ the well-known fact that the vertices of a spherical curve coincide with critical points of its geodesic curvature. It follows then that the curve in Figure \ref{fig:capswirl}(a) has only two vertices. Note that this curve, which we call $\Gamma$, is contained in a hemisphere. By moving $\Gamma$ parallel to itself outward, we may generate an immersed annulus $A$ in the northern hemisphere bounded by $\Gamma$ and a double covering of the equator. Then the projective transformation $(x,y,z)\mapsto(x/z,y/z,1/z)$ turns $A$ into the desired complete surface.

 \begin{figure}[h]
\begin{overpic}[height=1.2in]{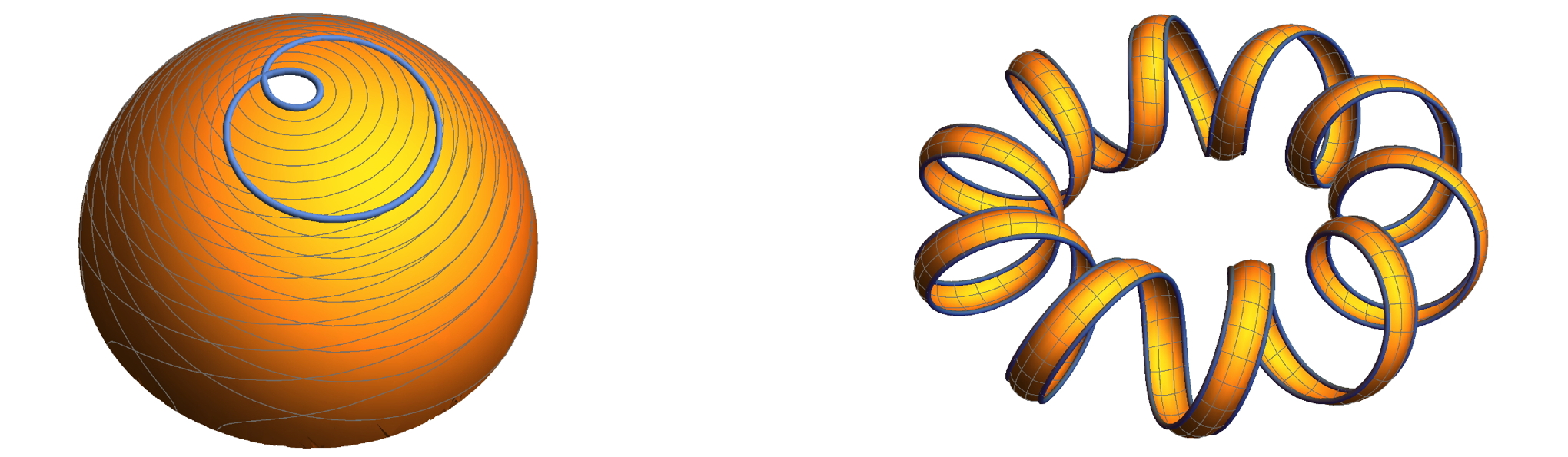}
\put(18,-4){$(a)$}
\put(74,-3){$(b)$}
\end{overpic}
\caption{}
\label{fig:capswirl}
\end{figure}
\end{example}

\begin{example}[The higher genus case]
There exists a $\C^\infty$ embedded annulus with positive curvature in $\R^3$ whose boundaries have no vertices. 
To construct this surface recall that, as we pointed out in Example \ref{ex:negative}, there are plenty of torus knots with non vanishing torsion. For instance a curve of type $(1,10)$ on a thin torus is one such example, see Figure \ref{fig:capswirl} (b). By the discussion in Example \ref{ex:negative}, we may extend this curve, say $\Gamma$, to a positively curved $\C^\infty$ annulus $A$. Let $M\subset A$ be a sub-annulus containing $\Gamma$ whose boundary components are
 $\C^3$-close to $\Gamma$. Then $\p M$ will have nonvanishing torsion.
Thus the simply connected assumption on $M$ in Theorem \ref{thm:main} is not superfluous. On the other hand, we do not know of any  non simply connected surface $M$ with connected boundary which violates Theorem \ref{thm:main}.
\end{example}

\addtocontents{toc}{\setcounter{tocdepth}{-1}}
\section*{Acknowledgments}
\addtocontents{toc}{\setcounter{tocdepth}{2}}

The author thanks Harold Rosenberg  for his interesting question \cite{rosenberg:constant} which prompted this work. Further he is indebted to the papers of Masaaki Umehara \cite{umehara2,thorbergsson&umehara}, and Gudlaugur Thorbergsson  \cite{thorbergsson&umehara} for developing the notion of intrinsic circle systems. 

\bibliographystyle{abbrv}
\bibliography{references}

\end{document}